\newdimen\AAdi%
\newbox\AAbo%
\def\AAk#1#2{\s_etbox\AAbo=\hbox{#2}\AAdi=\wd\AAbo\kern#1\AAdi{}}%
\def\AAr#1#2#3{\s_etbox\AAbo=\hbox{#2}\AAdi=\ht\AAbo\raise#1\AAdi\hbox{#3}}%
\font\tenmsb=msbm10 at 12pt \font\sevenmsb=msbm7 at 8pt
\font\fivemsb=msbm5 at 6pt
\newtheorem{theorem}{Theorem}
\newtheorem{remark}[theorem]{Remark}
\newtheorem{corollary}[theorem]{Corollary}
\newtheorem{lemma}[theorem]{Lemma}
\newtheorem{proposition}[theorem]{Proposition}
\numberwithin{equation}{section} \numberwithin{theorem}{section}
\renewcommand{\topmargin}{0cm}
\renewcommand{\oddsidemargin}{5mm}
\renewcommand{\evensidemargin}{5mm}
\renewcommand{\textwidth}{150mm}
\renewcommand{\textheight}{230mm}
\def\R{\mathbb R}
\def\N{\mathbb N}
\def\na{\nabla}
\def\f#1#2{\frac{#1}{#2}}
\def\a{\alpha}
\def\be{\beta}
\def\r{\Re_{I\!V}}
\def\p#1{\partial #1}
\def\de{\delta}
\def\De{\Delta}
\def\e{\eta}
\def\ep{\epsilon}
\def\G{\Gamma}
\def\g{\gamma}
\def\k{\kappa}
\def\la{\lambda}
\def\La{\Lambda}
\def\lan{\langle}
\def\ran{\rangle}
\def\Om{\Omega}
\def\th{\theta}
\def\Th{\Theta}
\def\Si{\Sigma}
\def\r{\rho}
\def\z{\zeta}
\begin{document}

\title
[Area-minimizing hypersurfaces in manifolds]
{Area-minimizing hypersurfaces in manifolds of Ricci curvature bounded below}

\author{Qi Ding}
\address{Shanghai Center for Mathematical Sciences, Fudan University, Shanghai 200438, China}
\email{dingqi@fudan.edu.cn}%

\begin{abstract}
In this paper, we study area-minimizing hypersurfaces in manifolds of Ricci curvature bounded below with Cheeger-Colding theory. Let $N_i$ be a sequence of smooth manifolds with Ricci curvature $\ge-n\k^2$ on $B_{1+\kappa'}(p_i)$ for constants $\k\ge0$, $\kappa'>0$, and volume of $B_1(p_i)$ has a positive uniformly lower bound. Assume $B_1(p_i)$ converges to a metric ball $B_1(p_\infty)$ in the Gromov-Hausdorff sense. For an area-minimizing hypersurface $M_i$ in $B_1(p_i)$ with $\p M_i\subset\p B_1(p_i)$, we prove the continuity for the volume function of area-minimizing hypersurfaces equipped with the induced Hausdorff topology. In particular, each limit $M_\infty$ of $M_i$ is area-minimizing in $B_1(p_\infty)$ provided $B_1(p_\infty)$ is a smooth Riemannian manifold. By blowing up argument, we get sharp dimensional estimates for the singular set of $M_\infty$ in $\mathcal{R}$, and $\mathcal{S}\cap M_\infty$. Here, $\mathcal{R}$, $\mathcal{S}$ are the regular and singular parts of $B_1(p_\infty)$, respectively.
\end{abstract}

\maketitle 

\section{Introduction}

In Euclidean space, area-minimizing hypersurfaces have been studied intensely several decades before (see \cite{Gi}\cite{LYa}\cite{S} for a systematical introduction).
The theory acts an important role in the famous Bernstein theorem for minimal graphs in Euclidean space.
Let $\Om$ be an open subset in $\R^{n+1}$, $\Si_i$ be a sequence of area-minimizing hypersurfaces in $\Om$ with $\p \Si_i\subset\p\Om$.
The compactness theorem tells us that there are an area-minimizing hypersurface $\Si$ in $\Om$ with $\p\Si\subset\p\Om$, and a subsequence $i_j$ such that $\Si_{i_j}$ converges to $\Si$ in the weak sense (see also Theorem 37.2 in \cite{S}). In particular, for any open $V\subset\subset\Om$ we have (see Lemma 9.1 in \cite{Gi} for the version of bounded variation)
\begin{equation}\aligned\label{SiijVEuc}
\lim_{j\rightarrow\infty}\mathcal{H}^n(\Si_{i_j}\cap V)=\mathcal{H}^n(\Si\cap V)\qquad \mathrm{provided} \ \mathcal{H}^n(\Si\cap\p V)=0.
\endaligned
\end{equation}

For any area-minimizing hypersurface $M$ in $\R^{n+1}$ (or a smoooth manifold), De Giorgi \cite{DG0}, Federer \cite{Fe}, Reifenberg \cite{Re} proved that the singular set of $M$ has Hausdorff dimension $\le n-7$.
Recently, Cheeger-Naber \cite{CN} and Naber-Valtorta \cite{NV} made important progress on quantitative
stratifications of the singular set of stationary varifolds. 
In particular, for an area-minimizing hypersurface $M$ they proved that
the second fundamental form of $M$ has \emph{a\,priori} estimates in $L^7_{weak}$ on $M$ \cite{NV}.
Needless to say, all area-minimizing hypersurfaces in smooth manifolds are stable. The theory of stable minimal surfaces is a powerful tool to study the topology of 3-dimensional manifolds,
see \cite{AR}\cite{FS}\cite{Liu}\cite{SY}\cite{SY0} for instance.

In general speaking, local calculations concerned on minimal hypersurfaces in manifolds usually contain sectional curvature of the ambient manifolds.
However, many properties of minimal hypersurfaces may have nothing to do with sectional curvature of the ambient manifolds.
In this paper, we study area-minimizing hypersurfaces in manifolds of Ricci curvature bounded below with Cheeger-Colding theory \cite{CCo1,CCo2,CCo3}.
For overcoming the difficulty of lacking sectional curvature condition, it is natural to take the possible limits in Ricci limit spaces, and then go back to the original problems by using the properties of such limits.
In this sense, it is worth to understand the limits of a sequence of area-minimizing hypersurfaces in a sequence of geodesic balls with Ricci curvature uniformly bounded below.
If the volume of unit geodesic balls goes to zero, the limit of the hypersurfaces may be equal to the whole ambient Ricci limit spaces.
So we assume the non-collapsing condition on ambient manifolds in the most cases, i.e., the volume of geodesic balls has a uniformly positive lower bound.
In a sequel, we will study Sobolev and Poincar$\mathrm{\acute{e}}$ inequalities on minimal graphs over manifolds \cite{D2}.

Let $S$ be a smooth minimal hypersurface in an $(n+1)$-dimensional complete Riemannian manifold $N$.
Using Jacobi fields, Heintze-Karcher \cite{HK} established the Laplacian comparison
for distance functions to $M$ outside cut loci in terms of Ricci curvature of ambient spaces.
Similar to Schoen-Yau's argument in \cite{SY2}, the comparison theorem holds globally in the distribution sense even $S$ instead by the support of an $n$-rectifiable stationary varifold in $N$.
Then we are able to estimate the positive lower bound of the volume of $S\cap B_1(p)\subset N$ using volume and the lower bound of Ricci curvature of $B_1(p)$ (compared \cite{IK}\cite{Mm}).
As an application, we obtain a non-existence result for complete area-minimizing hypersurfaces without the sectional curvature of ambient manifolds
(compared with Theorem 1 of Anderson \cite{An}; see Theorem \ref{Nonexistminimizinghypersurface}).

If $M$ is an area-minimizing hypersurface in $B_1(p)\subset N$ with $\p M\subset\p B_1(p)$, then the volume of $M$ in $B_1(p)$ has a uniform upper bound by a constant via the universal covers of ambient manifolds (see Lemma \ref{upbdareaM}).
With lower and upper bounds for the volume of area-minimizing hypersurfaces, we are able to consider their possible limits in Ricci limit spaces in the following sense.

Let $N_i$ be a sequence of $(n+1)$-dimensional smooth manifolds of Ricci curvature
$\ge-n\k^2$ on $B_{1+\k'}(p_i)$
for constants $\k\ge0$, $\k'>0$.
By Gromov precompactness theorem, up to a choice of the subsequence $\overline{B_1(p_i)}$ converges to a metric ball $\overline{B_1(p_\infty)}$ in the Gromov-Hausdorff sense.
For each $i$, let $\Phi_i:\,B_1(p_i)\rightarrow B_1(p_\infty)$ be an $\ep_i$-Hausdorff approximations with $\ep_i\rightarrow0$.
We further assume $\mathcal{H}^{n+1}(B_1(p_i))\ge v$ for some positive constant $v$.
Let $M_i$ be an area-minimizing hypersurface in $B_1(p_i)$ with $\p M_i\subset\p B_1(p_i)$.
Suppose that $\Phi_i(M_i)$ converges in the Hausdorff sense to a closed set $M_\infty$ in $\overline{B_1(p_\infty)}$ as $i\rightarrow\infty$.
Suppose $M_\infty\cap B_1(p_\infty)\neq\emptyset$.

Colding \cite{C}, Cheeger-Colding \cite{CCo1} proved the volume convergence of $B_1(p_i)$ under the Gromov-Hausdorff topology.
Based on their results, we get the continuity for the volume function of $M_i$ in the following sense.
\begin{theorem}\label{1.2}
For any open set $\Om\subset\subset B_1(p_\infty)$, let $\Om_i\subset B_1(p_i)$ be open with $\Phi_i(\Om_i)\rightarrow\Om$ in the Hausdorff sense. Then
\begin{equation}\aligned\label{HnMiOmiMOm000}
\limsup_{i\rightarrow\infty}\mathcal{H}^n\left(M_i\cap \overline{\Om_i}\right)\le&\mathcal{H}^n\left(M_\infty\cap \overline{\Om}\right)\\
\endaligned
\end{equation}
and
\begin{equation}\aligned\label{HnMiOmsiMOms000}
\mathcal{H}^n\left(M_\infty\cap B_s(\Om)\right)\le&\liminf_{i\rightarrow\infty}\mathcal{H}^n(M_i\cap B_s(\Om_i))\\
\endaligned
\end{equation}
for any $B_s(\Om)\subset\subset B_1(p_\infty)$ with $s>0$.
Moreover, $M_\infty$ is an area-minimizing hypersurface in $B_1(p_\infty)$ provided $\overline{B_1(p_\infty)}$ is a smooth Riemannian manifold.
\end{theorem}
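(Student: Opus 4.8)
The plan is to localize the problem to the regular part $\mathcal{R}$ of $\overline{B_1(p_\infty)}$, where Cheeger--Colding theory gives a smooth convergence and the classical statement \eqref{SiijVEuc} applies, and to control $M_i$ near the singular part $\mathcal{S}$ by means of the non-collapsing hypothesis. Recall that in the non-collapsed setting \cite{CCo1} (cf. \cite{An}) one has $\dim\mathcal{S}\le n-1$, hence $\mathcal{H}^n(\mathcal{S})=0$, and that for every compact $K\subset\mathcal{R}$ there are, for $i$ large, smooth embeddings $\psi_i:K\to N_i$ which are $o(1)$-Hausdorff approximations onto their images (so that they agree with $\Phi_i^{-1}$ up to an error tending to $0$), with $\psi_i^{*}g_{N_i}\to g_\infty|_K$ in the $C^{1,\alpha}$-sense. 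Since being area-minimizing is a local property, $\psi_i^{-1}\!\big(M_i\cap\psi_i(K)\big)$ is area-minimizing in $\big(\mathrm{int}\,K,\psi_i^{*}g_{N_i}\big)$, so over $\mathcal{R}$ the sequence $M_i$ becomes, after pull-back, a sequence of area-minimizing hypersurfaces in a fixed ball equipped with $C^{1,\alpha}$-converging metrics, with uniform local mass bounds supplied by Lemma \ref{upbdareaM}. The compactness theorem (Theorem 37.2 in \cite{S}), applied with varying metrics, then produces a subsequential weak limit $T$ that is area-minimizing for $g_\infty$; by the interior density lower bound for minimal hypersurfaces coming from the Heintze--Karcher/Laplacian comparison above (which depends only on the Ricci bound) one gets $\mathrm{spt}\,T=M_\infty\cap\mathcal{R}$, and by interior regularity $\|T\|(E)=\mathcal{H}^n(M_\infty\cap E)$ for Borel $E\subset\mathcal{R}$. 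Consequently \eqref{SiijVEuc} holds on $\mathcal{R}$: $\lim_i\mathcal{H}^n\big(M_i\cap\Phi_i^{-1}(V)\big)=\mathcal{H}^n(M_\infty\cap V)$ for every open $V\subset\subset\mathcal{R}$ with $\mathcal{H}^n(M_\infty\cap\partial V)=0$, and in particular the corresponding $\liminf$ and $\limsup$ inequalities for such $V$.

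Estimate \eqref{HnMiOmsiMOms000} then follows by inner regularity. Since $\mathcal{H}^n(\mathcal{S})=0$, $\mathcal{H}^n\big(M_\infty\cap B_s(\Om)\big)=\sup_K\mathcal{H}^n\big(M_\infty\cap\mathrm{int}\,K\big)$, the supremum over compact $K\subset B_s(\Om)\cap\mathcal{R}$. For each such $K$ one has $\Phi_i^{-1}(\mathrm{int}\,K)\subset B_s(\Om_i)$ for $i$ large, because $\Phi_i(\Om_i)\to\Om$ and $K\subset\subset B_s(\Om)$; the $\liminf$ inequality on $\mathcal{R}$ from the first step then gives $\mathcal{H}^n(M_\infty\cap\mathrm{int}\,K)\le\liminf_i\mathcal{H}^n\big(M_i\cap B_s(\Om_i)\big)$, and taking the supremum over $K$ proves \eqref{HnMiOmsiMOms000}. (Only the mass bound of Lemma \ref{upbdareaM}, not minimality, enters this part.)

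For \eqref{HnMiOmiMOm000} minimality is essential, and this is the crux. Fix $\eta>0$. Since $\mathcal{H}^n(\mathcal{S})=0$ and $\mathcal{S}\cap\overline{B_\delta(\Om)}$ is compact for a fixed small $\delta>0$, cover it by finitely many balls $B_{r_j}(z_j)\subset B_1(p_\infty)$ with $\sum_j r_j^n<\eta$; set $W=\bigcup_j B_{r_j}(z_j)$, chosen with generic radii so that $\overline{\Om}\setminus W\subset\subset\mathcal{R}$ and $\mathcal{H}^n\big(M_\infty\cap\partial(\overline{\Om}\setminus W)\big)=0$. Transporting $W$ to $N_i$ through $\Phi_i$, the balls $B_{r_j+\ep_i}(z_j^i)$ cover $\Phi_i^{-1}(W)$ and still satisfy $\sum_j(r_j+\ep_i)^n<2\eta$ for $i$ large. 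The interior area bound for area-minimizing hypersurfaces, $\mathcal{H}^n\big(M_i\cap B_r(x)\big)\le\Lambda(n,\kappa)r^n$ for $r<r_0(n,\kappa)$ and any $x$ with $B_{2r}(x)\subset B_1(p_i)$ — valid by minimality together with Bishop--Gromov, using only the Ricci lower bound — then gives $\mathcal{H}^n\big(M_i\cap\Phi_i^{-1}(W)\big)\le 2\Lambda\eta$. On the complement, $M_i\cap\overline{\Om_i}\setminus\Phi_i^{-1}(W)\subset\Phi_i^{-1}(W_0)$ for a fixed open $W_0\subset\subset\mathcal{R}$ once $i$ is large, so the $\limsup$ inequality on $\mathcal{R}$ yields $\limsup_i\mathcal{H}^n\big(M_i\cap\overline{\Om_i}\setminus\Phi_i^{-1}(W)\big)\le\mathcal{H}^n(M_\infty\cap\overline{\Om})$. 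Adding the two estimates and letting $\eta\to0$ gives \eqref{HnMiOmiMOm000}. Finally, if $\overline{B_1(p_\infty)}$ is a smooth Riemannian manifold, then $\mathcal{S}=\emptyset$ and, by \cite{CCo1}, $N_i\to B_1(p_\infty)$ with $C^{1,\alpha}$-convergence of metrics on compact subsets; thus $M_i$ is area-minimizing in $(B_1(p_\infty),g_i)$ with $g_i\to g_\infty$, and the compactness theorem for area-minimizing currents with converging metrics shows that $M_\infty$ is the support of an area-minimizing current in $(B_1(p_\infty),g_\infty)$, i.e. an area-minimizing hypersurface there.

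The step I expect to demand the most care is \eqref{HnMiOmiMOm000}: ruling out concentration of the measures $\mathcal{H}^n(M_i\cap\cdot)$ near $\mathcal{S}$ forces one to combine the sharp bound $\dim\mathcal{S}\le n-1$ with the interior area estimate for minimizers, and to track carefully how a covering of $\mathcal{S}$ inside $B_1(p_\infty)$ transfers to $N_i$ through the Hausdorff approximations $\Phi_i$. The other delicate point, underlying the entire reduction, is to match over $\mathcal{R}$ the abstract maps $\Phi_i$ with the Cheeger--Colding local smooth embeddings $\psi_i$, so that the Hausdorff limit $M_\infty$ and the weak limits of the pulled-back currents refer to one and the same object.
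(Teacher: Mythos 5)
Your reduction rests on a claim that is not available in this setting and is precisely the difficulty the paper is designed to circumvent: that on compact subsets of the regular part $\mathcal{R}$ one has smooth embeddings $\psi_i$ with $\psi_i^{*}g_{N_i}\rightarrow g_\infty$ in $C^{1,\alpha}$. Under only a lower Ricci bound plus non-collapsing, Cheeger--Colding theory gives Gromov--Hausdorff closeness and bi-H\"older (intrinsic Reifenberg, Theorem A.1.8 in \cite{CCo1}) charts near regular points, together with measured convergence; it does \emph{not} give convergence of the metric tensors in $C^{1,\alpha}$ (that requires two-sided Ricci bounds, \`a la Anderson's harmonic coordinates), and the same objection applies to your last step, where you assert that smoothness of $\overline{B_1(p_\infty)}$ upgrades the convergence to $C^{1,\alpha}$ --- it does not, since the sectional (and upper Ricci) curvature of $N_i$ may be unbounded. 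Without convergence of the metrics as tensors, Simon's compactness theorem for minimizing currents with varying metrics cannot be invoked, so both the construction of the limit current $T$ over $\mathcal{R}$, the identification $\|T\|=\mathcal{H}^n\llcorner M_\infty$ with multiplicity one, and the minimality of $M_\infty$ in the smooth-limit case are unproved in your outline. This is exactly why the paper proceeds differently: the smooth-limit case (Theorem \ref{Conv0}) is handled through the distribution-sense Heintze--Karcher comparison for $\r_{M_i}$ (Lemma \ref{DerMge*}, Lemma \ref{LOWERM000}), the volume convergence of Colding/Cheeger--Colding, the identification of Minkowski content with Hausdorff measure via \eqref{MSHS}, and Proposition \ref{MinMSinfty}; the general limsup/liminf statements are then obtained by covering arguments (Lemmas \ref{UpMinfty*}, \ref{LimMi}) whose key ingredient, Lemma \ref{LowerMinfty}, is proved by blow-up together with the quantitative estimates of Corollary \ref{epdeHM}, Proposition \ref{EnlocAM}, Lemma \ref{almostmono} and Theorem \ref{PARTReg}, not by a compactness theorem in converging metrics.

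A secondary but genuine gap: in your proof of \eqref{HnMiOmiMOm000} you cover ``the compact set $\mathcal{S}\cap\overline{B_\delta(\Om)}$'' by finitely many small balls, but $\mathcal{S}$ need not be closed in $B_1(p_\infty)$ (the paper states this explicitly), so this set need not be compact and the finite subcover --- which you need uniformly in $i$ --- is not justified. The paper's substitute is to work with the closed sets $\mathcal{S}_\ep$ of \eqref{RSep} and the lower semicontinuous function $\La_\ep$ of \eqref{Laep}, covering $\mathcal{S}_\ep\cap\overline{B_t(p_\infty)}$ as in \eqref{EepUPB}; points of $\mathcal{S}\setminus\mathcal{S}_\ep$ are then treated as almost-regular points via Corollary \ref{epdeHM} and Lemma \ref{LowerMinfty}. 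Your treatment of \eqref{HnMiOmsiMOms000} and the transfer of coverings through $\Phi_i$ are in the right spirit (they parallel Lemma \ref{UpMinfty*} and Lemma \ref{LimMi}), but as written both inequalities ultimately lean on the unavailable $C^{1,\alpha}$ reduction over $\mathcal{R}$, so the argument does not close.
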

\eqref{SiijVEuc} can be seen as a special version of \eqref{HnMiOmiMOm000}\eqref{HnMiOmsiMOms000} in Euclidean space.
From \eqref{HnMiOmiMOm000}\eqref{HnMiOmsiMOms000}, we immediately have
\begin{equation}\aligned\label{HnMBtplim000}
&\mathcal{H}^n\left(M_\infty\cap \overline{B_t(p_\infty)}\right)\ge\limsup_{i\rightarrow\infty}\mathcal{H}^{n}\left(M_i\cap \overline{B_t(p_i)}\right)\\
\ge&\liminf_{i\rightarrow\infty}\mathcal{H}^{n}(M_i\cap B_t(p_i))\ge\mathcal{H}^n(M_\infty\cap B_t(p_\infty))
\endaligned
\end{equation}
for any $t\in(0,1)$.
In particular, the inequality \eqref{HnMBtplim000} attains equality when $B_1(p_\infty)$ is a Euclidean ball.
Since $N_i$ may have unbounded sectional curvature, we can not directly prove Theorem \ref{1.2} by following the idea from the Euclidean case.
The proof of Theorem \ref{1.2} will be given by Theorem \ref{Conv0}, Lemma \ref{UpMinfty*} and Lemma \ref{LimMi}.
In $\S4$ we first prove \eqref{HnMBtplim000} and that $M_\infty$ is minimizing in $B_1(p_\infty)$ under the smooth condition on $\overline{B_1(p_\infty)}$.
Then using the results in $\S4$ and covering techniques, we can prove \eqref{HnMiOmiMOm000}\eqref{HnMiOmsiMOms000} in $\S5$
by combining Cheeger-Colding theory and the theory of area-minimizing hypersurfaces in Euclidean space.

The proof of \eqref{HnMBtplim000} for smooth $\overline{B_1(p_\infty)}$ will be divided into two parts (see Theorem \ref{Conv0}).
On the one hand, we prove area-minimizing $M_\infty$ and \eqref{HnMBtplim000} in the sense of Minkowski content, where we use the volume convergence of unit geodesic balls by Colding \cite{C}, Cheeger-Colding \cite{CCo1}.
On the other hand, for any $x\in M_\infty$ there is a constant $r_x>0$ so that $B_{r_x}(x)\cap M_\infty$ can be written as the boundary of an open set of finite perimeter in $B_{r_x}(x)$ using the volume convergence.
Then combining Theorem 2.104 in \cite{AFP} by Ambrosio-Fusco-Pallara, we can prove the equivalence of Hausdorff measure and Minkowski content for $M_\infty$. As an application, the local volume of area-minimizing hypersurfaces in a class of manifolds can be controlled by large-scale conditions (see Theorem \ref{IntReg}).

Let $\mathcal{R}$, $\mathcal{S}$ denote the regular and singular parts of $B_1(p_\infty)$, respectively.
Let $\mathcal{S}_{M_\infty}$ denote the singular set of $M_\infty$ in $\mathcal{R}$, i.e., a set containing all the points $x\in\mathcal{R}$ such that one of tangent cones of $M_\infty$ at $x$ is not flat. In fact, we can prove that all the possible tangent cones of $M_\infty$ at $x$ are not flat provided $x\in\mathcal{S}_{M_\infty}$.
By studying the local cone structure of $M_\infty$, we have the following sharp dimensional estimates for $\mathcal{S}_{M_\infty}$ and $M_\infty\cap\mathcal{S}$ (see Lemma \ref{codim7} and Lemma \ref{codim2}).
\begin{theorem}\label{dimestn-7n-2}
$\mathcal{S}_{M_\infty}$ has Hausdorff dimension $\le n-7$ for $n\ge7$, and it is empty for $n<7$;
$\mathcal{S}\cap M_\infty$ has Hausdorff dimension $\le n-2$.
\end{theorem}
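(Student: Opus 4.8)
The strategy is to blow $M_\infty$ up at points of $\mathcal R$ and of $\mathcal S$ separately, to identify the resulting tangent objects through (rescaled versions of) Theorem~\ref{1.2} and the almost-monotonicity already used in Theorem~\ref{Conv0} and Theorem~\ref{IntReg}, and then to run a dimension-reduction argument.

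\textbf{The bound for $\mathcal S_{M_\infty}$.} Fix $x\in\mathcal R$. Being a regular point of the non-collapsed limit $B_1(p_\infty)$, the rescalings $(B_1(p_\infty),r^{-1}d,x)$ converge to the unit Euclidean ball as $r\to0$. Applying Theorem~\ref{1.2} (and Theorem~\ref{Conv0}) to the sequences obtained from $(N_i,x_i,M_i)$ after such rescalings together with a diagonal argument, I get that every sub-sequential blow-up of $M_\infty$ at $x$ is an area-minimizing hypersurface of $\mathbb R^{n+1}$; its density $\Theta(x,0^+)$ is well defined because the volume ratio $\omega_n^{-1}r^{-n}\mathcal H^n(M_i\cap B_r(x_i))$ satisfies an almost-monotonicity whose error term — controlled by the comparison estimates of \S2--\S3 and the non-collapsing bound, not by sectional curvature — vanishes after blow-up, so the blow-up is a cone. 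Hence the tangent cones of $M_\infty$ at $x$ are area-minimizing cones in $\mathbb R^{n+1}$, and Simons' theorem together with Federer's dimension reduction, exactly as in the Euclidean theory of De Giorgi, Federer and Reifenberg quoted in the introduction, gives $\dim_{\mathcal H}\mathcal S_{M_\infty}\le n-7$ and $\mathcal S_{M_\infty}=\emptyset$ when $n<7$. That \emph{every} tangent cone at a point of $\mathcal S_{M_\infty}$ is non-flat follows from upper semicontinuity of $\Theta(\cdot,0^+)$ and Allard's theorem: if one tangent cone were a hyperplane then $\Theta(x,0^+)=1$, and since $B_r(x)$ is $C^0$-close to a Euclidean ball for small $r$, $x$ would be an interior regular point of $M_\infty$, whence all its tangent cones would be flat.

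\textbf{The bound for $\mathcal S\cap M_\infty$.} Fix $x\in\mathcal S\cap M_\infty$. By Cheeger--Colding \cite{CCo1,CCo2,CCo3}, every tangent cone of $B_1(p_\infty)$ at $x$ splits isometrically as $\mathbb R^{j}\times C(Z)$, with $C(Z)$ a metric cone on a compact $Z$, $\mathrm{diam}\,Z\le\pi$, carrying no line, and its singular set is contained in the vertex set $\mathbb R^{j}\times\{o\}$. As above, a sub-sequential (iterated) blow-up $\hat M$ of $M_\infty$ at $x$ exists, passes through the vertex, is dilation invariant, and, by Theorem~\ref{1.2} on the smooth locus $\mathbb R^{j}\times(C(Z)\setminus\{o\})$, is area-minimizing there; in particular $\hat M$ carries no interior boundary. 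I then stratify $\mathcal S\cap M_\infty$ by $f(x)=\max\{\ell:\ \text{some tangent cone of the pair }(B_1(p_\infty),M_\infty)\text{ at }x\text{ is }\mathbb R^{\ell}\text{-translation invariant}\}$ and use the standard dimension-reduction principle (Almgren--Federer--White; cf. also Cheeger--Naber \cite{CN}) to get $\dim_{\mathcal H}\{f\le k\}\le k$. If $f(x)=n$ the tangent cone of $B_1(p_\infty)$ at $x$ is $\mathbb R^{n+1}$, contradicting $x\in\mathcal S$. If $f(x)=n-1$, the relevant tangent cone is $\mathbb R^{n-1}\times C(S^1_\beta)$ with $\beta<2\pi$ (cone angle $2\pi$ being regular), and $\hat M=\mathbb R^{n-1}\times\gamma$ with $\gamma$ a $1$-dimensional dilation-invariant area-minimizing set in the $2$-dimensional cone $C(S^1_\beta)$ through the vertex, hence a union of rays issuing from the vertex. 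A single ray carries interior boundary at the vertex, while a union of rays is length-minimizing near the vertex only if the cone angle on each side is $\ge\pi$, which forces $\beta\ge2\pi$; so no such $\gamma$ exists. Therefore $f(x)\le n-2$ on $\mathcal S\cap M_\infty$, and $\dim_{\mathcal H}(\mathcal S\cap M_\infty)\le n-2$.

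The step I expect to be the main obstacle is precisely the monotonicity/compactness input near $\mathcal S$: the classical monotonicity formula for minimal hypersurfaces has error terms governed by the ambient sectional curvature, which is unavailable here. The plan is to replace it with an almost-monotonicity for the genuinely minimizing $M_i\subset N_i$ depending only on the Ricci lower bound, the non-collapsing bound and the minimizing property (via comparison with geodesic cones and the Heintze--Karcher/Laplacian comparison of \S2--\S3, already exploited in Theorems~\ref{Conv0} and \ref{IntReg}), to verify that its error vanishes under blow-up, and to combine it with Theorem~\ref{1.2} on the smooth part of the limiting cone; the behaviour of $\hat M$ across the vertex set $\mathbb R^{j}\times\{o\}$ must then be read off from the cone structure alone, as in the previous paragraph.
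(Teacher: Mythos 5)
Your first part (the bound $\dim\mathcal{S}_{M_\infty}\le n-7$) follows essentially the paper's route: blow-up at ambient regular points, identification of the blow-ups as Euclidean area-minimizing cones, Allard plus upper semicontinuity of the density to show non-flatness of all tangent cones at points of $\mathcal{S}_{M_\infty}$ (this is Lemma \ref{4EQUIV} in the paper), and Federer/Giusti dimension reduction. One caveat even here: the paper never proves an intrinsic almost-monotonicity for $M_i\subset N_i$ with error controlled by the Ricci bound, as you propose; instead it combines the volume continuity of Theorem \ref{LimMiMib} along a carefully chosen sequence of scales with an exact monotonicity formula proved only on limit cones in $\mathcal{C}_{n+1}$ (Lemma \ref{pM+Minfty}, which uses that the regular part of such a cone is flat and its singular set has codimension two). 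Consequently tangent cones of $M_\infty$ are only constructed at points outside $\mathcal{S}^{n-2}$ (Theorem \ref{Covcone}); your plan tacitly assumes cone-type blow-ups with well-defined density at arbitrary points of $\mathcal{S}\cap M_\infty$, which is not available and is not needed in the paper, since $\dim\mathcal{S}^{n-2}\le n-2$ lets one blow up only at $\mathcal{H}^{n-2+\de}$-density points of $M_\infty\cap\mathcal{S}^{n-1}$, which automatically lie in the top stratum.

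The genuine gap is in your treatment of the stratum $f(x)=n-1$. There you take $\hat M=\R^{n-1}\times\gamma$ with $\gamma$ a union of rays through the vertex of $C(S^1_\beta)$ and conclude via a local argument at the vertex: a single ray is excluded because ``$\hat M$ carries no interior boundary'', and two rays are excluded because local length-minimality near the vertex forces angle $\ge\pi$ on each side. Both steps use the minimizing property (and a boundary-free statement) of the limit \emph{across} the vertex line $\R^{n-1}\times\{o\}$, which is exactly what Theorem \ref{1.2}/Theorem \ref{Conv0} do not give: minimality of the limit is only established where the limit ambient is smooth, and the behaviour of the limit current across a codimension-two ambient singular set is the delicate point (the paper even remarks it does not know whether $M_\infty$ carries an Ambrosio--Kirchheim current). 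Your chord competitor replaces a piece of $\gamma$ containing the vertex, so the comparison takes place in a region meeting the singular line. The paper's proof of Lemma \ref{codim2} is engineered precisely to avoid this: by Giusti-style density-point selection and a second blow-up it produces a cone $C_y=\R^{n-1}\times C_y'$ which \emph{contains} the whole singular line $\R^{n-1}\times\{o_r\}$ (so the single-ray/boundary issue never arises), and the contradiction is obtained by an area comparison carried out entirely inside the smooth region $\{\ep<d(\cdot,o_r)<1\}$ but over a long slab $\{|\xi|<t\}$: replacing the two unit segments of $C_y'$ (length $2$) by the geodesic chord of length $\le 2-\th_r$ saves $\sim\th_r\,\omega_{n-1}t^{n-1}$, while the side walls $\{|\xi|=t\}$ cost only $O(t^{n-2})$, so taking $t$ large and $\ep$ small contradicts minimality in $\overline{\Om_{t,\ep}}$ without ever invoking minimality at or across the vertex. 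To repair your argument you would either have to justify the no-boundary and vertex-crossing minimality claims for the blow-up (not currently available), or replace the local vertex argument by a global comparison of this slab type.
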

Here, the codimension 7 is sharp if we choose $N_\infty$ as Euclidean space.
Suppose that $N_i$ splits off a line $\R$ isometrically, i.e., $N_i=\Si_i\times\R$ for some smooth manifold $\Si_i$. If we choose $M_i$ as $\Si_i\times\{0\}\subset\Si_i\times\R$, then the codimension 2 in the above theorem is sharp as the singular part of the limit of $\Si_i$ may have codimension 2.

\textbf{Remark.} 
We do not know whether $M_\infty$ is associated with the rectifiable current defined in \cite{AK} by Ambrosio-Kirchheim. If we assume further $M_i=\p E_i\cap B_1(p_i)$ with some set $E_i\subset B_1(p_i)$ for each $i$, then the results in Theorem \ref{dimestn-7n-2} can be generalized into the setting of RCD spaces in \cite{MS} by Mondino-Semola (about the same time as me on the arXiv website).
Theorem \ref{dimestn-7n-2} will act a crucial role for getting Sobolev inequality and Neumann-Poincar\'e inequality on minimal graphs over manifolds of Ricci curvature bounded below in \cite{D2}.

\emph{Acknowledgments.}
The author would like to thank J$\mathrm{\ddot{u}}$rgen Jost, Yuanlong Xin, Hui-Chun Zhang, Xi-Ping Zhu, Xiaohua Zhu for their interests and valuable discussions. The author would like to thank Gioacchino Antonelli and Daniele Semola for careful reading and valuable advice.
The author is partially supported by NSFC 11871156 and NSFC 11922106.

\section{Preliminary}

Let $(X,d)$ be a complete metric space.
For any subset $E\subset X$ and any constant $s\ge0$, we denote $\mathcal{H}^s(E)$ be the $s$-dimensional Hausdorff measure of $E$. Namely,
\begin{equation}\aligned\label{SH}
\mathcal{H}^s(E)=\lim_{\de\rightarrow0}\mathcal{H}^s_\de(E)
\endaligned
\end{equation}
with
\begin{equation}\aligned\label{mathcalHnep}
\mathcal{H}_\de^s(E)=\f{\omega_s}{2^s}\inf\left\{\sum_{i=1}^\infty (\mathrm{diam} U_i)^s\Big|\ E\subset\bigcup_{i=1}^\infty U_i\ \mathrm{for\ Borel\ sets}\ U_i\subset X,\ \mathrm{diam} U_i<\de\right\},
\endaligned
\end{equation}
where $\omega_s=\f{\pi^{s/2}}{\G(\f s2+1)}$, and $\G(r)=\int_0^\infty e^{-t}t^{r-1}dt$ is the gamma function for $0<r<\infty$.
In particular, for integer $s\ge1$, $\omega_s$ is the volume of the unit ball in $\R^s$, and $\omega_0=1$.
For each $p\in X$, let $B_r(p)$ denote the geodesic ball in $X$ with radius $r$ and centered at $p$. More general, for any set $K$ in $X$, let $\r_K=d(\cdot,K)=\inf_{y\in K}d(\cdot,y)$ be the distance function from $K$,
and $B_t(K)$ denote the $t$-neighborhood of $K$ in $X$ defined by $\{x\in X|\ d(x,K)<t\}$.
Moveover, $\r_K$ is a Lipschitz function on $X$ with the Lipschitz constant $\le1$.
Let $\Om\subset X$ be a measurable set of Hausdorff dimension $m\ge1$.
We define the upper and the lower $(m-1)$-dimensional \emph{Minkowski contents} of $K$ in $\Om$ by
\begin{equation}\aligned
\mathcal{M}^*(K,\Om)=&\limsup_{\de\rightarrow0}\f1{2\de}\mathcal{H}^{m}(\Om\cap B_\de(K)\setminus K)\\ \mathcal{M}_*(K,\Om)=&\liminf_{\de\rightarrow0}\f1{2\de}\mathcal{H}^{m}(\Om\cap B_\de(K)\setminus K).
\endaligned
\end{equation}
If $\mathcal{M}^*(K,\Om)=\mathcal{M}_*(K,\Om)$, the common value is denoted by $\mathcal{M}(K,\Om)$. 

Let $N$ be an $(n+1)$-dimensional complete Riemannian manifold with Ricci curvature $\ge-n\k^2$ on $B_R(p)$ for constants $\k\ge0$ and $R>0$.
For any integer $k\ge0$, let $V^k_s(r)$ denote the volume of a  geodesic ball with radius $r$ in a $k$-dimensional space form with constant sectional curvature $-s^2$.
In fact, $V^k_s(r)=k\omega_k s^{1-k}\int_0^r\sinh^{k-1}(st)dt$,
and $\left(V^{k}_{s}(r)\right)'=k\omega_ks^{1-k}\sinh^{k-1}(s r)$ for each $r>0$.
By Bishop-Gromov volume comparison,
\begin{equation}\aligned\label{BiVolN}
1\ge\f{\mathcal{H}^{n+1}(B_{r_1}(p))}{V^{n+1}_{\k}(r_1)}\ge\f{\mathcal{H}^{n+1}(B_{r_2}(p))}{V^{n+1}_{\k}(r_2)}
\endaligned
\end{equation}
for any $0<r_1\le r_2\le R$, and
\begin{equation}\aligned\label{HnpBr}
\mathcal{H}^{n}(\p B_{r}(p))\le \f{\left(V^{n+1}_{\k}(r)\right)'}{V^{n+1}_{\k}(r)}\mathcal{H}^{n+1}(B_{r}(p))\le\left(V^{n+1}_{\k}(r)\right)'=(n+1)\omega_{n+1}\f{\sinh^n(\k r)}{\k^n}
\endaligned
\end{equation}
for any $0<r\le R$. Here, we see $\sinh(\k r)/\k=r$ for $\k=0$.
Let us recall the isoperimetric inequality on $B_r(p)$ (see \cite{An1,B,Cr} for instance, or from the heat kernel \cite{Gri,LW}):
for any $r\in(0,R/2]$ there is a constant $\a_{n,\k r}>0$ depending only on $n,\k r$ such that for any open set $\Om\subset B_r(p)$, one has
\begin{equation}\label{isoperi}
\f{\mathcal{H}^{n}(\p\Om)}{\left(\mathcal{H}^{n+1}(\Om)\right)^{\f{n}{n+1}}}\ge \a_{n,\k r}\left(\f{\mathcal{H}^{n+1}(B_r(p))}{V^{n+1}_{\k}(r)}\right)^{\f1{n+1}}.
\end{equation}

Let exp$_p$ denote the exponential map from the tangent space $T_pN$ into $N$.
For any two constants $R,\tau>0$, let $\Si$ be an embedded $C^2$-hypersurface in $B_{R+\tau}(p)$ with $\p \Si\subset\p B_{R+\tau}(p)$ ($\p B_{R+\tau}(p)$, $\p \Si$ maybe empty).
By the uniqueness of the geodesics, if $\r_\Si$ is differentiable at $x\in B_{R}(p)\cap B_\tau(\Si)\setminus\Si$,
then there exist a unique $x_\Si\in \Si$ and a unique non-zero vector $v_x\in\R^{n+1}$ with $|v_x|=\r_\Si(x)$ such that
$\mathrm{exp}_{x_\Si}(v_x)=x$.
Let $\g_x(t)$ denote the geodesic $\mathrm{exp}_{x_\Si}(tv_x/|v_x|)$ from $t=0$ to $t=|v_x|$. In particular, $\r_\Si$ is smooth at $\g_x(t)$ for $t\in[0,|v_x|]$.
Let $H_x(t), A_x(t)$ denote the mean curvature (pointing out of $\{\r_\Si<t\}$), the second fundamental form of the level set $\{\r_\Si=t\}$ at $\g_x(t)$, respectively.
Let $\De_N$ denote the Laplacian of $N$ with respect to its Riemannian metric.
From Heintze-Karcher \cite{HK}, one has
\begin{equation}\label{Hxtge}
-\De_N\r_\Si=H_x(\r_\Si)\ge H_x(0)-n\k\tanh\left(\k\r_\Si\right)
\end{equation}
on the geodesic $\mathrm{exp}_{x_\Si}(tv_x/|v_x|)$ for $t\in[0,|v_x|]$.
In fact, from the variational argument
\begin{equation}\label{HtRic}
\f{\p H_x}{\p t}=|A_x|^2+Ric\left(\dot{\g}_x,\dot{\g}_x\right)\ge\f1n|H_x|^2-n\k^2,
\end{equation}
we can solve the above differential inequality \eqref{HtRic}, and obtain \eqref{Hxtge}.

For an open set $U\subset N$, we suppose that $U$ is properly embedded in $\R^{n+m}$ for some integer $m\ge1$.
For a set $S$ in $U$, $S$ is said to be \emph{countably $n$-rectifiable}
if $S\subset S_0\cup\bigcup_{j=1}^\infty F_j(\R^n)$, where $\mathcal{H}^n(S_0)=0$, and $F_j:\, \R^n\rightarrow \R^{n+m}$ are Lipschitz mappings for all integers $j\ge1$.
We further assume that $S$ is compact in $N$, and that there are a Radon measure $\mu$ in $N$ and a constant $\g>0$ such that $\mu$ is absolutely
continuous with respect to $\mathcal{H}^n$ and $\mu(B_r(x))\ge\g r^n$
for any $x\in S$, any $r\in(0,1)$. Then from Ambrosio-Fusco-Pallara (in \cite{AFP}, p. 110)
\begin{equation}\aligned\label{MSHS}
\mathcal{M}(S,N)=\mathcal{M}(S,U)=\mathcal{H}^n(S).
\endaligned
\end{equation}

Let $G_{n,m}$ denote the (Grassmann) manifold including all the $n$-dimensional subspace of $\R^{n+m}$.
An $n$-varifold $V$ in $U$ is a Radon measure on
$$G_{n,m}(U)=\{(x,T)|\, x\in U,\, T\in G_{n,m}\cap T_xN\}.$$
An $n$-rectifiable varifold in $U$ is an $n$-varifold in $U$ with support on countably $n$-rectifiable sets.
An $n$-varifold $V$ is said to be \emph{stationary} in $U$ if
\begin{equation}\aligned\nonumber
\int_{G_{n,m}(U)}\mathrm{div}_\omega YdV(x,\omega)=0
\endaligned
\end{equation}
for each $Y\in C^\infty_c(U,\R^{n+m})$ with $Y(x)\subset T_xN$ for each $x\in U$.
The singular set of $V$ is a set containing every point $x$ in spt$V\cap U$ such that one of tangent cone of $V$ at $x$ is not flat.
The notion of $n$-rectifiable stationary varifolds obviously generalizes the notion of minimal hypersurfaces. See \cite{LYa}\cite{S} for more details on varifolds and currents.

Let $\mathcal{D}^n(U)$ denote the set including all smooth $n$-forms on $U$ with compact supports in $U$. Denote $\mathcal{D}_n(U)$ be the set of $n$-currents in $U$, which are continuous linear functionals
on $\mathcal{D}^n(U)$. For each $T\in \mathcal{D}_n(U)$ and each open set $W$ in $U$, one defines the mass of $T$ on $W$ by
\begin{equation*}\aligned
\mathbb{M}(T\llcorner W)=\sup_{|\omega|_U\le1,\omega\in\mathcal{D}^n(U),\mathrm{spt}\omega\subset W}T(\omega)
\endaligned
\end{equation*}
with $|\omega|_U=\sup_{x\in U}\lan\omega(x),\omega(x)\ran^{1/2}$.
Let $\p T$ be the boundary of $T$ defined by $\p T(\omega')=T(d\omega')$ for any $\omega'\in\mathcal{D}^{n-1}(U)$.
For a countably $n$-rectifiable set $M\subset U$ with orientation $\xi$ (i.e., $\xi(x)$ is an $n$-vector representing $T_xM$ for $\mathcal{H}^n$-a.e. $x$), there is an $n$-current $\llbracket M\rrbracket\in\mathcal{D}_n(U)$ associated with $M$, i.e.,
\begin{equation*}\aligned
\llbracket M\rrbracket(\omega)=\int_M\lan \omega,\xi\ran, \qquad  \omega\in\mathcal{D}^n(U).
\endaligned
\end{equation*}
A countably $n$-rectifiable set $M$ (with orientation $\xi$) is said to be an \emph{area-minimizing} hypersurface in $U$ if the associated current $\llbracket M\rrbracket$ is a minimizing current in $U$. Namely,
$\mathbb{M}(\llbracket M\rrbracket\llcorner W)\le \mathbb{M}(T\llcorner W)$ whenever $W\subset\subset U$, $\p T=\p\llbracket M\rrbracket$ in $U$, spt$(T-\llbracket M\rrbracket)$ is compact in $W$ (see \cite{S} for instance).
In particular, $M$ does not contain any closed minimal hypersurface.
Similar to the Euclidean case, $M$ is one-sided, and smooth outside a closed set of Hausdorff dimension $\le n-7$ (\cite{DG0}\cite{Fe}\cite{Re}).

If $Z_1,Z_2$ are both metric spaces, then an admissible metric on the disjoint union $Z_1\coprod Z_2$ is a metric that extends the given metrics on $Z_1$ and $Z_2$. With this one can
define the Gromov-Hausdorff distance as
$$d_{GH}(Z_1,Z_2)=\inf\left\{d_H(Z_1,Z_2)\Big|\ \mathrm{adimissible\ metrics\ on}\ Z_1\coprod Z_2\right\},$$
where $d_H(Z_1,Z_2)$ is the Hausdorff distance of $Z_1,Z_2$.
For any $\ep>0$, a map $\Phi:\ Z_1\rightarrow Z_2$ is said to be an $\ep$-\emph{Hausdorff approximation} (see \cite{Fu} for example) if $Z_2$ is the $\ep$-neighborhood of the image $\Phi(Z_1)$ and
$$\left|d(x_1,x_2)-d(\Phi(x_1),\Phi(x_2))\right|\le\ep\qquad \ \mathrm{for\ every}\ x_1,x_2\in Z_1.$$
If $d_{GH}(Z_1,Z_2)\le\ep$, then there exists a $3\ep$-Hausdorff approximation $Z_1\rightarrow Z_2$. If there
exists an $\ep$-Hausdorff approximation $Z_1\rightarrow Z_2$, then $d_{GH}(Z_1,Z_2)\le 3\ep$.

Let $\{Z_i\}_{i=1}^\infty$ be a sequence of metric spaces with a point $p_i\in Z_i$, and $K_i$ be a subset of $Z_i$ for each $i$.
Let $Z_\infty$ be a metric space with a point $p_\infty\in Z_\infty$.
\begin{itemize}
  \item Case 1: $Z_\infty$ is compact with $\lim_{i\rightarrow\infty}d_{GH}(Z_i,Z_\infty)=0$. Namely, there is an $\ep_i$-Hausdorff approximations $\Phi_i:\ Z_i\rightarrow Z_\infty$ for some sequence $\ep_i\rightarrow0$.
Then from Blaschke theorem (see Theorem 7.3.8 in \cite{BBI} for instance), there is a closed subset $K_\infty$ of $Z_\infty$ such that $\Phi_{i}(K_i)$ converges to $K_\infty$ in the Hausdorff sense up to choose the subsequence.
  \item Case 2: $Z_\infty$ is non-compact with $(B_{R_i}(p_i),p_i)$ converges to a metric space $(Z_\infty,p_\infty)$ in the pointed Gromov-Hausdorff sense for some sequence $R_i\rightarrow\infty$.
Then there are sequences $r_i\rightarrow\infty$, $\ep_i\rightarrow0$
and a sequence of $\ep_{i}$-Hausdorff approximations $\Phi_{i}:\ B_{r_i}(p_i)\rightarrow B_{r_i}(p_\infty)$.
From Blaschke theorem, there is a closed set $K_\infty$ of $Z_\infty$ such that for each $0<r<\infty$, $\Phi_{i}\left(K_i\cap B_r(p_i)\right)$ converges to $K_\infty\cap B_r(p_\infty)$ in the Hausdorff sense up to choose the (diagonal) subsequence.
\end{itemize}
It is easy to see that $K_\infty$ depends on the choice of $\Phi_i$ in the sense of isometry group of $Z_\infty$.
For simplicity, we call that $K_i$ converges \textbf{in the induced Hausdorff sense} to $K_\infty$ in case 1,
and $(K_i,p_i)$ converges \textbf{in the induced Hausdorff sense} to $(K_\infty,p_\infty)$ in case 2 
unless we need emphasize the Hausdorff approximations $\Phi_i$.

Let $N_i$ be a sequence of $(n+1)$-dimensional smooth manifolds of Ricci curvature
\begin{equation}\aligned\label{Ric}
\mathrm{Ric}\ge-n\k^2\qquad \mathrm{on}\ B_{1+\k'}(p_i)
\endaligned
\end{equation}
for constants $\k\ge0$, $\k'>0$.
By Gromov's precompactness theorem (see \cite{GLP} for instance), up to choose the subsequence $B_1(p_i)$ converges to a metric ball $B_1(p_\infty)$
in the Gromov-Hausdorff sense.
From Cheeger-Colding theory \cite{CCo1,CCo2,CCo3},
there is a unique Radon measure $\nu_\infty$ on $B_1(p_\infty)$, which is a renormalized limit measure on $B_1(p_\infty)$ given by
\begin{equation}\aligned\label{nuinfty}
\nu_\infty(B_r(y))=\lim_{i\rightarrow\infty}\f{\mathcal{H}^{n+1}(B_r(y_i))}{\mathcal{H}^{n+1}(B_1(p_i))}
\endaligned
\end{equation}
for every ball $B_r(y)\subset B_1(p_\infty)$ and for each sequence $y_i\in B_1(p_i)$ converging to $y$. Then $\nu_\infty(B_1(p_\infty))=1$ and $\nu_\infty$ satisfies Bishop-Gromov comparison theorem in form as \eqref{BiVolN}.
We further assume that $B_1(p_i)$ satisfies a non-collapsing condition, i.e.,
\begin{equation}\aligned\label{Vol}
\mathcal{H}^{n+1}(B_1(p_i))\ge v\qquad \mathrm{for\ some\ constant}\ v>0.
\endaligned
\end{equation}
Then $\nu_\infty$ is just a multiple of the Hausdorff measure $\mathcal{H}^{n+1}$ and the volume convergence
\begin{equation}\aligned\label{VolCOV}
\mathcal{H}^{n+1}(B_1(p_\infty))=\lim_{i\rightarrow\infty}\mathcal{H}^{n+1}(B_1(p_i))
\endaligned
\end{equation}
holds from Colding \cite{C}, Cheeger-Colding \cite{CCo1}.

Let $\mathcal{R}$ be the regular set of $B_1(p_\infty)$. Namely, a point $x\in \mathcal{R}$ if and only if each tangent cone at $x$ of $B_1(p_\infty)$ is $\R^{n+1}$.
Let $\mathcal{S}=B_1(p_\infty)\setminus\mathcal{R}$ denote the singular set of $B_1(p_\infty)$. Then dim$(\mathcal{S})\le n-1$ from Cheeger-Colding \cite{CCo1}.
In general, $\mathcal{S}$ may not be closed in $B_1(p_\infty)$.
For any $\ep>0$, let $\mathcal{R}_\ep\supset\mathcal{R}$ and $\mathcal{S}_\ep$ be two subsets in $B_1(p_\infty)$ defined by
\begin{equation}\aligned\label{RSep}
\mathcal{R}_\ep=\left\{x\in B_1(p_\infty)\Big|\,\sup_{0<s\le r}s^{-1}d_{GH}(B_s(x),B_s(0))<\ep \ \mathrm{for\ some}\ r>0\right\},\  \mathcal{S}_\ep=B_1(p_\infty)\setminus\mathcal{R}_\ep.
\endaligned
\end{equation}
Here, $B_r(0)$ is the ball in $\R^{n+1}$ centered at the origin with radius $r$.
Let $\La_\ep$ be a positive function on $\mathcal{R}_\ep$ defined by
\begin{equation}\aligned\label{Laep}
\La_\ep(x)=\sup\{r>0|\, d_{GH}(B_{s}(x),B_{s}(0))<\ep s\ \mathrm{for\ each}\ 0<s\le r\}
\endaligned
\end{equation}
for any $x\in \mathcal{R}_\ep$.
From Bishop-Gromov volume comparison and Theorem 0.8 in \cite{C},
$\mathcal{S}_\ep$ is closed in $B_1(p_\infty)$, and $\La_\ep$ is a lower semicontinuous function on $\mathcal{R}_\ep$.

For each integer $0\le k\le n-1$ and constants $\ep>0,r>0$, let $\mathcal{S}^k_{\ep,r}$ be a set consisting of points $y\in B_1(p_\infty)$ such
that for any metric space $X$, $(0^{k+1},x)\in\R^{k+1}\times X$ and $s>r$, we have
$d_{GH}\left(B_s(y),B_s((0^{k+1},x))\right)\ge\ep s.$
Put
\begin{equation}\aligned\label{Sk}
\mathcal{S}^k=\cup_{\ep>0}\cap_{r>0}\mathcal{S}^k_{\ep,r},
\endaligned
\end{equation}
then $\mathcal{S}^{n-1}\setminus\mathcal{S}^{n-2}$ is the top stratum of the singular set and dim$(\mathcal{S}^k)\le k$ from Cheeger-Colding \cite{CCo1}. See Cheeger-Naber \cite{CN13} for further results on the singular sets.
For any $x\in \mathcal{S}^{n-1}\setminus\mathcal{S}^{n-2}$, there is a tangent cone $C_x$ of $B_1(p_\infty)$ at $x$, which splits off $\R^{n-1}$ isometrically.
Moreover, there is a round circle $\mathbb{S}^1_r$ in $\R^2$ with radius $r\in(0,1)$ such that $C_x=C\mathbb{S}^1_r\times\R^{n-1}$.

\section{Volume estimates for area-minimizing hypersurfaces}

From  the Laplacian comparison for distance functions from minimal submanifolds by Heintze-Karcher \cite{HK},
Itokawa-Kobayashi obtained the uniform lower bound for the volume of minimal hypersurfaces
in terms of the volume of their tubular neighborhoods in manifolds of nonnegative Ricci curvature (see Proposition 3.2 in \cite{IK}).
Let $N$ be an $(n+1)$-dimensional smooth complete Riemannian manifold.
Let $\na$ denote Levi-Civita connection of $N$, $\De_N$ denote the Laplacian of $N$.
Using Lemma \ref{DerMge*} in appendix I, we can get the lower bound of the volume of the support of $n$-rectifiable stationary varifolds in $N$ with Ricci curvature bounded below as follows.
\begin{lemma}\label{LOWERM000}
Suppose that $N$ has Ricci curvature $\ge-n\k^2$ on a geodesic ball $B_R(p)\subset N$ for some constant $\k\ge0$.
Let $V$ be an $n$-rectifiable stationary varifold in $B_{R}(p)$. Denote $M=\mathrm{spt} V\cap B_R(p)$. Then
\begin{equation}\aligned\label{tArea}
\mathcal{H}^{n+1}\left(B_t(M)\cap B_{s}(p)\right)\le\f{2t}{1-n\k t}\mathcal{H}^n\left(M\cap B_{t+s}(p)\right)
\endaligned
\end{equation}
for each $0<t\le\min\{\f1{n\k},s\}$ and $s+t<R$.
\end{lemma}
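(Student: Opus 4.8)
The plan is to integrate the Heintze--Karcher Laplacian comparison \eqref{Hxtge} over the tubular neighborhood and push the monotonicity information onto the level sets of the distance function $\rho_M$. First I would fix $0<t\le\min\{\tfrac1{n\k},s\}$ with $s+t<R$, and work with the Lipschitz function $\rho_M=d(\cdot,M)$. On the set where $\rho_M$ is smooth (which is of full measure by Rademacher and the structure discussed before \eqref{Hxtge}), the coarea formula gives
\begin{equation*}
\mathcal{H}^{n+1}\bigl(B_t(M)\cap B_s(p)\setminus M\bigr)=\int_0^t \mathcal{H}^n\bigl(\{\rho_M=\tau\}\cap B_s(p)\bigr)\,d\tau,
\end{equation*}
so it suffices to bound $g(\tau):=\mathcal{H}^n(\{\rho_M=\tau\}\cap B_{s}(p))$ by a multiple of $\mathcal{H}^n(M\cap B_{t+s}(p))$, uniformly in $\tau\in(0,t]$. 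The point of stationarity of $V$ is that $H_x(0)$, the ``mean curvature of $M$'', vanishes in the distributional sense, so \eqref{Hxtge} reads $-\Delta_N\rho_M\ge -n\k\tanh(\k\rho_M)\ge -n\k\rho_M\ge -n\k t$ along each minimizing geodesic from $M$; this is exactly the content needed from Lemma \ref{DerMge*}.

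Next I would make the monotonicity precise. Writing $A(\tau)=\mathcal{H}^n(\{\rho_M<\tau\}\cap B_{t+s}(p))$ — i.e. the measure of the tube of radius $\tau$ intersected with the slightly larger ball — the first variation / coarea identity together with the divergence-structure of $\Delta_N\rho_M$ gives, for a.e.\ $\tau$,
\begin{equation*}
\frac{d}{d\tau}A(\tau)=\mathcal{H}^n\bigl(\{\rho_M=\tau\}\cap B_{t+s}(p)\bigr)+\int_{\{\rho_M<\tau\}\cap B_{t+s}(p)}\Delta_N\rho_M,
\end{equation*}
and the bound $\Delta_N\rho_M\le n\k t$ on the relevant region lets me compare $g(\tau)$ with $g(0^+)$. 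Concretely, I expect to get an inequality of the form $g(\tau)\le g(0^+)+ n\k\,\tau\, g(0^+)/(1-n\k\tau)$ or, after absorbing, $g(\tau)\le \tfrac{1}{1-n\k t}\mathcal{H}^n(M\cap B_{t+s}(p))$ once one checks that the ``initial'' slice measure $g(0^+)$ is controlled by $\mathcal{H}^n(M\cap B_{t+s}(p))$ (each point of $\{\rho_M=\tau\}\cap B_s(p)$ is the endpoint of a geodesic of length $\tau\le t$ starting in $M\cap B_{t+s}(p)$, and the normal exponential map is area-nonincreasing in the direction transverse to $M$ by the same Jacobi-field computation behind \eqref{HtRic}). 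Integrating $g(\tau)\le\tfrac{1}{1-n\k t}\mathcal{H}^n(M\cap B_{t+s}(p))$ over $\tau\in(0,t)$ and adding back $\mathcal{H}^n$-almost-nothing from $M$ itself (which has $\mathcal{H}^{n+1}$-measure zero) yields
\begin{equation*}
\mathcal{H}^{n+1}\bigl(B_t(M)\cap B_s(p)\bigr)\le \frac{t}{1-n\k t}\mathcal{H}^n\bigl(M\cap B_{t+s}(p)\bigr),
\end{equation*}
and the stated factor $2t/(1-n\k t)$ leaves comfortable room (the extra factor $2$ presumably covers a two-sided count, since $M$ need not separate and geodesics may leave $M$ on ``both sides''; I would track orientations carefully and simply keep the weaker constant).

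The main obstacle is the low regularity: $M=\operatorname{spt}V$ is only the support of an $n$-rectifiable stationary varifold, not a $C^2$ hypersurface, so \eqref{Hxtge} must be used in the distributional form supplied by Lemma \ref{DerMge*}, and the coarea/first-variation identities for $\rho_M$ must be justified as inequalities between Radon measures rather than pointwise on smooth level sets. I would handle this by the standard Schoen--Yau-type device alluded to in the introduction: approximate from the smooth locus, use that the cut locus of $M$ and the non-differentiability set of $\rho_M$ are $\mathcal{H}^{n+1}$-null and do not contribute to the tube volume, and invoke that on geodesics realizing $\rho_M$ the Heintze--Karcher inequality holds up to the first focal/cut time, which is $\ge t$ on the part of the tube of radius $t$ that we integrate over. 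The constraint $t\le 1/(n\k)$ is exactly what keeps the Riccati-type denominator $1-n\k t$ positive, so no further smallness is needed; the role of $t\le s$ and $s+t<R$ is only to ensure that all competing geodesics stay inside $B_R(p)$ where the Ricci bound and the varifold live.
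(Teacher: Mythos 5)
Your overall strategy coincides with the paper's: reduce via the coarea formula to a bound on the areas of the level sets of $\rho_M$, obtain that bound from the distributional Heintze--Karcher comparison of Lemma \ref{DerMge*}, and account for two-sidedness. But as written there are three concrete problems. First, the intermediate bound you claim, $g(\tau)\le\frac1{1-n\kappa t}\mathcal{H}^n(M\cap B_{t+s}(p))$, and hence the final bound $\frac{t}{1-n\kappa t}\mathcal{H}^n(M\cap B_{t+s}(p))$, is false: for a hyperplane $M\ni p$ in $\R^{n+1}$ (so $\kappa=0$) the tube is two-sided and $\mathcal{H}^{n+1}(B_t(M)\cap B_s(p))\approx 2t\,\omega_ns^n$, which exceeds $t\,\mathcal{H}^n(M\cap B_{t+s}(p))\approx t\,\omega_n(s+t)^n$ for small $t$. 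So the factor $2$ is not ``comfortable room''; it is forced, and in the paper it enters exactly through the counting step you relegated to a parenthesis: for each $x\in M$ there are at most two points of the level set whose foot point is $x$ (this is how \eqref{rMBRBtR} is obtained). Second, your displayed ``first variation / coarea identity'' for $A(\tau)$ is not correct as stated (and mixes $\mathcal{H}^n$ with $\mathcal{H}^{n+1}$): the derivative of the tube volume is just the level-set area, since $|\nabla\rho_M|=1$ a.e., while $\int\Delta_N\rho_M$ governs the evolution of the level-set area, and because of the cut locus that relation is only an inequality (fortunately in the favorable direction). Third, the claim that the normal exponential map is area-nonincreasing is false for $\kappa>0$; minimality at the (regular) foot point plus the Riccati comparison only give a Jacobian bound $\cosh^n(\kappa\tau)$, which happens to satisfy $\cosh^n(\kappa\tau)\le\frac1{1-n\kappa\tau}$, and that weaker bound is what is actually needed and used.

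The paper implements the same idea in a way that sidesteps these issues: for a.e.\ $\tau$ it selects a smooth piece $W$ of $\{\rho_M=\tau\}\cap B_s(p)$ away from the cut locus (excised by a covering of small balls), notes that the foot points form a set $K_W$ in the regular part $\mathcal{R}_M$ (maximum principle forces flat tangent cones at foot points), and integrates $\Delta_N\bigl(\rho_M-\frac1{n\kappa+\ep}\bigr)^2\ge0$ over the normal cylinder $V_t$ over $K_W$; the lateral boundary contributes nothing because $\langle\nabla\rho_M,\nu_t\rangle=0$ there, and the top and bottom faces give $\mathcal{H}^n(W)\le\frac1{1-(n\kappa+\ep)t}\mathcal{H}^n(K_W)$, i.e.\ a direct comparison of each level-set piece with the corresponding piece of $M$ rather than with a limit slice $g(0^+)$. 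Combined with the at-most-two-preimages count and the coarea formula, this yields \eqref{tArea} after $\ep\to0$. If you restore the factor $2$ and replace your identity either by this cylinder computation or by a correctly stated Jacobian estimate along minimizing normal geodesics (with the cut locus handled as above), your argument becomes essentially the paper's proof.
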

\begin{proof}
Let $\mathcal{R}_{M}$ denote the regular part of $M$, i.e., $\mathcal{R}_{M}=M\setminus \mathcal{S}_M$ with the singular part $\mathcal{S}_M$ of $M$.
Let $\r_M$ denote the distance function from $M$ in $N$.
Let $M_i$ be the smooth embedded hypersurface constructed related to $M$ as in Lemma \ref{DerMge*} of appendix I.
Let $\mathcal{C}_{M_i}$ denote the cut locus of $\r_{M_i}$ for each $i$.
For any $s<R$,
$\overline{\mathcal{C}_{M_i}}\cap B_{R-s}(M_i)\cap B_{s}(p)$ has Hausdorff dimension $\le n$ (see Corollary 4.12 in \cite{MM}). 
Hence by co-area formula, there is a zero $\mathcal{H}^1$-measure set $\La\subset(0,\infty)$ such that for any $t\in(0,R-s)\setminus\La$ and $i\in \mathbb{N}^+$,
$\{\r_{M_i}=t\}\cap B_{R-s}(M_i)\cap B_{s}(p)$ is smooth except a closed set with Hausdorff dimension $\le n-1$.
Let $\ep>0$ be small. From \eqref{equivMiM} there is an integer $i_0>0$ (depending on $s,t$) such that
$$\{\r_{M_i}=t\}\cap \overline{B_{s}(p)}=\{\r_{M}=t\}\cap \overline{B_{s}(p)}$$
for all $i\ge i_0$.
For \textbf{fixed} $s,t$ with $t\in(0,\infty)\setminus\La$ and $0<t\le s<R-t$, there exists a finite collection of balls $\{B_{s_j}(z_j)\}_{j=1}^{N_\ep}$ with $\overline{\mathcal{C}_M}\subset\left(\cup_{j=1}^{N_\ep} B_{s_j}(z_j)\right)$ such that $\{\r_{M}=t\}\cap \overline{B_s(p)}\cap\left(\cup_{j=1}^{N_\ep} B_{s_j}(z_j)\right)$ has $n$-dimensional Hausdorff measure $\le\ep$.
Hence, there is a closed set $\Om_{t,s}$ in $\{\r_{M}=t\}\cap B_s(p)\setminus \left(\cup_{j=1}^{N_\ep} B_{s_j}(z_j)\right)$ with $\p\Om_{t,s}\in C^\infty$
so that
\begin{equation}\aligned\label{HnOmitsrM}
\mathcal{H}^n\left(\{\r_{M}=t\}\cap B_s(p)\right)\le\mathcal{H}^n\left(\Om_{t,s}\right)+2\ep.
\endaligned
\end{equation}

For any $z\in\Om_{t,s}$ there is a unique $x_z\in \mathcal{R}_{M}\cap B_{t+s}(p)$ with $d(x_z,z)=t$.
Let $W$ be a small neighborhood of $x$ in $\Om_{t,s}$ with $\p W\in C^1$, then there is a neighborhood $K_W$ of $x_z$ in $\mathcal{R}_{M}$ such that
$$W=\left\{\mathrm{exp}_{y}(t\mathbf{n}_M(y))\in N|\  y\in K_W \right\},$$
where $\mathbf{n}_M$ denotes the unit normal vector field to $\mathcal{R}_{M}$.
Since the exponential map $\mathrm{exp}_{\cdot}(t\mathbf{n}_M(\cdot)):\, K_W\rightarrow W$ is 1-1 and differentiable,
then $\p K_W\in C^1$ from the inverse mapping theorem and $\p W\in C^1$.
Let $V_{t}$ denote the set defined by
\begin{equation}\aligned\nonumber
V_t=\left\{\mathrm{exp}_{y}(r\mathbf{n}_M(y))\in N|\  0\le r\le t,\ y\in K_W \right\}
\endaligned
\end{equation}
with the outward unit normal $\nu_{t}$ to $\p V_{t}$.
For any $y\in\p V_{t}\setminus V_{t}$, if there is a point $(x_y,\tau)\in\p K_W\times\R$ with $y=\mathrm{exp}_{x_y}(\tau\mathbf{n}_M(x_y))$, then $\na\r_M(y)=\f{\p}{\p \tau}\mathrm{exp}_{x_y}(\tau\mathbf{n}_M(x_y))$,
which implies
\begin{equation}\aligned\label{narMnuts}
\left\lan\na\r_M,\nu_{t}\right\ran=0\qquad \mathrm{on}\ \ \p V_{t}\setminus V_{t}.
\endaligned
\end{equation}
Under the assumption $0<t\le s<R-t$ and $t\in(0,\infty)\setminus\La$, we further assume $t\le\f1{n\k+\ep}$. From Lemma \ref{DerMge*}, we have
\begin{equation}\aligned
&\De_N\left(\r_M-\f1{n\k+\ep}\right)^2=2\left(\r_M-\f1{n\k+\ep}\right)\De_N \r_M+2\\
\ge&-2\left(\f1{n\k+\ep}-\r_M\right)n\k\tanh(\k\r_M)+2\ge0
\endaligned
\end{equation}
on $V_{t}$ in the distribution sense.
With \eqref{narMnuts}, integrating the above inequality by parts infers
\begin{equation}\aligned
0\le&\int_{V_{t}}\De_N\left(\r_M-\f1{n\k+\ep}\right)^2=\int_{\p V_{t}}\left\lan\na\left(\r_M-\f1{n\k+\ep}\right)^2,\nu_{t}\right\ran\\
=&2\left(t-\f1{n\k+\ep}\right)\mathcal{H}^n(W)
+\f2{n\k+\ep}\mathcal{H}^n(K_W),
\endaligned
\end{equation}
which implies
\begin{equation}\aligned\label{WKcont}
\mathcal{H}^n(W)\le\f1{1-(n\k+\ep)t}\mathcal{H}^n(K_W).
\endaligned
\end{equation}

Note that for any point $x\in M\cap B_{t+s}(p)$ with $d(x,\Om_{t,s})=t$, there are 2 points $z_x,z_x'\in\Om_{t,s}$ at most such that $d(x,z_x)=d(x,z_x')=t$.
Hence, by choosing a suitable covering of $\Om_{t,s}$, with \eqref{WKcont} we get
\begin{equation}\aligned\label{rMBRBtR}
\mathcal{H}^n\left(\Om_{t,s}\right)\le\f2{1-(n\k+\ep)t}\mathcal{H}^n\left(M\cap B_{t+s}(p)\right)
\endaligned
\end{equation}
for each $0<t\le s<R-t$ with $t\in(0,\infty)\setminus\La$ and $t\le\f1{n\k+\ep}$.
With co-area formula and \eqref{HnOmitsrM}, for each $0<t\le\min\{\f1{n\k+\ep},s\}$ and $s+t<R$ we have
\begin{equation}\aligned
\mathcal{H}^{n+1}\left(B_t(M)\cap B_{s}(p)\right)=&\int_0^t\mathcal{H}^n\left(\{\r_M=\tau\}\cap B_{s}(p)\right)d\tau
\le\int_0^t\left(\mathcal{H}^n\left(\Om_{\tau,s}\right)+2\ep\right)d\tau\\
\le&\int_0^t\left(\f2{1-(n\k+\ep)\tau}\mathcal{H}^n\left(M\cap B_{\tau+s}(p)\right)+2\ep\right)d\tau\\
\le&\f{2t}{1-(n\k+\ep)t}\mathcal{H}^n\left(M\cap B_{t+s}(p)\right)+2\ep t.
\endaligned
\end{equation}
Letting $\ep\rightarrow0$ completes the proof.
\end{proof}

If $\k>0$ and the manifold $N$ in Lemma \ref{LOWERM000} has Ric$\,\ge-n\k^2$ in $B_{1}(p)$,
then we choose $t=s=\min\{\f12,\f1{2n\k}\}$ in \eqref{tArea}, and obtain
\begin{equation}\aligned\label{nnnArea}
\mathcal{H}^{n+1}(B_{t}(p))\le\min\left\{1,\f1{n\k}\right\}\mathcal{H}^n(M\cap B_1(p)).
\endaligned
\end{equation}
Clearly, the constant $\min\left\{1,\f1{n\k}\right\}$ in the above estimation is not optimal.
If $N$ has nonnegative Ricci curvature in $B_1(p)$, then we choose $t=s=\f12$ in
\eqref{tArea}, and obtain
\begin{equation}\aligned
\mathcal{H}^{n+1}\left(B_{\f1{2}}(p)\right)\le\mathcal{H}^n\left(M\cap B_1(p)\right).
\endaligned
\end{equation}

Let $\Si$ be an $(n+1)$-dimensional complete simply connected non-compact manifold with nonnegative sectional curvature.
Anderson \cite{An} proved that
if there is a constant $\de_n>0$ depending only on $n$ such that $\limsup_{r\rightarrow\infty}r^{-1}\mathrm{diam}(\p B_r(p))<\de_n$, then $\Si$ admits no complete area-minimizing hypersurfaces.
Here, $\mathrm{diam}(\p B_r(p))=\inf_{x,y\in\p B_r(p)}d(x,y)$ denotes the (extrinsic) diameter of the sphere $\p B_r(p)$ in $\Si$. With Lemma \ref{LOWERM000}, we have the following non-existence result.
\begin{theorem}\label{Nonexistminimizinghypersurface}
Let $N$ be an $(n+1)$-dimensional complete simply connected non-compact manifold with nonnegative Ricci curvature.
If
$$\limsup_{r\rightarrow\infty}r^{-1}\mathrm{diam}(\p B_r(p))<\la_n$$
with $\la_n(1+\la_n)^n=1/(n+1)$, then $N$ admits no complete area-minimizing hypersurfaces.
\end{theorem}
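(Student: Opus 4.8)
The plan is to argue by contradiction: suppose $N$ admits a complete area-minimizing hypersurface $M$. The strategy combines the lower volume bound for $M$ from Lemma \ref{LOWERM000} with the hypothesis that the spheres $\p B_r(p)$ become thin at infinity, which forces the tubular neighborhoods $B_t(M)$ to have far too little volume — a contradiction. First I would fix a large radius $r$ and let $M_r=M\cap B_r(p)$. Since $M$ is area-minimizing and noncompact, $M_r$ is a nontrivial hypersurface, and by the monotonicity/lower-volume estimates (or simply because $M$ separates and has no closed components), $\mathcal{H}^n(M\cap B_r(p))$ grows; the precise growth rate is governed by the isoperimetric and Bishop-Gromov inequalities \eqref{BiVolN}, \eqref{isoperi}, using that $\mathrm{Ric}\ge0$. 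In particular one gets a lower bound $\mathcal{H}^n(M\cap B_r(p))\ge c_n\,\mathcal{H}^{n+1}(B_{r/2}(p))/r$ of Euclidean type, from comparing $M$ to the cone over $M\cap\p B_r(p)$ or from \eqref{nnnArea} applied at many scales.

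Next I would use the thinness hypothesis. If $\limsup_{r\to\infty}r^{-1}\mathrm{diam}(\p B_r(p))<\lambda_n$, then for all large $r$ the whole sphere $\p B_r(p)$ — hence $B_r(p)\setminus B_{r/2}(p)$, once one controls the annulus — lies within extrinsic distance roughly $\lambda_n r$ of a single point, so $M\cap B_r(p)$ is contained in a tube of radius comparable to $\lambda_n r$ around a geodesic ray (or around $M$ itself restricted to a bounded set). The key computation is then to apply Lemma \ref{LOWERM000} with $t$ of order $\lambda_n r$ and $s$ of order $r$: this gives
\begin{equation}\aligned
\mathcal{H}^{n+1}(B_t(M)\cap B_s(p))\le 2t\,\mathcal{H}^n(M\cap B_{t+s}(p)).
\endaligned
\end{equation}
But if $M\cap B_s(p)$ sits in a thin slab, its tubular neighborhood $B_t(M)$ of radius $t\sim\lambda_n r$ already fills essentially all of $B_s(p)$ (up to a definite fraction depending on how $\lambda_n$ compares to $1$), so the left side is $\ge c\,\mathcal{H}^{n+1}(B_s(p))\ge c\,\omega_{n+1}s^{n+1}/(n+1)$ by the reverse Bishop inequality under $\mathrm{Ric}\ge0$ combined with the simply-connected/noncollapsing structure. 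Combining with the upper bound $\mathcal{H}^n(M\cap B_{t+s}(p))\le(n+1)\omega_{n+1}(t+s)^n$ coming from the area comparison $M\mapsto$ cone over $M\cap\p B_{t+s}(p)$ together with \eqref{HnpBr}, one obtains an inequality of the shape $s^{n+1}\le C\lambda_n r\,(t+s)^n$, i.e. roughly $1\le C'\lambda_n(1+\lambda_n)^n$. The precise constant-chasing is arranged so that the condition $\lambda_n(1+\lambda_n)^n=1/(n+1)$ makes this fail, giving the contradiction.

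The main obstacle I expect is making the geometric claim ``$M\cap B_r(p)$ lies in a thin tube'' rigorous and quantitative: the diameter hypothesis is about spheres $\p B_r(p)$, so one must propagate thinness of the boundary spheres to thinness of the region containing $M$, and simultaneously retain a good \emph{lower} bound for $\mathcal{H}^{n+1}(B_s(p))$ — the two pull in opposite directions unless $\lambda_n$ is small. Concretely, one needs that in a simply connected manifold of nonnegative Ricci curvature with thin boundary spheres, the balls $B_s(p)$ still have at least $\sim s^{n+1}$ volume (this is where simple connectivity and the global geometry enter, ruling out collapsing), while the set within distance $t\sim\lambda_n r$ of the thin slab containing $M$ has volume $\le C(\lambda_n r)\cdot s^n$. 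Getting the constants in these two estimates to be compatible with the sharp threshold $\lambda_n(1+\lambda_n)^n=1/(n+1)$ is the delicate point; everything else reduces to Lemma \ref{LOWERM000}, Bishop-Gromov, and the elementary area bound for minimizers via comparison with cones over boundary slices.
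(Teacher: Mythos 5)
Your overall strategy is the same as the paper's: cover $B_R(p)$ by a tube $B_{(\la+\ep)R}(M)$, convert tube volume into $\mathcal{H}^n(M\cap B_{(1+\la+\ep)R}(p))$ via Lemma \ref{LOWERM000} with $\k=0$, bound that area from above by sphere area using the minimizing property, and close the loop with \eqref{HnpBr} and Bishop--Gromov \eqref{BiVolN}. But there is a genuine gap precisely at the point you defer to ``constant-chasing'': you never use the simple connectivity hypothesis, and without it the sharp threshold $\la_n(1+\la_n)^n=1/(n+1)$ is out of reach. In the paper, simple connectivity is used (via an intersection-number-mod-2 argument) to show that the two one-sided neighborhoods $B^{+}_{2r}(M)$ and $B^{-}_{2r}(M)$ are disjoint in $B_r(p)$. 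This does two things your outline is missing: (i) it produces a legitimate competitor — the region $E_r=B^{+}_{2r}(M)\cap B_r(p)$ has boundary $(M\cap B_r(p))\cup(B^{+}_{2r}(M)\cap\p B_r(p))$, so the minimizing property can be tested against a \emph{portion of the geodesic sphere}; your proposed comparison with a ``cone over $M\cap\p B_{t+s}(p)$'' is not defined in a general manifold, is not what \eqref{HnpBr} estimates, and does not by itself justify $\mathcal{H}^n(M\cap B_\rho(p))\le\mathcal{H}^n(\p B_\rho(p))$; and (ii) taking the smaller of the two sides gives the crucial factor $\f12$, i.e. $\mathcal{H}^n(M\cap B_r(p))\le\f12\mathcal{H}^n(\p B_r(p))$. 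Even in the cleanest version of your computation (exact tube constant $2t$ from \eqref{tArea}, exact bound $\mathcal{H}^n(\p B_\rho(p))\le\f{n+1}{\rho}\mathcal{H}^{n+1}(B_\rho(p))$, exact Bishop--Gromov ratio $(1+\la)^{n+1}$) the absence of this $\f12$ only yields $\la(1+\la)^n\ge\f1{2(n+1)}$, a factor $2$ short of the stated threshold; with the unspecified constants in your outline (``fills essentially all of $B_s(p)$ up to a definite fraction'') even that is lost.

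A secondary repair: the covering step should not be argued by placing $M$ in a ``thin slab''. The clean statement is the exact inclusion $B_R(p)\subset B_{(\la+\ep)R}(M)$ for all large $R$: taking the basepoint on $M$ (the paper fixes $p\in M$), completeness, connectedness and unboundedness of $M$ force $M$ to meet every sphere $\p B_s(p)$, and the hypothesis $\limsup_{r\to\infty}r^{-1}\mathrm{diam}(\p B_r(p))<\la_n$ then puts every point of $\p B_s(p)$ within $(\la+\ep)s\le(\la+\ep)R$ of $M$ for $s$ large (small $s$ being harmless). This removes the lossy fraction $c$ in your lower bound for $\mathcal{H}^{n+1}(B_t(M)\cap B_s(p))$; the remaining, essential missing ingredient is the separation argument from simple connectivity described above.
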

\begin{proof}
Suppose that there is a complete connected area-minimizing hypersurface $M$ in $N$.
Let $\mathcal{R}_M$ denote the regular part of $M$.
Note that $\mathcal{R}_M$ is one-sided and connected.
Let $\mathbf{n}_M$ be the unit normal vector field to $\mathcal{R}_M$. Then for any $s>0$
$$\p B_s(M)\subset\{\mathrm{exp}_{x}(s\mathbf{n}_M(x))|\ x\in \mathcal{R}_{M}\}\cup\{\mathrm{exp}_{x}(-s\mathbf{n}_M(x))|\ x\in \mathcal{R}_{M}\}.$$
For each $\tau>0$, we define two open sets $B^+_{\tau}(M)$ and $B^-_{\tau}(M)$ by
\begin{equation}\aligned\nonumber
B^\pm_\tau(M)=B_\tau(M)\cap\{\mathrm{exp}_{x}(\pm s\mathbf{n}_M(x))|\ x\in \mathcal{R}_{M}, 0<s<r\}.
\endaligned
\end{equation}
Fix a point $p\in M$, and $r>0$. We claim
\begin{equation}\aligned\label{N+-tauM}
B^+_{\tau}(M)\cap B^-_{\tau}(M)\cap B_r(p)=\emptyset\qquad \mathrm{for\ each}\ \tau>0.
\endaligned
\end{equation}
If the claim \eqref{N+-tauM} fails, then there are a constant $\tau>0$ and a point $q\in\overline{B^+_{\tau}(M)}\cap \overline{B^-_{\tau}(M)}\cap B_r(p)$ but $B^+_{\tau}(M)\cap B^-_{\tau}(M)\cap B_r(p)=\emptyset$.
By definition, there are two points $x^+_q,x^-_q\in M$ such that
$$q=\mathrm{exp}_{x^+_q}\left(\tau\mathbf{n}_M(x^+_q)\right)=\mathrm{exp}_{x^-_q}\left(-\tau\mathbf{n}_M(x^-_q)\right).$$
Hence, there are smooth embedded curves $\g_\pm\subset B^\pm_{\tau}(M)$ connecting $p,q$.
Clearly, $\overline{\g_+\cup\g_-}$ is a closed piecewise smooth curve in $N$ with $M\cap\overline{\g_+\cup\g_-}=\{p\}$. Namely, the intersection number (mod 2) of $\overline{\g_+\cup\g_-}$ with $M$ is 1.
Therefore, $\overline{\g_+\cup\g_-}$ is homotopic nontrivial, which contradicts to the simply connected $N$. This gives the proof of the claim \eqref{N+-tauM}.

Denote $E_r=B^+_{2r}(M)\cap B_r(p)$. The claim \eqref{N+-tauM} infers
$$\p E_r=(M\cap B_r(p))\cup (B^+_{2r}(M)\cap\p B_r(p)).$$
Then $\p (M\cap B_r(p))=\p(B^+_{2r}(M)\cap\p B_r(p))$, and the area-minimizing hypersurface $M$ in $N$ implies
\begin{equation}\aligned
\mathcal{H}^{n}\left(M\cap B_r(p)\right)\le\mathcal{H}^n\left(B^+_{2r}(M)\cap\p B_r(p)\right).
\endaligned
\end{equation}
Analogously, we have $\mathcal{H}^{n}\left(M\cap B_r(p)\right)\le\mathcal{H}^n\left(B^-_{2r}(M)\cap\p B_r(p)\right)$.
With \eqref{N+-tauM}, we get
\begin{equation}\aligned\label{McapBrpupbound}
\mathcal{H}^{n}\left(M\cap B_r(p)\right)\le&\min\left\{\mathcal{H}^n\left(B^+_{2r}(M)\cap\p B_r(p)\right),\mathcal{H}^n\left(B^-_{2r}(M)\cap\p B_r(p)\right)\right\}\\
\le&\f12\mathcal{H}^n\left(\p B_r(p)\right).
\endaligned
\end{equation}
Let $\la=\limsup_{r\rightarrow\infty}r^{-1}\mathrm{diam}(\p B_r(p))$.
Then for each $\ep>0$ there is a constant $R_0>0$ such that
$$ B_R(p)\subset B_{(\la+\ep)R}(M)$$
for each $R\ge R_0$. From \eqref{tArea} we have
\begin{equation}\aligned\label{BRpcontr}
\mathcal{H}^{n+1}\left(B_R(p)\right)=&\mathcal{H}^{n+1}\left(B_{(\la+\ep)R}(M)\cap B_{R}(p)\right)
\le2(\la+\ep)R\mathcal{H}^n\left(M\cap B_{(1+\la+\ep)R}(p)\right).
\endaligned
\end{equation}
From \eqref{HnpBr} and \eqref{McapBrpupbound}, we have
\begin{equation}\aligned\label{MBRpcontr}
\mathcal{H}^n\left(M\cap B_{(1+\la+\ep)R}(p)\right)\le&\f12 \mathcal{H}^n(\p B_{(1+\la+\ep)R}(p))\\
\le&\f{n+1}{2(1+\la+\ep)R}\mathcal{H}^{n+1}\left(B_{(1+\la+\ep)R}(p)\right).
\endaligned\end{equation}
Combining \eqref{BiVolN}\eqref{BRpcontr}\eqref{MBRpcontr}, we get
\begin{equation}\aligned
\mathcal{H}^{n+1}\left(B_R(p)\right)\le&\f{(n+1)(\la+\ep)}{1+\la+\ep}\mathcal{H}^{n+1}\left(B_{(1+\la+\ep)R}(p)\right)\\
\le&(n+1)(\la+\ep)(1+\la+\ep)^n\mathcal{H}^{n+1}\left(B_{R}(p)\right).
\endaligned
\end{equation}
Since $\ep$ is arbitrary, then the above inequality implies
\begin{equation}\aligned
\la(1+\la)^n\ge\f1{n+1}.
\endaligned
\end{equation}
This completes the proof.
\end{proof}

Using the universal covers of ambient manifolds, the volume of area-minimizing hypersurfaces in geodesic balls has a upper bound by an explicit constant as follows.
\begin{lemma}\label{upbdareaM}
Let $N$ be an $(n+1)$-dimensional smooth complete Riemannian manifold with Ricci curvature $\ge-n\k^2$ in a geodesic ball $B_1(p)$ for some constant $\k\ge0$. If $M$ is an area-minimizing hypersurface in $B_1(p)$ with $\p M\subset\p B_1(p)$, then there is a constant $c_{n,\k}=\f12(n+1)\omega_{n+1}\k^{-n}\sinh^n\k$ so that
\begin{equation}\aligned\label{UNBOUNDn}
\mathcal{H}^n(M\cap B_t(p))\le c_{n,\k}t^n\qquad \mathrm{for\ any}\ t\in(0,1].
\endaligned
\end{equation}
\end{lemma}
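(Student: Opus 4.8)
The plan is to bound $\mathcal{H}^n(M\cap B_t(p))$ for each $t<1$ by comparing the current $\llbracket M\rrbracket\llcorner B_t(p)$ with (one half of) the geodesic sphere $\partial B_t(p)$, in the spirit of the proof of Theorem~\ref{Nonexistminimizinghypersurface}. So that this capping-off is legitimate I would first pass to the universal cover: let $\pi\colon\widetilde N\to N$ be the Riemannian universal cover and $\widetilde p\in\pi^{-1}(p)$. Since $\pi$ is a local isometry and $N$ is complete, $\widetilde N$ is complete, $B_1(\widetilde p)\subseteq\pi^{-1}(B_1(p))$ so that $\mathrm{Ric}\ge-n\kappa^2$ on $B_1(\widetilde p)$, and $\widetilde M:=\pi^{-1}(M)$ is area-minimizing in $\pi^{-1}(B_1(p))$ with $\partial\widetilde M\subseteq\pi^{-1}(\partial B_1(p))$, hence $\partial\widetilde M\cap B_t(\widetilde p)=\emptyset$ for $t<1$. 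Because $\pi$ is $1$-Lipschitz and $\pi\big(\widetilde M\cap B_t(\widetilde p)\big)=M\cap B_t(p)$ (lift geodesics from $p$), one has $\mathcal{H}^n(M\cap B_t(p))\le\mathcal{H}^n\big(\widetilde M\cap B_t(\widetilde p)\big)$, so it suffices to bound the right-hand side upstairs.

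Working in $\widetilde N$, since $\widetilde N$ is simply connected the minimizing hypersurface $\widetilde M$ is one-sided in the strong form that $\llbracket\widetilde M\rrbracket\llcorner B_1(\widetilde p)=\partial\llbracket E\rrbracket$ for a set $E\subseteq B_1(\widetilde p)$ of finite perimeter (the Euclidean-type ``$M$ is one-sided'' of $\S2$); set $E':=B_1(\widetilde p)\setminus E$, so $\partial\llbracket E'\rrbracket\llcorner B_1(\widetilde p)=-\llbracket\widetilde M\rrbracket\llcorner B_1(\widetilde p)$. Fix $t\in(0,1)$ and slice by the $1$-Lipschitz function $\widetilde\rho:=d(\cdot,\widetilde p)$ at level $t$, using $\partial\big(\llbracket E\rrbracket\llcorner\{\widetilde\rho<t\}\big)=\big(\partial\llbracket E\rrbracket\big)\llcorner\{\widetilde\rho<t\}+\langle\llbracket E\rrbracket,\widetilde\rho,t\rangle$: the $n$-currents $-\langle\llbracket E\rrbracket,\widetilde\rho,t\rangle$ and $\langle\llbracket E'\rrbracket,\widetilde\rho,t\rangle$ are then admissible competitors for $\llbracket\widetilde M\cap B_t(\widetilde p)\rrbracket$ — they have the same boundary, are supported on $\partial B_t(\widetilde p)$, and differ from $\llbracket\widetilde M\cap B_t(\widetilde p)\rrbracket$ by $-\partial\llbracket E\cap B_t(\widetilde p)\rrbracket$, resp.\ $-\partial\llbracket E'\cap B_t(\widetilde p)\rrbracket$, which has compact support in $\pi^{-1}(B_1(p))$ (here $t<1$ is used). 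Area-minimality of $\widetilde M$ therefore gives
\[
\mathcal{H}^n\big(\widetilde M\cap B_t(\widetilde p)\big)\le\mathcal{H}^n\big(E^{(1)}\cap\partial B_t(\widetilde p)\big)\quad\text{and}\quad\mathcal{H}^n\big(\widetilde M\cap B_t(\widetilde p)\big)\le\mathcal{H}^n\big((E')^{(1)}\cap\partial B_t(\widetilde p)\big),
\]
where $E^{(1)}$ is the set of density-$1$ points. For a.e.\ $t$ these two sets are disjoint with union $\mathcal{H}^n$-a.e.\ equal to $\partial B_t(\widetilde p)$ (the leftover lies, up to an $\mathcal{H}^n$-null set, in $\widetilde M\cap\partial B_t(\widetilde p)$, which vanishes for a.e.\ $t$ by the coarea formula on $\widetilde M$), so adding yields $2\,\mathcal{H}^n\big(\widetilde M\cap B_t(\widetilde p)\big)\le\mathcal{H}^n\big(\partial B_t(\widetilde p)\big)$.

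Finally, by \eqref{HnpBr} applied in $\widetilde N$ one has $\mathcal{H}^n\big(\partial B_t(\widetilde p)\big)\le\big(V^{n+1}_\kappa(t)\big)'=(n+1)\omega_{n+1}\kappa^{-n}\sinh^n(\kappa t)$, and since $\sinh$ is convex with $\sinh(0)=0$ we get $\sinh(\kappa t)\le t\sinh\kappa$ for $t\in(0,1]$; hence
\[
\mathcal{H}^n(M\cap B_t(p))\le\mathcal{H}^n\big(\widetilde M\cap B_t(\widetilde p)\big)\le\tfrac12(n+1)\omega_{n+1}\kappa^{-n}\sinh^n(\kappa)\,t^n=c_{n,\kappa}\,t^n
\]
for a.e.\ $t\in(0,1)$, and then for all $t\in(0,1]$ by inner continuity of the Radon measure $\mathcal{H}^n\llcorner M$ on $B_t(p)=\bigcup_{s<t}B_s(p)$ (this also disposes of $t=1$, for which the competitor above is not compactly contained in $\pi^{-1}(B_1(p))$). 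The step I expect to be the main obstacle is precisely the reduction to the universal cover: checking that area-minimality is inherited by $\widetilde M$ under the local isometry $\pi$ (for which one uses that, in codimension one, local area-minimality of a boundary is enough) and that $\widetilde M$ is then the boundary of a Caccioppoli set in $B_1(\widetilde p)$ — that is, genuinely handling the possibly nontrivial topology of $B_1(p)$. Everything after that is slicing together with Bishop--Gromov.
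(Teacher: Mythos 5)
Your overall strategy matches the paper's: pass to the universal cover, cap off by a geodesic sphere (keeping the smaller of the two pieces to get the factor $\tfrac12$), and conclude with Bishop--Gromov together with the convexity bound $\sinh(\k t)\le t\sinh\k$. The slicing formalism you use to produce the ``half--sphere'' competitor from a Caccioppoli set is a clean way to package what the paper does via the sets $B^\pm_{2r}(M)$ in Theorem \ref{Nonexistminimizinghypersurface}.

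The genuine gap is exactly the step you flag at the end, and the way you propose to handle it is the place where your argument diverges from (and falls short of) the paper's. You take the \emph{full} preimage $\widetilde M=\pi^{-1}(M)$ and assert that it is area-minimizing in $\pi^{-1}(B_1(p))$. That assertion is not proved and is not a formal consequence of $\pi$ being a local isometry: pushing a competitor $T$ upstairs down by $\pi_\#$ does not produce a competitor for $\llbracket M\rrbracket$, because $\pi$ restricted to a compactly contained $W\subset\subset\pi^{-1}(B_1(p))$ (or to $B_1(\widetilde p)$ itself) is in general many-to-one, so $\pi_\#\bigl(\llbracket\widetilde M\rrbracket\llcorner W\bigr)$ is \emph{not} $\llbracket M\rrbracket\llcorner\pi(W)$ — the multiplicities (and orientations) can pile up and cancel. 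Your parenthetical that ``in codimension one, local area-minimality of a boundary is enough'' does not fix this: local minimality is strictly weaker than the global minimality you need (a large round sphere in $\R^{n+1}$ is a locally minimizing boundary but not a minimizing one), and lifting only preserves the local property. If $\widetilde M$ fails to be minimizing in $B_1(\widetilde p)$, your key inequality $2\,\mathcal{H}^n(\widetilde M\cap B_t(\widetilde p))\le\mathcal{H}^n(\partial B_t(\widetilde p))$ has no justification, and indeed your preliminary bound $\mathcal{H}^n(M\cap B_t(p))\le\mathcal{H}^n(\widetilde M\cap B_t(\widetilde p))$ (correct, but lossy) shows the danger: $\widetilde M\cap B_t(\widetilde p)$ can consist of many sheets, so its area is not \emph{a priori} controlled.

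The paper sidesteps all of this by selecting a \emph{single} lift $M'\subset\pi^{-1}(M)$ with $M'\subset B_1(p')$ and $\pi\colon M'\to M$ a bijection (obtained by lifting, for each $x\in M$, the minimizing geodesic from $p$ to $x$), so that $\mathcal{H}^n(M')=\mathcal{H}^n(M)$ on the nose, and then proving that $M'$ is area-minimizing in $B_1(p')$ by the pushforward $\pi^\#$ argument — which now works precisely because $\pi|_{M'}$ is one-to-one, $\pi(B_1(p'))=B_1(p)$, and $\mathbf{Lip}\,\pi=1$, giving $\mathbb{M}(T)=\mathbb{M}(\pi^\#T)\le\mathbb{M}(\pi^\#S)\le\mathbb{M}(S)$ for any competitor $S$. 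You should replace ``$\widetilde M:=\pi^{-1}(M)$'' by this single-sheeted lift; with that substitution, the remainder of your slicing and Bishop--Gromov argument goes through and essentially reproduces the paper's proof.
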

\begin{proof}
By scaling, we only need to prove \eqref{UNBOUNDn} for $t=1$.
Let $\widetilde{N}$ be a universal cover of $N$ with the induced metric from $N$.
In other words, there is a Riemannian covering map $\pi$ from simply-connected manifold $\widetilde{N}$ to $N$.
Let $p'$ be a point in $\widetilde{N}$ with $\pi(p')=p$. Clearly, $\pi(B_1(p'))\subset B_1(p)$.
For any geodesic segment $\g:\ [0,1]\rightarrow \overline{B_1(p)}$ with $\g(0)=p$ and $\g(1)\in\p B_1(p)$, let $\g'$ be a component of $\pi^{-1}(\g)$ containing $p'$. Obviously, $\g'\subset \overline{B_1(p')}$ and $\g'(1)\in\p B_1(p')$.
Hence we get $\pi(B_1(p'))=B_1(p)$, and $\p B_1(p)\subset \pi(\p B_1(p'))$.
In particular, $\widetilde{N}$ is an $(n+1)$-dimensional smooth complete Riemannian manifold with Ricci curvature $\ge-n\k^2$ in $B_1(p')$.

Let $M$ be an area-minimizing hypersurface in $B_1(p)$ with $\p M\subset\p B_1(p)$. From $M\subset \pi(B_1(p'))$ and $\p M\subset \pi(\p B_1(p'))$ we can choose a subset set $M'$ of $\pi^{-1}(M)\subset\widetilde{N}$ such that $M'\subset B_1(p')$ and $\pi:\ M'\rightarrow M$ is 1-1. Then $M'$ is countably $n$-rectifiable, $\p M'\cap B_1(p')=\emptyset$, and
$\mathcal{H}^n(M)=\mathcal{H}^n(M').$
We claim that
\begin{center}
$M'$ is area-minimizing in $B_1(p')$.
\end{center}
Clearly, $M'$ is one-sided outside its singular set.
Let $T$ be the multiplicity one current in $B_1(p')$ with $M'=\mathrm{spt}T\cap B_1(p')$. Let $S$ be a current in $\widetilde{N}$
with $\p S=\p T$, spt$(S-T)\subset B_1(p')$.
Let $\pi^\#S$ be the pushing forward of $S$ defined by $\pi^\#S(\omega)=S(\z\pi^\#\omega)$ for any smooth $n$-form $\omega\in \mathcal{D}^n(B_1(p))$,
where $\z$ is any function in $C_c^\infty(\widetilde{N})$ with $\z=1$ in a neighborhood of spt$S\cap\mathrm{spt}(\pi^\#\omega)$ (see \cite{S} for further details).
Denote $\pi^\#T$ be the pushing forward of $T$.
Since $\pi:\ M'\rightarrow M$ is 1-1, then $\pi^\#T$ is the multiplicity one minimizing current in $B_1(p)$ with $M=\mathrm{spt}(\pi^\#T)\cap B_1(p)$. From $\pi(B_1(p'))=B_1(p)$ and spt$(S-T)\subset B_1(p')$, we get
spt$(\pi^\#S-\pi^\#T)=\mathrm{spt}(\pi^\#(S-T))=\pi(\mathrm{spt}(S-T))\subset\pi(B_1(p'))=B_1(p)$.
It's clear that $\p(\pi^\#S)=\pi^\#(\p S)=\pi^\#(\p T)=\p(\pi^\#T)$. Note that $\pi$ is a mapping with the Lipschitz constant $\mathbf{Lip}\pi=1$. Hence
$$\mathbb{M}(T)=\mathcal{H}^n(M)=\mathbb{M}(\pi^\#T)\le \mathbb{M}(\pi^\#S)\le\mathbf{Lip}\pi\,\mathbb{M}(S)=\mathbb{M}(S),$$
which means that $M'$ is area-minimizing in $B_1(p')$.

From the argument of the proof of Theorem \ref{Nonexistminimizinghypersurface}, there is a set $E$ in $B_1(p')$ such that $\p E\cap B_1(p')=M'$ and $\mathcal{H}^n\left(\p E\cap B_1(p')\right)\le\f12\mathcal{H}^n\left(\p B_1(p')\right)$. Then with \eqref{HnpBr} we have
\begin{equation}\aligned
\mathcal{H}^n(M)=\mathcal{H}^n(M')\le\f12\mathcal{H}^n(\p B_1(p'))\le\f12(n+1)\omega_{n+1}\f{\sinh^n\k}{\k^n},
\endaligned
\end{equation}
which completes the proof.
\end{proof}
\begin{remark}
In general, $\mathcal{H}^n(M)$ may not be sufficiently small when $\mathcal{H}^{n+1}(B_1(p))$ is sufficiently small in Lemma \ref{upbdareaM}.
For any $\ep>0$, let $N_\ep$ be a cylinder $\mathbb{S}^1(\ep)\times\R^n\subset\R^{n+2}$ with the metric induced from $\R^{n+2}$, and $M_\ep=\{\th_\ep\}\times\R^n$ for $\th_\ep\in\mathbb{S}^1(\ep)$. Denote $p_\ep=(\th_\ep,0^n)\in N_\ep$, and $B_r(p_\ep)$ be the geodesic ball in $N_\ep$ with radius $r$ and centered at $p_\ep$.
Then $\mathcal{H}^n(M_\ep\cap B_1(p_\ep))=\omega_n$ and $2\pi\ep\omega_n(1-\pi\ep)^n<\mathcal{H}^{n+1}(B_1(p_\ep))<2\pi\ep\omega_n$.
Since $N_\ep$ is flat everywhere, then for any minimal hypersurface $S_\ep\subset N_\ep$ with $p_\ep\in S_\ep$
we have $\mathcal{H}^n(S_\ep\cap B_r(p_\ep))\ge\omega_nr^n$ from the monotonicity of $r^{-n}\mathcal{H}^n(S_\ep\cap B_r(p_\ep))$.
Hence, $M_\ep$ is area-minimizing in $N_\ep$ for each $\ep>0$.
\end{remark}

Let us estimate the lower bound of the volume for a class of sets associated with area-minimizing hypersurfaces, which will be needed in Theorem \ref{Conv0}.
\begin{lemma}\label{lbd}
Let $N$ be an $(n+1)$-dimensional smooth non-compact Riemannian manifold with $Ric\ge-n\k^2$ on $B_1(p)$.
There is a constant $\a^*_{n,\k}>0$ depending only on $n,\k$ such that if $E$ is an open set in $B_1(p)\subset N$ with $p\in \p E$, and $\p E\cap B_1(p)$ is an area-minimizing hypersurface in $B_1(p)$, then
$$\mathcal{H}^{n+1}(E)\ge \a^*_{n,\k}\mathcal{H}^{n+1}(B_1(p)).$$
\end{lemma}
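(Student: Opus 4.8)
The plan is to produce a competitor surface for the area-minimizing $\partial E\cap B_1(p)$ by "capping off" $E$ with a piece of a sphere, and to combine the resulting perimeter bound with the isoperimetric inequality \eqref{isoperi} and the volume comparison \eqref{HnpBr}. First I would fix a radius $r_0\in(0,1)$ to be chosen later depending only on $n,\k$, and consider the competitor obtained by replacing $\partial E\cap B_{r_0}(p)$ with $(\partial B_{r_0}(p))\cap E$ (equivalently, comparing the current $\llbracket\partial E\rrbracket$ inside $B_{r_0}(p)$ with $\llbracket E\setminus B_{r_0}(p)\rrbracket$ or $\llbracket E\cup B_{r_0}(p)\rrbracket$, whichever has the same boundary in $B_1(p)$). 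Since $\partial E\cap B_1(p)$ is area-minimizing and $p\in\partial E$ so that $E$ meets $B_{r_0}(p)$ nontrivially, this yields
\begin{equation}\aligned\label{compet}
\mathcal{H}^n(\partial E\cap B_{r_0}(p))\le \mathcal{H}^n\big((\partial B_{r_0}(p))\cap E\big)\le \mathcal{H}^n(\partial B_{r_0}(p))\le (V^{n+1}_\k(r_0))',
\endaligned
\end{equation}
using \eqref{HnpBr} in the last step.

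Next I would apply the isoperimetric inequality \eqref{isoperi} inside $B_{r_0}(p)$ to the open set $E\cap B_{r_0}(p)$. Its topological boundary inside $B_{r_0}(p)$ is contained in $(\partial E\cap B_{r_0}(p))\cup((\partial B_{r_0}(p))\cap E)$, so \eqref{compet} gives an upper bound on $\mathcal{H}^n\big(\partial(E\cap B_{r_0}(p))\big)$ of the form $C(n,\k)\, r_0^{\,n}$ (for $r_0\le 1$, using $\sinh(\k t)/\k\le e^\k t$ or similar). Hence from \eqref{isoperi},
\begin{equation}\aligned\label{isoapp}
C(n,\k)\,r_0^{\,n}\ge \a_{n,\k r_0}\left(\f{\mathcal{H}^{n+1}(B_{r_0}(p))}{V^{n+1}_\k(r_0)}\right)^{\f1{n+1}}\big(\mathcal{H}^{n+1}(E\cap B_{r_0}(p))\big)^{\f{n}{n+1}}.
\endaligned
\end{equation}
The key point is that $C(n,\k)r_0^n$ and $V^{n+1}_\k(r_0)$ both scale like $r_0^n$ and $r_0^{n+1}$ respectively, while Bishop-Gromov \eqref{BiVolN} controls $\mathcal{H}^{n+1}(B_{r_0}(p))/\mathcal{H}^{n+1}(B_1(p))$ from below (times $V^{n+1}_\k(r_0)/V^{n+1}_\k(1)$, which is $\ge c(n,\k)r_0^{n+1}$). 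So \eqref{isoapp} must be read as a lower bound on $\mathcal{H}^{n+1}(E\cap B_{r_0}(p))$, but that is the wrong direction — one would instead want to iterate or to bound $\mathcal{H}^{n+1}(E)$ directly. The cleaner route: since $\mathcal{H}^{n+1}(E\cap B_{r_0}(p))\le \mathcal{H}^{n+1}(B_{r_0}(p))$ always, the real content is extracting from the minimizing property a lower bound; I would instead argue that $E$ cannot be too thin near $p$ because, by the competitor comparison above applied at every scale $r\le r_0$, $\partial E$ has density bounded and the set $E$ itself satisfies a one-sided density estimate $\mathcal{H}^{n+1}(E\cap B_r(p))\ge c(n,\k)\,r^{n+1}$ at a definite scale, which is exactly the standard lower volume density for a region whose boundary is minimizing with $p\in\partial E$. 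Then taking $r=r_0$ fixed depending on $n,\k$ and comparing $r_0^{n+1}$ against $\mathcal{H}^{n+1}(B_1(p))\le V^{n+1}_\k(1)$ produces the desired $\mathcal{H}^{n+1}(E)\ge \a^*_{n,\k}\mathcal{H}^{n+1}(B_1(p))$ with $\a^*_{n,\k}=c(n,\k)r_0^{n+1}/V^{n+1}_\k(1)$.

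The main obstacle is establishing the one-sided density estimate $\mathcal{H}^{n+1}(E\cap B_r(p))\ge c(n,\k)r^{n+1}$ in the Riemannian setting with only a Ricci lower bound: in Euclidean space this follows from monotonicity, but here I would instead derive it from the isoperimetric inequality \eqref{isoperi} combined with the minimizing comparison \eqref{compet} at scale $r$, setting $V(r)=\mathcal{H}^{n+1}(E\cap B_r(p))$, noting $V'(r)\ge \mathcal{H}^n((\partial B_r(p))\cap E)\ge \mathcal{H}^n(\partial E\cap B_r(p))\ge \a_{n,\k r}(\mathcal{H}^{n+1}(B_r(p))/V^{n+1}_\k(r))^{1/(n+1)}V(r)^{n/(n+1)}$ for a.e. $r$, and using Bishop-Gromov \eqref{BiVolN} together with the non-collapsing-free lower bound $\mathcal{H}^{n+1}(B_r(p))/V^{n+1}_\k(r)\ge \mathcal{H}^{n+1}(B_1(p))/V^{n+1}_\k(1)$ to make the coefficient of $V^{n/(n+1)}$ bounded below by a constant times $r^{-1}\big(\mathcal{H}^{n+1}(B_1(p))\big)^{1/(n+1)}$; integrating this differential inequality from $0$ gives $V(r)\ge c(n)\,r^{n+1}\,\mathcal{H}^{n+1}(B_1(p))$ directly at scale $r=r_0\in(0,1)$ depending on $\k$ (to keep $\a_{n,\k r_0}$ under control), and then $\mathcal{H}^{n+1}(E)\ge V(r_0)\ge \a^*_{n,\k}\mathcal{H}^{n+1}(B_1(p))$. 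Care is needed to justify that $V$ is absolutely continuous with $V'(r)\ge\mathcal{H}^{n-1\text{(sic: }n)}((\partial B_r(p))\cap E)$ via the coarea formula for the distance function to $p$, and that the competitor inequality \eqref{compet} holds for a.e. $r$ (those $r$ for which $\partial B_r(p)$ meets $\partial E$ transversally in a nice enough way), which is exactly where the area-minimizing hypothesis on $\partial E\cap B_1(p)$ enters.
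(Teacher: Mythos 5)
Your proposal, after the initial back-and-forth, lands on exactly the paper's argument: compare $\partial E\cap B_r(p)$ against the slice $E_r = E\cap\partial B_r(p)$ using the minimizing property, set $V(r)=\mathcal{H}^{n+1}(E\cap B_r(p))$ and use the coarea formula to write $V'(r)=\mathcal{H}^n(E_r)$, feed the isoperimetric inequality \eqref{isoperi} plus Bishop--Gromov \eqref{BiVolN} into a differential inequality, and integrate from $0$ to a fixed scale (the paper uses $t\le 1/2$).

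One step in your final chain is stated incorrectly, though it is easy to repair: you assert
$\mathcal{H}^n(\partial E\cap B_r(p))\ge \a_{n,\k r}\bigl(\mathcal{H}^{n+1}(B_r(p))/V^{n+1}_\k(r)\bigr)^{1/(n+1)}V(r)^{n/(n+1)}$,
but the isoperimetric inequality gives this lower bound for $\mathcal{H}^n\bigl(\partial(E\cap B_r(p))\bigr)$, whose relevant boundary is the union $(\partial E\cap B_r(p))\cup E_r$, not for the piece $\partial E\cap B_r(p)$ alone. The correct chain, as in the paper's display \eqref{ODEEsx}, is $\mathcal{H}^n\bigl(\partial(E\cap B_r(p))\bigr)\le \mathcal{H}^n(E_r)+\mathcal{H}^n(\partial E\cap B_r(p))\le 2\mathcal{H}^n(E_r)=2V'(r)$, which produces the ODE $2V'(r)\ge c(n,\k)\bigl(\mathcal{H}^{n+1}(B_1(p))\bigr)^{1/(n+1)}V(r)^{n/(n+1)}$; the extra factor of $2$ is harmless. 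Apart from that slip and the somewhat discursive first half (where you correctly recognize that the naive competitor bound via \eqref{HnpBr} runs in the wrong direction), the route is the same as the paper's.
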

\begin{proof}
Let $E_{s}=E\cap\p B_s(p)$ for any $s\in(0,1]$, then
$$\p E_{s}=\p E\cap\p B_s(p)=\p(B_s(p)\cap\p E).$$
Since $\p E\cap B_1(p)$ is area-minimizing in $B_1(p)$, then
\begin{equation}\aligned
\mathcal{H}^{n}(E_{s})\ge\mathcal{H}^{n}(B_s(p)\cap\p E).
\endaligned
\end{equation}
From co-area formula,
$$\mathcal{H}^{n+1}(B_t(p)\cap E)=\int_0^t\mathcal{H}^{n}(E_{s})ds\qquad \mathrm{for\ any}\ t\in(0,1].$$
By Bishop-Gromov volume comparison \eqref{BiVolN} and the isoperimetric inequality \eqref{isoperi}, we have
\begin{equation}\aligned\label{ODEEsx}
\widetilde{\a}_{n,\k}\left(\mathcal{H}^{n+1}(B_1(p))\right)^{\f1{n+1}}&\left(\int_0^t\mathcal{H}^{n}(E_{s})ds\right)^{\f{n}{n+1}}\le\mathcal{H}^{n}(\p(B_t(p)\cap E))\\
\le&\mathcal{H}^{n}(E_{t})+\mathcal{H}^{n}(B_t(p)\cap \p E)\le2\mathcal{H}^{n}(E_{t})
\endaligned\end{equation}
for any $t\in(0,\f12]$.
Here, $\widetilde{\a}_{n,\k}=\a_{n,\k}\left(V_{\k}^{n+1}(1)\right)^{-1/(n+1)}$, and $V^{n+1}_\k(1)$ denotes the volume of a unit geodesic ball in an $(n+1)$-dimensional space form with constant sectional curvature $-\k^2$.
From $p\in \p E$, one has $\int_0^t\mathcal{H}^{n}(E_{s})ds>0$ for each $t\in(0,1]$. From the ODE \eqref{ODEEsx}, we get
\begin{equation}
\f{\p}{\p t}\left(\int_0^t\mathcal{H}^{n}(E_{s})ds\right)^{\f{1}{n+1}}\ge\f{\widetilde{\a}_{n,\k}}{2(n+1)}\left(\mathcal{H}^{n+1}(B_1(p))\right)^{\f1{n+1}},
\end{equation}
which gives
\begin{equation}\label{EsxB1p}
\left(\int_0^t\mathcal{H}^{n}(E_{s})ds\right)^{\f{1}{n+1}}\ge\f{\widetilde{\a}_{n,\k}t}{2(n+1)}\left(\mathcal{H}^{n+1}(B_1(p))\right)^{\f1{n+1}}
\end{equation}
for any $t\in(0,\f12]$.
In particular,
\begin{equation}
\mathcal{H}^{n+1}(E)\ge\int_0^{\f12}\mathcal{H}^{n}(E_{s})ds\ge\left(\f{\widetilde{\a}_{n,\k}}{4(n+1)}\right)^{n+1}\mathcal{H}^{n+1}(B_1(p)),
\end{equation}
which completes the proof.
\end{proof}
\textbf{Remark.} The set $E$ in the above lemma is said to be a \emph{minimal set} in some terminology, such as \cite{Gi}.

\section{Convergence for area-minimizing hypersurfaces in geodesic balls}

Let $N_i$ be a sequence of $(n+1)$-dimensional smooth Riemannian manifolds with $\mathrm{Ric}\ge-n\k^2$ on the metric ball $B_{1+\k'}(p_i)\subset N_i$ for constants $\k\ge0$, $\k'>0$.
Up to a choice of the subsequence, we assume that $\overline{B_1(p_i)}$ converges to a metric ball $\overline{B_1(p_\infty)}$ in the Gromov-Hausdorff sense.
Namely, there is a sequence of $\ep_i$-Hausdorff approximations $\Phi_i:\, B_1(p_i)\rightarrow B_1(p_\infty)$ for some sequence $\ep_i\rightarrow0$.
Let $\nu_\infty$ denote the renormalized limit measure on $B_1(p_\infty)$ obtained from the renormalized measures as \eqref{nuinfty}.
For any set $K$ in $\overline{B_1(p_\infty)}$, let $B_\de(K)$ be the $\de$-neighborhood of $K$ in $\overline{B_1(p_\infty)}$ defined by $\{y\in \overline{B_1(p_\infty)}|\ d(y,K)<\de\}$. Here, $d$ denotes the distance function on $\overline{B_1(p_\infty)}$.

Let us state the continuity for measure of $\de$-neighborhood of sets in the Gromov-Hausdorff topology, which will be needed in a sequel.
\begin{lemma}\label{UpMinfty}
Let $F_i$ be a sequence of sets in $B_1(p_i)$. If $F_i$ converges to a closed set $F_\infty\subset \overline{B_1(p_\infty)}$ in the induced Hausdorff sense, then for each $t\in(0,1)$ and $\de\in(0,1)$ one has
$$\nu_\infty\left(B_\de(F_\infty)\cap B_t(p_\infty)\right)=\lim_{i\rightarrow\infty}\mathcal{H}^{n+1}(B_\de(F_i)\cap B_t(p_i))/\mathcal{H}^{n+1}\left(B_{1}(p_i)\right).$$
\end{lemma}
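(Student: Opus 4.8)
The plan is to deduce the claim from the volume convergence results of Colding and Cheeger--Colding together with the fact that the measure $\nu_\infty$ is, under the non-collapsing hypothesis \eqref{Vol}, a constant multiple of $\mathcal{H}^{n+1}$, so that $\nu_\infty$ of a ball is the renormalized limit of the Riemannian volumes of the corresponding balls in $N_i$. The subtlety is that the quantity $\nu_\infty(B_\de(F_\infty)\cap B_t(p_\infty))$ involves the $\de$-neighborhood of the \emph{limit} set $F_\infty$, while the right-hand side involves $\de$-neighborhoods of the approximating sets $F_i$, and one must match these up through the $\ep_i$-Hausdorff approximations $\Phi_i$.

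First I would record the Hausdorff-closeness of neighborhoods: if $\Phi_i(F_i)\to F_\infty$ in the Hausdorff sense and $\Phi_i$ is an $\ep_i$-Hausdorff approximation, then for every $\eta>0$ one has the inclusions
\begin{equation}\aligned\nonumber
\Phi_i\big(B_{\de-\eta}(F_i)\cap B_{t-\eta}(p_i)\big)\subset B_\de(F_\infty)\cap B_t(p_\infty)\subset \Phi_i\big(B_{\de+\eta}(F_i)\cap B_{t+\eta}(p_i)\big)
\endaligned
\end{equation}
for all $i$ large (depending on $\eta$), since distances are distorted by at most $\ep_i\to0$ and Hausdorff distance of $\Phi_i(F_i)$ to $F_\infty$ tends to $0$. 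Next, covering $B_\de(F_\infty)\cap B_t(p_\infty)$ by finitely many small balls and using the definition \eqref{nuinfty} of $\nu_\infty$ together with the volume convergence \eqref{VolCOV}, I would pass to the limit to get
\begin{equation}\aligned\nonumber
\nu_\infty\big(B_{\de-\eta}(F_\infty)\cap B_{t-\eta}(p_\infty)\big)\le \liminf_{i\to\infty}\frac{\mathcal{H}^{n+1}(B_\de(F_i)\cap B_t(p_i))}{\mathcal{H}^{n+1}(B_1(p_i))}
\endaligned
\end{equation}
and the matching $\limsup$ bounded above by $\nu_\infty(B_{\de+\eta}(F_\infty)\cap B_{t+\eta}(p_\infty))$; here one uses that the $\ep_i$-Hausdorff approximations are measurable maps that nearly preserve the renormalized volume of open sets, which is exactly the content of the Cheeger--Colding volume convergence applied to exhausting families of balls.

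The main obstacle, and the step requiring care, is the passage $\eta\to0$: one needs $\nu_\infty(B_\de(F_\infty)\cap B_t(p_\infty))$ to depend continuously on $\de$ and on $t$ at the given values. For the $t$ variable this is automatic away from an at most countable set of radii, but one wants it for \emph{every} $t\in(0,1)$; here I would invoke that $\nu_\infty$ satisfies Bishop--Gromov (stated after \eqref{nuinfty}), so $\nu_\infty(\p B_t(p_\infty))=0$ for all $t$, making $t\mapsto\nu_\infty(\cdot\cap B_t(p_\infty))$ continuous. For the $\de$ variable, the function $\de\mapsto\nu_\infty(B_\de(F_\infty))$ is monotone, hence continuous outside a countable set; to upgrade to all $\de$ one uses that $\{x: d(x,F_\infty)=\de\}$ has $\nu_\infty$-measure zero, which again follows from the co-area inequality for the Lipschitz function $\rho_{F_\infty}$ together with the Bishop--Gromov-type volume bound on $\nu_\infty$ (the boundary $\p B_\de(F_\infty)$ cannot carry positive $\nu_\infty$-mass). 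Squeezing between the lower and upper bounds and letting $\eta\to0$ then yields the stated equality.
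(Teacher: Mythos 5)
Your sandwich strategy (compare neighborhoods of $F_i$ and of $F_\infty$ at radii $\de\pm\eta$, $t\pm\eta$ through the Hausdorff approximations, pass to the limit, then let $\eta\to0$) has the same overall shape as the paper's argument, but the two steps carrying the real weight are not supplied, and the first one does not work as described. For the liminf inequality, covering $B_{\de-\eta}(F_\infty)\cap B_{t-\eta}(p_\infty)$ by finitely many small balls only produces upper bounds: the transferred balls $B_{r_j}(x_{i,j})$ do sit inside $B_\de(F_i)\cap B_t(p_i)$ for large $i$, but they overlap, so the sum of their volumes controls nothing from below, and one cannot pass from $\nu_\infty(\mathrm{set})\le\sum_j\nu_\infty(B_{r_j}(x_j))$ to a lower bound for $\liminf_i\mathcal{H}^{n+1}(B_\de(F_i)\cap B_t(p_i))/\mathcal{H}^{n+1}(B_1(p_i))$. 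What is needed is a family of pairwise \emph{disjoint} balls centered in the shrunk neighborhood whose $\nu_\infty$-measures already exhaust its measure; this is exactly the paper's Lemma \ref{nuinftyKthj} (a Vitali-type selection whose exhaustion property rests on Bishop--Gromov), after which finitely many of these disjoint balls are transferred to disjoint balls inside $B_\de(F_i)\cap B_t(p_i)$ and \eqref{nuinfty} is applied to each. Your appeal to ``Hausdorff approximations nearly preserve the renormalized volume of open sets'' being ``exactly the content'' of Colding/Cheeger--Colding volume convergence is essentially circular: \eqref{nuinfty} and \eqref{VolCOV} give convergence of volumes of metric balls only, and extending this to the open sets $B_\de(F_i)\cap B_t(p_i)$ is precisely what the lemma asserts.

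For the $\eta\to0$ step, the inner approximation is harmless (the shrunk neighborhoods increase to the open set, so continuity from below applies; the paper does exactly this with its parameter $\tau$), and your Bishop--Gromov argument for $\nu_\infty(\partial B_t(p_\infty))=0$ is fine. But the outer bound leaves you with $\nu_\infty(\{\rho_{F_\infty}\le\de\}\cap\overline{B_t(p_\infty)})$, so you need $\nu_\infty(\{\rho_{F_\infty}=\de\})=0$ for the \emph{given} $\de$, and the co-area inequality only yields this for almost every level, not for a prescribed one; as written this is a gap. A correct repair is a porosity/Lebesgue-density argument for the level set, using that $\nu_\infty$ is doubling and that the limit ball is (locally) a length space; alternatively one argues as the paper does, handling the limsup direction by the finite-covering Lemma \ref{Cont*} (where, one should note, the same open-versus-closed point still has to be addressed when identifying the Hausdorff limit of $B_\de(F_i)\cap B_t(p_i)$ with the $\de$-neighborhood of $F_\infty$).
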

\begin{proof}
For each fixed $\tau\in(0,\min\{t,\de\})$,
from Lemma \ref{nuinftyKthj} in appendix II, there is a sequence of mutually disjoint balls $\{B_{\th_j}(x_j)\}_{j=1}^\infty$ with
$x_j\in \overline{B_{\de-\tau}(F_\infty)}\cap \overline{B_{t-\tau}(p_\infty)}$ and $\th_j<\tau$
such that $\overline{B_{\de-\tau}(F_\infty)}\cap \overline{B_{t-\tau}(p_\infty)}\subset \bigcup_{1\le j\le k}B_{\th_j+2\th_k}(x_j)$ for the sufficiently large $k$, and
\begin{equation}\aligned\label{430}
\nu_\infty\left(\overline{B_{\de-\tau}(F_\infty)}\cap \overline{B_{t-\tau}(p_\infty)}\right)\le\sum_{j=1}^{\infty}\nu_\infty\left(B_{\th_j}(x_j)\right).
\endaligned
\end{equation}
Note that $\Phi_i(F_i)$ converges to $F_\infty$ in the Hausdorff sense.
For each $j\ge0$, there is a sequence of points $x_{i,j}\in B_{\de'}(F_i)$ with $\lim_{i\rightarrow\infty}x_{i,j}=x_j$.
From \eqref{nuinfty},
\begin{equation}\aligned
\lim_{i\rightarrow\infty}\mathcal{H}^{n+1}\left(B_{\th_j}(x_{i,j})\right)/\mathcal{H}^{n+1}\left(B_{1}(p_i)\right)=\nu_\infty\left(B_{\th_j}(x_{j})\right).
\endaligned
\end{equation}
Now we fix an integer $m>0$.
By the selection of $B_{\th_j}(x_j)$, we can require $B_{\th_j}(x_{i,j})\cap B_{\th_k}(x_{i,k})=\emptyset$ for $j\neq k$ and $j,k\le m$ provided $i$ is sufficiently large.
Hence combining $B_{\th_j}(x_{i,j})\subset B_\de(F_i)\cap B_t(p_i)$, \eqref{430} implies
\begin{equation}\aligned\label{wnthjn}
\sum_{j=1}^{m}\nu_\infty\left(B_{\th_j}(x_{j})\right)\le\mathcal{H}^{n+1}(B_\de(F_i)\cap B_t(p_i))/\mathcal{H}^{n+1}\left(B_{1}(p_i)\right)
\endaligned
\end{equation}
for the sufficiently large $i$.
Letting $i\rightarrow\infty$ first, then letting $m\rightarrow\infty$ infers
\begin{equation}\aligned\label{432}
\sum_{j=1}^{\infty}\nu_\infty\left(B_{\th_j}(x_{j})\right)\le\liminf_{i\rightarrow\infty}\mathcal{H}^{n+1}(B_\de(F_i)\cap B_t(p_i))/\mathcal{H}^{n+1}\left(B_{1}(p_i)\right).
\endaligned
\end{equation}
Combining \eqref{430}\eqref{432} we have
\begin{equation}\aligned\label{436}
\nu_\infty\left(\overline{B_{\de-\tau}(F_\infty)}\cap \overline{B_{t-\tau}(p_\infty)}\right)\le\liminf_{i\rightarrow\infty}\mathcal{H}^{n+1}(B_\de(F_i)\cap B_t(p_i))/\mathcal{H}^{n+1}\left(B_{1}(p_i)\right).
\endaligned
\end{equation}
Letting $\tau\rightarrow0$ implies
$$\nu_\infty\left(B_\de(F_\infty)\cap B_t(p_\infty)\right)\le\liminf_{i\rightarrow\infty}\mathcal{H}^{n+1}(B_\de(F_i)\cap B_t(p_i))/\mathcal{H}^{n+1}\left(B_{1}(p_i)\right).$$
Combining Lemma \ref{Cont*} in appendix II, we complete the proof.
\end{proof}

From now on, we further assume
\begin{equation}\aligned\label{VOL*}
\mathcal{H}^{n+1}(B_1(p_i))\ge v\qquad \mathrm{for\ each}\ i\ge1\ \mathrm{and\ some\ constant}\ v>0.
\endaligned
\end{equation}
For each $i$, let $M_i$ be an area-minimizing hypersurface in $B_1(p_i)$ with $\p M_i\subset\p B_1(p_i)$.
Suppose that $M_i$ converges to a closed set $M_\infty\subset \overline{B_1(p_\infty)}$ in the induced Hausdorff sense.
For any $0<t<1$ and $0<\de\le\min\{\f1{n\k},t-\de\}$, from \eqref{VolCOV}, Lemma \ref{LOWERM000} and Lemma \ref{UpMinfty} we have
\begin{equation}\aligned\label{Hn1Bdede*}
\mathcal{H}^{n+1}\left(B_{\de}(M_\infty)\cap B_{t-\de}(p_\infty)\right)=&\liminf_{i\rightarrow\infty}\mathcal{H}^{n+1}\left(B_\de(M_i)\cap B_{t-\de}(p_i)\right)\\
\le&\f{2\de}{1-n\k\de}\liminf_{i\rightarrow\infty}\mathcal{H}^n\left(M_i\cap B_{t}(p_i)\right).
\endaligned
\end{equation}
From the definition of Minkowski contents and \eqref{Hn1Bdede*}, for any $0<t'<t<1$ we immediately have
\begin{equation}\aligned\label{nu*MinftyBtp}
\mathcal{M}^*\left(M_\infty,B_{t'}(p_\infty)\right)\le\limsup_{\de\rightarrow0}\f{\mathcal{H}^{n+1}\left(B_{\de}(M_\infty)\cap B_{t-\de}(p_\infty)\right)}{2\de}
\le\liminf_{i\rightarrow\infty}\mathcal{H}^n\left(M_i\cap B_{t}(p_i)\right).
\endaligned
\end{equation}

The upper Minkowski content of the limit $M_\infty$ preserves the minimizing property under some suitable conditions in the following sense.
Let $S_i$ be a subset in $B_1(p_i)$ such that $S_i\setminus M_i$ is smooth embedded, $S_i=M_i$ outside $B_t(p_i)$
and $\p \llbracket S_i\rrbracket=\p\llbracket M_i\rrbracket$.
For each $i$, let $\mathcal{R}_{M_i}$ denote the regular part of $M_i$, and $\mathcal{R}_{S_i}=(S_i\setminus M_i)\cup(S_i\cap \mathcal{R}_{M_i})$.
Clearly, $\mathcal{R}_{S_i}$ is one-sided and connected. Let $\mathbf{n}_{S_i}$ be the unit normal vector field to $\mathcal{R}_{S_i}$.
For each $r\in(0,1-t)$, we define two open sets $B^\pm_{r}(S_i)$ by
\begin{equation}\aligned\nonumber
B^\pm_{r}(S_i)=B_r(S_i)\cap\{\mathrm{exp}_{x}(\pm s\mathbf{n}_{S_i}(x))|\ x\in \mathcal{R}_{S_i},\, 0<s<r\}.
\endaligned
\end{equation}
Clearly, $B^+_{r}(S_i)\cap B^-_{r}(S_i)\cap S_i=\emptyset$, and
$$B_{r}(S_i)\cap B_{1-r}(p_i)=(B^+_{r}(S_i)\cup B^-_{r}(S_i)\cup S_i)\cap B_{1-r}(p_i).$$
\begin{proposition}\label{MinMSinfty}
Suppose that there is a constant $\th\in(0,t)$ such that
$S_i,B^+_{\th}(S_i),B^-_{\th}(S_i)$ converge to closed sets $S_\infty,\G^+_{\th,\infty},\G^-_{\th,\infty}\subset \overline{B_1(p_\infty)}$ in the induced Hausdorff sense, respectively.
If $\G^+_{\th,\infty}\cap\G^-_{\th,\infty}\subset S_\infty$, then
\begin{equation}\aligned\label{MinftytSinfty}
\mathcal{M}^*\left(M_\infty,B_t(p_\infty)\right)\le\mathcal{M}^*\left(S_\infty,B_t(p_\infty)\right),\quad
\mathcal{M}_*\left(M_\infty,B_t(p_\infty)\right)\le\mathcal{M}_*\left(S_\infty,B_t(p_\infty)\right).
\endaligned
\end{equation}
\end{proposition}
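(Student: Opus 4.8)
The plan is to reduce \eqref{MinftytSinfty} to one estimate: that for all small $\delta>0$,
$$\mathcal{H}^{n+1}\big(B_\delta(M_\infty)\cap B_t(p_\infty)\big)\le \mathcal{H}^{n+1}\big(B_\delta(S_\infty)\cap B_t(p_\infty)\big)+o(\delta)\qquad(\delta\to0).$$
Indeed, dividing by $2\delta$ and passing to $\limsup_{\delta\to0}$, resp.\ $\liminf_{\delta\to0}$, then yields the first, resp.\ second, inequality of \eqref{MinftytSinfty} at once (recall $\mathcal{H}^{n+1}(M_\infty)=\mathcal{H}^{n+1}(S_\infty)=0$). The passage between the $N_i$ and the limit is governed by Lemma \ref{UpMinfty} together with the volume convergence \eqref{VolCOV}: applied to $M_i\to M_\infty$ and to $S_i\to S_\infty$, these give $\mathcal{H}^{n+1}(B_\delta(M_\infty)\cap B_s(p_\infty))=\lim_{i}\mathcal{H}^{n+1}(B_\delta(M_i)\cap B_s(p_i))$, and likewise for $S$, for every $s,\delta\in(0,1)$.

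For the $M_i$-side I would combine minimality with Lemma \ref{LOWERM000}. Since $\llbracket S_i\rrbracket$ has the same boundary as $\llbracket M_i\rrbracket$ and agrees with it outside $B_t(p_i)$, minimality of $M_i$ gives $\mathcal{H}^n(M_i\cap \overline{B_s(p_i)})\le\mathcal{H}^n(S_i\cap \overline{B_s(p_i)})$ for $s\in[t,1)$; and since $M_i$ is the support of a stationary varifold in $B_1(p_i)$, Lemma \ref{LOWERM000} gives, for $\delta$ small,
$$\mathcal{H}^{n+1}\big(B_\delta(M_i)\cap B_{t-\delta}(p_i)\big)\le\frac{2\delta}{1-n\kappa\delta}\,\mathcal{H}^n\big(M_i\cap B_{t}(p_i)\big)\le\frac{2\delta}{1-n\kappa\delta}\,\mathcal{H}^n\big(S_i\cap \overline{B_t(p_i)}\big),$$
so that $\mathcal{H}^{n+1}(B_\delta(M_\infty)\cap B_{t-\delta}(p_\infty))\le \frac{2\delta}{1-n\kappa\delta}\liminf_i\mathcal{H}^n(S_i\cap\overline{B_t(p_i)})$. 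The thin shell $B_t(p_\infty)\setminus B_{t-\delta}(p_\infty)$ is discarded for generic $t$ (so that $\mathcal{H}^n(M_\infty\cap\partial B_t(p_\infty))=0$ and the shell contributes $o(\delta)$), the general $t$ following by approximation from inside using the uniform bound from \eqref{nu*MinftyBtp} and Lemma \ref{upbdareaM}.

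The heart of the matter is a matching \emph{lower} bound for the $\delta$-neighbourhood of $S_i$. Here I would use the one-sided decomposition recorded before the statement, $B_\delta(S_i)\cap B_{1-\delta}(p_i)=\big(B^+_\delta(S_i)\cup B^-_\delta(S_i)\cup S_i\big)\cap B_{1-\delta}(p_i)$, and the resulting inclusion--exclusion identity for $\mathcal{H}^{n+1}(B_\delta(S_i)\cap B_s(p_i))$ as the sum of the two one-sided volumes minus $\mathcal{H}^{n+1}(B^+_\delta(S_i)\cap B^-_\delta(S_i)\cap B_s(p_i))$. Each $B^\pm_\delta(S_i)$ is swept out by the normal exponential flow of the smooth locus $\mathcal{R}_{S_i}$, so I would bound $\mathcal{H}^{n+1}(B^\pm_\delta(S_i)\cap B_{s-\delta}(p_i))$ below by the co-area formula in $\rho_{S_i}$ together with the Heintze--Karcher monotonicity \eqref{Hxtge}--\eqref{HtRic} of the mean curvature of the level sets; the part $S_i\cap\mathcal{S}_{M_i}\subset M_i$ is handled via stationarity exactly as in the proof of Lemma \ref{LOWERM000}, and the shell $B_s(p_i)\setminus B_{s-2\delta}(p_i)$ by \eqref{HnpBr} for generic $s$. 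The hypothesis enters through the overlap term: for $\delta<\theta$ one has $B^\pm_\delta(S_i)\subset B^\pm_\theta(S_i)\to\Gamma^\pm_{\theta,\infty}$ with $\Gamma^+_{\theta,\infty}\cap\Gamma^-_{\theta,\infty}\subset S_\infty$, which is $\nu_\infty$-null (the $S_i$ being hypersurfaces of uniformly bounded $\mathcal{H}^n$-measure), so upper semicontinuity of $\nu_\infty$ on closed sets forces $\mathcal{H}^{n+1}(B^+_\delta(S_i)\cap B^-_\delta(S_i)\cap B_s(p_i))/\mathcal{H}^{n+1}(B_1(p_i))\to0$; in the limit the overlap vanishes and the two Heintze--Karcher contributions genuinely add, matching the $2\delta$ produced on the $M_i$-side. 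Combining this with the $M_i$-estimate, the minimality comparison, and Lemma \ref{UpMinfty}$+$\eqref{VolCOV} (letting $i\to\infty$ inside the estimate for each fixed $\delta$) gives the displayed pointwise-in-$\delta$ inequality, hence \eqref{MinftytSinfty}.

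The step I expect to be the main obstacle is the lower bound for the two-sided $\delta$-neighbourhood of $S_i$ with the \emph{sharp} constant $2\delta$, rather than merely $c\,\delta$: the mean curvature of $S_i$ along $\mathcal{R}_{S_i}$ carries no a priori bound, so a single one-sided tube cannot be compared to $\mathcal{H}^n(S_i)$, and it is precisely the disjointness assumption $\Gamma^+_{\theta,\infty}\cap\Gamma^-_{\theta,\infty}\subset S_\infty$ that prevents the two one-sided tubes from collapsing onto one another in the limit and so recovers the factor $2$. A secondary technical point is to keep the order of limits clean (take $i\to\infty$ first, for each fixed $\delta$, before sending $\delta\to0$) and to verify that the shell near $\partial B_t(p_\infty)$ contributes only $o(\delta)$, which is where the genericity of $t$ — and then the approximation to general $t$ — is used.
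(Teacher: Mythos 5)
Your overall scaffolding (compare $\delta$-tube volumes, pass from $N_i$ to the limit via Lemma \ref{UpMinfty} and \eqref{VolCOV}, use Lemma \ref{LOWERM000} to bound $\mathcal{H}^{n+1}(B_\delta(M_i)\cap\cdot)$ above by $\tfrac{2\delta}{1-n\kappa\delta}\mathcal{H}^n(M_i\cap\cdot)$, and handle the shell near $\partial B_t$ by a co-area/generic-radius selection) matches the paper. But the step you yourself flag as the heart of the matter is a genuine gap, and it is not repairable as stated. After the lossy comparison $\mathcal{H}^n(M_i\cap\overline{B_t(p_i)})\le\mathcal{H}^n(S_i\cap\overline{B_t(p_i)})$, your plan requires the matching lower bound $\mathcal{H}^{n+1}(B_\delta(S_i)\cap B_s(p_i))\ge 2\delta\,\mathcal{H}^n(S_i\cap\overline{B_t(p_i)})-o(\delta)$. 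This is false in general: $S_i$ is an arbitrary competitor, neither minimal nor stationary, so neither Heintze--Karcher \eqref{Hxtge}--\eqref{HtRic} (which needs mean curvature control and in any case bounds tube volumes from \emph{above} by area, as in Lemma \ref{LOWERM000}) nor the stationarity argument of Lemma \ref{LOWERM000} applies to it; and a wrinkled or finger-like $S_i$ can have $\mathcal{H}^n(S_i)$ arbitrarily large while $B_\delta(S_i)$ stays small, with each one-sided tube collapsing onto \emph{itself}. The hypothesis $\Gamma^+_{\theta,\infty}\cap\Gamma^-_{\theta,\infty}\subset S_\infty$ only prevents the two opposite one-sided tubes from overlapping each other in the limit; it does not prevent a single one-sided tube from self-overlapping, so it cannot restore the sharp factor $2\delta$ against $\mathcal{H}^n(S_i)$. (A secondary issue: your vanishing of the overlap term needs $\nu_\infty(S_\infty)=0$, and no uniform area bound on $S_i$ is assumed, nor would one pass to the Hausdorff limit for free.)

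The paper's proof avoids ever bounding the tube of $S_i$ from below by $\mathcal{H}^n(S_i)$. Instead, for a.e.\ $r\in[\delta^2,\delta]$ it uses the two outer level sets $\partial(B^{\pm}_r(S_i))\cap B_{t_*+\tilde\delta}(p_i)$ (capped off by a small piece of $\partial B_{t_*+\tilde\delta}(p_i)$ chosen by co-area so that the cap has measure $\le\delta^{5/6}$) as \emph{competitors} for the minimizing $M_i$; each gives $\mathcal{H}^n(M_i\cap B_{t_*+\tilde\delta}(p_i))\le\mathcal{H}^n(\partial(B^{\pm}_r(S_i))\cap B_{t_*+\tilde\delta}(p_i)\setminus S_i)+\mathcal{H}^n(B^{\pm}_r(S_i)\cap\partial B_{t_*+\tilde\delta}(p_i))$. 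The hypothesis on $\Gamma^{\pm}_{\theta,\infty}$ is used exactly here: for large $i$ and all $r\in[\delta^2,\delta]$ the two outer boundaries are disjoint pieces of $\partial(B_r(S_i))$, so the two inequalities add to give $2\mathcal{H}^n(M_i\cap B_{t_*+\tilde\delta}(p_i))\le\mathcal{H}^n(\partial(B_r(S_i))\cap B_{t_*+\tilde\delta}(p_i))+\mathcal{H}^n(B_r(S_i)\cap\partial B_{t_*+\tilde\delta}(p_i))$, and integrating in $r$ by co-area yields $\mathcal{H}^{n+1}(B_{\delta-\delta^2}(S_i)\cap\cdot)\ge 2(\delta-2\delta^2)\mathcal{H}^n(M_i\cap\cdot)-\delta^{11/6}$, i.e.\ a lower bound by the area of $M_i$, not of $S_i$. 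Combined with Lemma \ref{LOWERM000} for $B_\delta(M_i)$ and Lemma \ref{UpMinfty} in the limit, this gives the tube comparison and hence \eqref{MinftytSinfty}. So the missing idea is to take the boundaries of the $r$-neighborhoods of $S_i$ as the competing hypersurfaces and integrate over $r$, rather than comparing areas of $M_i$ and $S_i$ directly.
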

\begin{proof}
For any $\tau\in(t',1)$ and $\de\in(0,\min\{\f1{2n\k},\tau,1-\tau\})$, from \eqref{tArea} and Lemma \ref{upbdareaM} we have
\begin{equation}\aligned\label{Hn+1BdeMitaupi}
\mathcal{H}^{n+1}\left(B_\de(M_i)\cap B_{\tau}(p_i)\right)\le\f{2\de}{1-n\k\de}\mathcal{H}^n\left(M_i\cap B_{\tau+\de}(p_i)\right)\le \f{2\de c_{n,\k}}{1-n\k\de}.
\endaligned
\end{equation}
From co-area formula, there is a constant $t_*\in[\f{1+3t'}4,\f{3+t'}4]$ such that
\begin{equation}\aligned\label{410*****}
\mathcal{H}^{n+1}\left(B_\de(M_i)\cap B_{t_*+\sqrt{\de}}(p_i)\setminus B_{t_*}(p_i)\right)\le\f12\de^{4/3}
\endaligned
\end{equation}
for the sufficiently small $0<\de<\f1{2n\k}$.
Combining \eqref{410*****} and co-area formula, there is a constant $\tilde{\de}\in[\de,\sqrt{\de}]$ such that
\begin{equation}\aligned
\mathcal{H}^{n}\left(B_\de(M_i)\cap\p B_{t_*+\tilde{\de}}(p_i)\right)\le&\f1{\sqrt{\de}-\de}\mathcal{H}^{n+1}\left(B_\de(M_i)\cap B_{t_*+\sqrt{\de}}(p_i)\setminus B_{t_*+\de}(p_i)\right)\\
\le&\f{\de^{4/3}}{2(\sqrt{\de}-\de)}\le\de^{5/6}.
\endaligned
\end{equation}
Note that $B_{\de}(S_i)=B_{\de}(M_i)$ outside $B_{t'}(p_i)$ for $0<\de<t'-t$. Hence
\begin{equation}\aligned\label{HnBdeSit*pi}
\mathcal{H}^{n}\left(B_\de(S_i)\cap\p B_{t_*+\tilde{\de}}(p_i)\right)=\mathcal{H}^{n}\left(B_\de(M_i)\cap\p B_{t_*+\tilde{\de}}(p_i)\right)\le\de^{5/6}.
\endaligned
\end{equation}

By the definition of $B^\pm_r(S_i)$, $S_i\cap B_{t_*+\tilde{\de}}(p_i)\subset \p\left(B^\pm_r(S_i)\cap B_{t_*+\tilde{\de}}(p_i)\right)$
and
$$\p\left(B^\pm_r(S_i)\cap B_{t_*+\tilde{\de}}(p_i)\right)=\left(\p(B^\pm_r(S_i))\cap B_{t_*+\tilde{\de}}(p_i)\right)\cup\left(B^\pm_r(S_i)\cap\p B_{t_*+\tilde{\de}}(p_i)\right).$$
From the minimizing property of $M_i$ and $S_i=M_i$ outside $B_t(p_i)$, we get
\begin{equation*}\aligned
&\mathcal{H}^n\left(M_i\cap B_{t_*+\tilde{\de}}(p_i)\right)\le\mathcal{H}^n\left(\p(B^+_r(S_i))\cap B_{t_*+\tilde{\de}}(p_i)\setminus S_i\right)
+\mathcal{H}^n\left(B^+_r(S_i)\cap\p B_{t_*+\tilde{\de}}(p_i)\right)\\
&\mathcal{H}^n\left(M_i\cap B_{t_*+\tilde{\de}}(p_i)\right)\le\mathcal{H}^n\left(\p(B^-_r(S_i))\cap B_{t_*+\tilde{\de}}(p_i)\setminus S_i\right)
+\mathcal{H}^n\left(B^-_r(S_i)\cap\p B_{t_*+\tilde{\de}}(p_i)\right)
\endaligned.
\end{equation*}
From $\G^+_{\th,\infty}\cap\G^-_{\th,\infty}\subset S_\infty$, there is an integer $i_0>0$ so that
$\p(B_r(S_i))\cap B_{t_*+\tilde{\de}}(p_i)=\left(\p(B^+_r(S_i))\cup\p(B^-_r(S_i))\right)\cap B_{t_*+\tilde{\de}}(p_i)\setminus S_i$ for all $r\in[\de^2,\de]$ and all $i\ge i_0$.
Hence, from the above two inequalities we get
\begin{equation}\aligned\label{2HMiBt*de}
2\mathcal{H}^n\left(M_i\cap B_{t_*+\tilde{\de}}(p_i)\right)\le\mathcal{H}^n\left(\p(B_r(S_i))\cap B_{t_*+\tilde{\de}}(p_i)\right)
+\mathcal{H}^n\left(B_r(S_i)\cap\p B_{t_*+\tilde{\de}}(p_i)\right)
\endaligned
\end{equation}
for all $r\in[\de^2,\de]$ and all $i\ge i_0$.

With co-area formula and \eqref{Hn+1BdeMitaupi}\eqref{HnBdeSit*pi}\eqref{2HMiBt*de}, for each $i\ge i_0$ we have
\begin{equation}\aligned
&\mathcal{H}^{n+1}\left(B_{\de-\de^2}(S_i)\cap B_{t_*+\tilde{\de}}(p_i)\right)=\int_0^{\de-\de^2}\mathcal{H}^n\left(\p(B_\tau(S_i))\cap B_{t_*+\tilde{\de}}(p_i)\right)d\tau\\
\ge&\int_{\de^2}^{\de-\de^2}\left(2\mathcal{H}^n\left(M_i\cap B_{t_*+\tilde{\de}}(p_i)\right)-\mathcal{H}^n\left(B_\tau(S_i)\cap\p B_{t_*+\tilde{\de}}(p_i)\right)\right)d\tau\\
\ge&2(\de-2\de^2)\mathcal{H}^n\left(M_i\cap B_{t_*+\tilde{\de}}(p_i)\right)-\de^{\f{11}6}\\
\ge&(1-2\de)(1-n\k\de)\mathcal{H}^{n+1}\left(B_\de(M_i)\cap B_{t_*}(p_i)\right)-\de^{\f{11}6}.
\endaligned
\end{equation}
Hence combining \eqref{410*****}, we obtain
\begin{equation}\aligned\label{Bde-de2Side43116}
&\mathcal{H}^{n+1}\left(B_{\de-\de^2}(S_i)\cap B_{t_*}(p_i)\right)+\f12\de^{4/3}\\
\ge&\mathcal{H}^{n+1}\left(B_{\de-\de^2}(S_i)\cap B_{t_*}(p_i)\right)+\mathcal{H}^{n+1}\left(B_\de(S_i)\cap B_{t_*+\sqrt{\de}}(p_i)\setminus B_{t_*}(p_i)\right)\\
\ge&\mathcal{H}^{n+1}\left(B_{\de-\de^2}(S_i)\cap B_{t_*+\sqrt{\de}}(p_i)\right)\\
\ge&(1-2\de)(1-n\k\de)\mathcal{H}^{n+1}\left(B_\de(M_i)\cap B_{t_*}(p_i)\right)-\de^{\f{11}6}.
\endaligned
\end{equation}
From \eqref{VolCOV} and Lemma \ref{UpMinfty}, letting $i\to\infty$ in \eqref{Bde-de2Side43116} implies
\begin{equation}\aligned
(1-2\de)(1-n\k\de)\mathcal{H}^{n+1}\left(B_\de(M_\infty)\cap B_{t_*}(p_\infty)\right)\le\mathcal{H}^{n+1}\left(B_\de(S_\infty)\cap B_{t_*}(p_\infty)\right)+2\de^{\f{11}6}.
\endaligned
\end{equation}
As $S_i=M_i$ outside $B_t(p_i)$,
\begin{equation}\aligned\label{deHn+1BdeMt'*}
(1-2\de)(1-n\k\de)\mathcal{H}^{n+1}\left(B_\de(M_\infty)\cap B_{t'}(p_\infty)\right)\le\mathcal{H}^{n+1}\left(B_\de(S_\infty)\cap B_{t'}(p_\infty)\right)+2\de^{\f{11}6}.
\endaligned
\end{equation}
Forcing $t'\rightarrow t$ in \eqref{deHn+1BdeMt'*} implies
\begin{equation}\aligned\label{deHn+1BdeMt'**}
(1-2\de)(1-n\k\de)\mathcal{H}^{n+1}\left(B_\de(M_\infty)\cap B_{t}(p_\infty)\right)\le\mathcal{H}^{n+1}\left(B_\de(S_\infty)\cap B_{t}(p_\infty)\right)+2\de^{\f{11}6}.
\endaligned
\end{equation}
We divide $2\de$ on both sides of \eqref{deHn+1BdeMt'**} and get \eqref{MinftytSinfty} by letting $\de\rightarrow0$.
\end{proof}

From Lemma \ref{LOWERM000} and Lemma \ref{upbdareaM}, we can get upper and positive lower bounds for the Hausdorff measure of $M_\infty$.
\begin{lemma}\label{UPPLOWMinfty}
There is a constant $c_{n,\k}^*>0$ depending only on $n,\k$ such that
\begin{equation}\aligned\label{HnBrpinfUP}
\mathcal{H}^n\left(M_\infty\cap B_r(p_\infty)\right)\le c_{n,\k}^*r^n\qquad \mathrm{for\ any}\ r\in(0,1].
\endaligned
\end{equation}
Moreover, if we further suppose $p_\infty\in M_\infty$, then there is a constant $\de_{n,\k,v}>0$ depending only on $n,\k,v$ such that
\begin{equation}\aligned\label{lowerbdMinftyr}
\mathcal{H}^n\left(M_\infty\cap B_r(p_\infty)\right)\ge \de_{n,\k,v}r^n&\qquad \mathrm{for\ any}\ r\in(0,1].
\endaligned
\end{equation}
\end{lemma}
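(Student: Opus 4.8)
The plan is to derive both bounds from the already-established Minkowski-content comparisons together with the volume convergence and the volume estimates of \S3. For the upper bound \eqref{HnBrpinfUP}, fix $r\in(0,1]$ and pick $r'\in(r,1)$. By \eqref{nu*MinftyBtp} (applied with a center at $p_\infty$ rather than a general ball, which is legitimate since the argument there only used \eqref{tArea} and \eqref{UpMinfty}),
\begin{equation}\aligned\nonumber
\mathcal{M}^*\left(M_\infty,B_r(p_\infty)\right)\le\liminf_{i\rightarrow\infty}\mathcal{H}^n\left(M_i\cap B_{r'}(p_i)\right)\le c_{n,\k}(r')^n,
\endaligned
\end{equation}
where the last inequality is Lemma \ref{upbdareaM}. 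Letting $r'\to r$ gives $\mathcal{M}^*(M_\infty,B_r(p_\infty))\le c_{n,\k}r^n$. To convert the Minkowski bound into a Hausdorff bound, one uses that $\mathcal{H}^n(E)\le\mathcal{M}^*(E,\Om)$ for any set $E\subset\Om$ of finite upper Minkowski content; combined with a standard covering-and-rescaling argument (covering $M_\infty\cap B_r(p_\infty)$ by balls and summing) this yields \eqref{HnBrpinfUP} with $c_{n,\k}^*$ a dimensional multiple of $c_{n,\k}$.

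For the lower bound \eqref{lowerbdMinftyr}, assume $p_\infty\in M_\infty$ and again fix $r\in(0,1]$. Choose points $q_i\in M_i$ with $\Phi_i(q_i)\to p_\infty$. Since each $M_i$ is area-minimizing in $B_1(p_i)$ with $q_i\in M_i$, Lemma \ref{lbd} (after a rescaling to balls of radius $r$, using $\mathrm{Ric}\ge-n\k^2$ on $B_{1+\k'}(p_i)\supset B_r(q_i)$) provides an open set $E_i\subset B_r(q_i)$ with $q_i\in\p E_i$, $\p E_i\cap B_r(q_i)=M_i\cap B_r(q_i)$, and $\mathcal{H}^{n+1}(E_i)\ge\a^*_{n,\k}\mathcal{H}^{n+1}(B_r(q_i))$. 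In particular $B_r(q_i)$ contains a fixed fraction of volume on the minimal-set side of $M_i$, so the $\de$-tube $B_\de(M_i)\cap B_r(q_i)$ cannot be too small: a co-area computation exactly as in Lemma \ref{lbd} gives a lower bound of the form $\mathcal{H}^{n+1}(B_\de(M_i)\cap B_r(q_i))\ge c\,\de\,\mathcal{H}^{n+1}(B_r(q_i))$ for small $\de$, and then Lemma \ref{LOWERM000} (inequality \eqref{tArea}) converts this into $\mathcal{H}^n(M_i\cap B_{r+\de}(q_i))\ge c'\,\mathcal{H}^{n+1}(B_r(q_i))/\de\cdot\de = c'\mathcal{H}^{n+1}(B_r(q_i))$ — more carefully, combining the tube lower bound with \eqref{tArea} read from right to left gives $\mathcal{H}^n(M_i\cap B_{r+\de}(q_i))\ge\frac{1-n\k\de}{2\de}\mathcal{H}^{n+1}(B_\de(M_i)\cap B_r(q_i))$, which is bounded below by a constant times $\mathcal{H}^{n+1}(B_r(q_i))$. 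Using Bishop-Gromov \eqref{BiVolN} and the non-collapsing \eqref{VOL*}, $\mathcal{H}^{n+1}(B_r(q_i))\ge c_{n,\k,v}r^{n+1}$; hence $\mathcal{H}^n(M_i\cap B_{2r}(q_i))\ge c_{n,\k,v}r^{n}$ for all large $i$ (absorbing $\de\sim r$).

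Finally, to pass this uniform lower bound to $M_\infty$, apply \eqref{nu*MinftyBtp} in the other direction is not available, so instead I would use the upper semicontinuity statement \eqref{Hn1Bdede*}: for fixed small $\de$,
\begin{equation}\aligned\nonumber
\mathcal{H}^{n+1}\left(B_\de(M_\infty)\cap B_{r}(p_\infty)\right)=\lim_{i\to\infty}\mathcal{H}^{n+1}\left(B_\de(M_i)\cap B_r(q_i)\right)\ge c_{n,\k,v}\,\de\, r^{n},
\endaligned
\end{equation}
by Lemma \ref{UpMinfty} and the tube lower bound above; dividing by $2\de$ and taking $\de\to0$ gives $\mathcal{M}_*(M_\infty,B_{r}(p_\infty))\ge c_{n,\k,v}r^n/2$. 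To replace the Minkowski content by Hausdorff measure from below one does not get it for free — a set can have large lower Minkowski content but zero $\mathcal{H}^n$ measure. Here, however, $M_\infty$ is $n$-rectifiable with a uniform density lower bound in the relevant class (this is exactly the setting of \eqref{MSHS}, invoking Ambrosio-Fusco-Pallara), so $\mathcal{M}(M_\infty,\cdot)=\mathcal{H}^n(M_\infty)$ and the lower bound transfers, giving \eqref{lowerbdMinftyr}. The main obstacle is this last point: establishing that $M_\infty$ falls under the hypotheses of \eqref{MSHS} — i.e., that it is countably $n$-rectifiable and carries a Radon measure with the Ahlfors-type lower bound $\mu(B_\rho(x))\ge\g\rho^n$ — which is precisely the content of the \S4 structure theory (local finite-perimeter representation via volume convergence) that the introduction flags as the technical heart of the paper; modulo that, everything reduces to the \S3 estimates and the volume convergence \eqref{VolCOV}.
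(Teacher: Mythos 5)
Your upper-bound route is workable but different from the paper's: you bound $\mathcal{M}^*(M_\infty,B_r(p_\infty))$ via \eqref{nu*MinftyBtp} and Lemma \ref{upbdareaM}, then convert upper Minkowski content into Hausdorff measure by ball counting. That conversion is indeed the "good" direction, but note it needs a lower bound $\mathcal{H}^{n+1}(B_\rho(x))\ge c\rho^{n+1}$ for small balls in the limit space, which comes from Bishop-Gromov together with the non-collapsing \eqref{VOL*}; so the constant you obtain depends on $v$ as well, not just on $n,\k$ as you claim (and as the statement asserts). The paper instead covers $M_\infty\cap\overline{B_r(p_\infty)}$ by small balls, disjointifies, picks centers $w_{i,j}\in M_i$ nearby, uses the density lower bound $\mathcal{H}^n(M_i\cap B_{2s}(w_{i,j}))\ge\de\, s^n$ coming from \eqref{tArea} with the center on $M_i$, and plays this against the global mass bound \eqref{UNBOUNDn}, so that the comparison with $\mathcal{H}^n(M_i)$ is done directly at the level of coverings.

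The genuine gap is in the lower bound \eqref{lowerbdMinftyr}. Your estimate $\mathcal{H}^n(M_i\cap B_r(q_i))\ge\de_{n,\k,v}r^n$ for the approximating surfaces is correct (and is exactly \eqref{tArea} plus Bishop-Gromov and \eqref{VOL*}; the detour through Lemma \ref{lbd} and a tube-volume lower bound is unnecessary and itself only sketched). But to transfer it to $M_\infty$ you pass through $\mathcal{M}_*(M_\infty,B_r(p_\infty))$ and then invoke \eqref{MSHS} to replace lower Minkowski content by $\mathcal{H}^n$, conceding that this requires $M_\infty$ to be countably $n$-rectifiable with an Ahlfors-type lower density bound, "which is the content of the \S4 structure theory." This is circular: the \S4 structure theory is Theorem \ref{Conv0}, whose proof explicitly uses both \eqref{upbdMinftyr} and \eqref{lowerbdMinftyr} of Lemma \ref{UPPLOWMinfty} (the latter precisely to justify applying \eqref{MSHS} and to get \eqref{HnMinftyBtpde}); moreover \eqref{MSHS} is stated for compact sets in a smooth manifold and Theorem \ref{Conv0} only treats smooth limit balls, whereas Lemma \ref{UPPLOWMinfty} must hold for general, possibly singular, non-collapsed limits, since it is used in that generality in \S5 and \S6. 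The paper avoids rectifiability altogether by a covering trick you do not use: choose a covering $\{B_{r_j}(z_j)\}$ of $M_\infty\cap\overline{B_r(p_\infty)}$ with $c_n\sum_j r_j^n\le\mathcal{H}^n(M_\infty\cap\overline{B_r(p_\infty)})+\ep$ (comparability of spherical and Hausdorff measure), observe that the transplanted balls $B_{r_j}(z_{i,j})$ cover $M_i\cap B_r(p_i)$ for large $i$ by the Hausdorff convergence, and bound each piece by $\mathcal{H}^n(M_i\cap B_{r_j}(z_{i,j}))\le c_{n,\k}r_j^n$ via Lemma \ref{upbdareaM}; combining with the uniform lower bound for $\mathcal{H}^n(M_i\cap B_r(p_i))$ gives \eqref{lowerbdMinftyr} directly. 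Without that (or some substitute), your lower-bound argument does not close at this point of the paper's logical development.
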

\begin{proof}
For any $\ep\in(0,\f12)$ and $r\in(0,1-2\ep)$, let $\{B_{s_j}(x_j)\}_{j=1}^{N_\ep}$ be a covering of $M_\infty\cap \overline{B_r(p_\infty)}$ with all $s_j<\min\{\ep,(\k+1)^{-1}\}$ such that
\begin{equation}\aligned\label{HnMinftyCOVER}
\mathcal{H}^n\left(M_\infty\cap \overline{B_r(p_\infty)}\right)\le \omega_n\sum_{j=1}^{N_\ep}s_j^n+\ep.
\endaligned
\end{equation}
We assume $M_\infty\cap B_{s_j}(x_j)\neq\emptyset$ and $w_j\in M_\infty\cap B_{s_j}(x_j)$. Then $\{B_{2s_j}(w_j)\}_{j=1}^{N_\ep}$ is a covering of $M_\infty\cap\overline{B_r(p_\infty)}$.
By a covering lemma, up to a choice of the subsequence we may assume that $B_{\f25s_j}(w_j)$ and $B_{\f25s_{j'}}(w_{j'})$ are disjoint for all distinct $j,j'$.
Let $w_{i,j}\in M_i\cap B_r(p_i)$ be a sequence of points converging as $i\rightarrow\infty$ to $w_j$. Then we can assume that $B_{\f25s_j}(w_{i,j})$ and $B_{\f25s_{j'}}(w_{i,j'})$ are disjoint for all distinct $j,j'\le N_\ep$ and for all the sufficiently large $i$.
Hence, from Lemma \ref{LOWERM000} and $s_j\le(\k+1)^{-1}$ (up to a scaling of $N_i$)
\begin{equation}\aligned
\mathcal{H}^n\left(M_i\cap B_{\f{2s_j}5}(w_{i,j})\right)\ge \de_{n}s_j^n
\endaligned
\end{equation}
for some constant $\de_n>0$ depending only on $n$.
With \eqref{UNBOUNDn} we have
\begin{equation}\aligned
\de_n\sum_{j=1}^{N'_\ep}s_j^n\le\sum_{j=1}^{N'_\ep}\mathcal{H}^n\left(M_i\cap B_{\f{2s_j}5}(w_{i,j})\right)\le\mathcal{H}^n(M_i\cap B_{r+2\ep}(p_i))\le c_{n,\k}(r+2\ep)^n.
\endaligned
\end{equation}
With \eqref{HnMinftyCOVER}, there is a constant $c_{n,\k}^*>0$ depending only on $n,\k$ such that
\begin{equation}\aligned\label{upbdMinftyr}
\mathcal{H}^n\left(M_\infty\cap \overline{B_r(p_\infty)}\right)\le c_{n,\k}^* r^n\qquad \mathrm{for\ each}\ 0<r<1,
\endaligned
\end{equation}
which implies \eqref{HnBrpinfUP}.

We further suppose $p_\infty\in M_\infty$.
For any $0<\ep'<r<1$, (combining spherical measure is equivalent to Hausdorff measure up to a constant) there is a finite covering $\{B_{r_j}(z_j)\}_{j=1}^{N_\ep'}$ of $M_\infty\cap \overline{B_r(p_\infty)}$ with $B_{r_j}(z_j)\subset B_1(p_\infty)$ such that
\begin{equation}\aligned\label{ddddd}
\mathcal{H}^n\left(M_\infty\cap \overline{B_r(p_\infty)}\right)\ge c_n\sum_{j=1}^{N_\ep'}r_j^n-\ep,
\endaligned
\end{equation}
where $c_n$ is a constant depending only on $n$.
For each $j$, let $z_{i,j}\in N_i$ be a sequence of points with $B_{r_j}(z_{i,j})\subset B_1(p_i)$ and $z_{i,j}\rightarrow z_j$ as $i\rightarrow\infty$.
Then $\{B_{r_j}(z_{i,j})\}_{j=1}^{N_\ep'}$ is a covering of $M_i\cap B_{r}(p_i)$ for the sufficiently large $i$.
With \eqref{UNBOUNDn} and \eqref{ddddd}, we have
\begin{equation}\aligned
\mathcal{H}^n\left(M_\infty\cap \overline{B_r(p_\infty)}\right)\ge\f{c_n}{c_{n,\k}}\sum_{j=1}^{N_\ep}\mathcal{H}^n\left(M_i\cap B_{r_j}(z_{i,j})\right)-\ep
\ge\f{c_n}{c_{n,\k}}\mathcal{H}^n\left(M_i\cap B_{r}(p_i)\right)-\ep.
\endaligned
\end{equation}
Combining \eqref{tArea} (or \eqref{nnnArea}), there is a constant $\de_{n,\k,v}>0$ depending only on $n,\k,v$ such that
\begin{equation}\aligned
\mathcal{H}^n\left(M_\infty\cap \overline{B_r(p_\infty)}\right)\ge\de_{n,\k,v}r^n-\ep.
\endaligned
\end{equation}
We get \eqref{lowerbdMinftyr} by letting $\ep\rightarrow0$ in the above inequality.
This completes the proof.
\end{proof}

Now let us study the limits of a sequence of area-minimizing hypersurfaces in a sequence of manifolds converging to a smooth geodesic ball.
\begin{theorem}\label{Conv0}
Let $N_i$ be a sequence of $(n+1)$-dimensional smooth Riemannian manifolds with $\mathrm{Ric}\ge-n\k^2$ on the metric ball $B_{1+\k'}(p_i)\subset N_i$ with $\k\ge0$, $\k'>0$.
Suppose that $B_1(p_i)$ converges to an $(n+1)$-dimensional smooth manifold $\overline{B_1(p)}$ in the Gromov-Hausdorff sense.
Let $M_i$ be an area-minimizing hypersurface in $B_1(p_i)$ with $p_i\in M_i$ and $\p M_i\subset\p B_1(p_i)$, and $M_i$ converges to a closed set $M_\infty$ in $\overline{B_1(p)}$ in the induced Hausdorff sense. Then $M_\infty$ is area-minimizing in $B_1(p)$ and for any $t\in(0,1)$
\begin{equation}\aligned\label{HnMinfequMitpi}
&\mathcal{H}^n\left(M_\infty\cap \overline{B_t(p)}\right)\ge\limsup_{i\rightarrow\infty}\mathcal{H}^{n}\left(M_i\cap \overline{B_t(p_i)}\right)\\
\ge&\liminf_{i\rightarrow\infty}\mathcal{H}^{n}(M_i\cap B_t(p_i))\ge\mathcal{H}^n(M_\infty\cap B_t(p)).
\endaligned
\end{equation}
\end{theorem}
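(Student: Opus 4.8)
The plan is to establish the four-term chain \eqref{HnMinfequMitpi} together with the minimizing property of $M_\infty$ by splitting the argument into a ``soft'' part, carried out entirely at the level of Minkowski content (hence using only Gromov--Hausdorff convergence and the volume convergence \eqref{VolCOV}), and a ``hard'' part that exploits the smoothness of $\overline{B_1(p)}$. The middle inequality of \eqref{HnMinfequMitpi} is trivial since $\overline{B_t(p_i)}\supset B_t(p_i)$, so only the two outer inequalities and ``$M_\infty$ minimizing'' are at stake.

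\emph{Soft part: local finite-perimeter structure of $M_\infty$, and the lower inequality.} Since each $M_i$ is one-sided, around every $x\in M_\infty$ there are a ball $B_{r_x}(x)\subset B_1(p)$ and, for large $i$, open sets $E_i$ with $\partial E_i\cap B_{r_x}(x_i)=M_i\cap B_{r_x}(x_i)$; by Lemma \ref{upbdareaM} (re-centred at $x_i$) the perimeters $\mathcal{H}^n(M_i\cap B_{r_x}(x_i))$ are uniformly bounded, and since \eqref{VolCOV} makes $\nu_\infty$ a constant multiple of $\mathcal{H}^{n+1}$, a subsequence of $\chi_{E_i}$ converges in $L^1$ to $\chi_{E_\infty}$ for an open set $E_\infty$ of finite perimeter in $B_{r_x}(x)$. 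The Hausdorff convergence $M_i\to M_\infty$ gives $\mathrm{spt}\,\|\partial E_\infty\|\cap B_{r_x}(x)\subset M_\infty$; for the reverse inclusion I apply Lemma \ref{lbd} re-centred at points $y\in M_\infty$, both to $E_i$ and to its complement in a small ball, to obtain $\mathcal{H}^{n+1}(E_i\cap B_\rho(y_i))\ge c\rho^{n+1}$ and $\mathcal{H}^{n+1}(B_\rho(y_i)\setminus E_i)\ge c\rho^{n+1}$, which pass to $E_\infty$ via the $L^1$-convergence and \eqref{VolCOV}. Thus $\partial E_\infty\cap B_{r_x}(x)=M_\infty\cap B_{r_x}(x)$, so $M_\infty$ is locally the reduced boundary of a Caccioppoli set, in particular countably $n$-rectifiable with $\mathcal{H}^n\llcorner M_\infty$ Radon. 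Re-centring Lemma \ref{UPPLOWMinfty} at $y\in M_\infty$ yields $\de_{n,\k,v}\rho^n\le\mathcal{H}^n(M_\infty\cap B_\rho(y))\le c^*_{n,\k}\rho^n$, so the hypotheses behind \eqref{MSHS} (Theorem 2.104 of \cite{AFP}) hold and $\mathcal{M}^*(M_\infty,B_s(p))=\mathcal{M}_*(M_\infty,B_s(p))=\mathcal{H}^n(M_\infty\cap B_s(p))$ for every $s\in(0,1)$ with $\mathcal{H}^n(M_\infty\cap\partial B_s(p))=0$. Since \eqref{nu*MinftyBtp} (a consequence of Lemma \ref{LOWERM000}, \eqref{VolCOV} and Lemma \ref{UpMinfty}) reads $\mathcal{M}^*(M_\infty,B_{t'}(p))\le\liminf_i\mathcal{H}^n(M_i\cap B_t(p_i))$ for $t'<t$, letting $t'\uparrow t$ through good radii gives the last inequality $\mathcal{H}^n(M_\infty\cap B_t(p))\le\liminf_i\mathcal{H}^n(M_i\cap B_t(p_i))$.

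\emph{Hard part: $M_\infty$ minimizing, and the upper inequality.} Both remaining points rest on transplanting competitors between $N_i$ and $\overline{B_1(p)}$; here one uses that, the limit being smooth and non-collapsed, the convergence $N_i\to\overline{B_1(p)}$ is realized by diffeomorphisms $\psi_i$ under which the metrics converge in $C^{1,\alpha}_{\mathrm{loc}}$, so that areas and volumes are distorted by a factor $1+o(1)$. To see $M_\infty$ is area-minimizing, take any Caccioppoli competitor for $E_\infty$ supported in some $B_t(p)$, transport it to $N_i$ by $\psi_i$ and glue it to $E_i$ across $\partial B_s(p_i)$ for a.e.\ $s\in(t,1)$ — the gluing term being $\mathcal{H}^n$ of a subset of $\partial B_s(p_i)$ that shrinks to measure zero by slicing the $L^1$-convergence — so that the resulting surfaces compete with $M_i$ and, in the limit, satisfy the hypotheses of Proposition \ref{MinMSinfty}; then Proposition \ref{MinMSinfty} together with the equivalence $\mathcal{M}^*=\mathcal{M}_*=\mathcal{H}^n$ of the soft part forces the area of $M_\infty$ in $B_t(p)$ to be no larger than that of the competitor, so (competitors with smooth boundary being dense) $M_\infty$ is area-minimizing in $B_1(p)$; in particular $\llbracket M_\infty\rrbracket$ is the limit of the minimizing currents $\llbracket M_i\rrbracket$ and $\mathcal{H}^n(M_\infty\cap W)=\mathbb{M}(\llbracket M_\infty\rrbracket\llcorner W)$. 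For the first inequality of \eqref{HnMinfequMitpi} I use $\psi_i(M_\infty)$ itself as the bulk of a competitor $S_i$ for $M_i$: equal to $M_i$ outside $B_{s'}(p_i)$, to $\psi_i(M_\infty)$ inside $B_s(p_i)$ (with $t<s<s'<1$), interpolated across the thin shell $B_{s'}(p_i)\setminus B_s(p_i)$ with area $o(1)$; minimality of $M_i$ gives $\mathcal{H}^n(M_i\cap\overline{B_t(p_i)})\le\mathcal{H}^n(\psi_i(M_\infty)\cap B_s(p_i))+o(1)\le(1+o(1))\,\mathcal{H}^n(M_\infty\cap\overline{B_s(p)})+o(1)$, whence $\limsup_i\mathcal{H}^n(M_i\cap\overline{B_t(p_i)})\le\mathcal{H}^n(M_\infty\cap\overline{B_s(p)})$, and $s\downarrow t$ yields the first inequality.

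\emph{Main obstacle.} The soft part is essentially bookkeeping once Lemmas \ref{LOWERM000}, \ref{upbdareaM}, \ref{lbd}, \ref{UpMinfty}, \ref{UPPLOWMinfty} and the relation \eqref{MSHS} are available. The genuinely delicate points are (i) making the competitor transfer between $N_i$ and $\overline{B_1(p)}$ precise — choosing the slicing radius so that the gluing term across $\partial B_s$ vanishes in the limit, building an interpolating annulus of asymptotically negligible area, and verifying the hypothesis $\G^+_{\th,\infty}\cap\G^-_{\th,\infty}\subset S_\infty$ of Proposition \ref{MinMSinfty} for the surfaces $S_i$ so constructed; and (ii) pinning down $\partial E_\infty=M_\infty$ exactly in the local analysis, where the re-centred volume lower bound of Lemma \ref{lbd} must be played against the Hausdorff convergence to exclude both loss and gain of boundary in the limit.
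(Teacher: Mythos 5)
Your \emph{soft part} tracks the paper closely: identifying $M_\infty$ locally as a topological boundary $\partial E_\infty$, pushing the density bounds of Lemma \ref{UPPLOWMinfty} through to get rectifiability and the hypotheses of Theorem 2.104 of \cite{AFP}, and then using \eqref{nu*MinftyBtp} for the last inequality of \eqref{HnMinfequMitpi}. The paper actually avoids your ``$L^1$-convergence of $\chi_{E_i}$'' formulation (which needs a GH-adapted $L^1$ notion, since the $E_i$ live in different spaces) and instead takes the Hausdorff limit $E_\infty$ of the $E_i$ directly, then proves the claim $\partial E_\infty\cap B_r(x)=M_\infty\cap B_r(x)$ by contradiction, playing the volume lower bound of Lemma \ref{lbd} and the covering Lemma \ref{nuinftyKthj} against the Hausdorff convergence; this is in the same spirit as your ``Lemma \ref{lbd} at $E_i$ and its complement,'' only phrased intrinsically.

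The \emph{hard part} is where there is a genuine gap. You want to realize the convergence by diffeomorphisms $\psi_i$ under which the metrics converge in $C^{1,\alpha}_{\mathrm{loc}}$, and to transport competitors with $1+o(1)$ area distortion, both for the minimizing property and for the first inequality of \eqref{HnMinfequMitpi}. But the standing hypotheses are only $\mathrm{Ric}\ge-n\kappa^2$, a volume lower bound, and a smooth GH limit, and under these the metrics do not converge in $C^{1,\alpha}$ (nor in $C^1$, nor even $C^0$): the Anderson--Cheeger $C^{1,\alpha}$ compactness requires additional data such as a two-sided Ricci bound with injectivity radius control, and what Cheeger--Colding theory delivers near the regular set is only a bi-H\"older homeomorphism, which gives no control on $n$-dimensional Hausdorff measure of hypersurfaces. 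So both the transported competitor for the minimizing property and the competitor $\psi_i(M_\infty)$ used for the upper inequality rest on a regularity of the convergence that is not available; as written, the hard part does not go through.

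The paper's route avoids this entirely and is the key idea of the theorem. The competitors $S_i$ are only required to converge to $S$ in the induced Hausdorff sense, and no area of $S_i$ is ever compared with that of $S$; instead Proposition \ref{MinMSinfty}, built from GH convergence of $\delta$-neighborhoods (Lemma \ref{UpMinfty}), the volume convergence \eqref{VolCOV}, and the Heintze--Karcher comparison \eqref{tArea}, transfers the minimizing inequality at the level of Minkowski content. The equivalence \eqref{HnMinftyBtpde} (via \eqref{MSHS}) converts this back to a Hausdorff-measure inequality. The upper inequality of \eqref{HnMinfequMitpi} is not proved from a competitor at all: it follows from the inequality \eqref{Bde-de2Side43116} specialized to $S_i=M_i$, together with Lemma \ref{Cont*}, giving \eqref{Hn12123}, and then again \eqref{HnMinftyBtpde}. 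Replacing this Minkowski-content mechanism by a diffeomorphism transport loses exactly the robustness the theorem is designed to have under a one-sided Ricci bound.
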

\begin{proof}
Let $x\in M_\infty\cap B_1(p)$.
There is a suitable small $0<r<\f12(1-d(p,x))$ such that $B_{r}(x)$ is diffeomorphic to an $(n+1)$-dimensional Euclidean ball $B_r(0)\subset\R^{n+1}$.
Let $x_i\in B_1(p_i)$ with $x_i\rightarrow x$.
From Theorem A.1.8 in \cite{CCo1}, there are open sets $U_i\subset B_r(x_i)$ such that
$U_i$ is homeomorphic to $B_r(0)$, and $U_i\supset B_{(1-\ep_i)r}(x_i)$ for some sequence $\ep_i\to0$ as $i\to\infty$.
Then there is an open set $E_i$ in $U_i$ with $\p E_i\cap U_i=M_i\cap U_i$.
Up to a choice of the subsequence, we assume that $E_i$ converges to a closed set $E_\infty$ in $\overline{B_r(x)}\subset B_1(p)$ in the induced Hausdorff sense.
From Lemma \ref{subset} in the appendix II, it follows that $\p E_\infty\cap B_r(x)\subset M_\infty$. We claim
\begin{equation}\aligned\label{CLAy}
M_\infty\cap B_r(x)=\p E_\infty\cap B_r(x).
\endaligned
\end{equation}
It remains to prove $y\in \p E_\infty$ for any $y\in M_\infty\cap B_r(x)$.
If it fails, i.e., there is a point $y\in M_\infty\cap B_r(x)\setminus \p E_\infty$. Then there is a positive constant $\de_0\in(0,r-d(x,y))$ such that $B_{\de_0}(y)\subset E_\infty$. Let $y_{i}\in \p E_{i}\cap B_1(p_i)$ with $y_i\rightarrow y$,
then for any $\de\in(0,\de_0]$, $E_i\cap B_\de(y_i)$ converges to $E_\infty\cap B_\de(y)=B_\de(y)$ in the induced Hausdorff sense.

From Lemma \ref{nuinftyKthj}, there is a sequence of mutually disjoint balls $\{B_{\th_j}(z_j)\}_{j=1}^\infty$ with $z_j\in \overline{B_{(1-\de)\de}(y)}\setminus B_{\de'}(M_\infty)$ and $\th_j<\min\{\de'/6,\de^2/6\}$
such that $\overline{B_{(1-\de)\de}(y)}\setminus B_{\de'}(M_\infty)\subset \bigcup_{1\le j\le k}B_{\th_j+2\th_k}(z_j)$ for the sufficiently large $k$, and
\begin{equation}\aligned\label{(1-de)deyMde'}
\mathcal{H}^{n+1}\left(\overline{B_{(1-\de)\de}(y)}\setminus B_{\de'}(M_\infty)\right)\le\sum_{j=1}^{\infty}\mathcal{H}^{n+1}\left(B_{\th_j}(z_j)\right).
\endaligned
\end{equation}
Denote $E_i^\de=E_i\cap B_\de(y_i)$.
For each $j$, there is a sequence of points $z_{j,i}\in N_i$ with $z_{j,i}\to z_j$.
Since $E_i^\de\to B_\de(y)$ and $\p E_i\cap B_\de(y_i)\to M_\infty\cap B_\de(y)$, for each large $N_*$ there is an integer $i_0>0$ such that $B_{\th_j}(z_{j,i})\subset E_i^\de$, $B_{\th_{j}}(z_{j,i})\cap B_{\th_{j'}}(z_{j',i})=\emptyset$ for each $1\le j\neq j'\le N_*$ and $i\ge i_0$. Hence, from \eqref{VolCOV} it follows that
\begin{equation}\aligned
\sum_{j=1}^{N_*}\mathcal{H}^{n+1}\left(B_{\th_j}(z_j)\right)=\lim_{i\to\infty}\sum_{j=1}^{N_*}\mathcal{H}^{n+1}\left(B_{\th_j}(z_{j,i})\right)
\le\liminf_{i\to\infty}\mathcal{H}^{n+1}\left(E_i^\de\right).
\endaligned
\end{equation}
Combining \eqref{(1-de)deyMde'} we have
\begin{equation}\aligned\label{(1-de)deyMde''}
\mathcal{H}^{n+1}\left(\overline{B_{(1-\de)\de}(y)}\setminus B_{\de'}(M_\infty)\right)\le\liminf_{i\to\infty}\mathcal{H}^{n+1}\left(E_i^\de\right).
\endaligned
\end{equation}
Combining \eqref{nu*MinftyBtp} and Lemma \ref{lbd}, there is a constant $\de'>0$ such that
\begin{equation}\aligned\label{qqqqq1}
\mathcal{H}^n\left(B_{\de'}(M_\infty)\cap B_{\de}(y)\right)\le \de\mathcal{H}^{n+1}\left(E_i^\de\right)\qquad \mathrm{for\ each}\ i\ge 1.
\endaligned
\end{equation}
With \eqref{(1-de)deyMde''}\eqref{qqqqq1}, for each $i\ge1$ we deduce
\begin{equation}\aligned
\mathcal{H}^{n+1}(B_{(1-\de)\de}(y))\le&\mathcal{H}^{n+1}\left(\overline{B_{(1-\de)\de}(y)}\setminus B_{\de'}(M_\infty)\right)+\mathcal{H}^{n+1}\left(B_{\de'}(M_\infty)\cap B_{\de}(y)\right)\\
\le& \liminf_{i\to\infty}\mathcal{H}^{n+1}\left(E_i^\de\right)+\de\mathcal{H}^{n+1}\left(E_i^\de\right).
\endaligned
\end{equation}
From \eqref{VolCOV} and Lemma \ref{lbd}, the above inequality is impossible for the sufficiently small $\de>0$.
Hence $y\in \p E_\infty$ and we have shown the claim \eqref{CLAy}.

From Theorem 4.4 in \cite{Gi} and \eqref{upbdMinftyr}\eqref{CLAy}, $M_\infty$ is countably $n$-rectifiable in $B_1(p)$.
Combining \eqref{MSHS} and \eqref{lowerbdMinftyr}, we deduce
\begin{equation}\aligned\label{HnMinftyBtpde}
\mathcal{H}^n\left(M_\infty\cap \overline{B_t(p)}\right)=\lim_{\de\rightarrow0}\f1{2\de}\mathcal{H}^{n+1}\left(B_\de\left(M_\infty\cap \overline{B_t(p)}\right)\right)
\endaligned
\end{equation}
for any $t\in(0,1)$.
For the fixed $t\in(0,1)$, let $S$ be a hypersurface in $B_1(p)$ such that $S\setminus M_\infty$ is smooth embedded,
$S=M_\infty$ outside $B_t(p)$ and $\p \llbracket S\rrbracket=\p\llbracket M_\infty\rrbracket$.
Let $\mathbf{n}_{S}$ be the unit normal vector field to the regular part $\mathcal{R}_S$ of $S$.
Clearly, there is a constant $\th>0$ such that
\begin{equation}\aligned\nonumber
\{\mathrm{exp}_{x}(s\mathbf{n}_S(x))|\ x\in \mathcal{R}_S,\, 0<s<\th\}\cap \{\mathrm{exp}_{x}(-s\mathbf{n}_S(x))|\ x\in \mathcal{R}_S,\, 0<s<\th\}=\emptyset.
\endaligned
\end{equation}
Moreover, there is a sequence of hypersurfaces $S_{i}\subset B_1(p_i)$ such that
$S_{i}\setminus M_i$ is smooth embedded, $S_{i}=M_i$ outside $B_t(p_i)$, $\p \llbracket S_{i}\rrbracket=\p\llbracket M_i\rrbracket$,
and $S_{i}$ converges to $S$ in the induced Hausdorff sense.
From Proposition \ref{MinMSinfty} and \eqref{HnMinftyBtpde}, we have
\begin{equation}\aligned
\mathcal{H}^n\left(M_\infty\cap \overline{B_t(p)}\right)\le\mathcal{H}^{n}\left(S\cap \overline{B_t(p)}\right).
\endaligned
\end{equation}
Namely, $M_\infty$ is area-minimizing in $B_t(p)$.
As $t$ is arbitrary in $(0,1)$, we conclude that $M_\infty$ is area-minimizing in $B_1(p)$.

From \eqref{Bde-de2Side43116} with $S_i=M_i$, we have
\begin{equation}\aligned\label{Hn12121}
\mathcal{H}^{n+1}(B_\de(M_i)\cap B_{t+\psi_\de}(p_i))+\psi_\de\ge2\de\mathcal{H}^{n}\left(M_i\cap \overline{B_t(p_i)}\right)
\endaligned
\end{equation}
for any $t\in(0,1)$ and the small $\de>0$,
where $\psi_\de$ is a general function depending only on $n,\de$ with $\lim_{\de\rightarrow0}\psi_\de=0$.
From Lemma \ref{Cont*} in the appendix II, we have
\begin{equation}\aligned\label{Hn12122}
\mathcal{H}^{n+1}\left(B_{\de}(M_\infty)\cap B_{t+\psi_\de}(p)\right)\ge\limsup_{i\rightarrow\infty}\mathcal{H}^{n+1}(B_\de(M_i)\cap B_{t+\psi_\de}(p_i)).
\endaligned
\end{equation}
Combining \eqref{Hn12121}\eqref{Hn12122}, we get
\begin{equation}\aligned\label{Hn12123}
\liminf_{\de\rightarrow0}\f1{2\de}\mathcal{H}^{n+1}\left(B_{\de}(M_\infty)\cap B_{t+\psi_\de}(p)\right)\ge\limsup_{i\rightarrow\infty}\mathcal{H}^{n}\left(M_i\cap \overline{B_t(p_i)}\right).
\endaligned
\end{equation}
Combining \eqref{nu*MinftyBtp}\eqref{HnMinftyBtpde} and \eqref{Hn12123}, for any $t'<t$ we have
\begin{equation}\aligned\label{HnMinftyBtpMipi}
&\mathcal{H}^n\left(M_\infty\cap \overline{B_{2t-t'}(p)}\right)\ge\liminf_{\de\rightarrow0}\f1{2\de}\mathcal{H}^{n+1}\left(B_{\de}(M_\infty)\cap B_{2t-t'}(p)\right)\\
\ge&\limsup_{i\rightarrow\infty}\mathcal{H}^{n}\left(M_i\cap \overline{B_t(p_i)}\right)
\ge\liminf_{i\rightarrow\infty}\mathcal{H}^{n}(M_i\cap B_t(p_i))\\
\ge&\mathcal{M}^*\left(M_\infty,B_{t'}(p)\right)\ge\mathcal{H}^n(M_\infty\cap B_{2t'-t}(p)).
\endaligned
\end{equation}
Forcing $t'\to t$ gets \eqref{HnMinfequMitpi}.
This completes the proof.
\end{proof}
\textbf{Remark.} If we further assume that $B_1(p)$ is a Euclidean ball $B_1(0)$, then $M_\infty$ is an area-minimizing hypersurface in $B_1(0)$.
Clearly, $\mathcal{H}^n(M_\infty\cap\p B_t(0))=0$ for each $0<t<1$. Then \eqref{HnMinfequMitpi} implies
\begin{equation}\aligned\label{VOLCov}
\mathcal{H}^n(M_\infty\cap B_t(0))=\lim_{i\rightarrow\infty}\mathcal{H}^{n}(M_i\cap B_t(p_i))=\lim_{i\rightarrow\infty}\mathcal{H}^{n}\left(M_i\cap \overline{B_t(p_i)}\right).
\endaligned
\end{equation}
\begin{proposition}\label{ContiOmega}
Let $N_i,B_1(p_i),M_i,B_1(p_\infty),M_\infty$ be the notions as in Theorem \ref{Conv0}.
Suppose that $B_1(p_\infty)$ is an $(n+1)$-dimensional Euclidean ball $B_1(0)$. Then for any open set $\Om\subset\subset B_1(0)$, any open $\Om_i\subset B_1(p_i)$ with $\Om_i\rightarrow\Om$ in the induced Hausdorff sense,
\begin{equation}\aligned
\mathcal{H}^n\left(M_\infty\cap \overline{\Om}\right)\ge\limsup_{i\rightarrow\infty}\mathcal{H}^{n}\left(M_i\cap \overline{\Om_i}\right).
\endaligned
\end{equation}
\end{proposition}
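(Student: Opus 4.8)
The plan is to localise Theorem~\ref{Conv0} through a Vitali covering argument performed in the Euclidean ball $B_1(0)=B_1(p_\infty)$. First I would record that, by the Remark following Theorem~\ref{Conv0}, $M_\infty$ is an area-minimizing hypersurface in $B_1(0)$; in particular it is countably $n$-rectifiable and, by \eqref{HnBrpinfUP}, $\mathcal{H}^n\llcorner M_\infty$ is a Radon measure on $B_1(0)$ with $\mathcal{H}^n(M_\infty\cap\overline\Om)<\infty$. Given $\ep>0$, I would pass to a slightly larger reference set: since $\bigcap_{\eta>0}\overline{B_{2\eta}(\Om)}=\overline\Om$ and the measure is finite, I choose $\eta\in(0,1/4)$ with $\overline{B_{2\eta}(\Om)}\subset\subset B_1(0)$ and $\mathcal{H}^n(M_\infty\cap\overline{B_{2\eta}(\Om)})\le\mathcal{H}^n(M_\infty\cap\overline\Om)+\ep$, and set $K:=M_\infty\cap\overline{B_\eta(\Om)}$, which is compact.

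Next I would cover $K$. For each $x\in K$, and for all but countably many sufficiently small radii $r$ (those with $\mathcal{H}^n(M_\infty\cap\partial B_r(x))=0$ and $\overline{B_{2r}(x)}\subset B_{2\eta}(\Om)$), the closed ball $\overline{B_r(x)}$ is admissible, so these balls form a fine cover of $K$. By the Besicovitch--Vitali theorem for the Radon measure $\mathcal{H}^n\llcorner M_\infty$ there is a countable \emph{pairwise disjoint} subfamily $\{\overline{B_{r_j}(x_j)}\}_{j\ge1}$, $x_j\in K$, covering $K$ up to an $\mathcal{H}^n\llcorner M_\infty$-null set; since the selected spheres are null, the leftover $L:=K\setminus\bigcup_j B_{r_j}(x_j)$ is also null, so I cover it by open balls $\{B_{\rho_l}(y_l)\}$ with $\overline{B_{\rho_l}(y_l)}\subset B_{2\eta}(\Om)$ and $\sum_l\rho_l^n<\ep$. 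The resulting collection is an open cover of the compact set $K$; I pass to a finite subcover $B_{r_1}(x_1),\dots,B_{r_J}(x_J),B_{\rho_1}(y_1),\dots,B_{\rho_P}(y_P)$, note that the balls $\overline{B_{r_1}(x_1)},\dots,\overline{B_{r_J}(x_J)}$ are still pairwise disjoint and contained in $\overline{B_{2\eta}(\Om)}$, and fix $\mu_0>0$ with $B_{\mu_0}(K)$ inside this finite union.

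Then I would transfer the cover to $N_i$: pick $x_{i,j}\in M_i$ with $\Phi_i(x_{i,j})\to x_j$ (possible since $x_j\in M_\infty$ is a Hausdorff limit of $\Phi_i(M_i)$) and centres $y_{i,l}\in N_i$ with $\Phi_i(y_{i,l})\to y_l$. For large $i$, using that $\Phi_i$ is an $\ep_i$-approximation with $\Phi_i(M_i)\to M_\infty$ and $\Phi_i(\Om_i)\to\Om$, each $y_i\in M_i\cap\overline{\Om_i}$ has $\Phi_i(y_i)$ within $o(1)$ of both $M_\infty$ and $\overline\Om$, hence within $\mu_0$ of $K$, hence inside the finite cover; pulling back through $\Phi_i$ produces $\de_i\to0$ with
\begin{equation*}\aligned
M_i\cap\overline{\Om_i}\subset\bigcup_{j\le J}\overline{B_{r_j+\de_i}(x_{i,j})}\ \cup\ \bigcup_{l\le P}B_{\rho_l+\de_i}(y_{i,l})\qquad\text{for all large }i.
\endaligned\end{equation*}
On the balls around $y_{i,l}$ I would use Lemma~\ref{upbdareaM} (applied after rescaling, and using that the restriction of an area-minimizing hypersurface to a sub-ball is again area-minimizing) to bound their total $\mathcal{H}^n(M_i\cap\cdot)$ by $c_{n,\k}\sum_l(\rho_l+\de_i)^n$. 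On the balls around $x_{i,j}$ I would apply Theorem~\ref{Conv0} to the rescaled pointed balls $(2r_j)^{-1}\big(B_{2r_j}(x_{i,j}),x_{i,j}\big)$, the limit being a Euclidean ball since $x_j$ is an interior point of $B_1(0)$; the first inequality in \eqref{HnMinfequMitpi} (at radii $r_j+\de<2r_j$), together with continuity from above of $\mathcal{H}^n\llcorner M_\infty$, then gives $\limsup_i\mathcal{H}^n(M_i\cap\overline{B_{r_j+\de_i}(x_{i,j})})\le\mathcal{H}^n(M_\infty\cap\overline{B_{r_j}(x_j)})$.

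Finally I would add these estimates, take $\limsup_{i\to\infty}$, and use the pairwise disjointness of $\overline{B_{r_1}(x_1)},\dots,\overline{B_{r_J}(x_J)}$ together with their containment in $\overline{B_{2\eta}(\Om)}$:
\begin{equation*}\aligned
\limsup_{i\to\infty}\mathcal{H}^n(M_i\cap\overline{\Om_i})&\le\sum_{j\le J}\mathcal{H}^n(M_\infty\cap\overline{B_{r_j}(x_j)})+c_{n,\k}\sum_{l\le P}\rho_l^n\\
&\le\mathcal{H}^n(M_\infty\cap\overline{B_{2\eta}(\Om)})+c_{n,\k}\ep\le\mathcal{H}^n(M_\infty\cap\overline\Om)+(1+c_{n,\k})\ep,
\endaligned\end{equation*}
and then let $\ep\to0$. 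The step I expect to be the main obstacle is reconciling two competing needs: I need a \emph{disjoint} Vitali family so that the sharp Euclidean estimate of Theorem~\ref{Conv0} sums with no overlap constant (keeping the factor $1$ in the final inequality), but I also need an honest \emph{open} cover of the compact set $K$ in order to capture $M_i\cap\overline{\Om_i}$ through Hausdorff closeness; my resolution is to push the ($\mathcal{H}^n$-null) Vitali leftover $L$ into the $\ep$-error by means of the uniform volume bound of Lemma~\ref{upbdareaM}. A secondary technical point is checking that Theorem~\ref{Conv0} applies verbatim to the recentred, rescaled small balls $B_{2r_j}(x_{i,j})$, namely area-minimality of the restriction, the Ricci lower bound persisting on a slightly larger ball for large $i$, the fact that $x_{i,j}\in M_i$, and induced-Hausdorff convergence of the restricted hypersurfaces.
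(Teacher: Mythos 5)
Your proof is correct, and it follows the same overall strategy as the paper's: cover $M_\infty\cap\overline\Om$ by small balls on which a recentred, rescaled version of Theorem \ref{Conv0} applies (the limit being Euclidean near interior points), transfer the cover to $M_i$ via the Hausdorff approximations, and sum; note that the paper's own proof also relies implicitly on exactly this recentred use of \eqref{HnMinfequMitpi}, so the technical checks you flag (centres $x_{i,j}\in M_i$, Ricci bound on a slightly larger rescaled ball, minimality of restrictions, subsequential convergence of the restricted hypersurfaces with limit contained in $M_\infty\cap\overline{B_{2r_j}(x_j)}$) are routine and not a gap relative to the paper. Where you genuinely differ is in the covering machinery. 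The paper uses its own greedy covering Lemma \ref{nuinftyKthj}, producing disjoint balls $B_{\th_j}(x_j)$ whose controlled enlargements $B_{\th_j+2\th_k}(x_j)$ cover $M_\infty\cap\overline\Om$; the mismatch between $\th_j$ and $\th_j+2\th_k$ is absorbed by a uniform quantitative estimate \eqref{1-epdeept} (obtained by contradiction/compactness of Euclidean area-minimizers), and the tail of the covering is controlled by the two-sided density bounds of Lemma \ref{UPPLOWMinfty}, at the cost of a factor $(1-\ep)$ that is removed at the end. You instead exploit the hypothesis that $B_1(p_\infty)$ is Euclidean to invoke the Besicovitch--Vitali theorem for the Radon measure $\mathcal{H}^n\llcorner M_\infty$, obtaining genuinely disjoint closed balls that cover up to a null set; the enlargement issue disappears (continuity from above of the measure on closed balls replaces \eqref{1-epdeept}), the $(1-\ep)$ loss never appears, and the null leftover is absorbed through the crude bound of Lemma \ref{upbdareaM}, which parallels the paper's use of the upper density bound $c^*_{n,\k}$. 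What your route buys is a cleaner, more standard measure-theoretic argument with no compactness lemma for minimizers; what the paper's route buys is a covering scheme (Lemma \ref{nuinftyKthj}) that does not use Besicovitch and hence is reusable verbatim in Section 5 for the general, possibly singular limit space, where your Euclidean covering theorem would not be available.
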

\begin{proof}
By contradiction and compactness of area-minimizing hypersurfaces in Euclidean space, for any $\ep>0$ there is a constant $\de_\ep$ (depending only on $n,\ep$) such that
\begin{equation}\aligned\label{1-epdeept}
\mathcal{H}^n\left(M_\infty\cap B_t(q)\right)\ge(1-\ep)\mathcal{H}^n(M_\infty\cap B_{(1+\de_\ep)t}(q))
\endaligned
\end{equation}
for any $B_{(1+\de_\ep)t}(q)\subset B_1(0)$.

From Lemma \ref{nuinftyKthj} in appendix II,
there is a sequence of mutually disjoint balls $\{B_{\th_j}(x_j)\}_{j=1}^\infty$ with $x_j\in M_\infty\cap \overline{\Om}$
and $\th_j<\inf_{x\in\p\Om}|1-x|$
such that $M_\infty\cap \overline{\Om}\subset \bigcup_{1\le j\le k}B_{\th_j+2\th_k}(x_j)$ for the sufficiently large $k$, $\th_j\ge \th_{j+1}$ for each $j\ge1$ and
\begin{equation}\aligned\label{thjn2k**}
\sum_{j=1}^{\infty}\th_j^n
=\lim_{k\rightarrow\infty}\sum_{j=1}^{k}(\th_{j}+2\th_k)^n.
\endaligned
\end{equation}
From Lemma \ref{UPPLOWMinfty},
\begin{equation}\aligned
\mathcal{H}^n\left(M_\infty\right)\ge\sum_{j=1}^{\infty}\mathcal{H}^n\left(M_\infty\cap B_{\th_j}(x_j)\right)\ge\de_{n,\k,v}\sum_{j=1}^{\infty}\th_j^n.
\endaligned
\end{equation}
Hence there is an integer $j_0>1$ such that $\sum_{j=j_0}^{\infty}\th_j^n<\ep$. Then with \eqref{thjn2k**}
\begin{equation}\aligned
\limsup_{k\rightarrow\infty}\sum_{j=j_0}^{k}(\th_{j}+2\th_k)^n\le \lim_{k\rightarrow\infty}\sum_{j=1}^{k}(\th_{j}+2\th_k)^n-\sum_{j=1}^{j_0-1}\th_j^n
=\sum_{j=1}^{\infty}\th_j^n-\sum_{j=1}^{j_0-1}\th_j^n=\sum_{j=j_0}^{\infty}\th_j^n<\ep.
\endaligned
\end{equation}
With Lemma \ref{UPPLOWMinfty}, for the sufficiently large $k$ we have
\begin{equation}\aligned
&\mathcal{H}^n\left(M_\infty\cap B_\ep(\Om)\right)\ge\sum_{j=1}^{j_0-1}\mathcal{H}^n\left(M_\infty\cap B_{\th_j}(x_j)\right)
+c_{n,\k}^*\sum_{j=j_0}^k(\th_j+2\th_k)^n-c_{n,\k}^*\ep\\
&\ge\sum_{j=1}^{j_0-1}\mathcal{H}^n\left(M_\infty\cap B_{\th_j}(x_j)\right)
+\sum_{j=j_0}^k\mathcal{H}^n\left(M_\infty\cap B_{\th_j+2\th_k}(x_j)\right)-c_{n,\k}^*\ep.
\endaligned
\end{equation}
For the sufficiently large $k$, $(1+\de_\ep)\th_j>\th_j+2\th_k$ for each $j=1,\cdots,j_0-1$.
Let $x_{j,i}\in B_1(p_i)$ with $x_{j,i}\to x_j$ as $i\to\infty$.
With \eqref{1-epdeept}, we have
\begin{equation}\aligned\label{MinftyovOm}
\mathcal{H}^n\left(M_\infty\cap B_\ep(\Om)\right)\ge(1-\ep)\limsup_{i\rightarrow\infty}\sum_{j=1}^k\mathcal{H}^n\left(M_i\cap B_{\th_j+2\th_k}(x_{j,i})\right)-c_{n,\k}^*\ep.
\endaligned
\end{equation}
Since
$M_\infty\cap \overline{\Om}\subset \bigcup_{1\le j\le k}B_{\th_j+2\th_k}(x_j)$,
then for any open set $\Om_i\subset B_1(p_i)$ with $\Om_i\rightarrow\Om$ in the induced Hausdorff sense, we have
\begin{equation}\aligned
M_i\cap \overline{\Om_i}\subset\bigcup_{j=1}^{k}B_{\th_j+2\th_k}(x_{j,i})
\endaligned
\end{equation}
for the sufficiently large $i>0$.
From \eqref{MinftyovOm}, we have
\begin{equation}\aligned\nonumber
\mathcal{H}^n\left(M_\infty\cap B_\ep(\Om)\right)\ge(1-\ep)\limsup_{i\rightarrow\infty}\mathcal{H}^n\left(M_i\cap \overline{\Om_i}\right)-c_{n,\k}^*\ep.
\endaligned
\end{equation}
Letting $\ep\to0$ completes the proof.
\end{proof}

In the $(n+1)$-dimensional Euclidean ball $B_1(0)$, any minimal hypersurface $S$ through the origin satisfies $\mathcal{H}^n(S)\ge\omega_n$ by the classical monotonicity of $r^{-n}\mathcal{H}^n(S\cap B_r(0))$. Combining \eqref{tArea} and Theorem \ref{Conv0}, we immediately have the following result.
\begin{corollary}\label{epdeHM}
For any $\ep>0$ there is a constant $\de_\ep>0$ depending only on $n,\ep$ such that if $B_1(p)$ is an $(n+1)$-dimensional smooth geodesic ball with Ricci curvature $\ge-\de_\ep$, $\mathcal{H}^{n+1}\left(B_1(p)\right)\ge(1-\de_\ep)\omega_{n+1},$
and $M$ is an area-minimizing hypersurface in $B_1(p)$ with $p\in M$, $\p M\subset\p B_1(p)$, then
\begin{equation}\aligned
\mathcal{H}^n(M)\ge(1-\ep)\omega_n,\ \ \mathcal{H}^{n+1}(B_t(M)\cap B_1(p))\ge2(1-\ep)t\omega_n \ \mathrm{for\ any}\ t\in(0,\de_\ep].
\endaligned
\end{equation}
\end{corollary}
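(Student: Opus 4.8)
The plan is to argue by contradiction, using Theorem~\ref{Conv0} to reduce both conclusions to statements about a limit area‑minimizing hypersurface in a \emph{Euclidean} ball, where the sharp constants $\omega_n$ and $2t\omega_n$ are produced by the classical monotonicity formula.

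Suppose the corollary fails for some $\ep>0$. Then there are $\de_i\to0$, $(n+1)$‑dimensional smooth geodesic balls $B_1(p_i)$ with $\mathrm{Ric}\ge-\de_i$ and $\mathcal{H}^{n+1}(B_1(p_i))\ge(1-\de_i)\omega_{n+1}$, and area‑minimizing hypersurfaces $M_i$ in $B_1(p_i)$ with $p_i\in M_i$ and $\p M_i\subset\p B_1(p_i)$, such that for each $i$ either (a) $\mathcal{H}^n(M_i)<(1-\ep)\omega_n$, or (b) there is $t_i\in(0,\de_i]$ with $\mathcal{H}^{n+1}(B_{t_i}(M_i)\cap B_1(p_i))<2(1-\ep)t_i\omega_n$; after passing to a subsequence the same alternative holds for all $i$. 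By Bishop--Gromov \eqref{BiVolN}, $\mathcal{H}^{n+1}(B_1(p_i))\le V^{n+1}_{\k_i}(1)$ with $\k_i=\sqrt{\de_i/n}\to0$ and $V^{n+1}_{\k_i}(1)\to\omega_{n+1}$; together with $\mathcal{H}^{n+1}(B_1(p_i))\ge(1-\de_i)\omega_{n+1}$ this forces $\mathcal{H}^{n+1}(B_1(p_i))\to\omega_{n+1}$. By Gromov precompactness and the volume convergence \eqref{VolCOV}, after a further subsequence $\overline{B_1(p_i)}$ converges in the Gromov--Hausdorff sense to a metric ball $\overline{B_1(p_\infty)}$ with $\mathcal{H}^{n+1}(B_1(p_\infty))=\omega_{n+1}$; being a non‑collapsed Gromov--Hausdorff limit of balls whose Ricci lower bound tends to $0$ and whose volume tends to $\omega_{n+1}$, $\overline{B_1(p_\infty)}$ is isometric to $\overline{B_1(0)}\subset\R^{n+1}$ by the volume almost‑rigidity theorem of Colding~\cite{C} and Cheeger--Colding~\cite{CCo1}. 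Passing to a further subsequence, $M_i$ converges in the induced Hausdorff sense to a closed set $M_\infty\subset\overline{B_1(0)}$ with $0=p_\infty\in M_\infty$; by Theorem~\ref{Conv0}, $M_\infty$ is area‑minimizing in $B_1(0)$ and satisfies \eqref{HnMinfequMitpi}. In particular $M_\infty$ is a stationary varifold in $B_1(0)$ through the origin, so by the classical monotonicity formula, $\mathcal{H}^n(M_\infty\cap B_s(0))\ge\omega_n s^n$ for every $s\in(0,1)$.

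In alternative (a), \eqref{HnMinfequMitpi} and the monotonicity give $\liminf_i\mathcal{H}^n(M_i)\ge\liminf_i\mathcal{H}^n(M_i\cap B_s(p_i))\ge\mathcal{H}^n(M_\infty\cap B_s(0))\ge\omega_n s^n$ for every $s<1$, hence $\liminf_i\mathcal{H}^n(M_i)\ge\omega_n$, contradicting $\mathcal{H}^n(M_i)<(1-\ep)\omega_n$. In alternative (b) we have $t_i\le\de_i\to0$; fix $s\in(0,1)$ with $s^n>1-\ep$. Applying the coarea estimate \eqref{Hn12121} (equivalently, \eqref{Bde-de2Side43116} specialized to $S_i=M_i$) with $\de=t_i$ and $t=s$, and using $B_{s+\psi_{t_i}}(p_i)\subset B_1(p_i)$ for $i$ large, we obtain $2t_i\,\mathcal{H}^n(M_i\cap\overline{B_s(p_i)})\le\mathcal{H}^{n+1}(B_{t_i}(M_i)\cap B_1(p_i))+\psi_{t_i}<2(1-\ep)t_i\omega_n+\psi_{t_i}$, so $\mathcal{H}^n(M_i\cap\overline{B_s(p_i)})<(1-\ep)\omega_n+\psi_{t_i}/(2t_i)$. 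Since $\psi_\de=o(\de)$ (this is already used to pass from \eqref{Hn12121} to \eqref{Hn12123} in the proof of Theorem~\ref{Conv0}), letting $i\to\infty$ yields $\limsup_i\mathcal{H}^n(M_i\cap\overline{B_s(p_i)})\le(1-\ep)\omega_n$; but $M_i\cap B_s(p_i)\subset M_i\cap\overline{B_s(p_i)}$ while \eqref{HnMinfequMitpi} and the monotonicity give $\liminf_i\mathcal{H}^n(M_i\cap B_s(p_i))\ge\mathcal{H}^n(M_\infty\cap B_s(0))\ge\omega_n s^n>(1-\ep)\omega_n$, a contradiction. Hence the corollary holds.

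The one genuinely delicate step is the uniform tube lower bound behind \eqref{Hn12121}: that along the sequence the level sets $\{\rho_{M_i}=\tau\}$ ``double‑cover'' $M_i$ with an error that is $o(t_i)$ against the leading term $2t_i\omega_n$. This is exactly the interplay of the Heintze--Karcher / Laplacian comparison used in Lemma~\ref{LOWERM000} with the minimizing comparison for $M_i$ --- the content of \eqref{Bde-de2Side43116} with $S_i=M_i$ --- and it relies on the error function $\psi_\de$ there being $o(\de)$. The remaining ingredient, routine within Cheeger--Colding theory, is the volume almost‑rigidity that promotes ``almost‑Euclidean volume'' to ``Euclidean Gromov--Hausdorff limit''; it is precisely this rigidity that upgrades the non‑sharp lower volume bounds of Lemma~\ref{LOWERM000} to the sharp constants $\omega_n$ and $2t\omega_n$.
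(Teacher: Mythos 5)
Your proof of the first inequality is essentially the argument the paper intends: contradiction, volume almost-rigidity of Colding/Cheeger--Colding to force the limit ball to be $\overline{B_1(0)}$, Theorem \ref{Conv0} to produce an area-minimizing limit $M_\infty\ni 0$, then Euclidean monotonicity and \eqref{HnMinfequMitpi}. That part is correct (modulo the cosmetic point, shared with the paper, that Theorem \ref{Conv0} assumes the Ricci bound on $B_{1+\k'}(p_i)$, so one should run the argument on slightly smaller concentric balls as in the proof of Proposition \ref{EnlocAM}; this costs nothing since $s^n\to 1$).

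The second inequality is where there is a genuine gap. You apply \eqref{Hn12121} with $\de=t_i$, but \eqref{Hn12121} is obtained from the proof of Proposition \ref{MinMSinfty} with $S_i=M_i$, and there the crucial step that allows the two one-sided minimizing comparisons to be added, namely $\p(B_r(M_i))\cap B_{t_*+\tilde{\de}}(p_i)=\bigl(\p(B^+_r(M_i))\cup\p(B^-_r(M_i))\bigr)\cap B_{t_*+\tilde{\de}}(p_i)\setminus M_i$ for $r\in[\de^2,\de]$ (leading to \eqref{2HMiBt*de} and \eqref{Bde-de2Side43116}), is deduced from the Hausdorff convergence of $B^\pm_\th(M_i)$ and therefore holds only for $i\ge i_0(\de)$. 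So \eqref{Hn12121} is available for each fixed $\de$ and all large $i$; in your contradiction sequence $t_i\le\de_i\to0$ is tied to $i$, and you use the inequality along the diagonal $(\de,i)=(t_i,i)$, which is exactly what is not provided (the threshold $i_0(t_i)$ may exceed $i$). Your remark that the additive error $\psi_\de$ is $o(\de)$ is correct and is indeed implicitly needed to pass to \eqref{Hn12123}, but the problem is the order of quantifiers in $i$ and $\de$, not the size of $\psi_\de$. To close the gap you should prove the two-sided tube bound on the given ball itself, uniformly for all $t\le\de_\ep$: since $B_1(p)$ has almost maximal volume, Theorem A.1.8 of \cite{CCo1} gives an open set $U\supset B_{1-\eta}(p)$ homeomorphic to a Euclidean ball, so that $M\cap U=\p E\cap U$ for an open $E$; for a.e.\ $\tau\in(0,t)$ the portions of the level set $\{\rho_M=\tau\}$ lying in $E$ and in $U\setminus\overline{E}$ are disjoint, and the minimizing property of $M$, applied to $\{\rho_M<\tau\}\cap E\cap B_\sigma(p)$ and $\{\rho_M<\tau\}\cap B_\sigma(p)\setminus\overline{E}$ for a good radius $\sigma\in[1-\eta,1-\eta/2]$, bounds each portion below by $\mathcal{H}^n(M\cap B_{1-\eta}(p))$ minus a spherical cap; the cap is controlled by the coarea formula together with the tube upper bound \eqref{tArea} and Lemma \ref{upbdareaM} (it is $O(\tau/\eta)=O(\de_\ep/\eta)$), and $\mathcal{H}^n(M\cap B_{1-\eta}(p))$ is bounded below by your first inequality applied at radius $1-\eta$. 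Integrating in $\tau$ over $(0,t)$ and choosing $\eta$ and then $\de_\ep$ small in terms of $\ep$ yields $\mathcal{H}^{n+1}(B_t(M)\cap B_1(p))\ge 2(1-\ep)t\omega_n$; this is presumably what the paper means by ``combining \eqref{tArea} and Theorem \ref{Conv0}''.
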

Moreover, we have the volume estimate for area-minimizing hypersurfaces in a class of smooth manifolds as follows.
\begin{proposition}\label{EnlocAM}
For any $\ep\in(0,\f12]$ there is a constant $\de_\ep>0$ depending only on $n,\ep$ such that if $B_1(p)$ is an $(n+1)$-dimensional smooth geodesic ball with Ricci curvature $\ge-\de_\ep$, $\mathcal{H}^{n+1}\left(B_1(p)\right)\ge(1-\de_\ep)\omega_{n+1},$
and $M$ is an area-minimizing hypersurface in $B_1(p)$ with $p\in M$, $\p M\subset\p B_1(p)$ and
$\mathcal{H}^n(M)\le(1+\de_\ep)\omega_n,$
then for any subset $U\subset B_{1-\ep}(p)$ with $diam\, U=1$ we have
\begin{equation}\aligned\label{LR2211}
\mathcal{H}^n\left(M\cap U\right)\le(1+\ep)\omega_n2^{-n}.
\endaligned
\end{equation}
\end{proposition}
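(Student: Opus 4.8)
The plan is to argue by contradiction, combining the compactness theory of \S4 with the isodiametric inequality in $\R^n$. Suppose the statement fails for some fixed $\ep\in(0,\frac12]$. Then there are constants $\de_i\to0$, $(n+1)$-dimensional smooth geodesic balls $B_1(p_i)$ with $\mathrm{Ric}\ge-\de_i$ and $\mathcal{H}^{n+1}(B_1(p_i))\ge(1-\de_i)\omega_{n+1}$, area-minimizing hypersurfaces $M_i$ in $B_1(p_i)$ with $p_i\in M_i$, $\partial M_i\subset\partial B_1(p_i)$ and $\mathcal{H}^n(M_i)\le(1+\de_i)\omega_n$, and subsets $U_i\subset B_{1-\ep}(p_i)$ with $\mathrm{diam}\,U_i=1$ but $\mathcal{H}^n(M_i\cap U_i)>(1+\ep)\omega_n2^{-n}$. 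By the volume (almost-)rigidity of Colding \cite{C} and Cheeger-Colding \cite{CCo1}, the hypotheses $\mathrm{Ric}\ge-\de_i$ and $\mathcal{H}^{n+1}(B_1(p_i))\ge(1-\de_i)\omega_{n+1}$ force $\overline{B_1(p_i)}$ to converge in the Gromov-Hausdorff sense to the Euclidean unit ball $\overline{B_1(0)}\subset\R^{n+1}$, so Theorem \ref{Conv0} applies: after passing to a subsequence, $M_i$ converges in the induced Hausdorff sense to a closed set $M_\infty$ which is area-minimizing in $B_1(0)$, satisfies $0\in M_\infty$, and by \eqref{VOLCov} has $\mathcal{H}^n(M_\infty\cap B_t(0))=\lim_{i\to\infty}\mathcal{H}^n(M_i\cap B_t(p_i))$ for each $t\in(0,1)$.

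I first identify $M_\infty$. From $\mathcal{H}^n(M_i)\le(1+\de_i)\omega_n$ and the last identity, $\mathcal{H}^n(M_\infty\cap B_t(0))\le\omega_n$ for every $t\in(0,1)$. Since $M_\infty$ is area-minimizing in $B_1(0)$ and $0\in M_\infty$, the Euclidean monotonicity formula makes $t\mapsto(\omega_nt^n)^{-1}\mathcal{H}^n(M_\infty\cap B_t(0))$ non-decreasing with limit $\ge1$ as $t\to0$; comparing this with the upper bound $\mathcal{H}^n(M_\infty\cap B_t(0))\le\omega_n$ forces the ratio to be identically $1$ on $(0,1)$. Hence $M_\infty$ is a minimal cone with vertex $0$ of density one there, and by the regularity theory for area-minimizing hypersurfaces in Euclidean space (a density-one point is regular, so a regular minimal cone is a hyperplane) we obtain $M_\infty=P\cap B_1(0)$ for some hyperplane $P$ through the origin (compare Corollary \ref{epdeHM}).

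Next I pass the sets $U_i$ to the limit. As $\overline{U_i}\subset\overline{B_{1-\ep}(p_i)}$ is compact, a further subsequence gives a compact limit $U_\infty\subset\overline{B_{1-\ep}(0)}\subset\subset B_1(0)$ of $\overline{U_i}$ in the induced Hausdorff sense, and since the Hausdorff approximations distort distances by $\ep_i\to0$ we have $\mathrm{diam}\,U_\infty\le1$. Fix a small $\eta>0$ and put $\Om_i=\{x\in B_1(p_i):d(x,U_i)<\eta\}$, an open subset of $B_1(p_i)$; then $\Om_i$ converges in the induced Hausdorff sense to the open set $B_\eta(U_\infty)\subset\subset B_1(0)$. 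Applying Proposition \ref{ContiOmega} and using $U_i\subset\overline{\Om_i}$,
\begin{equation*}\aligned
\mathcal{H}^n\!\left(M_\infty\cap\overline{B_\eta(U_\infty)}\right)\ge\limsup_{i\to\infty}\mathcal{H}^n\!\left(M_i\cap\overline{\Om_i}\right)\ge\limsup_{i\to\infty}\mathcal{H}^n(M_i\cap U_i)\ge(1+\ep)\omega_n2^{-n};
\endaligned\end{equation*}
letting $\eta\to0$ and using $\bigcap_{\eta>0}\overline{B_\eta(U_\infty)}=U_\infty$ together with the local finiteness of $\mathcal{H}^n\llcorner M_\infty$ gives $\mathcal{H}^n(M_\infty\cap U_\infty)\ge(1+\ep)\omega_n2^{-n}$.

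Finally, $M_\infty\cap U_\infty\subset P\cap U_\infty$ is a subset of the hyperplane $P\cong\R^n$ of diameter $\le\mathrm{diam}\,U_\infty\le1$, so the isodiametric inequality in $\R^n$ yields $\mathcal{H}^n(M_\infty\cap U_\infty)\le\mathcal{H}^n(P\cap U_\infty)\le\omega_n\big(\tfrac12\,\mathrm{diam}(P\cap U_\infty)\big)^n\le\omega_n2^{-n}$, which contradicts the previous inequality because $\ep>0$. This produces the desired $\de_\ep$. The step requiring the most care is the rigidity identification $M_\infty=P\cap B_1(0)$, where all three pinching hypotheses (small Ricci, near-maximal volume of $B_1(p)$, near-minimal area of $M$) enter at once through the volume rigidity of Cheeger-Colding, Theorem \ref{Conv0}, and the equality case of the monotonicity formula; a secondary technical point is the routine check that $\Om_i=B_\eta(U_i)$ converges in the induced Hausdorff sense to $B_\eta(U_\infty)$.
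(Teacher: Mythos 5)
Your overall strategy coincides with the paper's: argue by contradiction, use the convergence theory of \S4 to extract a flat limit hyperplane from the three pinching hypotheses, transfer the lower bound $\mathcal{H}^n(M_i\cap U_i)>(1+\ep)\omega_n2^{-n}$ to the limit via Proposition \ref{ContiOmega}, and contradict the isodiametric inequality. The identification of $M_\infty$ as a hyperplane through the monotonicity formula and the density-one criterion is the same mechanism the paper invokes (``flatness of $M_\infty$''), and your $\eta$-neighborhood device for feeding the arbitrary sets $U_i$ into Proposition \ref{ContiOmega} (which is stated for open sets) is if anything more careful than the paper's direct application.

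There is, however, one genuine gap: you apply Theorem \ref{Conv0}, the remark \eqref{VOLCov}, and Proposition \ref{ContiOmega} at the full radius $1$, but all of these results assume the Ricci lower bound on a strictly larger ball $B_{1+\k'}(p_i)$ with $\k'>0$, whereas Proposition \ref{EnlocAM} only provides $\mathrm{Ric}\ge-\de_\ep$ on $B_1(p)$ itself; there is no margin, and convergence/area-continuity up to the boundary sphere is exactly where these statements are not available. The paper circumvents this by fixing $t\in(0,\ep_*)$ and applying Theorem \ref{Conv0} only to the interior balls $B_{1-t}(p_i)$ (so that the ambient Ricci bound on $B_1(p_i)$ supplies the required margin), obtaining area-minimizing limits $M_{t,\infty}$ in $\overline{B_{1-t}(0)}$ with $\mathcal{H}^n(M_{t,\infty}\cap B_{1-t}(0))\le\omega_n$, and then performing a second diagonal limit $t_i\to0$ using the Euclidean compactness of area-minimizing hypersurfaces to produce the flat $M_\infty$ in $B_1(0)$ and the limit set $U_\infty$, with Proposition \ref{ContiOmega} applied once more in the Euclidean setting. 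Your argument becomes correct once it is rerouted through this two-stage limit (note that $U_i\subset B_{1-\ep}(p_i)$ stays well inside $B_{1-t}(p_i)$ for $t<\ep$, so nothing else changes); as written, the direct appeal to Theorem \ref{Conv0} on the unit ball is not covered by its hypotheses.
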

\begin{proof}
Assume that there are a constant $\ep_*\in(0,\f12]$, and a sequence of $(n+1)$-dimensional smooth geodesic balls $B_1(p_i)$ with Ricci curvature $\ge-\f1i$ and $\mathcal{H}^{n+1}\left(B_1(p_i)\right)\ge(1-\f1i)\omega_{n+1}$, and a sequence of area-minimizing hypersurfaces $M_i$ in $B_1(p_i)$ with $p_i\in M_i$, $\p M_i\subset\p B_1(p_i)$ and $\mathcal{H}^n(M_i)\le(1+\f1i)\omega_n$ such that
\begin{equation}\aligned\label{Mi||ep*}
\mathcal{H}^n\left(M_i\cap U_i\right)>(1+\ep_*)\omega_n2^{-n}
\endaligned
\end{equation}
for some sequence of sets $U_i\subset B_{1-\ep_*}(p_i)$ with diam$\, U_i=1$.
We fix a constant $t\in(0,\ep_*)$.
From Theorem \ref{Conv0}, without loss of generality, we can assume that $B_{1-t}(p_i)$ converges to a Euclidean ball $\overline{B_{1-t}(0)}$ in the Gromov-Hausdorff sense, $M_i$ converges to an area-minimizing hypersurface $M_{t,\infty}$ through $0$ in $B_{1-t}(0)$ in the induced Hausdorff sense, and $U_i$ converges to a closed set $U_{t,\infty}$ in $\overline{B_{1-\ep_*}(0)}$ in the induced Hausdorff sense with diam$\, U_{t,\infty}=1$.
From Proposition \ref{ContiOmega} and \eqref{Mi||ep*}, we have
\begin{equation}\aligned\label{Mi||ep**}
\mathcal{H}^n\left(M_{t,\infty}\cap U_{t,\infty}\right)\ge\limsup_{i\rightarrow\infty}\mathcal{H}^n\left(M_i\cap U_i\right)\ge(1+\ep_*)\omega_n2^{-n}.
\endaligned
\end{equation}
From \eqref{HnMinfequMitpi} and $\mathcal{H}^n(M_i)\le(1+\f1i)\omega_n$, we get $\mathcal{H}^n(M_{t,\infty}\cap B_{1-t}(0))\le\omega_n$.

From compactness of area-minimizing hypersurfaces, there is a sequence $t_i\to0$ such that $M_{t_i,\infty}$ converges to an area-minimizing hypersurface $M_\infty$ through $0$ in $B_1(0)$.
Then from $\mathcal{H}^n(M_{t_i,\infty}\cap B_{1-t_i}(0))\le\omega_n$ , we get $\mathcal{H}^n(M_{\infty}\cap B_1(0))\le\omega_n$, which implies flatness of $M_{\infty}$.
Without loss of generality, we assume that $U_{t_i,\infty}$ converges in the Hausdorff sense to a closed set $U_{\infty}$ in $\overline{B_{1-\ep_*}(0)}$ with diam$\, U_{\infty}=1$.
Combining Proposition \ref{ContiOmega} and \eqref{Mi||ep**}, we get
\begin{equation}\aligned
\mathcal{H}^n\left(M_\infty\cap U_{\infty}\right)\ge\limsup_{i\rightarrow\infty}\mathcal{H}^n\left(M_{t_i,\infty}\cap U_{t_i,\infty}\right)\ge(1+\ep_*)\omega_n2^{-n}.
\endaligned
\end{equation}
This contradicts to the flatness of $M_{\infty}$ with the isodiametric inequality (see 2.5 in Chapter 1 of \cite{S} or \cite{FH1}\cite{HR}). We complete the proof.
\end{proof}

For any integer $n\ge7$, (from Allard's regularity theorem) there is a positive constant $\th_n$ depending only on $n$ such that the densities of all the non-flat area-minimizing hypercones in $\R^{n+1}$ is no less than $1+\th_n$.
Namely, any area-minimizing non-flat hypercone $C$ in $\R^{n+1}$ satisfies
 \begin{equation}\aligned\label{DEFthn}
\lim_{r\rightarrow\infty}\f1{\omega_n r^n}\mathcal{H}^n(C\cap B_r(0))\ge1+\th_n.
\endaligned
\end{equation}
For any integer $2\le n\le6$, let $\th_n=\infty$.
\begin{lemma}\label{almostmono}
For any integer $n\ge2$ and any $0<\ep'<\ep<\th_n$ there is a constant $\de>0$ depending only on $n,\ep,\ep'$ such that if $B_1(p)$ is an $(n+1)$-dimensional smooth geodesic ball with Ricci curvature $\mathrm{Ric}\ge-\de$ and $\mathcal{H}^{n+1}\left(B_1(p)\right)\ge(1-\de)\omega_{n+1},$
$M$ is an area-minimizing hypersurface in $B_1(p)$ with $p\in M$, $\p M\subset\p B_1(p)$, $\mathcal{H}^n(M)\le(1+\ep')\omega_n$, then
\begin{equation}\aligned\label{WRegM}
\mathcal{H}^n(M\cap B_r(p))\le(1+\ep)\omega_nr^n\qquad \mathrm{for\ any}\ 0<r<1.
\endaligned
\end{equation}
\end{lemma}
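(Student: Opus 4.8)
The plan is to argue by contradiction and compactness, pushing everything to the Euclidean model, where \eqref{WRegM} is immediate from the classical monotonicity formula: if $M\subset\R^{n+1}$ is area-minimizing and $0\in M$, then $r^{-n}\mathcal H^n(M\cap B_r(0))$ is nondecreasing, so it never exceeds its value at $r=1$. Suppose the lemma fails for some $n$ and some $0<\ep'<\ep<\th_n$. Taking $\de=1/i$ we obtain smooth geodesic balls $B_1(p_i)$ with $\mathrm{Ric}\ge-1/i$ and $\mathcal H^{n+1}(B_1(p_i))\ge(1-1/i)\omega_{n+1}$, and area-minimizing hypersurfaces $M_i$ in $B_1(p_i)$ with $p_i\in M_i$, $\p M_i\subset\p B_1(p_i)$, $\mathcal H^n(M_i)\le(1+\ep')\omega_n$, for which \eqref{WRegM} is violated at some radius. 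Set $r_i=\sup\{r\in(0,1):\ \mathcal H^n(M_i\cap B_r(p_i))\ge(1+\ep)\omega_n r^n\}$. Any such $r$ satisfies $r^n\le\mathcal H^n(M_i)/((1+\ep)\omega_n)\le(1+\ep')/(1+\ep)$, so $r_i\le\big((1+\ep')/(1+\ep)\big)^{1/n}<1$; the set is closed, hence $\mathcal H^n(M_i\cap B_{r_i}(p_i))\ge(1+\ep)\omega_n r_i^n$, while $\mathcal H^n(M_i\cap B_r(p_i))<(1+\ep)\omega_n r^n$ for $r\in(r_i,1)$, and letting $r\downarrow r_i$ gives in addition $\mathcal H^n(M_i\cap\overline{B_{r_i}(p_i)})=(1+\ep)\omega_n r_i^n$. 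Passing to a subsequence, $r_i\to r_0\in[0,\big((1+\ep')/(1+\ep)\big)^{1/n}]$.

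\emph{First case: $r_0>0$.} By Gromov's precompactness theorem and the volume rigidity of Cheeger--Colding (Colding's volume convergence together with the almost-maximal-volume rigidity), a further subsequence of $\overline{B_1(p_i)}$ converges to the Euclidean ball $\overline{B_1(0)}\subset\R^{n+1}$, and by Theorem \ref{Conv0} the $M_i$ converge in the induced Hausdorff sense to an area-minimizing hypersurface $M_\infty$ in $B_1(0)$ with $0\in M_\infty$. From \eqref{HnMinfequMitpi} and $\mathcal H^n(M_i)\le(1+\ep')\omega_n$ we get $\mathcal H^n(M_\infty\cap B_1(0))\le(1+\ep')\omega_n$, so by monotonicity $\mathcal H^n(M_\infty\cap B_r(0))\le(1+\ep')\omega_n r^n$ for all $r<1$ and hence $\mathcal H^n(M_\infty\cap\overline{B_{r_0}(0)})\le(1+\ep')\omega_n r_0^n$. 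On the other hand, applying Proposition \ref{ContiOmega} with $\Om_i=B_{r_i}(p_i)\to B_{r_0}(0)\subset\subset B_1(0)$ gives $\mathcal H^n(M_\infty\cap\overline{B_{r_0}(0)})\ge\limsup_i\mathcal H^n(M_i\cap\overline{B_{r_i}(p_i)})=(1+\ep)\omega_n r_0^n$, contradicting $\ep'<\ep$.

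\emph{Second case: $r_0=0$.} Rescale the metrics to $\hat g_i=r_i^{-2}g_i$; then $B_1^{g_i}(p_i)=B_{1/r_i}^{\hat g_i}(p_i)$ with $1/r_i\to\infty$, the Ricci lower bound becomes $-r_i^2/i\to0$, and — by Bishop--Gromov and the scale invariance of volume ratios — the volume ratio of every fixed ball $B_R(p_i)$ tends to the Euclidean value, so the rescaled balls converge on each scale to $\R^{n+1}$. The rescaled $M_i=\tilde M_i$ remain area-minimizing with $\mathcal H^n(\tilde M_i\cap B_R(p_i))\le C_nR^n$ uniformly (Lemma \ref{upbdareaM}), so by Theorem \ref{Conv0} and a diagonal argument they converge to an area-minimizing hypersurface $M_\infty$ in $\R^{n+1}$ with $0\in M_\infty$. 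By the choice of $r_i$ we have $\mathcal H^n(\tilde M_i\cap\overline{B_1(p_i)})=(1+\ep)\omega_n$ and, for each fixed $R>1$ and all large $i$, $\mathcal H^n(\tilde M_i\cap B_R(p_i))=r_i^{-n}\mathcal H^n(M_i\cap B_{Rr_i}(p_i))<(1+\ep)\omega_n R^n$, since $Rr_i\in(r_i,1)$ exceeds the supremum defining $r_i$. Passing to the limit with Theorem \ref{Conv0} and Proposition \ref{ContiOmega} (and using that no $\mathcal H^n$-mass of the minimal $M_\infty$ sits on $\p B_1(0)$), we obtain $\mathcal H^n(M_\infty\cap B_1(0))=(1+\ep)\omega_n$ and $\mathcal H^n(M_\infty\cap B_R(0))\le(1+\ep)\omega_n R^n$ for all $R\ge1$; with the monotonicity formula this forces $\mathcal H^n(M_\infty\cap B_R(0))=(1+\ep)\omega_n R^n$ for every $R\ge1$, so $\lim_{R\to\infty}(\omega_nR^n)^{-1}\mathcal H^n(M_\infty\cap B_R(0))=1+\ep$. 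The blow-down of $M_\infty$ is then an area-minimizing hypercone $C\subset\R^{n+1}$ with $\lim_{r\to\infty}(\omega_nr^n)^{-1}\mathcal H^n(C\cap B_r(0))=1+\ep$; since $1<1+\ep<1+\th_n$, the gap \eqref{DEFthn} forces $C$ to be a hyperplane, so that density equals $1$, a contradiction. This rules out the second case and finishes the proof.

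\emph{Where the difficulty lies.} The only genuinely delicate point is the case $r_0=0$: after a rescaling at the bad scale the hypothesis $\mathcal H^n(M_i)\le(1+\ep')\omega_n$ only survives as a bound at the diverging radius $1/r_i$ and is useless at bounded scales. The device that saves the argument is to take $r_i$ to be the \emph{largest} bad radius, so that the strict inequality at all larger radii rescales to a uniform upper bound for $\tilde M_i$ on every fixed ball; together with Euclidean monotonicity in the limit this pins the blow-down density of $M_\infty$ to the exact value $1+\ep$, which is precisely what the hypothesis $\ep<\th_n$ excludes. One must also be careful that the rescaled ambient balls genuinely converge to Euclidean space (non-collapsed volume rigidity) — this is what makes the limit amenable both to Theorem \ref{Conv0} and to the monotonicity formula — and that open and closed balls may be freely interchanged, which here follows from the supremum construction and from minimality forbidding mass concentration on spheres.
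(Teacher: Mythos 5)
Your proof is correct, and it runs on the same engine as the paper's own argument: locate the largest radius at which the density bound fails, observe it is at most $\left((1+\ep')/(1+\ep)\right)^{1/n}<1$, pass to a Euclidean limit via volume rigidity together with Theorem \ref{Conv0} and Proposition \ref{ContiOmega}, and play the Euclidean monotonicity formula against the gap $\th_n$. The organization differs, however. The paper rescales once, by the critical radius $t_i$ at which the density ratio equals exactly $1+\ep$ and stays $\le 1+\ep$ for all larger radii; since $t_i\le\left((1+\ep')/(1+\ep)\right)^{1/n}$, the rescaled balls always contain in the limit a Euclidean ball of a definite radius $s_*>1$, so the limiting minimizer has constant ratio $1+\ep$ on $[1,s_*)$, is conical there, and contradicts \eqref{DEFthn} — no case distinction and no analysis at infinity is needed. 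You instead split according to whether the critical radii degenerate: for $r_0>0$ you obtain a cleaner contradiction that does not even invoke $\th_n$ (the hypothesis $\mathcal{H}^n(M)\le(1+\ep')\omega_n$ plus monotonicity against the density $1+\ep$ at scale $r_0$), while for $r_0=0$ you blow up to a complete minimizer in $\R^{n+1}$ with density exactly $1+\ep$ at scale $1$ and $\le 1+\ep$ at every larger scale, and you then need the additional (standard, but not quoted in the paper) facts that blow-downs of complete area-minimizing hypersurfaces are multiplicity-one minimizing cones carrying the density at infinity. Both write-ups share the same harmless glosses: Theorem \ref{Conv0} formally requires the Ricci bound on a slightly larger ball, so one should really work in $B_{1-t}(p_i)$ and let $t\to0$ exactly as the paper does in Proposition \ref{EnlocAM} (your bound $r_0<1$, respectively the fixed scale $1$ after rescaling, leaves the room for this), and the interchange of open and closed balls rests on the fact that a Euclidean minimizing limit carries no $\mathcal{H}^n$-mass on spheres, which the paper likewise asserts without proof in the remark after Theorem \ref{Conv0}. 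In sum, your variant is a correct but slightly longer route; the paper's device of rescaling by the exact critical radius $t_i$ is precisely what lets it bypass your second case.
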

\begin{proof}
Let us prove \eqref{WRegM} by contradiction.
Assume that there are constants $0<\ep'<\ep<\th_n$, a sequence $0<r_i<1$, a sequence of $(n+1)$-dimensional smooth geodesic balls $B_1(p_i)$ with Ricci curvature $\ge-\f1i$ and $\mathcal{H}^{n+1}\left(B_1(p_i)\right)\ge(1-\f1i)\omega_{n+1}$, and a sequence of area-minimizing hypersurfaces $M_i$ in $B_1(p_i)$ with $p_i\in M_i$ and $\p M_i\subset\p B_1(p_i)$ such that
\begin{equation}\aligned\label{AssumeMi}
\mathcal{H}^n(M_i)\le(1+\ep')\omega_n
\endaligned
\end{equation}
and
\begin{equation}\aligned\label{AssumeMiBripi}
\mathcal{H}^n(M_i\cap B_{r_i}(p_i))>(1+\ep)\omega_nr_i^n.
\endaligned
\end{equation}
Let us consider the density function $\Th_i$ defined by
$$\Th_i(r)=\f{\mathcal{H}^n(M_i\cap B_r(p_i))}{\omega_nr^n}$$
for any $r\in(0,1]$.
From \eqref{AssumeMi} and \eqref{AssumeMiBripi}, there is a sequence of numbers $t_i\in(r_i,1)$ such that
$\Th_i(t_i)=1+\ep$ and $\Th_i(r)\le1+\ep$ for all $r\in(t_i,1)$.
Moreover, $(1+\ep)\omega_nt_i^n\le(1+\ep')\omega_n$ implies
\begin{equation}\aligned\label{tiepep'}
t_i\le\left(\f{1+\ep'}{1+\ep}\right)^{1/n}.
\endaligned
\end{equation}

Scale $B_1(p_i),M_i$ by the factor $\f{1}{t_i}$ w.r.t. $p_i$.
Namely, we define $B_{1/t_i}(\xi_i)=\f{1}{t_i}B_1(p_i)$, $M_i^*=\f{1}{t_i}M_i\subset B_{1/t_i}(\xi_i)$,
then $M_i^*$ is area-minimizing in $B_{1/t_i}(\xi_i)$.
Up to a choice of the subsequence, $(B_{t_i^{-1/2}}(\xi_i),\xi_i)$ converges to a metric space $(N_\infty^*,\xi^*)$ in the pointed Gromov-Hausdorff sense.
From \eqref{tiepep'}, $\mathrm{Ric}\ge-\f1i$ on $B_1(p_i)$ and $\mathcal{H}^{n+1}\left(B_1(p_i)\right)\ge(1-\f1i)\omega_{n+1}$, we deduce that $N_\infty^*$ contains a metric ball $B_{s_*}(\xi^*)$, which is a Euclidean ball with $s_*=\left(\f{1+\ep}{1+\ep'}\right)^{1/(2n)}$.
With Theorem \ref{Conv0}, we can assume that $M_i^*\cap B_{s_*}(\xi_i)$ converges to an area-minimizing hypersurface $M_\infty^*$ in $B_{s_*}(\xi^*)$.
From \eqref{HnMinfequMitpi} and the choice of $t_i$, we have
\begin{equation}\aligned\label{Ri111}
r^{-n}\mathcal{H}^n\left(M_\infty^*\cap B_{r}(\xi^*)\right)\le \mathcal{H}^n\left(M_\infty^*\cap B_{1}(\xi^*)\right)=(1+\ep)\omega_n
\endaligned
\end{equation}
for all $r\in(1,s_*)$.
The monotonicity of $r^{-n}\mathcal{H}^n\left(M_\infty^*\cap B_r(\xi^*)\right)$ implies that $M_\infty^*$ is a cone.
Then \eqref{Ri111} contradicts to the definition of $\th_n$.
This completes the proof.
\end{proof}

Combining Proposition \ref{EnlocAM} and Lemma \ref{almostmono}, the local property of area-minimizing hypersurfaces in a class of manifolds can be controlled by large-scale conditions in the following sense.
\begin{theorem}\label{IntReg}
For any integer $n\ge2$, and any constant $\ep\in(0,\f12]$, there is a constant $\de>0$ depending only on $n,\ep$ such that if
$B_1(p)$ is an $(n+1)$-dimensional smooth geodesic ball with Ricci curvature $\ge-\de$, $\mathcal{H}^{n+1}\left(B_1(p)\right)\ge(1-\de)\omega_{n+1},$
and $M$ is an area-minimizing hypersurface in $B_1(p)$ with $p\in M$, $\p M\subset\p B_1(p)$ and $\mathcal{H}^n(B_1(p)\cap M)<(1+\de)\omega_n$, then for any subset $U\subset B_{1-\ep}(p)$ we have
\begin{equation}\aligned
\mathcal{H}^n(M\cap U)\le(1+\ep)\omega_{n}2^{-n}(diam\, U)^n.
\endaligned
\end{equation}
In particular, $M$ is smooth in $B_{1-\ep}(p)$ provided we choose $\ep<\th_n$.
\end{theorem}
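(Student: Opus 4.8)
The plan is to run a single compactness argument, feeding into it Lemma \ref{almostmono} (which already gives an almost-monotonicity of the density at the base point) and Proposition \ref{EnlocAM} (the unit-diameter case), but only after upgrading the former to a density bound valid at \emph{every} point of $M\cap B_{1-\ep}(p)$ at all small scales. The organising principle: along any sequence of admissible data with constants $\de_i\to 0$, Bishop--Gromov volume rigidity together with Colding's volume convergence \cite{C}\cite{CCo1} forces $B_1(p_i)\to B_1(0)\subset\R^{n+1}$ in the Gromov--Hausdorff sense, and moreover forces every sub-ball $B_s(q_i)$ with $q_i$ at a definite distance from $\p B_1(p_i)$ to have volume ratio tending to $1$, uniformly in $s$; by Theorem \ref{Conv0} the $M_i$ subconverge to an area-minimizing $M_\infty\ni 0$ in $B_1(0)$, and the hypothesis $\mathcal{H}^n(B_1(p_i)\cap M_i)<(1+\de_i)\omega_n$ together with the lower-semicontinuity in \eqref{HnMinfequMitpi} gives $\mathcal{H}^n(M_\infty\cap B_1(0))\le\omega_n$; since $r^{-n}\mathcal{H}^n(M_\infty\cap B_r(0))$ is non-decreasing and $\ge 1$, $M_\infty$ must be a hyperplane through $0$. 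The same reasoning identifies any blow-up limit built from rescaled sub-balls whose rescaled areas are asymptotically $\le\omega_n$.

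First I would prove a density bound at every interior point (Step~1): given $\eta\in(0,\th_n)$, for $\de$ small (depending on $n,\ep,\eta$) one has $\mathcal{H}^n(M\cap B_s(q))\le(1+\eta)\omega_ns^n$ for all $q\in M\cap B_{1-\ep}(p)$ and all $s\le\ep/2$. The scale $s=\ep/2$ is handled by contradiction: a violating sequence, passed to the hyperplane limit above and treated with Proposition \ref{ContiOmega} applied to shrinking open neighbourhoods of $\overline{B_{\ep/2}(q_i)}$, would give $\mathcal{H}^n(M_\infty\cap\overline{B_{\ep/2}(q_\infty)})\ge(1+\eta)\omega_n(\ep/2)^n$, impossible for a hyperplane. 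Smaller scales are then reached \emph{not} by a monotonicity formula in $N$ (unavailable without sectional curvature bounds) but by re-applying Lemma \ref{almostmono}: the sub-ball $B_{\ep/2}(q)\subset B_{1-\ep/2}(p)$ is again almost Euclidean — its Ricci lower bound is $-\de(\ep/2)^2$ and, by Cheeger--Colding \cite{CCo1}, its volume ratio is at least $1-\Psi(\de)$ — so after rescaling it to the unit ball, Lemma \ref{almostmono} applies and propagates the bound down to all $s<\ep/2$. This is the step that breaks the apparent circularity, and it will be the technical heart of the argument.

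Next, suppose the asserted inequality fails: there are $\ep_*\in(0,\tfrac12]$, admissible data with $\de_i\to 0$, and $U_i\subset B_{1-\ep_*}(p_i)$ with $d_i:=\mathrm{diam}\,U_i>0$, $M_i\cap U_i\ne\emptyset$ and $\mathcal{H}^n(M_i\cap U_i)>(1+\ep_*)\omega_n2^{-n}d_i^n$; pass to a subsequence with $d_i\to d_\infty$. If $d_\infty>0$, then $U_i$ converges (Blaschke) to a closed $U_\infty\subset\overline{B_{1-\ep_*}(0)}$ of diameter $d_\infty$, Proposition \ref{ContiOmega} yields $\mathcal{H}^n(M_\infty\cap U_\infty)\ge(1+\ep_*)\omega_n2^{-n}d_\infty^n$, and the isodiametric inequality in the affine hyperplane $M_\infty$ forces $\mathcal{H}^n(M_\infty\cap U_\infty)\le\omega_n(d_\infty/2)^n$, a contradiction. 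If $d_\infty=0$, choose $z_i\in M_i\cap U_i$ and rescale $B_{2d_i}(z_i)$ by $1/(2d_i)$; by Step~1 (applied with $\eta\downarrow 0$ along the sequence, legitimate since $\de_i\to 0$) and the uniform volume estimates, the rescaled ambient balls converge to a Euclidean unit ball and $\mathcal{H}^n(\hat M_i\cap\hat B_i)\le(1+\eta_i)\omega_n$ with $\eta_i\to 0$, so by Theorem \ref{Conv0} a blow-up limit $\hat M_\infty$ is a hyperplane through $\hat z_\infty$; since $\hat U_i:=U_i/(2d_i)$ has diameter exactly $1/2$ and lies in $B_{1/2}(\hat z_i)$, Proposition \ref{ContiOmega} gives $\mathcal{H}^n(\hat M_\infty\cap\hat U_\infty)\ge(1+\ep_*)\omega_n4^{-n}$, again contradicting the isodiametric inequality on the hyperplane. (Equivalently, one may invoke Proposition \ref{EnlocAM} on the rescaled unit ball, its area hypothesis being supplied by Step~1.) A routine reduction — using that the conclusion for a smaller $\ep$ implies it for all larger $\ep$, since $B_{1-\ep}(p)$ shrinks while the constant worsens — lets one assume $\ep$ small throughout, which is convenient for the circumradius estimates behind this last case.

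Finally, taking $U=B_r(q)$ with $q\in M\cap B_{1-\ep}(p)$ and $r$ small in the estimate just proved gives $\mathcal{H}^n(M\cap B_r(q))\le(1+\ep)\omega_nr^n$, so each tangent cone of $M$ at $q$ is an area-minimizing cone in $\R^{n+1}$ of density $\le 1+\ep<1+\th_n$ when $\ep<\th_n$; by \eqref{DEFthn} such a cone is flat, so by the regularity theory for area-minimizing hypersurfaces in smooth manifolds (\cite{DG0}\cite{Fe}\cite{Re}) $M$ is singularity-free in $B_{1-\ep}(p)$. The hard part will be Step~1: one must pin down the Euclidean hyperplane as the only possible Gromov--Hausdorff or blow-up limit — which depends on the global area bound together with the Euclidean monotonicity transported through Theorem \ref{Conv0} — and one must check the volume hypotheses of Lemma \ref{almostmono} and Proposition \ref{EnlocAM} on sub-balls at arbitrarily small scales, for which Cheeger--Colding's uniform almost-maximal-volume estimates are indispensable.
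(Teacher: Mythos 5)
Your proposal is correct and follows essentially the route the paper intends: Theorem \ref{IntReg} is stated there as a direct combination of Proposition \ref{EnlocAM} and Lemma \ref{almostmono}, and your Step~1 (interior density bounds at all small scales via the compactness/hyperplane-limit argument plus Lemma \ref{almostmono} applied to rescaled sub-balls, with Cheeger--Colding volume almost-rigidity supplying their hypotheses) together with the diameter dichotomy, Proposition \ref{ContiOmega} and the isodiametric inequality is exactly the fleshed-out version of that combination, in the same spirit as the paper's own proof of Proposition \ref{EnlocAM}. One small nit: the parenthetical alternative of invoking Proposition \ref{EnlocAM} directly on the rescaled unit ball does not literally fit its hypotheses (it requires $\mathrm{diam}\,U=1$ with $U\subset B_{1-\ep}(p)$, whereas your rescaled $\hat U_i$ has diameter $\tfrac12$), but your primary argument via the isodiametric inequality on the hyperplane limit does not need it.
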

Note that the singular set of every area-minimizing hypersurface in a manifold has the codimension 7 at most. Then combining Theorem \ref{Conv0}, Proposition \ref{EnlocAM} and Lemma \ref{almostmono},
we can prove the following result by contradiction.
\begin{theorem}\label{PARTReg}
For any integer $n\ge2$, and any constant $\ep\in(0,\f12]$, there is a constant $\de>0$ depending only on $n,\ep$ such that if
$B_1(p)$ is an $(n+1)$-dimensional smooth geodesic ball with Ricci curvature $\ge-\de$, $\mathcal{H}^{n+1}\left(B_1(p)\right)\ge(1-\de)\omega_{n+1},$
and $M$ is an area-minimizing hypersurface in $B_1(p)$ with $\p M\subset\p B_1(p)$, then there is a collection of balls $\{B_{s_i}(x_i)\}_{i=1}^{N_\ep}$ with $\sum_{i=1}^{N_\ep}s_i^{n-7+\ep}<\ep$ so that for any subset $U\subset B_{1-\ep}(p)\setminus\cup_{i=1}^{N_\ep}B_{s_i}(x_i)$ with $diam\, U<\de$ we have
\begin{equation}\aligned
\mathcal{H}^n(M\cap U)\le(1+\ep)\omega_{n}2^{-n}(diam\, U)^n.
\endaligned
\end{equation}
\end{theorem}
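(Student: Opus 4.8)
The plan is to argue by contradiction: reduce to the flat limit via Cheeger--Colding almost-rigidity and Theorem~\ref{Conv0}, invoke the classical dimension bound for the singular set of an area-minimizing hypersurface in $\R^{n+1}$, cover that singular set efficiently, and then push the resulting near-flatness back to the approximating sequence by means of Theorem~\ref{IntReg}. Suppose the assertion fails for some $\ep\in(0,\f12]$. Taking $\de=\f1i$ produces, for each $i$, an $(n+1)$-dimensional smooth geodesic ball $B_1(p_i)$ with $\mathrm{Ric}\ge-\f1i$ and $\mathcal{H}^{n+1}(B_1(p_i))\ge(1-\f1i)\omega_{n+1}$, together with an area-minimizing hypersurface $M_i$ in $B_1(p_i)$ with $\p M_i\subset\p B_1(p_i)$, such that for \emph{every} finite collection $\{B_{s_j}(x_j)\}_j$ with $\sum_j s_j^{n-7+\ep}<\ep$ there is a set $U_i\subset B_{1-\ep}(p_i)\setminus\bigcup_j B_{s_j}(x_j)$ with $\mathrm{diam}\,U_i<\f1i$ and $\mathcal{H}^n(M_i\cap U_i)>(1+\ep)\omega_n2^{-n}(\mathrm{diam}\,U_i)^n$. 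Since $\mathcal{H}^{n+1}(B_1(p_i))\ge\f12\omega_{n+1}$, the non-collapsing condition \eqref{Vol} holds uniformly; passing to a subsequence, $\overline{B_1(p_i)}$ converges in the Gromov--Hausdorff sense to a metric ball $\overline{B_1(p_\infty)}$, and by the volume convergence \eqref{VolCOV} (which forces $\mathcal{H}^{n+1}(B_1(p_\infty))=\omega_{n+1}$, using Bishop--Gromov with $\mathrm{Ric}\ge-\f1i$) together with Cheeger--Colding volume rigidity \cite{CCo1}, $\overline{B_1(p_\infty)}$ is isometric to the flat ball $\overline{B_1(0)}\subset\R^{n+1}$. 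By Blaschke's theorem \cite{BBI} we may also assume $M_i$ converges to a closed set $M_\infty\subset\overline{B_1(0)}$ in the induced Hausdorff sense. Applying Theorem~\ref{Conv0} in small balls recentered at points of $M_\infty$ (where the hypothesis ``$p_i\in M_i$'' is available for points of $M_i$ converging to them), $M_\infty$ is locally area-minimizing in $B_1(0)$, and by \eqref{HnMinfequMitpi}, for $x\in M_\infty$ and $x_i\in M_i$ with $x_i\to x$, one has $\mathcal{H}^n(M_i\cap\overline{B_r(x_i)})\to\mathcal{H}^n(M_\infty\cap B_r(x))$ for all but countably many $r>0$.

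By the interior regularity theory of De~Giorgi, Federer and Reifenberg \cite{DG0}\cite{Fe}\cite{Re}, $M_\infty$ is smooth away from a closed set $\mathrm{Sing}(M_\infty)$ with $\dim\mathrm{Sing}(M_\infty)\le n-7$, hence $\mathcal{H}^{n-7+\ep}\big(\mathrm{Sing}(M_\infty)\cap\overline{B_{1-\ep/2}(0)}\big)=0$ (this set is empty when $n\le6$). Therefore there is a \emph{finite} collection of balls $\{B_{r_k}(q_k)\}_{k=1}^{K}$ with $q_k\in\mathrm{Sing}(M_\infty)$, each $r_k<\ep/100$, covering $\mathrm{Sing}(M_\infty)\cap\overline{B_{1-\ep/2}(0)}$ and with $2^{\,n-7+\ep}\sum_{k}r_k^{\,n-7+\ep}<\ep$. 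Put $K''=\overline{B_{1-\ep}(0)}\setminus\bigcup_{k}B_{2r_k}(q_k)$; then $\mathrm{Sing}(M_\infty)\cap K''=\emptyset$, so $M_\infty\cap K''$ is a compact subset of the smooth part of $M_\infty$, and we may cover it by finitely many balls $B_{\rho_0}(x_l)$, $x_l\in M_\infty\cap K''$, $l=1,\dots,L$, with $\rho_0>0$ as small as we please.

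Let $\de_1=\de_1(n,\ep)>0$ be the constant furnished by Theorem~\ref{IntReg}. Using smoothness of $M_\infty$ near the compact set $M_\infty\cap K''$, fix a radius $r_0\in(0,\ep/10)$ that avoids the countably many radii on which $M_\infty$ charges a sphere about some $x_l$ and is small enough that $\mathcal{H}^n(M_\infty\cap B_{r_0}(x))\le(1+\tfrac{\de_1}{2})\omega_nr_0^n$ for all $x\in M_\infty\cap K''$, and choose $\rho_0<(1-\ep)r_0/4$ above. Pick $q_{k,i}\to q_k$ in $B_1(p_i)$ and $x_{l,i}\in M_i$ with $x_{l,i}\to x_l$. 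By Bishop--Gromov monotonicity and \eqref{VolCOV}, the rescaled balls $r_0^{-1}B_{r_0}(x_{l,i})$ have $\mathrm{Ric}\ge-\de_1$ and volume $\ge(1-\de_1)\omega_{n+1}$ for large $i$; by the density convergence above, $\mathcal{H}^n(M_i\cap B_{r_0}(x_{l,i}))<(1+\de_1)\omega_nr_0^n$ for large $i$; and $x_{l,i}\in M_i$. Hence Theorem~\ref{IntReg}, rescaled to $B_{r_0}(x_{l,i})$, gives for large $i$ and every $l$
\[
\mathcal{H}^n(M_i\cap V)\le(1+\ep)\omega_n2^{-n}(\mathrm{diam}\,V)^n\qquad\text{for every }V\subset B_{(1-\ep)r_0}(x_{l,i}).
\]
A routine argument using the Hausdorff convergence of $M_i$ and of the balls shows that, for large $i$, every point of $M_i\cap\big(B_{1-\ep}(p_i)\setminus\bigcup_k B_{2r_k}(q_{k,i})\big)$ lies in some $B_{(1-\ep)r_0/3}(x_{l,i})$; hence, if $U\subset B_{1-\ep}(p_i)\setminus\bigcup_k B_{2r_k}(q_{k,i})$ has $\mathrm{diam}\,U<(1-\ep)r_0/3$ and meets $M_i$, then $U\subset B_{(1-\ep)r_0}(x_{l,i})$ for some $l$, and the displayed estimate yields $\mathcal{H}^n(M_i\cap U)\le(1+\ep)\omega_n2^{-n}(\mathrm{diam}\,U)^n$. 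But $\{B_{2r_k}(q_{k,i})\}_{k=1}^{K}$ satisfies $\sum_k(2r_k)^{n-7+\ep}<\ep$, so for every $i$ with $\f1i<(1-\ep)r_0/3$ the set $U_i$ from the first paragraph violates precisely this estimate --- a contradiction. (When $\mathrm{Sing}(M_\infty)\cap\overline{B_{1-\ep/2}(0)}=\emptyset$, in particular when $n\le6$, the collection is empty and the same reasoning applies with $K''=\overline{B_{1-\ep}(0)}$.)

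The principal obstacle is making the scale $r_0$, and with it the whole covering, uniform in $i$: one must exploit the almost-rigidity built into the hypotheses --- a Ricci lower bound tending to $0$ together with almost-maximal volume --- through Bishop--Gromov monotonicity and Cheeger--Colding volume rigidity, in order to certify that $B_1(p_i)$ is quantitatively close to the flat ball at a \emph{definite} small scale about every relevant point, so that the near-Euclidean hypotheses of Theorem~\ref{IntReg} genuinely hold there. Combined with the density convergence of Theorem~\ref{Conv0}, this is exactly what converts the classical dimension bound on $\mathrm{Sing}(M_\infty)$ into the required quantitative covering estimate for the sequence $M_i$; the $2^{-n}$ and the treatment of all small-diameter subsets $U$ are already packaged inside Theorem~\ref{IntReg} (via Proposition~\ref{EnlocAM} and Lemma~\ref{almostmono}) and the isodiametric inequality.
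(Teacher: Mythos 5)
Your proposal is correct, and it follows essentially the same route the paper indicates: argue by contradiction, pass to the Gromov--Hausdorff limit using Cheeger--Colding volume rigidity to reach the flat Euclidean ball, invoke Theorem~\ref{Conv0} to identify the limit $M_\infty$ as an area-minimizing hypersurface in $B_1(0)$, use the classical De~Giorgi--Federer--Reifenberg dimension bound $\dim\mathrm{Sing}(M_\infty)\le n-7$ to produce the small cover, and then transfer the near-flatness back to $M_i$ at a definite scale via Theorem~\ref{IntReg} (which is exactly Proposition~\ref{EnlocAM} combined with Lemma~\ref{almostmono}, the two ingredients the paper names). The only point worth tightening in the write-up is the choice of the uniform scale $r_0$: the uniform density bound $\mathcal{H}^n(M_\infty\cap B_{r_0}(x))\le(1+\de_1/2)\omega_n r_0^n$ over the compact set $M_\infty\cap K''$ follows from Dini's theorem applied to the monotone ratio $r\mapsto r^{-n}\mathcal{H}^n(M_\infty\cap B_r(x))$, using that $\mathrm{Sing}(M_\infty)$ is relatively closed so $M_\infty\cap K''$ has positive distance to it --- this is implicit in your phrase ``using smoothness of $M_\infty$ near the compact set $M_\infty\cap K''$'' but deserves a sentence.
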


\section{Continuity for the volume function of area-minimizing hypersurfaces}

Let $N_i$ be a sequence of $(n+1)$-dimensional complete smooth manifolds with \eqref{Ric} and \eqref{Vol}.
Up to choose the subsequence, we assume that $\overline{B_1(p_i)}$ converges to a metric ball $\overline{B_1(p_\infty)}$ in the Gromov-Hausdorff sense.
Let $M_i$ be an area-minimizing hypersurface in $B_1(p_i)\subset N_i$ with $\p M_i\subset\p B_1(p_i)$.
Suppose that $M_i$ converges to a closed set $M_\infty\subset \overline{B_1(p_\infty)}$ in the induced Hausdorff sense.
\begin{lemma}\label{LowerMinfty}
For any $\de>0$ there is a constant $r_\de\in(0,1)$ such that if $0<r\le r_\de$ and $x\in M_\infty\cap B_{1-3r}(p_\infty)$ with
$d_{GH}\left(B_{2r}(x),B_{2r}(0)\right)<r_\de r,$
then for any sequence $M_i\ni x_i\rightarrow x$, there holds
\begin{equation}\aligned\label{1-deBrMinf}
(1-\de)\limsup_{i\rightarrow\infty}\mathcal{H}^n\left(M_i\cap \overline{B_{(1+r_\de)r}(x_i)}\right)\le\mathcal{H}^n\left(M_\infty\cap \overline{B_{r}(x)}\right).
\endaligned
\end{equation}
\end{lemma}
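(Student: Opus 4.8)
The plan is to argue by contradiction and blow up, so as to reduce the statement to the Euclidean model handled by Theorem \ref{Conv0}. If the lemma fails, there is $\delta>0$ such that, taking $r_\delta=1/k$ with $k\to\infty$, one can find $r_k\le 1/k$, points $x_k\in M_\infty\cap B_{1-3r_k}(p_\infty)$ with $d_{GH}(B_{2r_k}(x_k),B_{2r_k}(0))<r_k/k$, and, for each $k$, a sequence $M_i\ni x_i^{(k)}\to x_k$ (as $i\to\infty$) with $(1-\delta)L_k>B_k$, where $L_k:=\limsup_i\mathcal{H}^n(M_i\cap\overline{B_{(1+1/k)r_k}(x_i^{(k)})})$ and $B_k:=\mathcal{H}^n(M_\infty\cap\overline{B_{r_k}(x_k)})$; by Lemma \ref{upbdareaM} all these quantities are finite.

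First I would rescale the metric of $N_i$ by $1/r_k$ about $x_i^{(k)}$. The Ricci lower bound becomes $-n\kappa^2r_k^2\to 0$; by $d_{GH}(B_{2r_k}(x_k),B_{2r_k}(0))<r_k/k$, the Gromov--Hausdorff convergence $B_1(p_i)\to B_1(p_\infty)$ and $x_i^{(k)}\to x_k$, the rescaled ball $B_2$ is, for $i$ large, $O(1/k)$-close to the Euclidean ball $B_2(0)$; and by Colding's volume convergence (\eqref{nuinfty}, \eqref{VolCOV}) the corresponding balls also converge in measure, so the non-collapsing hypothesis of Theorem \ref{Conv0} is automatic. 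A diagonal argument then produces an increasing sequence $i_k$ such that, writing $\widehat N_k=\tfrac1{r_k}N_{i_k}$, $\widehat M_k=\tfrac1{r_k}M_{i_k}$ and $\widehat y_k$ for the image of $x_{i_k}^{(k)}$: (a) $\mathcal{H}^n(M_{i_k}\cap\overline{B_{(1+1/k)r_k}(x_{i_k}^{(k)})})\ge(1-1/k)L_k$; (b) $B_{3/2}(\widehat y_k)\subset\widehat N_k$ is $(1/k)$-close to $B_{3/2}(0)$; (c) there is a $(1/k)$-Hausdorff approximation between $B_{3/2}(\widehat y_k)$ and the $1/r_k$-rescaling of $B_{3/2r_k}(x_k)$ which matches $\widehat y_k$ with (the rescaled) $x_k$ and $\widehat M_k$ with (the rescaled) $M_\infty$ up to error $1/k$. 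Theorem \ref{Conv0} then applies to $\widehat M_k$: up to a subsequence it converges in the induced Hausdorff sense to an area-minimizing hypersurface $M_\infty^*$ in the Euclidean ball $B_{3/2}(0)$ with $0\in M_\infty^*$, and by (c) the rescaled $M_\infty$ has the same limit $M_\infty^*$.

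Next I would combine both inequalities of \eqref{HnMinfequMitpi} with \eqref{VOLCov} for the sequence $\widehat M_k\to M_\infty^*$. For any $t>1$ one gets $\limsup_k\mathcal{H}^n(\widehat M_k\cap\overline{B_{1+1/k}(\widehat y_k)})\le\mathcal{H}^n(M_\infty^*\cap\overline{B_t(0)})$; since $M_\infty^*$ is minimizing through the origin, $\mathcal{H}^n(M_\infty^*\cap\partial B_s(0))=0$ for every $s$, so letting $t\downarrow 1$ and unrescaling gives $\limsup_k r_k^{-n}\mathcal{H}^n(M_{i_k}\cap\overline{B_{(1+1/k)r_k}(x_{i_k}^{(k)})})\le\mathcal{H}^n(M_\infty^*\cap B_1(0))$. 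With (a) and $(1-\delta)L_k>B_k$ this yields $\limsup_k r_k^{-n}B_k\le(1-\delta)\,\mathcal{H}^n(M_\infty^*\cap B_1(0))$. By monotonicity $\mathcal{H}^n(M_\infty^*\cap B_1(0))\ge\omega_n>0$, and $s\mapsto\mathcal{H}^n(M_\infty^*\cap B_s(0))$ is continuous, so it is enough to prove the no-mass-loss estimate
\[
\liminf_{k\to\infty}r_k^{-n}\,\mathcal{H}^n\bigl(M_\infty\cap\overline{B_{r_k}(x_k)}\bigr)\ \ge\ \mathcal{H}^n\bigl(M_\infty^*\cap B_{1-\eta}(0)\bigr)\qquad\text{for every }\eta\in(0,1),
\]
since for $\eta$ small its right-hand side exceeds $(1-\delta)\mathcal{H}^n(M_\infty^*\cap B_1(0))$, a contradiction.

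The hard part is this last estimate, and I would handle it as in the proof of Theorem \ref{Conv0}: because $B_{2r_k}(x_k)$ is almost Euclidean, the $\epsilon$-regularity of \cite{CCo1} (Theorem A.1.8, as used for \eqref{CLAy}) yields open sets $E_i^{(k)}$ with $M_i=\partial E_i^{(k)}$ on charts containing $B_{(2-o(1))r_k}(x_i^{(k)})$, and the volume arguments of Theorem \ref{Conv0} --- which rely on Lemma \ref{lbd} and \eqref{VolCOV} and become available once the rescaling has made the ambient limit exactly Euclidean --- identify $\partial\widehat E_\infty=M_\infty^*$ for the rescaled limit and show that, near $x_k$, $M_\infty$ is the reduced boundary of a set of finite perimeter; in particular $M_\infty$ is locally countably $n$-rectifiable there, so by \eqref{MSHS} its Hausdorff measure equals its Minkowski content. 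Then the localized form of Lemma \ref{UpMinfty} (replacing $B_t(p_\infty),B_t(p_i)$ by $B_t(x),B_t(x_i)$, which its proof permits), applied to $F_i=M_i$ and combined with \eqref{VolCOV} and Lemma \ref{LOWERM000}, upgrades this to $r_k^{-n}\mathcal{H}^n(M_\infty\cap\overline{B_{r_k}(x_k)})\to\mathcal{H}^n(M_\infty^*\cap B_1(0))$ up to an annular error controlled by the upper bound \eqref{HnBrpinfUP}, which one absorbs by passing from radius $1$ to $1-\eta$. This contradicts $\limsup_k r_k^{-n}B_k\le(1-\delta)\mathcal{H}^n(M_\infty^*\cap B_1(0))$ and proves the lemma. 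The main obstacles I anticipate are the simultaneous diagonal choice of $i_k$ making the two blow-ups agree, and the careful bookkeeping of open-versus-closed balls and of the mismatched radii $r_k$ and $(1+1/k)r_k$.
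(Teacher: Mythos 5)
Your proposal takes a genuinely different route from the paper. The paper argues by contradiction at a fixed scale: it covers $M_\infty\cap\overline{B_{(1+r_j^*)r_j}(z_j)}$ by sets of small diameter, transfers this cover to $M_{i_j}$, and then invokes the quantitative partial regularity estimate of Theorem \ref{PARTReg} to bound $\mathcal{H}^n(M_{i_j}\cap U)$ by an isodiametric-type quantity on each cover element away from a small exceptional set $F_{j,\ep}$, controlling the exceptional set by Lemma \ref{upbdareaM}; no blowup limit is ever taken inside the proof of Lemma \ref{LowerMinfty} itself. Your proposal instead rescales by $1/r_k$, passes to a common blowup limit $M_\infty^*$, and tries to sandwich both sides of the contradictory inequality between the masses of $M_\infty^*$.

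There is a genuine gap in the step you label ``the hard part,'' namely the no-mass-loss estimate $\liminf_k r_k^{-n}\,\mathcal{H}^n(M_\infty\cap\overline{B_{r_k}(x_k)})\ge\mathcal{H}^n(M_\infty^*\cap B_{1-\eta}(0))$, and this is not just bookkeeping. The mass comparison in Theorem \ref{Conv0} (together with \eqref{HnMinfequMitpi} and \eqref{VOLCov}) applies to the diagonal sequence $\widehat M_k=\frac1{r_k}M_{i_k}$ of area-minimizing hypersurfaces sitting in the \emph{smooth} manifolds $\frac1{r_k}N_{i_k}$; it does \emph{not} apply to the sequence $\widehat M_\infty^{(k)}=\frac1{r_k}(M_\infty,x_k)$, which consists of closed subsets of the generally singular rescaled limit spaces $\frac1{r_k}(B_1(p_\infty),x_k)$, and the whole content of Section 5 (including the very lemma you are trying to prove) is to establish measure convergence for exactly this latter type of sequence. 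Your fallback via Lemma \ref{UpMinfty} together with \eqref{MSHS} requires two things that are not supplied: (a) that $M_\infty$ near $x_k$ is countably $n$-rectifiable with a uniform Ahlfors lower bound, which you assert ``the volume arguments of Theorem \ref{Conv0}'' give, but those arguments rely on the ambient limit being \emph{exactly} Euclidean, which holds only as $k\to\infty$ and not for a fixed $k$; and (b) an interchange of the two limits $\delta\to 0$ (Minkowski content) and $k\to\infty$ (blowup), since Lemma \ref{UpMinfty} gives convergence of $\delta$-neighborhood volumes only for \emph{fixed} $\delta$, while the Minkowski content is the limit as $\delta\to 0$. Without a uniform (in $k$) rate for the Minkowski/Hausdorff equivalence, the interchange is unjustified, and the estimate you need is, in effect, an instance of the lemma being proved, rendering the blowup route circular. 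The paper's covering argument avoids this by never having to compare the masses of two different sequences converging to the same blowup; instead it directly converts a Hausdorff-measure-approximating cover of $M_\infty$ into an upper bound on $\mathcal{H}^n(M_{i_j})$ using the quantitative isodiametric bound from Theorem \ref{PARTReg}.
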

\begin{proof}
We only need to show that there is a constant $r_\de'\in(0,1)$ such that if $0<r\le r_\de'$ and $x\in M_\infty\cap B_{1-3r}(p_\infty)$ with
$d_{GH}\left(B_{2r}(x),B_{2r}(0)\right)<r_\de' r,$
then for any sequence $M_i\ni x_i\rightarrow x$
\begin{equation}\aligned\label{1-deBrMinf*}
(1-\de/2)\limsup_{i\rightarrow\infty}\mathcal{H}^n\left(M_i\cap \overline{B_{(1+s)r}(x_i)}\right)\le\mathcal{H}^n\left(M_\infty\cap \overline{B_{(1+s)r}(x)}\right)\quad \mathrm{for\ any}\ 0<s\le r_\de'.
\endaligned
\end{equation}
For the sufficiently small $r_\de''>0$ we can assume
\begin{equation}\aligned
\mathcal{H}^n\left(M_\infty\cap \overline{B_{(1+r_\de'')r}(x)}\right)\le\f{1-\de/2}{1-\de}\mathcal{H}^n\left(M_\infty\cap \overline{B_{r}(x)}\right).
\endaligned
\end{equation}
We choose $r_\de=\min\{r_\de',r_\de''\}$, then \eqref{1-deBrMinf} holds.

Let us prove \eqref{1-deBrMinf*} by contradiction. We assume that there are a constant $\ep_0>0$,
3 sequences of positive constants $r_j,r_j^*\le r_j'\rightarrow0$, a sequence of points $z_j\in M_\infty\cap B_{1-3r_j}(p_\infty)$, and a sequence of points $M_i\ni z_{i,j}\rightarrow z_j$ such that
$
d_{GH}\left(B_{2r_j}(z_j),B_{2r_j}(0)\right)<r_j'r_j
$
and
\begin{equation}\aligned\label{ep0CONTR}
(1-\ep_0)\limsup_{i\rightarrow\infty}\mathcal{H}^n\left(M_i\cap \overline{B_{(1+r_j^*)r_j}(z_{i,j})}\right)>\mathcal{H}^n\left(M_\infty\cap \overline{B_{(1+r_j^*)r_j}(z_j)}\right).
\endaligned
\end{equation}
For any $0<\tau<\ep$, for each $j$ let $\{U_{j,k}\}_{k=1}^{N_{\tau,\ep,j}}$ be a finite covering of $M_{\infty}\cap \overline{B_{(1+r_j^*)r_j}(z_j)}$ with $\mathrm{diam} U_{j,k}\le\tau r_j$ such that
\begin{equation}\aligned\label{MBrjNtauepj**}
\mathcal{H}^n\left(M_{\infty}\cap \overline{B_{(1+r_j^*)r_j}(z_j)}\right)> \f{\omega_n}{2^n}\sum_{k=1}^{N_{\tau,\ep,j}} (\mathrm{diam} U_{j,k})^n-\ep r_j^n.
\endaligned
\end{equation}
Without loss of generality, we assume that all the $U_{j,k}$ are open sets.

There is a subsequence $i_j$ so that $d_{GH}\left(B_{2r_j}(z_{i_j,j}),B_{2r_j}(0)\right)<r_j'r_j$ and
\begin{equation}\aligned\label{MijsipMeprj}
\mathcal{H}^n\left(M_{i_j}\cap \overline{B_{(1+r_j^*)r_j}(z_{i_j,j})}\right)\ge\limsup_{i\rightarrow\infty}\mathcal{H}^n\left(M_i\cap \overline{B_{(1+r_j^*)r_j}(z_{i,j})}\right)-\ep r_j^n.
\endaligned
\end{equation}
Up to a choice of the subsequence, for each $j$ there is a collection of open sets $U_{j,k}^*\subset B_{3r_j/2}(z_{i_j,j})$ so that $M_{i_j}\cap \overline{B_{(1+r_j^*)r_j}(z_{i_j,j})}\subset\bigcup_{k=1}^{N_{\tau,\ep,j}}U_{j,k}^*$ and $\mathrm{diam} U_{j,k}^*=\mathrm{diam} U_{j,k}$.
From Theorem \ref{PARTReg}, for each $j$
there are $\tau_\ep>0$ and a collection of balls $\{B_{s_{j,k}}(x_{j,k})\}_{k=1}^{N_\ep}\subset B_{3r_j/2}(z_{i_j,j})$ with $\sum_{k=1}^{N_\ep}s_{j,k}^n<\ep r_j^n$ so that for any subset $U\subset B_{3r_j/2}(z_{i_j,j})\setminus\cup_{k=1}^{N_\ep}B_{s_{j,k}}(x_{j,k})$ with diam$U<\tau_\ep r_j$ we have
\begin{equation}\aligned\label{1-epMijU}
(1-\ep)\mathcal{H}^n(M_{i_j}\cap U)\le\omega_{n}2^{-n}(\mathrm{diam}\, U)^n.
\endaligned
\end{equation}
Up to a choice of the subsequence of $i_j$, we can require $\tau_\ep$ depending only on $n,\ep,r_j$.
Hence we can assume $\tau<\tau_\ep$.
Let $F_{j,\ep}=\bigcup_{k=1}^{N_\ep} B_{s_{j,k}}(x_{j,k})$, then combining Lemma \ref{upbdareaM}
\begin{equation}\aligned\label{HNtaurjEjep}
&\mathcal{H}^{n+1}(M_{i_j}\cap B_{2\tau r_j}(F_{j,\ep}))\le \sum_{k=1}^{N_\ep}\mathcal{H}^{n+1}\left(M_{i_j}\cap B_{s_{j,k}+2\tau r_j}(x_{j,k})\right)\\
\le& c_{n,\k}\sum_{k=1}^{N_\ep}(s_{j,k}+2\tau r_j)^n\le c_{n,\k}\sum_{k=1}^{N_\ep}c_n(s_{j,k}^n+\tau^n r_j^n)\le c_{n,\k}c_n(\ep+N_\ep\tau^n) r_j^n,
\endaligned
\end{equation}
where $c_n$ is a constant depending only on $n$.
Denote $\mathcal{I}_{\tau,\ep,j}=\{k=1,\cdots,N_{\tau,\ep,j}|\, U_{j,k}^*\cap F_{j,\ep}=\emptyset\}$.
Note $\mathrm{diam} U_{j,k}^*=\mathrm{diam} U_{j,k}\le\tau r_j$.
Then $M_{i_j}\cap\overline{B_{(1+r_j^*)r_j}(z_{i_j,j})}\subset\bigcup_{k\in \mathcal{I}_{\tau,\ep,j}}U_{j,k}^*\cup B_{2\tau r_j}(F_{j,\ep})$.
From \eqref{MBrjNtauepj**}\eqref{1-epMijU}\eqref{HNtaurjEjep},
\begin{equation*}\aligned
&\mathcal{H}^n\left(M_{\infty}\cap \overline{B_{(1+r_j^*)r_j}(z_j)}\right)> \f{\omega_n}{2^n}\sum_{k\in \mathcal{I}_{\tau,\ep,j}} (\mathrm{diam} U_{j,k}^*)^n-\ep r_j^n\\
\ge&(1-\ep)\sum_{k\in \mathcal{I}_{\tau,\ep,j}} \mathcal{H}^n(M_{i_j}\cap U_{j,k}^*)-\ep r_j^n-c_{n,\k}c_n(\ep+N_\ep\tau^n) r_j^n+\mathcal{H}^{n+1}(M_{i_j}\cap B_{2\tau r_j}(F_{j,\ep}))\\
\ge&(1-\ep)\mathcal{H}^n\left(M_{i_j}\cap \overline{B_{(1+r_j^*)r_j}(z_{i_j,j})}\right)-\ep r_j^n-c_{n,\k}c_n(\ep+N_\ep\tau^n) r_j^n.
\endaligned
\end{equation*}
Combining \eqref{MijsipMeprj}, we get
\begin{equation}\aligned
&\mathcal{H}^n\left(M_{\infty}\cap \overline{B_{(1+r_j^*)r_j}(z_j)}\right)\\
\ge&(1-\ep)\limsup_{i\rightarrow\infty}\mathcal{H}^n\left(M_i\cap \overline{B_{(1+r_j^*)r_j}(z_{i,j})}\right)-2\ep r_j^n-c_{n,\k}c_n(\ep+N_\ep\tau^n)r_j^n.
\endaligned
\end{equation}
Letting $\tau\to0$ first, and then $\ep\to0$ implies
\begin{equation}\aligned
\mathcal{H}^n\left(M_{\infty}\cap \overline{B_{(1+r_j^*)r_j}(z_j)}\right)\ge\limsup_{i\rightarrow\infty}\mathcal{H}^n\left(M_i\cap \overline{B_{(1+r_j^*)r_j}(z_{i,j})}\right),
\endaligned
\end{equation}
which contradicts to \eqref{ep0CONTR}. This completes the proof.
\end{proof}

Now let us study the upper semicontinuity for the volume function of area-minimizing hypersurfaces equipped with the induced Hausdorff topology.
Let $\mathcal{R}$ and $\mathcal{S}$ denote the regular set and the singular set of $B_1(p_\infty)$, respectively.
For any $\ep>0$, let $\mathcal{R}_\ep$ and $\mathcal{S}_\ep$ denote the sets as \eqref{RSep}.
Note that $\mathcal{S}_\ep\subset\mathcal{S}$ and $\mathcal{S}_\ep$ is closed in $B_1(p_\infty)$ with Hausdorff dimension of $\mathcal{S}_\ep$ $\le n-1$.
For each $0<t<1$, there are finite balls $\{B_{r_{y_j}}(y_j)\}_{j=1}^{N_\ep}\subset B_1(p_\infty)$ with
$\mathcal{S}_\ep\cap\overline{B_{t}(p_\infty)}\subset\bigcup_{j=1}^{N_\ep}B_{r_{y_j}}(y_j)$ so that
\begin{equation}\aligned\label{EepUPB}
r_{y_j}<\ep,\qquad \mathrm{and}\qquad \mathcal{H}^n_\ep(E_{\ep,t})\le\omega_n\sum_{j=1}^{N_\ep}r_{y_j}^n<\ep\quad \mathrm{with}\ E_{\ep,t}\triangleq\bigcup_{j=1}^{N_\ep}B_{r_{y_j}}(y_j).
\endaligned
\end{equation}
Let $\La_\ep$ be the function defined as \eqref{Laep}.
Since $\La_\ep$ is lower semicontinuous on $\mathcal{R}_\ep$,
then
\begin{equation}\aligned\label{Laeptxinfty*}
\La_{\ep,t}^*\triangleq\inf\left\{\La_\ep(x)|\, x\in M_\infty\cap\overline{B_{t}(p_\infty)}\setminus E_{\ep,t}\right\}>0.
\endaligned
\end{equation}
\begin{lemma}\label{UpMinfty*}
Let $\Om$ be an open set in $B_t(p_\infty)$.
If $\Om_i\subset B_1(p_i)$ are open with $\Om_i\rightarrow\Om$ in the induced Hausdorff sense, then for any $B_s(\Om)\subset B_t(p_\infty)$ with $s>0$, we have
$$\mathcal{H}^n\left(M_\infty\cap B_s(\Om)\right)\le
\min\left\{\liminf_{i\rightarrow\infty}\mathcal{H}^n(M_i\cap B_s(\Om_i)),\mathcal{M}_*\left(M_\infty, B_s(\Om)\right)\right\}.$$
\end{lemma}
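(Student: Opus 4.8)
The plan is to prove the two bounds in the minimum separately, using as a common tool the identification of the $n$-dimensional Hausdorff measure of $M_\infty$ with its lower Minkowski content. Concretely, I will show (i) $\mathcal{H}^n(M_\infty\cap G)=\mathcal{M}(M_\infty,G)=\mathcal{M}_*(M_\infty,G)$ for every open $G\subset B_t(p_\infty)$, which is exactly the second term of the minimum, and (ii) $\mathcal{M}_*(M_\infty,B_{s'}(\Om))\le\liminf_i\mathcal{H}^n(M_i\cap B_s(\Om_i))$ for every $s'<s$; then $\mathcal{H}^n(M_\infty\cap B_s(\Om))=\lim_{s'\uparrow s}\mathcal{H}^n(M_\infty\cap B_{s'}(\Om))\le\liminf_i\mathcal{H}^n(M_i\cap B_s(\Om_i))$ by inner regularity of $\mathcal{H}^n$.

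For (i) the main point is the local structure of $M_\infty$. Fix $\ep>0$; by \eqref{EepUPB} the ambient singular stratum $\mathcal{S}_\ep\cap\overline{B_t(p_\infty)}$ is covered by $E_{\ep,t}$ with $\mathcal{H}^n(M_\infty\cap E_{\ep,t})\le c^*_{n,\k}\sum_j r_{y_j}^n<c^*_{n,\k}\ep/\omega_n$ by the upper density bound \eqref{HnBrpinfUP}, so it suffices to work on $K:=M_\infty\cap\overline{B_{s'}(\Om)}\setminus E_{\ep,t}$, which is compact, lies in $\mathcal{R}_\ep$, and satisfies $\La_\ep\ge\La^*_{\ep,t}>0$ by \eqref{Laeptxinfty*}. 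For $x\in K$ and $0<r\le\La^*_{\ep,t}$ the ball $B_r(x)$ is $\ep r$-Gromov--Hausdorff close to a Euclidean ball; after rescaling (so that the ambient balls approach Euclidean balls as $\ep\to0$), Theorem \ref{Conv0} and Proposition \ref{ContiOmega}, together with compactness of area-minimizing hypersurfaces in Euclidean space and the volume convergence \eqref{VolCOV}, show that $M_i$ near $x$ produces in the limit a set of finite perimeter whose boundary is $M_\infty$ near $x$, so $M_\infty$ is countably $n$-rectifiable on $K$. With this, the lower density bound \eqref{lowerbdMinftyr} and the Ambrosio--Fusco--Pallara equivalence \eqref{MSHS} give $\mathcal{H}^n(M_\infty\cap\cdot)=\mathcal{M}(M_\infty,\cdot)$ on small balls; a Vitali covering of $K$ by such balls (with $\mathcal{H}^n$-null boundary spheres), the inner regularity of $\mathcal{H}^n$, and the superadditivity of $\mathcal{M}_*$ over disjoint pieces (for $\de$ below the separations of any finite subfamily the $\de$-tubes are disjoint) then yield $\mathcal{H}^n(M_\infty\cap K)\le\mathcal{M}_*(M_\infty,B_{s'}(\Om))\le\mathcal{M}_*(M_\infty,B_s(\Om))$. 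Letting $\ep\to0$ and then $s'\uparrow s$ gives (i).

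For (ii) I use a tube estimate. Fix $s'<s$ and let $\de>0$ be small. Any point of $B_\de(M_i)\cap B_{s'}(\Om_i)$ lies within $\de$ of $M_i$, and its nearest point on $M_i$ then lies in $M_i\cap B_{s'+\de}(\Om_i)\subset M_i\cap B_s(\Om_i)$ once $\de<s-s'$; hence the fibrewise form of the Heintze--Karcher estimate from the proof of Lemma \ref{LOWERM000} gives $\mathcal{H}^{n+1}\big(B_\de(M_i)\cap B_{s'}(\Om_i)\big)\le\tfrac{2\de}{1-n\k\de}\mathcal{H}^n\big(M_i\cap B_s(\Om_i)\big)$. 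By the volume convergence \eqref{VolCOV}, \eqref{nuinfty} and the covering argument of Lemma \ref{UpMinfty} (see also Lemma \ref{Cont*}), applied to the open set $B_\de(M_\infty)\cap B_{s'}(\Om)$ — whose compact subsets are, for $i$ large, contained in $B_\de(M_i)\cap B_{s'}(\Om_i)$ — one gets $\mathcal{H}^{n+1}\big(B_\de(M_\infty)\cap B_{s'}(\Om)\big)\le\liminf_i\mathcal{H}^{n+1}\big(B_\de(M_i)\cap B_{s'}(\Om_i)\big)\le\tfrac{2\de}{1-n\k\de}\liminf_i\mathcal{H}^n\big(M_i\cap B_s(\Om_i)\big)$. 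Dividing by $2\de$ and taking $\liminf_{\de\to0}$ gives $\mathcal{M}_*(M_\infty,B_{s'}(\Om))\le\liminf_i\mathcal{H}^n(M_i\cap B_s(\Om_i))$, as required.

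The main obstacle is step (i): establishing the local finite-perimeter (hence rectifiable) structure of $M_\infty$, and the coincidence of Hausdorff measure and Minkowski content, across the ambient singular set $\mathcal{S}$, where there is no smooth ambient ball to work in. This is what forces the localization through $E_{\ep,t}$ and $\La^*_{\ep,t}$ into $\mathcal{R}_\ep$, the rescaling that makes the ambient balls arbitrarily close to Euclidean balls, and the appeal to the Euclidean theory via Theorem \ref{Conv0} and Proposition \ref{ContiOmega}. A secondary difficulty is the bookkeeping of the nested limits $i\to\infty$, $\de\to0$, $\ep\to0$, $s'\uparrow s$ and the passage from finite to countable coverings, handled by proving every intermediate inequality on $B_{s'}(\Om)$ with $s'<s$ and only then letting $s'\uparrow s$.
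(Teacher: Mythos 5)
Your step (ii) is essentially sound (it is the analogue of \eqref{Hn1Bdede*}--\eqref{nu*MinftyBtp}, and the Hausdorff-approximation bookkeeping can be handled as in Lemma \ref{UpMinfty}), but the lemma cannot be carried by it alone: both terms of the minimum rest on your step (i), and that is where there is a genuine gap. You propose to get $\mathcal{H}^n(M_\infty\cap G)\le\mathcal{M}_*(M_\infty,G)$ by establishing rectifiability and a local finite-perimeter structure of $M_\infty$ on $K=M_\infty\cap\overline{B_{s'}(\Om)}\setminus E_{\ep,t}\subset\mathcal{R}_\ep$ via Theorem \ref{Conv0} and Proposition \ref{ContiOmega}, and then invoking the Ambrosio--Fusco--Pallara identity \eqref{MSHS}. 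But Theorem \ref{Conv0} and \eqref{MSHS} require the ambient ball to be a \emph{smooth} Riemannian manifold, and no point of $K$ is guaranteed to have such a neighborhood: $\mathcal{R}_\ep$ contains genuinely singular points (it is larger than $\mathcal{R}$), and even at points of $\mathcal{R}$ the regular set need not be open nor the metric smooth on any ball of definite size. Rescaling at fixed $\ep$ does not help, since a ball $B_r(x)$ with $x\in\mathcal{R}_\ep$, $r\le\La^*_{\ep,t}$, stays only $\ep r$-Gromov--Hausdorff close to Euclidean after rescaling; you never obtain a smooth (or Euclidean) ambient space at the scale of your covering, so neither the finite-perimeter description of $M_\infty$ nor the sharp Hausdorff-measure $=$ Minkowski-content equivalence is available there. (The paper explicitly refrains from any such structural claim about $M_\infty$; cf.\ the remark after Theorem \ref{dimestn-7n-2}.) Note also that a soft Vitali/Besicovitch covering argument only yields $\mathcal{H}^n\le C_n\,\mathcal{M}_*$ with a dimensional constant $C_n>1$, whereas the lemma needs constant $1$; the sharp constant is precisely what your appeal to \eqref{MSHS} was meant to supply, so the gap is not cosmetic.

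The paper obtains the sharp constant without attributing any structure to $M_\infty$: it covers $M_\infty\cap\overline{B_{s-\ep}(\Om)}\setminus E_{\ep,t}$ by mutually disjoint balls $B_{\th_j}(x_j)$ with $\th_j<\min\{\La^*_{\ep,t},\ep/3\}$ (Lemma \ref{nuinftyKthj}), transfers the centers to points $x_{i,j}\in M_i$ in the \emph{smooth} manifolds $N_i$, and applies Corollary \ref{epdeHM} there to get, up to a factor $1+\psi_\ep$, the lower bounds $\mathcal{H}^n\bigl(M_i\cap B_{\th_j}(x_{i,j})\bigr)\ge\omega_n\th_j^n$ and $\mathcal{H}^{n+1}\bigl(B_\th(M_i)\cap B_{\th_j}(x_{i,j})\bigr)\ge2\th\omega_n\th_j^n$. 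Summing over the disjoint balls bounds $\omega_n\sum_j\th_j^n$ simultaneously by $(1+\psi_\ep)\mathcal{H}^n(M_i\cap B_s(\Om_i))$ and, after letting $i\to\infty$ via Lemma \ref{UpMinfty} and then $\th\to0$, by $(1+\psi_\ep)\mathcal{M}_*\left(M_\infty,B_s(\Om)\right)$; finally the covering pre-measure estimate \eqref{Hkepkjinfthjk} gives $\mathcal{H}^n_\ep\bigl(M_\infty\cap\overline{B_{s-\ep}(\Om)}\bigr)\le\ep+\omega_n\sum_j\th_j^n$, and $\ep\to0$ concludes. In short, the almost-Euclidean density and tube estimates for the approximating $M_i$ in the smooth spaces (consequences of \S4 by compactness) replace the rectifiability-plus-AFP mechanism you use in the limit space; to repair your proposal you would have to substitute this covering argument (or something equivalent) for step (i).
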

\begin{proof}
Let $\ep\in(0,s)$.
From Lemma \ref{UPPLOWMinfty} and Lemma \ref{nuinftyKthj} in appendix II,
there is a sequence of mutually disjoint balls $\{B_{\th_j}(x_j)\}_{j=1}^\infty$ with $x_j\in M_\infty\cap \overline{B_{s-\ep}(\Om)}\setminus E_{\ep,t}$
and $\th_j<\min\{\La_{\ep,t}^*,\ep/3\}$
such that
\begin{equation}\aligned\label{MinftyBt'Eep}
\mathcal{H}^n_\ep\left(M_\infty\cap \overline{B_{s-\ep}(\Om)}\setminus E_{\ep,t}\right)\le\omega_n\sum_{j=1}^{\infty}\th_j^n.
\endaligned
\end{equation}
For each integer $j\ge 1$, there is a sequence of points $x_{i,j}\in M_i$ with $\lim_{i\rightarrow\infty}x_{i,j}=x_j$.
By Corollary \ref{epdeHM} and the definition of $\La_\ep$ in \eqref{Laep}, for any integer $k\ge1$ there are a positive function $\psi_\ep$ (independent of $k$)
with $\lim_{\ep\rightarrow0}\psi_\ep=0$
and an integer $i_k$ (depending on $k$) such that
\begin{equation}\aligned\label{430*}
(1+\psi_\ep)\mathcal{H}^n\left(M_i\cap B_{\th_j}(x_{i,j})\right)&\ge\omega_n\th_j^{n},\\
(1+\psi_\ep)\mathcal{H}^{n+1}\left(B_\th(M_i)\cap B_{\th_j}(x_{i,j})\right)&\ge2\th\omega_n\th_j^{n}\quad \mathrm{for\ any}\ \th\in(0,\psi_\ep)\\
\endaligned
\end{equation}
for each $i\ge i_k$ and each $1\le j\le k$. By the selection of $B_{\th_j}(x_j)$, we can require $B_{\th_{j_1}}(x_{i,j_1})\cap B_{\th_{j_2}}(x_{i,j_2})=\emptyset$ for all $j_1\neq j_2$ and the sufficiently large $i$. Hence there is a constant $\de_k>0$ (depending on $k$) such that from \eqref{430*}
\begin{equation}\aligned\label{wnthjn*}
\omega_n\sum_{j=1}^{k}\th_j^{n}\le&(1+\psi_\ep)\mathcal{H}^n(M_i\cap B_s(\Om_i)),\\
2\th\omega_n\sum_{j=1}^{k}\th_j^{n}\le&(1+\psi_\ep)\mathcal{H}^{n+1}\left(B_\th(M_i)\cap B_s(\Om_i)\right)
\endaligned
\end{equation}
for each $i\ge i_k$ and each $\th\in(0,\min\{\psi_\ep,\de_k\})$.
With Lemma \ref{UpMinfty},
letting $i\rightarrow\infty$, then $\th\rightarrow0$ infers
\begin{equation}\aligned
\omega_n\sum_{j=1}^{k}\th_j^{n}\le&(1+\psi_\ep)\liminf_{i\rightarrow\infty}\mathcal{H}^n(M_i\cap B_s(\Om_i)),\\
\omega_n\sum_{j=1}^{k}\th_j^{n}\le&(1+\psi_\ep)\mathcal{M}_*\left(M_\infty, B_s(\Om)\right).
\endaligned
\end{equation}
Letting $k\rightarrow\infty$ infers
\begin{equation}\aligned\label{432*}
\omega_n\sum_{j=1}^{\infty}\th_j^{n}\le(1+\psi_\ep)\min\left\{\liminf_{i\rightarrow\infty}\mathcal{H}^n(M_i\cap B_s(\Om_i)),\mathcal{M}_*\left(M_\infty, B_s(\Om)\right)\right\}.
\endaligned
\end{equation}
By the definition of $\mathcal{H}^n_\ep$ in \eqref{mathcalHnep}, \eqref{EepUPB} and \eqref{MinftyBt'Eep}, we have
\begin{equation}\aligned\label{433*}
\mathcal{H}^n_\ep\left(M_\infty\cap \overline{B_{s-\ep}(\Om)}\right)
\le\omega_n\sum_{j=1}^{N_\ep}r_{y_j}^n+\mathcal{H}^n_\ep\left(M_\infty\cap \overline{B_{s-\ep}(\Om)}\setminus E_{\ep,t}\right)
<\ep+\omega_n\sum_{j=1}^{\infty}\th_j^n.
\endaligned
\end{equation}
Combining \eqref{432*}\eqref{433*} we have
\begin{equation}\aligned\label{436*}
\mathcal{H}^n_\ep\left(M_\infty\cap \overline{B_{s-\ep}(\Om)}\right)\le\ep+(1+\psi_\ep)\min\left\{\liminf_{i\rightarrow\infty}\mathcal{H}^n(M_i\cap B_s(\Om_i)),\mathcal{M}_*\left(M_\infty, B_s(\Om)\right)\right\}.
\endaligned
\end{equation}
Note that the Hausdorff measure $\mathcal{H}^n$ is a Radon measure. Letting $\ep\to0$ suffices to complete the proof.
\end{proof}
From Lemma \ref{UpMinfty*}, we immediately have
\begin{equation}\aligned\label{HnBtpinfMiMINF}
\mathcal{H}^n\left(M_\infty\cap B_t(p_\infty)\right)\le
\min\left\{\liminf_{i\rightarrow\infty}\mathcal{H}^n(M_i\cap B_t(p_i)),\mathcal{M}_*\left(M_\infty, B_{t}(p_\infty)\right)\right\}
\endaligned
\end{equation}
for each $t\in(0,1)$.

Using Lemma \ref{LowerMinfty}, we are able to show the lower semicontinuity for the volume function of area-minimizing hypersurfaces equipped with the induced Hausdorff topology (see Corollary \ref{ContiOmega} for the special case).
\begin{lemma}\label{LimMi}
For any open set $\Om\subset\subset B_1(p_\infty)$, if $\Om_i\subset B_1(p_i)$ are open with $\Om_i\rightarrow\Om$ in the induced Hausdorff sense, then
$$\limsup_{i\rightarrow\infty}\mathcal{H}^n\left(M_i\cap \overline{\Om_i}\right)\le\mathcal{H}^n\left(M_\infty\cap \overline{\Om}\right).$$
\end{lemma}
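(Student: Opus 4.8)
The plan is to reduce everything to local comparisons. Away from the singular set $\mathcal S_\ep$ of $B_1(p_\infty)$ I would compare $\mathcal H^n(M_i\cap\,\cdot\,)$ with $\mathcal H^n(M_\infty\cap\,\cdot\,)$ by Lemma \ref{LowerMinfty}; near $\mathcal S_\ep$ the contribution of $M_i$ is negligible by the area bound of Lemma \ref{upbdareaM}. Concretely, fix $\de>0$ and let $r_\de\in(0,1)$ be the constant of Lemma \ref{LowerMinfty}. Choose $\ep\in(0,r_\de/2)$ and $t\in(0,1)$ with $\overline\Om\subset B_t(p_\infty)$, and let $E_\ep:=E_{\ep,t}=\bigcup_{j=1}^{N_\ep}B_{r_{y_j}}(y_j)$ be the open set of \eqref{EepUPB}, so that $\mathcal S_\ep\cap\overline\Om\subset E_\ep$, $r_{y_j}<\ep$ and $\omega_n\sum_j r_{y_j}^n<\ep$. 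Then $K:=M_\infty\cap\overline\Om\setminus E_\ep$ is compact, contained in $\mathcal R_\ep$, and, since $\Om\subset\subset B_1(p_\infty)$, contained in $B_{1-4\g_0}(p_\infty)$ for some $\g_0>0$. Lower semicontinuity of $\La_\ep$ on $\mathcal R_\ep$ gives $\La^*:=\inf_{x\in K}\La_\ep(x)>0$; since $\ep<r_\de/2$, every $x\in K$ satisfies $d_{GH}(B_{2r}(x),B_{2r}(0))<\ep\cdot 2r<r_\de r$ whenever $2r\le\La^*$. Hence the hypotheses of Lemma \ref{LowerMinfty} are met at every $x\in K$ at every scale $r\le r_0:=\min\{r_\de,\g_0,\La^*/2\}$.

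Next, fix a small auxiliary $\eta>0$. Using that $\mathcal H^n\llcorner M_\infty$ is a Radon measure, Ahlfors $n$-regular on balls of $B_1(p_\infty)$ by Lemma \ref{UPPLOWMinfty} (applied in re-centered balls, minimizers restricting to minimizers), a Vitali-type covering argument of the kind already used for Lemma \ref{UpMinfty*} --- i.e.\ Lemma \ref{nuinftyKthj} --- produces a \emph{finite} family of balls with centres in $K$ and radii $<r_0$ (the radii chosen so their bounding spheres are $\mathcal H^n$-null for $M_\infty$) whose closures cover $K$ and the sum of whose $\mathcal H^n(M_\infty\cap\overline{\,\cdot\,})$-masses does not exceed $\mathcal H^n(M_\infty\cap\overline\Om)+\eta$. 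In detail one first takes a pairwise disjoint subfamily $\{B_{\th_j}(x_j)\}_{j\le m}$, for which disjointness forces $\sum_j\mathcal H^n\big(M_\infty\cap\overline{B_{\th_j}(x_j)}\big)\le\mathcal H^n(M_\infty\cap\overline\Om)+\eta/2$, and then re-covers the leftover $K\setminus\bigcup_{j\le m}\overline{B_{\th_j}(x_j)}$ --- of arbitrarily small $\mathcal H^n$-measure --- by a second finite family $\{B_{\rho_l}(z_l)\}_{l\le m'}$ of mass $<\eta/2$. Choose sequences $M_i\ni x_{i,j}\to x_j$, $M_i\ni z_{i,l}\to z_l$ and $N_i\ni y_{i,j}\to y_j$. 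Because the slightly enlarged open balls $B_{(1+r_\de/2)\th_j}(x_j)$ and $B_{(1+r_\de/2)\rho_l}(z_l)$ form an open set containing the compact $K$, while $M_\infty\cap\overline\Om\subset K\cup E_\ep$ and $\Phi_i(M_i\cap\overline{\Om_i})$ eventually lies in every open neighbourhood of $M_\infty\cap\overline\Om$, routine Gromov--Hausdorff bookkeeping gives, for all large $i$,
\[
M_i\cap\overline{\Om_i}\ \subset\ \bigcup_{j\le m}\overline{B_{(1+r_\de)\th_j}(x_{i,j})}\cup\bigcup_{l\le m'}\overline{B_{(1+r_\de)\rho_l}(z_{i,l})}\cup\bigcup_{j\le N_\ep}B_{2r_{y_j}}(y_{i,j}).
\]

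Summing $\mathcal H^n(M_i\cap\,\cdot\,)$ over this cover, bounding the contribution of the last (singular) family by $c_{n,\k}2^n\sum_j r_{y_j}^n<\tfrac{c_{n,\k}2^n}{\omega_n}\ep$ via Lemma \ref{upbdareaM} restricted to the balls $B_{2r_{y_j}}(y_{i,j})$, then letting $i\to\infty$ and applying Lemma \ref{LowerMinfty} to each of the finitely many $B_{\th_j}(x_j)$, $B_{\rho_l}(z_l)$ (legitimate: centres in $K$, radii $\le r_0$), I obtain
\[
\limsup_{i\to\infty}\mathcal H^n\big(M_i\cap\overline{\Om_i}\big)\ \le\ \f1{1-\de}\Big(\mathcal H^n\big(M_\infty\cap\overline\Om\big)+\eta\Big)+\f{c_{n,\k}2^n}{\omega_n}\,\ep .
\]
Letting $\eta\to0$, then $\ep\to0$ (any $\ep\in(0,r_\de/2)$ is admissible and $\ep$ survives only in the last term), and finally $\de\to0$, gives $\limsup_{i\to\infty}\mathcal H^n(M_i\cap\overline{\Om_i})\le\mathcal H^n(M_\infty\cap\overline\Om)$, which is the assertion.

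The step I expect to be the main obstacle is the efficient finite covering in the second paragraph. The enlargement factor $(1+r_\de)$ is unavoidable when passing from $M_\infty$ to $M_i$, but it must be absorbed \emph{entirely} inside Lemma \ref{LowerMinfty} --- whose conclusion already carries it on the $M_i$-side --- so the covering cannot merely have bounded overlap but has to be essentially \emph{pairwise disjoint}: a fixed multiplicative loss in the comparison of the Hausdorff measures would be fatal to the estimate. Making this rigorous requires the Vitali-type selection together with the two-sided Ahlfors bounds of Lemma \ref{UPPLOWMinfty} (to re-cover the small-measure remainder cheaply and to ensure radii with $\mathcal H^n$-null bounding spheres are plentiful). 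The surrounding Gromov--Hausdorff manipulations --- extracting convergent centre sequences and transporting the open cover of $K$ to a cover of $M_i\cap\overline{\Om_i}$ through $\Phi_i$, together with the outer-limit inclusion $\limsup_i\Phi_i(M_i\cap\overline{\Om_i})\subset M_\infty\cap\overline\Om$ --- are routine.
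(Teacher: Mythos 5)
Your argument is correct and is essentially the paper's own proof: the same decomposition into the good region $\mathcal{R}_\ep\setminus E_{\ep,t}$, where Lemma \ref{LowerMinfty} is applied ball-by-ball on an essentially disjoint cover of small radii controlled by $\La_{\ep,t}^*$, and the singular neighbourhood $E_{\ep,t}$ of \eqref{EepUPB}, whose $M_i$-contribution is discarded via the area bound of Lemma \ref{upbdareaM}, followed by transporting the finite open cover to $M_i\cap\overline{\Om_i}$ and sending the parameters to zero. The only differences are bookkeeping: the paper produces its efficient cover from Lemma \ref{nuinftyKthj} (disjoint balls with enlargement $\th_j+2\th_k$ and a small tail absorbed by the upper bound of Lemma \ref{UPPLOWMinfty}) rather than your Vitali two-family selection, and your claim that disjointness alone gives $\sum_j\mathcal{H}^n\left(M_\infty\cap\overline{B_{\th_j}(x_j)}\right)\le\mathcal{H}^n\left(M_\infty\cap\overline{\Om}\right)+\eta/2$ additionally needs the radii chosen small enough that $\mathcal{H}^n\left(M_\infty\cap B_{r_0}(\Om)\right)\le\mathcal{H}^n\left(M_\infty\cap\overline{\Om}\right)+\eta/2$, which is exactly the $B_\ep(\Om)\rightarrow\overline{\Om}$ continuity-of-measure step the paper performs when letting $\ep\to0$.
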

\begin{proof}
Suppose $\overline{\Om}\subset B_{t}(p_\infty)$ for some $t\in(0,1)$. Let $\ep<1-t$, $E_{\ep,t}$ be as defined in \eqref{EepUPB} and $\La_{\ep,t}^*$ be as defined in \eqref{Laeptxinfty*}.
From Lemma \ref{UPPLOWMinfty} and Lemma \ref{nuinftyKthj} in appendix II,
there is a sequence of mutually disjoint balls $\{B_{\th_j}(x_j)\}_{j=1}^\infty$ with $x_j\in M_\infty\cap \overline{\Om}\setminus E_{\ep,t}$
and $\th_j<\min\{\La_{\ep,t}^*,\ep/3\}$
such that $M_\infty\cap \overline{\Om}\setminus E_{\ep,t}\subset \bigcup_{1\le j\le k}B_{\th_j+2\th_k}(x_j)$ for the sufficiently large $k$, and
\begin{equation}\aligned
\mathcal{H}^n\left(M_\infty\right)\ge\sum_{j=1}^{\infty}\mathcal{H}^n\left(M_\infty\cap B_{\th_j}(x_j)\right)\ge\de_{n,\k,v}\sum_{j=1}^{\infty}\th_j^n
=\de_{n,\k,v}\lim_{k\rightarrow\infty}\sum_{j=1}^{k}(\th_{j}+2\th_k)^n.
\endaligned
\end{equation}
Hence there is an integer $j_0>1$ such that $\sum_{j=j_0}^{\infty}\th_j^n<\ep$. Then
\begin{equation}\aligned
\limsup_{k\rightarrow\infty}\sum_{j=j_0}^{k}(\th_{j}+2\th_k)^n\le \lim_{k\rightarrow\infty}\sum_{j=1}^{k}(\th_{j}+2\th_k)^n-\sum_{j=1}^{j_0-1}\th_j^n
=\sum_{j=1}^{\infty}\th_j^n-\sum_{j=1}^{j_0-1}\th_j^n=\sum_{j=j_0}^{\infty}\th_j^n<\ep.
\endaligned
\end{equation}
With Lemma \ref{UPPLOWMinfty}, for the sufficiently large $k$ we have
\begin{equation}\aligned\label{HnMinfgecnk*ep}
&\mathcal{H}^n\left(M_\infty\cap B_{\ep}(\Om)\right)\ge\sum_{j=1}^{j_0-1}\mathcal{H}^n\left(M_\infty\cap B_{\th_j}(x_j)\right)
+c_{n,\k}^*\sum_{j=j_0}^k(\th_j+2\th_k)^n-c_{n,\k}^*\ep\\
&\ge\sum_{j=1}^{j_0-1}\mathcal{H}^n\left(M_\infty\cap B_{\th_j}(x_j)\right)
+\sum_{j=j_0}^k\mathcal{H}^n\left(M_\infty\cap B_{\th_j+2\th_k}(x_j)\right)-c_{n,\k}^*\ep.
\endaligned
\end{equation}
From Lemma \ref{LowerMinfty}, for the suitable small $\ep>0$ there is a constant $\xi_\ep>1$ such that
for each $j$ and each sequence $M_i\ni x_{j,i}\rightarrow x_j$ as $i\rightarrow\infty$, we have
\begin{equation}\aligned\label{abcd111}
(1-\ep)\limsup_{i\rightarrow\infty}\mathcal{H}^n\left(M_i\cap \overline{B_{\xi_\ep\th_j}(x_{j,i})}\right)\le\mathcal{H}^n\left(M_\infty\cap \overline{B_{\th_j}(x_j)}\right).
\endaligned
\end{equation}
For the sufficiently large $k$, $\xi_\ep\th_j>\th_j+2\th_k$ for each $1\le j\le j_0-1$. With \eqref{HnMinfgecnk*ep}, we have
\begin{equation}\aligned\label{M*tpinftyep}
\mathcal{H}^n\left(M_\infty\cap B_{\ep}(\Om)\right)\ge(1-\ep)\limsup_{i\rightarrow\infty}\sum_{j=1}^k\mathcal{H}^n\left(M_i\cap B_{\th_j+2\th_k}(x_{j,i})\right)-c_{n,\k}^*\ep.
\endaligned
\end{equation}
Recalling $\mathcal{S}_\ep\cap\overline{B_{t}(p_\infty)}\subset E_{\ep,t}=\bigcup_{j=1}^{N_\ep}B_{r_{y_j}}(y_j)$. Now we fix an integer $k$ sufficiently large so that
\begin{equation}\aligned\label{M01ep}
M_\infty\cap \overline{\Om}\subset\bigcup_{j=1}^{k}B_{\th_j+2\th_k}(x_j)\cup\bigcup_{j=1}^{N_\ep}B_{r_{y_j}}(y_j).
\endaligned
\end{equation}
Let $y_{j,i}\in M_i$ with $y_{j,i}\to y_j$ as $i\to\infty$.
Then from \eqref{M01ep}, we have
\begin{equation}\aligned\label{5.37}
M_i\cap \overline{\Om_i}\subset\bigcup_{j=1}^{k}B_{\th_j+2\th_k}(x_{j,i})\cup\bigcup_{j=1}^{N_\ep}B_{r_{y_j}}(y_{k,i})
\endaligned
\end{equation}
for the sufficiently large $i>0$.
Combining Lemma \ref{UPPLOWMinfty} and \eqref{EepUPB}\eqref{M*tpinftyep}\eqref{5.37}, we have
\begin{equation}\aligned\nonumber
\mathcal{H}^n\left(M_\infty\cap B_{\ep}(\Om)\right)\ge &
(1-\ep)\limsup_{i\rightarrow\infty}\sum_{j=1}^k\mathcal{H}^n\left(M_i\cap B_{\th_j+2\th_k}(x_{j,i})\right)-c_{n,\k}^*\ep\\
+& \limsup_{i\rightarrow\infty}\sum_{j=1}^{N_\ep}\mathcal{H}^n\left(M_i\cap B_{r_{y_j}}(y_{j,i})\right)-c_{n,\k}^*\sum_{j=1}^{N_\ep}r_{y_j}^n\\
\ge&(1-\ep)\limsup_{i\rightarrow\infty}\mathcal{H}^n\left(M_i\cap \overline{\Om_i}\right)-c_{n,\k}^*\ep-c_{n,\k}^*\omega_n^{-1}\ep.
\endaligned
\end{equation}
Letting $\ep\to0$ completes the proof.
\end{proof}
Uniting \eqref{HnBtpinfMiMINF} and Lemma \ref{LimMi}, we immediately have the following result.
\begin{theorem}\label{LimMiMib}
For each integer $i\ge1$,
let $M_i$ be an area-minimizing hypersurface in $B_1(p_i)$ with $\p M_i\subset\p B_1(p_i)$.
If $M_i$ converges to a closed set $M_\infty\subset \overline{B_1(p_\infty)}$ in the induced Hausdorff sense, then for any $t\in(0,1)$ we have
\begin{equation}\aligned
\mathcal{H}^n\left(M_\infty\cap B_t(p_\infty)\right)\le& \liminf_{i\rightarrow\infty}\mathcal{H}^n\left(M_i\cap B_t(p_i)\right)\\
\le&\limsup_{i\rightarrow\infty}\mathcal{H}^n\left(M_i\cap \overline{B_{t}(p_i)}\right)\le\mathcal{H}^n\left(M_\infty\cap \overline{B_{t}(p_\infty)}\right).
\endaligned
\end{equation}
\end{theorem}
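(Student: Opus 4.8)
The plan is to assemble the final theorem directly from the two one-sided estimates already established, namely the lower semicontinuity bound of Lemma~\ref{LimMi} and the upper semicontinuity bound \eqref{HnBtpinfMiMINF} (which is itself the specialization of Lemma~\ref{UpMinfty*} to $\Om=B_t(p_\infty)$), together with the trivial inequality $\mathcal{H}^n(M_i\cap B_t(p_i))\le\mathcal{H}^n(M_i\cap\overline{B_t(p_i)})$ relating the open and closed balls in $N_i$. Thus the only genuine content is to interpolate these ingredients into the chain
\begin{equation*}\aligned
\mathcal{H}^n\left(M_\infty\cap B_t(p_\infty)\right)\le\liminf_{i\to\infty}\mathcal{H}^n\left(M_i\cap B_t(p_i)\right)\le\limsup_{i\to\infty}\mathcal{H}^n\left(M_i\cap\overline{B_t(p_i)}\right)\le\mathcal{H}^n\left(M_\infty\cap\overline{B_t(p_\infty)}\right).
\endaligned
\end{equation*}

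For the leftmost inequality, I would apply Lemma~\ref{LimMi} with $\Om=B_t(p_\infty)$ and $\Om_i=B_t(p_i)$: indeed $\Phi_i(B_t(p_i))\to B_t(p_\infty)$ in the Hausdorff sense since $\Phi_i$ are $\ep_i$-Hausdorff approximations with $\ep_i\to0$, so $\Om_i\to\Om$ in the induced Hausdorff sense and $\overline{\Om}\subset\subset B_1(p_\infty)$. Lemma~\ref{LimMi} then gives $\limsup_{i\to\infty}\mathcal{H}^n(M_i\cap\overline{B_t(p_i)})\le\mathcal{H}^n(M_\infty\cap\overline{B_t(p_\infty)})$, which is exactly the rightmost inequality. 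The leftmost inequality is precisely \eqref{HnBtpinfMiMINF} (dropping the $\mathcal{M}_*$ term from the min), and the middle inequality $\liminf\le\limsup$ together with $B_t(p_i)\subset\overline{B_t(p_i)}$ is elementary. Chaining these three produces the claimed sandwich, and letting the reader note that no further hypothesis beyond \eqref{Ric}, \eqref{Vol}, and Gromov–Hausdorff convergence is used.

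The only subtlety — and the point that deserves a sentence of care rather than real difficulty — is verifying that the hypotheses of Lemma~\ref{LimMi} are literally met by the choice $\Om=B_t(p_\infty)$: one needs $\overline{B_t(p_\infty)}$ compactly contained in $B_1(p_\infty)$, which holds since $t<1$, and one needs the induced-Hausdorff convergence $B_t(p_i)\to B_t(p_\infty)$, which follows from the definition of $\ep_i$-Hausdorff approximation (distances are distorted by at most $\ep_i$, so the preimages of concentric balls converge). Since all the heavy lifting — the covering arguments, the interplay between Minkowski content and Hausdorff measure, the use of Cheeger–Colding volume convergence, and the near-monotonicity from Theorem~\ref{IntReg} and Theorem~\ref{PARTReg} — has already been absorbed into Lemma~\ref{LimMi} and Lemma~\ref{UpMinfty*}, the proof of Theorem~\ref{LimMiMib} itself is a two-line bookkeeping argument. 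I expect the write-up to be essentially: "This follows immediately by combining \eqref{HnBtpinfMiMINF} with Lemma~\ref{LimMi}, using $\Om=B_t(p_\infty)$, $\Om_i=B_t(p_i)$, and $\mathcal{H}^n(M_i\cap B_t(p_i))\le\mathcal{H}^n(M_i\cap\overline{B_t(p_i)})$."
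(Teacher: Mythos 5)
Your proposal is correct and coincides with the paper's own proof: the paper simply states ``Uniting \eqref{HnBtpinfMiMINF} and Lemma \ref{LimMi}, we immediately have the following result,'' which is exactly your two-line bookkeeping with $\Om=B_t(p_\infty)$, $\Om_i=B_t(p_i)$, the trivial inclusion $B_t(p_i)\subset\overline{B_t(p_i)}$, and $\liminf\le\limsup$.
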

In particular, if $\mathcal{H}^n\left(M_\infty\cap\p B_{t}(p_\infty)\right)=0$ for some $t\in(0,1)$, then the above theorem implies
\begin{equation}\aligned\label{EQUIVMEA}
\mathcal{H}^n\left(M_\infty\cap B_t(p_\infty)\right)=\liminf_{i\rightarrow\infty}\mathcal{H}^n\left(M_i\cap B_t(p_i)\right).
\endaligned
\end{equation}
From \eqref{Hn1Bdede*} and Lemma \ref{LimMi}, for any $\tau\in(0,1)$ we have
\begin{equation*}\aligned
\f{1-n\k\de}{2\de}\mathcal{H}^{n+1}\left(B_\de(M_\infty)\cap B_{\tau}(p_\infty)\right)\le
\liminf_{i\rightarrow\infty}\mathcal{H}^n\left(M_i\cap B_{\tau+\de}(p_i)\right)\le\mathcal{H}^n\left(M_\infty\cap \overline{B_{\tau+\de}(p_\infty)}\right),
\endaligned
\end{equation*}
which implies
\begin{equation}\aligned\label{M*Btau}
\mathcal{M}^*\left(M_\infty, B_{\tau}(p_\infty)\right)\le\mathcal{H}^n\left(M_\infty\cap \overline{B_{\tau}(p_\infty)}\right).
\endaligned
\end{equation}
Combining Lemma \ref{UpMinfty*} and \eqref{M*Btau}, we get the following result.
\begin{corollary}\label{M**Btau}
For each integer $i\ge1$,
let $M_i$ be an area-minimizing hypersurface in $B_1(p_i)$ with $\p M_i\subset\p B_1(p_i)$.
If $M_i$ converges to a closed set $M_\infty\subset \overline{B_1(p_\infty)}$ in the induced Hausdorff sense, then for any $\tau\in(0,1)$
\begin{equation*}\aligned
\mathcal{H}^n\left(M_\infty\cap B_{\tau}(p_\infty)\right)\le\mathcal{M}_*\left(M_\infty, B_{\tau}(p_\infty)\right)\le\mathcal{M}^*\left(M_\infty, B_{\tau}(p_\infty)\right)
\le\mathcal{H}^n\left(M_\infty\cap \overline{B_{\tau}(p_\infty)}\right).
\endaligned
\end{equation*}
\end{corollary}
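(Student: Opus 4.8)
The plan is to assemble the chain from results already in hand; the outer two inequalities are the substantive ones and the middle one is purely formal.

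For the leftmost inequality $\mathcal{H}^n(M_\infty\cap B_{\tau}(p_\infty))\le\mathcal{M}_*(M_\infty,B_{\tau}(p_\infty))$ I would invoke \eqref{HnBtpinfMiMINF}, the specialization of Lemma \ref{UpMinfty*} to $\Om=B_{\tau}(p_\infty)$. Recall how that lemma runs: one covers $M_\infty\cap\overline{B_{s-\ep}(\Om)}\setminus E_{\ep,t}$ by a maximal family of disjoint small balls $B_{\th_j}(x_j)$ centered on $M_\infty$ with radii below the local regularity scale $\La_{\ep,t}^*$, applies Corollary \ref{epdeHM} to get a two-sided volume control $(1+\psi_\ep)\mathcal{H}^{n+1}(B_\th(M_i)\cap B_{\th_j}(x_{i,j}))\ge 2\th\om_n\th_j^n$ on each ball, sums and compares the packing against $\mathcal{H}^{n+1}(B_\th(M_i)\cap B_s(\Om_i))/(2\th)$ via the neighborhood-measure convergence Lemma \ref{UpMinfty}, and then lets $\th\to0$, $k\to\infty$, $\ep\to0$. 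Since $\tau<1$ this applies after exhausting $B_\tau(p_\infty)$ by $B_{\tau'}(p_\infty)$ with $\tau'\uparrow\tau$, using that $\mathcal{H}^n$ is a Radon measure and that $\mathcal{M}_*(M_\infty,\cdot)$ is monotone in the domain.

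The middle inequality $\mathcal{M}_*(M_\infty,B_{\tau}(p_\infty))\le\mathcal{M}^*(M_\infty,B_{\tau}(p_\infty))$ is immediate from the definitions, being a $\liminf$ compared with the $\limsup$ of the same quantity $\tfrac1{2\de}\mathcal{H}^{n+1}\big(B_\tau(p_\infty)\cap B_\de(M_\infty)\setminus M_\infty\big)$. For the rightmost inequality $\mathcal{M}^*(M_\infty,B_{\tau}(p_\infty))\le\mathcal{H}^n(M_\infty\cap\overline{B_{\tau}(p_\infty)})$ I would use \eqref{M*Btau}: combine the Heintze--Karcher-type tube estimate \eqref{Hn1Bdede*} (valid for small $\de$, from Lemma \ref{LOWERM000} together with volume convergence and Lemma \ref{UpMinfty}) with the upper semicontinuity Lemma \ref{LimMi} applied to $\Om_i=B_{\tau+\de}(p_i)$, which yields $\tfrac{1-n\k\de}{2\de}\mathcal{H}^{n+1}(B_\de(M_\infty)\cap B_{\tau}(p_\infty))\le\mathcal{H}^n(M_\infty\cap\overline{B_{\tau+\de}(p_\infty)})$; letting $\de\to0$ and using that $\overline{B_{\tau+\de}(p_\infty)}$ decreases to $\overline{B_{\tau}(p_\infty)}$ gives \eqref{M*Btau}.

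There is no genuine obstacle here beyond bookkeeping: the hard inputs — the local regularity estimates of Theorem \ref{IntReg} and Theorem \ref{PARTReg} underlying Lemma \ref{UpMinfty*}, and the tube and volume-convergence estimates underlying \eqref{M*Btau} — have already been established, so the corollary is a short synthesis. The one point to watch is the domain matching in Lemma \ref{UpMinfty*}, where $s=0$ is not permitted; this is dealt with precisely by the $\tau'\uparrow\tau$ exhaustion and the Radon property of $\mathcal{H}^n$. As a byproduct one obtains that $\mathcal{M}(M_\infty,B_{\tau}(p_\infty))$ exists and equals $\mathcal{H}^n(M_\infty\cap B_{\tau}(p_\infty))$ whenever $\mathcal{H}^n(M_\infty\cap\p B_{\tau}(p_\infty))=0$.
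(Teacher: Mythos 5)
Your proposal is correct and follows essentially the same route as the paper: the leftmost inequality is the specialization \eqref{HnBtpinfMiMINF} of Lemma \ref{UpMinfty*}, the middle one is immediate from the definitions of $\mathcal{M}_*$ and $\mathcal{M}^*$, and the rightmost one is precisely the derivation of \eqref{M*Btau} from \eqref{Hn1Bdede*} together with Lemma \ref{LimMi}. The only point where you had to fill in an unstated detail is the exhaustion $\tau'\uparrow\tau$ to match the $s>0$ hypothesis of Lemma \ref{UpMinfty*}, and you handle that correctly via the Radon property of $\mathcal{H}^n$ and the monotonicity of $\mathcal{M}_*$ in its second argument.
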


\section{Singular sets related to limits of area-minimizing hypersurfaces}

Let $C_*$ be a 2-dimensional metric cone with cross section $\G$ and vertex at $o_*$,
where $\G$ is a 1-dimensional round circle with radius $\le1$.
Let $C=C_*\times\R^{n-1}$ with the standard product metric and $o=(o_*,0^{n-1})\in C$.
Let $\mathcal{C}_{n+1}$ denote the set including all the cones $C$ defined above.
Clearly, the Euclidean space $\R^{n+1}$ with the standard flat metric belongs to $\mathcal{C}_{n+1}$.
For each cone $C\in \mathcal{C}_{n+1}$, $C$ has the flat Euclidean metric on its regular part.
Let us prove a monotonicity formula for all the limits of area-minimizing hypersurfaces in the metric cone $C$ provided $C\in\mathcal{C}_{n+1}$ as follows.
\begin{lemma}\label{pM+Minfty}
Let $Q_i$ be a sequence of $(n+1)$-dimensional complete Riemannian manifolds with Ricci curvature $\ge-(n-1)R_i^{-2}$ on $B_{R_i}(q_i)\subset Q_i$ for some sequence $R_i\rightarrow\infty$. Suppose that $(Q_i,q_i)$ converges to a cone $(C,o)$ in $\mathcal{C}_{n+1}$ in the pointed Gromov-Hausdorff sense.
Let $\Si_i$ be a sequence of area-minimizing hypersurfaces in $B_1(q_i)\subset Q_i$ with $\p \Si_i\subset\p B_1(q_i)$, which converges in the induced Hausdorff sense to a closed set $\Si$ in $C$.
Then for any $0<t'<t<1$, we have
\begin{equation}\aligned\label{t1t2HnMpinfty}
t^{-n}\mathcal{H}^n\left(\Si\cap B_{t}(o)\right)\ge (t')^{-n}\mathcal{H}^n\left(\Si\cap \overline{B_{t'}(o)}\right).
\endaligned
\end{equation}
\end{lemma}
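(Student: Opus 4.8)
The plan is to reduce the statement to the regular part of the cone, where $\Sigma$ is a genuine area-minimizing hypersurface, and then run the classical cone comparison for the monotonicity of the normalized mass, controlling the ambient singular spine by a tube cut-off whose error is absorbed via the upper volume bound for $\Sigma$. Write $C=C_*\times\R^{n-1}$ with $o=(o_*,0^{n-1})$. By Cheeger--Colding \cite{CCo1} the regular set $\mathcal R$ of $C$ is the flat $(n+1)$-manifold $(C_*\setminus\{o_*\})\times\R^{n-1}$, and the singular set is the spine $\mathcal S=\{o_*\}\times\R^{n-1}$, of dimension $n-1$; in particular $\mathcal H^n(\Sigma\cap\mathcal S)=0$. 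For every $x\in\Sigma\cap\mathcal R$ choose $\sigma>0$ with $\overline{B_\sigma(x)}\subset\mathcal R$ isometric to a Euclidean ball; since $(Q_i,q_i)\to(C,o)$, after rescaling by $\sigma^{-1}$ the balls $B_\sigma(x_i)\subset Q_i$ (with $\Sigma_i\ni x_i\to x$) converge to the smooth unit ball, are uniformly non-collapsed, and $\Sigma_i$ restricted to $B_\sigma(x_i)$ is area-minimizing there with boundary on $\partial B_\sigma(x_i)$. Theorem \ref{Conv0} then shows $\Sigma\cap B_\sigma(x)$ is area-minimizing in $B_\sigma(x)$; hence $\Sigma\cap\mathcal R$ is locally area-minimizing in $\mathcal R$. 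Lemma \ref{UPPLOWMinfty} and its proof (applicable since, $C$ being genuinely $(n+1)$-dimensional, the convergence is non-collapsed) give the uniform density bound $\mathcal H^n(\Sigma\cap B_s(z))\le c_n s^n$ for all $z\in\Sigma$ and $s\in(0,1]$.

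\textbf{Cone comparison.} Fix $r\in(0,1)$ with $\mathcal H^n(\Sigma\cap\partial B_r(o))=0$; all but countably many $r$ work, since $\rho\mapsto\mathcal H^n(\Sigma\cap\overline{B_\rho(o)})$ is a finite monotone function. Because $C$ is a metric cone with vertex $o$, the minimizing geodesic from $o$ to any $z\in\partial B_r(o)\setminus\mathcal S$ meets $\mathcal S$ only at $o$, so coning $\Sigma\cap\partial B_r(o)$ to $o$ produces a rectifiable set inside $(\mathcal R\cap B_r(o))\cup\{o\}$ of $\mathcal H^n$-measure $\tfrac{r}{n}\mathcal H^{n-1}(\Sigma\cap\partial B_r(o))$. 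On the open set $W_\delta:=B_r(o)\setminus\overline{B_\delta(\mathcal S)}$, which is compactly contained in $\mathcal R$, this cone --- joined to $\Sigma$ along $\partial B_\delta(\mathcal S)$ by a ``bridge'' surface inside $B_{2\delta}(\mathcal S)$ --- is an admissible competitor for $\Sigma$, so minimality of $\Sigma\cap\mathcal R$ in $\mathcal R$ yields
\begin{equation*}
\mathcal H^n\big(\Sigma\cap W_\delta\big)\le \tfrac{r}{n}\,\mathcal H^{n-1}\big(\Sigma\cap\partial B_r(o)\big)+\mathcal H^n(\mathrm{bridge}).
\end{equation*}
Covering the $(n-1)$-ball $\mathcal S\cap B_{2r}(o)$ by at most $c_n(r/\delta)^{n-1}$ balls of radius $2\delta$ and using the density bound gives $\mathcal H^n(\Sigma\cap B_\delta(\mathcal S)\cap B_r(o))\le c_n r^{n-1}\delta$; a coarea slicing in this tube lets one choose the bridge with $\mathcal H^n(\mathrm{bridge})\to0$ as $\delta\to0$, so letting $\delta\to0$ gives $\mathcal H^n(\Sigma\cap B_r(o))\le \tfrac{r}{n}\mathcal H^{n-1}(\Sigma\cap\partial B_r(o))$ for a.e.\ $r\in(0,1)$.

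\textbf{Integration and general radii.} Set $g(r)=\mathcal H^n(\Sigma\cap B_r(o))$ and $I(r)=\int_0^r\mathcal H^{n-1}(\Sigma\cap\partial B_s(o))\,ds$. The coarea formula on the rectifiable set $\Sigma$ (with $|\nabla^\Sigma d(\cdot,o)|\le1$) shows $I\le g$ and that $g-I$ is non-decreasing, hence $g'\ge\mathcal H^{n-1}(\Sigma\cap\partial B_r(o))$ a.e.; combined with the cone comparison this gives $g'(r)\ge\tfrac nr g(r)$ a.e., whence $r\mapsto r^{-n}g(r)$ is non-decreasing (the singular part of $dg$ only helps). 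Since $\mathcal H^n(\Sigma\cap\overline{B_{t'}(o)})=\lim_{r\downarrow t'}g(r)$ and $\mathcal H^n(\Sigma\cap B_t(o))=\lim_{r\uparrow t}g(r)$, picking any $s\in(t',t)$ yields
\begin{equation*}
(t')^{-n}\,\mathcal H^n\big(\Sigma\cap\overline{B_{t'}(o)}\big)\le s^{-n}g(s)\le t^{-n}\,\mathcal H^n\big(\Sigma\cap B_t(o)\big),
\end{equation*}
which is \eqref{t1t2HnMpinfty}.

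\textbf{The main obstacle.} The delicate point is the cone comparison across the spine $\mathcal S$: since $\dim\mathcal S=n-1$ --- one less than $\Sigma$ itself --- $\mathcal S$ is not a priori a removable set for area-minimizers, so one cannot simply treat $\Sigma$ as a minimizing current in all of $B_r(o)$. The tube cut-off together with the quantitative near-spine bound $\mathcal H^n(\Sigma\cap B_\delta(\mathcal S)\cap B_r(o))=O(\delta)$ coming from Lemma \ref{UPPLOWMinfty} is exactly what forces the error to vanish as $\delta\to0$; the technical work is to build the bridge surface inside $B_{2\delta}(\mathcal S)$ with vanishing area and with boundary matching that of $\Sigma$ on $\partial B_\delta(\mathcal S)$ and on $\partial B_r(o)$ (for which one also uses that $\mathcal H^{n-1}(\Sigma\cap\mathcal S\cap\partial B_r(o))=0$ for a.e.\ $r$, by the coarea formula applied to $d(\cdot,o)$ on $\mathcal S$).
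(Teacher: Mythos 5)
Your overall strategy (cone--competitor comparison plus a tube cut-off near the spine) is genuinely different from the paper's, which never uses the minimizing property of $\Si$ at this stage: the paper only records from Theorem \ref{Conv0} that $\Si$ carries a multiplicity one $n$-rectifiable \emph{stationary} varifold structure on the flat regular part, and then runs the first-variation monotonicity identity \eqref{MVfSi} with two cut-offs, a vertex cut-off $\e_\ep$ whose error is killed by the bound $\mathcal{H}^n(\Si\cap B_{2\ep}(o))\le c\,\ep^n$ of Lemma \ref{UPPLOWMinfty}, and a spine cut-off $\phi_\ep$ depending only on the $0$-homogeneous quantity $\r_{\mathcal{S}_C}/\r$, so that $\lan\na_C\phi_\ep,\p_\r\ran=0$ and the spine contributes \emph{no} error term at all. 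Your route, by contrast, hinges on two steps that are not justified as written. The more serious one is the passage from ``$\Si\cap B_\si(x)$ is area-minimizing in each small ball $B_\si(x)\subset\subset\mathcal{R}$'' (which Theorem \ref{Conv0} does give) to ``minimality of $\Si\cap\mathcal{R}$ in $\mathcal{R}$'' used against the cone competitor on the large region $W_\de=B_r(o)\setminus\overline{B_\de(\mathcal{S})}$. Minimality in every small ball does not imply minimality in a large region against competitors that differ from $\Si$ on all of $W_\de$ (local minimizers need not be global ones), so the inequality $\mathcal{H}^n(\Si\cap W_\de)\le\f rn\mathcal{H}^{n-1}(\Si\cap\p B_r(o))+\mathcal{H}^n(\mathrm{bridge})$ does not follow from what you established. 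To get minimality of the limit in regions such as $W_\de$ one has to go back to the sequence $\Si_i$ and transfer competitors, i.e.\ redo the argument of Proposition \ref{MinMSinfty}/Theorem \ref{Conv0} on domains avoiding the spine; this is exactly what the paper does later, in the proof of Lemma \ref{codim2}, when it needs the limit cone to be minimizing in $\overline{\Om_{t,\ep}}$ — it is an extra limiting argument, not a consequence of small-ball minimality.

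The second gap is the bridge itself. Your area heuristics are right: the covering bound gives $\mathcal{H}^n(\Si\cap B_\de(\mathcal{S})\cap B_r(o))=O(\de)$, and a coarea slicing in the tube produces slices $\Si\cap\p B_s(\mathcal{S})$ of bounded $\mathcal{H}^{n-1}$-measure for good $s\in(\de,2\de)$. But an admissible competitor must match the current boundary of $\Si\llcorner W_\de$ exactly, and you have not shown that the cycle formed by the two traces on $\p B_\de(\mathcal{S})\cap B_r(o)$ (that of $\Si$ and that of the cone over $\Si\cap\p B_r(o)$), together with the pieces on $\p B_r(o)\cap\overline{B_\de(\mathcal{S})}$, bounds a rectifiable current of mass $o(1)$ \emph{inside the tube minus the spine}; the tube minus the spine is not simply connected in the relevant dimension, so this filling needs an actual construction (or a projection/deformation argument), and it also presupposes that the local minimizing currents in the small balls glue to a single oriented current in $\mathcal{R}$ with well-defined slices for a.e.\ $r$ and $\de$ — another point you assert implicitly. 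None of this arises in the paper's proof, because stationarity is purely local (varifolds glue with no orientation issue) and the dilation-invariant cut-off removes the spine without any quantitative filling. Your ``Integration and general radii'' step is fine as stated; the proof would be complete once the large-region minimality and the bridge construction are actually supplied, but as written these are genuine gaps rather than routine technicalities.
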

\begin{proof}
Let $\mathcal{S}_C$ denote the singular set of $C$.
From Theorem \ref{Conv0}, there is a multiplicity one rectifiable $n$-stationary varifold $V$ in $B_1(o)\setminus \mathcal{S}_C$ with supp$V\cap B_1(o)=\Si\cap B_1(o)$
outside $\mathcal{S}_C$.
Note that $B_1(o)\setminus \mathcal{S}_C$ has flat standard Euclidean metric.
Denote $\Si^*=\Si\cap B_1(o)\setminus\mathcal{S}_C$.
Let $\r$ be the distance function from $o$ in $C$, and $\p_\r$ denote the unit radial vector perpendicular to $\p B_\r(o)$.
For any smooth function $f$ with compact support in $C\setminus \mathcal{S}_C$, any $0<t_1<t_2<1$, we have the mean value inequality (see \cite{CM} or \cite{S} for instance)
\begin{equation}\aligned\label{MVfSi}
&t_2^{-n}\int_{\Si\cap B_{t_2}(o)}f-t_1^{-n}\int_{\Si\cap B_{t_1}(o)}f\\
=&\int_{\Si\cap B_{t_2}(o)\setminus B_{t_1}(o)}f|(\p_\r)^N|^2\r^{-n}+\int_{t_1}^{t_2}\tau^{-n-1}\int_{\Si\cap B_\tau(o)}\lan \na_\Si f,\p_\r\ran d\tau,
\endaligned
\end{equation}
where $\na_\Si$ denotes the Levi-Civita connection of the regular part of $\Si^*$, $(\p_\r)^N$ denotes the projection of $\p_\r$ to the normal bundle of the regular part of $\Si^*$. Since $\mathcal{S}_C$ has Hausdorff dimension $\le n-1$, then $\mathcal{H}^n\left(\Si\cap\mathcal{S}_C\right)=0$.

Let $\r_{\mathcal{S}_C}$ denote the distance function from $\mathcal{S}_C$.
For any small $0<\ep<t_1$, let $\phi_\ep$ be a Lipschitz function on $C$ defined by $\phi_\ep=1$ on $\{\r_{\mathcal{S}_C}\ge2\ep\r\}$, $\phi_\ep=\r_{\mathcal{S}_C}/(\ep\r)-1$ on $\{\ep\r\le\r_{\mathcal{S}_C}<2\ep\r\}$, and $\phi_\ep=0$ on $\{\r_{\mathcal{S}_C}<\ep\r\}$.
Then
\begin{equation}\aligned
\lan \na_C \phi_\ep,\p_\r\ran =0\qquad \mathrm{a.e.\ \ on} \ C\setminus \mathcal{S}_C,
\endaligned
\end{equation}
where $\na_C$ denotes the Levi-Civita connection of $C\setminus \mathcal{S}_C$.
Let $\e_\ep$ be a Lipschitz function on $C$ defined by $\e_\ep=1$ on $\{\r\ge2\ep\}$, $\e_\ep=\r/\ep-1$ on $\{\ep\le\r<2\ep\}$, and $\e_\ep=0$ on $\{\r<\ep\}$.
Note that $\phi_\ep \e_\ep$ is Lipschitz on $\Si^*$ for a.e. $\ep>0$. Then for $\tau\in[t_1,t_2]$
\begin{equation}\aligned
&\left|\int_{\Si\cap B_\tau(o)}\lan \na_\Si(\phi_\ep \e_\ep),\p_\r\ran \right|=\left|\int_{\Si\cap B_\tau(o)}\phi_\ep\lan \na_\Si\e_\ep,\p_\r\ran \right|\\
\le&\int_{\Si\cap B_{2\ep}(o)\setminus B_\ep(o)}\f1{\ep}\le\f1{\ep}\mathcal{H}^n(\Si\cap B_{2\ep}(o)).
\endaligned
\end{equation}
With Lemma \ref{UPPLOWMinfty}, we get
\begin{equation}\aligned
\lim_{\ep\rightarrow0}\int_{\Si\cap B_\tau(o)}\lan \na_\Si(\phi_\ep \e_\ep),\p_\r\ran=0.
\endaligned
\end{equation}
We choose the function $f$ in \eqref{MVfSi} approaching to $\phi_\ep\e_\ep$, and then $\ep\to0$ implies
\begin{equation}\aligned\label{MontSi}
&t_2^{-n}\mathcal{H}^n(\Si\cap B_{t_2}(o))-t_1^{-n}\mathcal{H}^n(\Si\cap B_{t_1}(o))
=&\int_{\Si^*\cap B_{t_2}(o)\setminus B_{t_1}(o)}|(\p_\r)^N|^2\r^{-n}.
\endaligned
\end{equation}
For any $0<t'<t<1$, let $t_2=t$, $t_1\to t'$ with $t_1>t'$, we complete the proof.
\end{proof}

Let $N_i$ be a sequence of $(n+1)$-dimensional complete smooth manifolds with \eqref{Ric} and \eqref{Vol}.
Up to choose the subsequence, we assume that $\overline{B_1(p_i)}$ converges to a metric ball $\overline{B_1(p_\infty)}$ in the Gromov-Hausdorff sense.
Let $M_i$ be an area-minimizing hypersurface in $B_1(p_i)\subset N_i$ with $\p M_i\subset\p B_1(p_i)$.
Suppose that $M_i$ converges to a closed set $M_\infty\subset \overline{B_1(p_\infty)}$ in the induced Hausdorff sense. Let $\mathcal{S}^{n-2}$ be the subset of $B_1(p_\infty)$ defined in \eqref{Sk}.
Using Lemma \ref{pM+Minfty}, we have the following cone property.
\begin{theorem}\label{Covcone}
For $y_\infty\in M_\infty\cap B_1(p_\infty)\setminus\mathcal{S}^{n-2}$, let $s_j$ be a sequence with $s_j\to0^+$ as $j\to\infty$ so that $\f1{s_j}(B_1(p_\infty),y_\infty)$ converges to a metric cone $(C,o)$ in the pointed Gromov-Hausdorff sense with $C\in\mathcal{C}_{n+1}$,
and $\f1{s_j}(M_\infty,y_\infty)$ converges in the induced Hausdorff sense to $(M^*,o)$ for some closed set $M^*\subset C$. Then for any sequence $\r_k\rightarrow0^+$, there is a subsequence $\r_{k'}$ of $\r_k$ such that $\f1{\r_{k'}}(M^*,o)$ converges in the induced Hausdorff sense to a metric cone $(C^*,o)$ with $C^*\subset C$.
Moreover, there is a sequence $r_j\rightarrow0^+$ such that $\f1{r_j}(B_1(p_\infty),y_\infty)$ converges to $(C,o)$ in the pointed Gromov-Hausdorff sense,
and $\f1{r_j}(M_\infty,y_\infty)$ converges in the induced Hausdorff sense to $(C^*,o)$.
\end{theorem}
\begin{proof}
Given $t>1$, by taking the diagonal subsequence, (up to choose the subsequences) we may assume $2ts_j\r_k\le 1-d(p_\infty,y_\infty)$ and
\begin{equation}\aligned\label{GHrjykl000}
d_{GH}\left(B_{ts_j\r_{k}}(y_{\infty}),B_{ts_j\r_{k}}(o)\right)<s_j\r_{k}/k
\endaligned
\end{equation}
for each integer $j\ge k>0$. Here, $B_{r}(o)$ is the ball in $C$ centered at its vertex $o$ with the radius $r$.
Let $\Phi_{k,j}:\,B_{ts_j\r_k}(y_{\infty})\to B_{ts_j\r_k}(o)\subset C$ denote a $3s_j\r_{k}/k$-Hausdorff approximation.
Up to a choice of subsequence of $j$, we can assume
\begin{equation}\aligned\label{eq510add000}
d_{H}\left(s_j^{-1}\Phi_{k,j}(M_{\infty}\cap B_{ts_j\r_{k}}(y_{\infty})),M^*\cap B_{t\r_{k}}(o)\right)<\r_{k}/k
\endaligned
\end{equation}
for any $j\ge k$.
There is a sequence of points $y_i\in M_i$ so that $y_i\to y_\infty$ as $i\to\infty$.
By taking the diagonal subsequence, (up to choose the subsequences) we can assume
\begin{equation}\aligned\label{GHrjykl}
d_{GH}\left(B_{ts_j\r_k}(y_{i}),B_{ts_j\r_k}(y_\infty)\right)<s_j\r_k/k,
\endaligned
\end{equation}
and
\begin{equation}\aligned\label{eq510add}
d_{H}\left(\Phi_{k,j,i}(M_{i}\cap B_{ts_j\r_k}(y_{i})),M_\infty\cap B_{ts_j\r_k}(y_\infty)\right)<s_j\r_k/k
\endaligned
\end{equation}
for any $i\ge j\ge k>0$,
where $\Phi_{k,j,i}$ is a $3s_j\r_k/k$-Hausdorff approximation from $B_{ts_j\r_k}(y_{i})$ to $B_{ts_j\r_k}(y_{\infty})\subset B_1(p_\infty)$.

Up to a choice of subsequence of $\r_k$, we assume that $\f1{\r_{k}}(M^*,o)$ converges in the induced Hausdorff sense to $(C^*,o)$ for some closed subset $C^*\subset C$ with $o\in C^*$. 
Using the monotonicity formula \eqref{t1t2HnMpinfty}, we get
\begin{equation}\aligned\label{limrkrto0+}
\lim_{k\to\infty}\r_{k}^{-n}\mathcal{H}^n\left(M^*\cap B_{\r_{k}}(o)\right)=\lim_{r\to0^+}r^{-n}\mathcal{H}^n\left(M^*\cap B_{r}(o)\right).
\endaligned
\end{equation}
From \eqref{GHrjykl000} and \eqref{GHrjykl}, we obtain
\begin{equation}\aligned\label{dGHrkskt0001}
\lim_{i\to\infty}d_{GH}\left(s_i^{-1}\r_k^{-1}B_{ts_i\r_k}(y_{j}),B_{t}(o)\right)=\lim_{k\to\infty}d_{GH}\left(s_k^{-1}\r_k^{-1}B_{ts_k\r_k}(y_{k}),B_{t}(o)\right)=0.
\endaligned
\end{equation}
From \eqref{eq510add000}\eqref{eq510add} and $\f1{\r_{k}}(M^*,o)\to(C^*,o)$, we obtain
\begin{equation}\aligned\label{dHrkskt0001}
&\lim_{i\to\infty}d_{H}\left(s_i^{-1}\r_k^{-1}\Phi_{k,i}(\Phi_{k,i,i}(M_i\cap B_{ts_i\r_{k}}(y_i))),\r_k^{-1}M^*\cap B_{t}(o)\right)\\
=&\lim_{k\to\infty}d_{H}\left(s_k^{-1}\r_k^{-1}\Phi_{k,k}(\Phi_{k,k,k}(M_k\cap B_{ts_k\r_{k}}(y_k))),C^*\cap B_{t}(o)\right)=0.
\endaligned
\end{equation}
By Lemma \ref{UPPLOWMinfty} and Theorem \ref{Conv0},
there is a multiplicity one rectifiable $n$-stationary varifold $V$ in the regular part of $C\cap B_t(o)$ with spt$V\triangleq C^*\cap B_t(o)$  such that
$s_i^{-1}\r_i^{-1}(M_{i}\cap B_{ts_i\r_{i}}(y_i))$ converges in the induced Hausdorff sense to $ C^*\cap B_t(o)$.
From Theorem \ref{LimMiMib} and \eqref{dGHrkskt0001}\eqref{dHrkskt0001}, given $0<t_1<t_2<t$ for each $k$ we have
\begin{equation}\aligned
\limsup_{i\to\infty}s_i^{-n}\mathcal{H}^n(M_{i}\cap\overline{B_{t_2s_{i}\r_k}(p_i)})\le\mathcal{H}^n( M^*\cap\overline{B_{t_2\r_k}(o)})
\endaligned
\end{equation}
and
\begin{equation}\aligned
\mathcal{H}^n( M^*\cap B_{t_1\r_k}(o))\le\liminf_{i\to\infty}s_i^{-n}\mathcal{H}^n(M_{i}\cap B_{t_1s_{i}\r_k}(p_i)).
\endaligned
\end{equation}
With \eqref{limrkrto0+}, by taking the diagonal subsequence, (up to choose the subsequences) we have
\begin{equation}\aligned\label{cpdejMkjadd}
t_2^{-n}\liminf_{k\to\infty}s_k^{-n}\r_k^{-n}\mathcal{H}^n\left(M_{k}\cap\overline{B_{t_2s_{k}\r_k}(p_k)}\right)\le t_1^{-n} \limsup_{k\to\infty}s_k^{-n}\r_k^{-n}\mathcal{H}^n(M_{k}\cap B_{t_1s_{k}\r_k}(p_k)),
\endaligned
\end{equation}
and still have
\begin{equation}\aligned\label{dGHrkskt0002}
&\lim_{k\to\infty}d_{GH}\left(s_k^{-1}\r_k^{-1}B_{ts_k\r_k}(y_{k}),B_{t}(o)\right)\\
=&\lim_{k\to\infty}d_{H}\left(s_k^{-1}\r_k^{-1}\Phi_{k,k}(\Phi_{k,k,k}(M_k\cap B_{ts_k\r_{k}}(y_k))),C^*\cap B_{t}(o)\right)=0
\endaligned
\end{equation}
from \eqref{dGHrkskt0001}\eqref{dHrkskt0001}.
Combining Theorem \ref{LimMiMib}, \eqref{cpdejMkjadd} and \eqref{dGHrkskt0002}, we conclude that 
$$t_2^{-n}\mathcal{H}^n(C^*\cap B_{t_2}(o))\le t_1^{-n}\mathcal{H}^n\left(C^*\cap \overline{B_{t_1}(o)}\right).$$
From Lemma \ref{pM+Minfty}, $\tau^{-n}\mathcal{H}^n(C^*\cap B_{\tau}(o))$ is a constant on $(0,t)$.
From arbitrariness of $t$ and the uniqueness of solutions of minimal hypersurface equation, we conclude that $C^*$ is a metric cone in $C$.

From  \eqref{GHrjykl000} and  \eqref{eq510add000}, $\f1{s_j\r_j}B_{ts_j\r_j}(y_\infty)$ converges to $B_t(o)$ in the Gromov-Hausdorff sense, and $\f1{s_j\r_j}(M_\infty\cap B_{ts_j\r_j}(y_\infty))$ converges to $C^*\cap B_t(o)$ in the induced Hausdorff sense. There is a subsequence $\tau_j$ of $s_j\r_j$ so that $\f1{\tau_j}B_{t^2\tau_j}(y_\infty)$ converges to $B_{t^2}(o)$ in the Gromov-Hausdorff sense, and $\f1{\tau_j}(M_\infty\cap B_{t^2\tau_j}(y_\infty))$ converges to $C^*\cap B_{t^2}(o)$ in the induced Hausdorff sense. Then we extract the subsequence of $\tau_j$ for the case $t^3$ (namely, $B_{t^3}(o)$ and $C^*\cap B_{t^3}(o)$), and continue to do this process inductively for $t^l$ until $l\to\infty$. 
By taking the diagonal subsequence, there is a subsequence $r_j\to0^+$ as $j\to\infty$ so that $\f1{r_j}(B_1(p_\infty),y_\infty)$ converges to $(C,o)$ in the pointed Gromov-Hausdorff sense,
and $\f1{r_j}(M_\infty,y_\infty)$ converges in the induced Hausdorff sense to $(C^*,o)$.
This completes the proof.
\end{proof}
\textbf{Remark.}
In general, I do not know whether $M^*$ is a metric cone in $C$ in Theorem \ref{Covcone}. I would like to thank Gioacchino Antonelli and Daniele Semola for indicating this problem, which I overlooked in the last version.

Using Lemma \ref{almostmono}, we have the uniqueness of the limit for the density function of $M_\infty$ in a special case, which acts a key role in studying the regularity of $M_\infty$ in $B_1(p_\infty)$.
\begin{lemma}\label{4EQUIV}
For any $x\in M_\infty\cap \mathcal{R}$, if $\liminf_{r\rightarrow0}r^{-n}\mathcal{H}^n(M_\infty\cap B_{r}(x))=\omega_n$, then $\lim_{r\rightarrow0}r^{-n}\mathcal{H}^n(M_\infty\cap B_{r}(x))=\omega_n$, and for any sequence $s_i\rightarrow0^+$ there is a subsequence $s_{i_j}$ so that $s_{i_j}^{-1}(M_\infty,x)$ converges to $(\R^n,0)$ in $\R^{n+1}$ in the induced Hausdorff sense.
\end{lemma}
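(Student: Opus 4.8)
The plan is to analyze the density ratio $\Theta(r):=\omega_n^{-1}r^{-n}\mathcal{H}^n(M_\infty\cap B_r(x))$ as $r\to0$, using Theorem \ref{Covcone} to pass to tangent cones inside $\R^{n+1}$ and Lemma \ref{almostmono} to rule out density jumps. Three preliminary observations will be used repeatedly: since $x\in\mathcal{R}$ we have $x\in M_\infty\cap B_1(p_\infty)\setminus\mathcal{S}^{n-2}$ because $\mathcal{S}^{n-2}\subset\mathcal{S}=B_1(p_\infty)\setminus\mathcal{R}$, so Theorem \ref{Covcone} is available; every tangent cone of $B_1(p_\infty)$ at $x$ is $\R^{n+1}$; and $\lim_{r\to0}\omega_{n+1}^{-1}r^{-n-1}\mathcal{H}^{n+1}(B_r(x))=1$ by \eqref{VolCOV} and Cheeger--Colding. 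Also $\Theta(r)$ is bounded above and below by positive constants thanks to Lemma \ref{UPPLOWMinfty}.

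First I would show $\liminf_{r\to0}\Theta(r)\ge1$. For any $\sigma_j\to0$, Theorem \ref{Covcone} yields a subsequence along which $\sigma_{j_l}^{-1}(B_1(p_\infty),x)\to(\R^{n+1},0)$ (using $x\in\mathcal{R}$) and $\sigma_{j_l}^{-1}(M_\infty,x)\to(C',o)$ in the induced Hausdorff sense, with $C'$ a metric cone through $o$; by Theorem \ref{Conv0} (the ambient blow-up being Euclidean) $C'$ is an area-minimizing hypercone in $\R^{n+1}$, the support of a multiplicity-one stationary varifold. Since $C'$ is a cone, $\mathcal{H}^n(C'\cap\partial B_t(o))=0$ for every $t>0$, so Theorem \ref{LimMiMib} (cf. \eqref{EQUIVMEA}) applied to the blow-up sequence gives $\omega_n^{-1}t^{-n}\mathcal{H}^n(C'\cap B_t(o))=\lim_l\Theta(t\sigma_{j_l})$ for each $t$; the left side is independent of $t$ and hence equals $\lim_l\Theta(\sigma_{j_l})$. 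A nonempty multiplicity-one area-minimizing hypercone through the origin is either a hyperplane, of density $1$, or nonflat, of density $\ge1+\theta_n$ by \eqref{DEFthn}; either way $\lim_l\Theta(\sigma_{j_l})\ge1$. Choosing $\sigma_j$ to realize $\liminf_{r\to0}\Theta(r)$ gives the claim.

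The main step is $\limsup_{r\to0}\Theta(r)\le1$. Fix $\ep\in(0,\theta_n)$ and let $\de=\de(n,\ep,\ep/2)>0$ be the constant of Lemma \ref{almostmono}. Only countably many radii $r$ carry $\mathcal{H}^n(M_\infty\cap\partial B_r(x))>0$, and by the first step together with the hypothesis $\Theta(r_i)\to1$ we may replace $r_i$ by slightly smaller radii with $\mathcal{H}^n(M_\infty\cap\partial B_{r_i}(x))=0$, keeping $\Theta(r_i)\to1$. Fix $i$ large and pick $x_k\in M_k$ with $x_k\to x$. Rescaling the metric of $N_k$ by $1/r_i$, the smooth geodesic ball $r_i^{-1}B_{r_i}(x_k)$ has $\mathrm{Ric}\ge-r_i^2n\k^2\ge-\de$, the restricted current $r_i^{-1}(\llbracket M_k\rrbracket\llcorner B_{r_i}(x_k))$ is area-minimizing in it with boundary on the sphere and with the rescaled $x_k$ in its support; and as $k\to\infty$, by \eqref{VolCOV} and Theorem \ref{LimMiMib}, the volume of this ball tends to $r_i^{-n-1}\mathcal{H}^{n+1}(B_{r_i}(x))$ and the mass of the current tends to $r_i^{-n}\mathcal{H}^n(M_\infty\cap B_{r_i}(x))=\omega_n\Theta(r_i)$. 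Since the first quantity is $\ge(1-\de/2)\omega_{n+1}$ and the second is $\le(1+\ep/2)\omega_n$ once $i$ is large, for such $i$ and $k=k(i)$ large Lemma \ref{almostmono} applies and gives $\mathcal{H}^n(M_k\cap B_{\rho r_i}(x_k))\le(1+\ep)\omega_n(\rho r_i)^n$ for all $\rho\in(0,1)$; letting $k\to\infty$ via Theorem \ref{LimMiMib} yields $\Theta(s)\le1+\ep$ for all $s\in(0,r_i)$. As $r_i\to0$ this gives $\limsup_{r\to0}\Theta(r)\le1+\ep$, and $\ep\to0$ together with the first step proves $\lim_{r\to0}r^{-n}\mathcal{H}^n(M_\infty\cap B_r(x))=\omega_n$.

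For the blow-up statement, given $s_i\to0$ Theorem \ref{Covcone} produces a subsequence with $s_{i_j}^{-1}(M_\infty,x)\to(C',o)$, $C'$ an area-minimizing hypercone in $\R^{n+1}$ through $o$ as above; by the density identity of the second paragraph and $\lim_{r\to0}\Theta(r)=1$ we get $\omega_n^{-1}\mathcal{H}^n(C'\cap B_1(o))=\lim_j\Theta(s_{i_j})=1$, so $C'$ is a density-one area-minimizing hypercone through the origin, hence flat, i.e.\ $C'=\R^n$. This finishes the proof. I expect the main obstacle to be the third paragraph: arranging the diagonal double limit so that, for $k=k(i)$ depending on $i$, the rescaled smooth geodesic balls $r_i^{-1}B_{r_i}(x_k)$ genuinely satisfy the quantitative Ricci, volume and total-mass hypotheses of Lemma \ref{almostmono}, and then checking that its conclusion descends to $M_\infty$ through the lower semicontinuity of volume in Theorem \ref{LimMiMib}.
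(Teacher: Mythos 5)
Your proposal is correct, and for the first and last assertions (the lower bound $\liminf_{r\to0}r^{-n}\mathcal{H}^n(M_\infty\cap B_r(x))\ge\omega_n$ via blow-up cones, and flatness of every tangent cone once the density equals $\omega_n$) it runs along the same lines as the paper, which likewise combines Theorem \ref{Covcone}, Theorem \ref{Conv0} and Theorem \ref{LimMiMib} with the density gap \eqref{DEFthn} for non-flat minimizing hypercones. Where you genuinely diverge is the key step $\limsup_{r\to0}r^{-n}\mathcal{H}^n(M_\infty\cap B_r(x))\le\omega_n$: the paper argues by contradiction, selecting scales $\rho_i\to0$ along which the density ratio converges to a value $(1+\epsilon_*')\omega_n$ lying strictly inside the Allard gap, and then contradicting the fact that the blow-up limit is a minimizing cone whose density must be either $1$ or at least $1+\epsilon_*$; you instead transfer the bound at the good scales $r_i$ (where the ratio tends to $\omega_n$) to the approximating smooth balls $r_i^{-1}B_{r_i}(x_k)$ and invoke the almost-monotonicity Lemma \ref{almostmono} there, then push the resulting bound at all smaller radii back to $M_\infty$ via the semicontinuity lemmas. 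Your route avoids the selection of a density level strictly inside the gap (an intermediate-value-type step for $r\mapsto r^{-n}\mathcal{H}^n(M_\infty\cap B_r(x))$ that the paper leaves implicit), at the price of the double limit in $k$ and $i$, whose hypotheses (rescaled Ricci bound, volume $\ge(1-\delta)\omega_{n+1}$ from the density-one property of the regular point $x$ together with volume convergence, mass $\le(1+\epsilon/2)\omega_n r_i^n$ from Lemma \ref{LimMi} at radii with $\mathcal{H}^n(M_\infty\cap\partial B_{r_i}(x))=0$, and the centering $x_k\in M_k$) you verify correctly; note only that, exactly as in the paper's own proof, the applications of Theorem \ref{LimMiMib} at centers $x_k\to x$ and to blow-ups of $M_\infty$ should formally be routed through Lemma \ref{UpMinfty*}, Lemma \ref{LimMi} and the diagonal argument built into Theorem \ref{Covcone}.
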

\begin{proof}
From the assumption, there is a positive sequence $r_j\rightarrow0$ so that
$\lim_{j\rightarrow\infty}r_j^{-n}\mathcal{H}^n(M_\infty\cap B_{r_j}(x))=\omega_n$.
Let $\th_n$ be the constant defined as \eqref{DEFthn} such that
\begin{equation}\aligned\label{Cep*}
\mathcal{H}^n(C\cap B_{1}(0))\ge(1+\th_n)\omega_n
\endaligned
\end{equation}
for each non-flat area-minimizing hypercone $C$ in $\R^{n+1}$ with the vertex at the origin.
Let $x_i\in M_i$ with $x_i\to x$ as $i\to\infty$.
Given a small constant $\ep\in(0,\de_n)$ with $(1-\ep^2)^{-n-2}\le1+\ep/2$. Since $M_i$ converges to $M_\infty\subset \overline{B_1(p_\infty)}$ in the induced Hausdorff sense, from Theorem \ref{LimMiMib} we have
\begin{equation}\aligned
\mathcal{H}^n(M_i\cap B_{(1-\ep^2)r_j}(x_i))<(1-\ep^2)^{-1}\mathcal{H}^n(M_\infty\cap B_{r_j}(x))
\endaligned
\end{equation}
for all sufficiently large $i$. Hence, there is an integer $j_\ep$ so that for each integer $j\ge j_\ep$
\begin{equation}\aligned
\mathcal{H}^n(M_i\cap B_{(1-\ep^2)r_j}(x_i))<(1-\ep^2)^{-2}\omega_nr_j^n\le(1+\ep/2)\omega_n\left((1-\ep^2)r_j\right)^n
\endaligned
\end{equation}
for all sufficiently large $i$ (depending on $j$).
From Lemma \ref{almostmono}, (up to a choice of $j_\ep$) for each $s\in(0,(1-\ep^2)r_j]$ with $j\ge j_\ep$ we have
\begin{equation}\aligned\label{MiBsxiep<}
\mathcal{H}^n(M_i\cap B_{s}(x_i))<(1+\ep)\omega_ns^n
\endaligned
\end{equation}
for all sufficiently large $i$ (depending on $j$). From \eqref{MiBsxiep<} and Theorem \ref{LimMiMib} again, we have
\begin{equation}\aligned
s^{-n}\mathcal{H}^n(M_\infty\cap B_{s}(x))\le s^{-n}\liminf_{i\to\infty}\mathcal{H}^n(M_i\cap B_{s}(x_i))\le(1+\ep)\omega_n.
\endaligned
\end{equation}
Letting $s\to0$ and then $\ep\to0$ imply
$$\lim_{r\rightarrow0}s^{-n}\mathcal{H}^n(M_\infty\cap B_{s}(x))\le\omega_n.$$
Combining Theorem \ref{Conv0} and Theorem \ref{LimMiMib}, for any sequence $s_i\rightarrow0^+$ there is a subsequence $s_{i_j}$
so that $s_{i_j}^{-1}(M_\infty,x)$ converges to $(\R^n,0)$ in $\R^{n+1}$ in the induced Hausdorff sense. This completes the proof.
\end{proof}

Inspired by Allard's regularity theorem and the work on the singular sets by Cheeger-Colding \cite{CCo1},
we define some approximate sets related to the singular sets of area-minimizing hypersurfaces in metric spaces as follows.
Let $\mathcal{S}_{M_\infty,\ep,r}$ denote the subsets in the regular set $\mathcal{R}\subset B_1(p_\infty)$ containing all the points $y$ satisfying
$$\mathcal{H}^n\left(M_\infty\cap B_{s}(y)\right)\ge(1+\ep)\omega_n s^n\qquad \mathrm{for\ some}\ s\in(0,r].$$
Note that $M_\infty\subset\overline{B_1(p_\infty)}$. Let
$$\mathcal{S}_{M_\infty,\ep}=\bigcap_{0<r\le1}\mathcal{S}_{M_\infty,\ep,r},
\qquad\mathcal{S}_{M_\infty}=\bigcup_{\ep>0}\mathcal{S}_{M_\infty,\ep}=\bigcup_{\ep>0}\bigcap_{0<r\le1}\mathcal{S}_{M_\infty,\ep,r}.$$
From Lemma \ref{4EQUIV}, every tangent cone of $M_\infty$ at each point in $\mathcal{S}_{M_\infty}$ is not a hyperplane.
Let $\th_n>0$ be the constant defined in \eqref{DEFthn}.
From Lemma \ref{4EQUIV} again, for any $x\in\mathcal{S}_{M_\infty,\ep}$ with $\ep<\th_n$ there is a constant $r_x>0$ such that
\begin{equation}\aligned
\mathcal{H}^n\left(M_\infty\cap B_{s}(x)\right)\ge(1+2\ep/3)\omega_n s^n\qquad \mathrm{for\ any}\ s\in(0,r_x].
\endaligned
\end{equation}
Noting that $\mathcal{S}_{M_\infty,\ep}$ may not be closed as $\mathcal{R}$ may not be closed in $B_1(p_\infty)$.
However, from Theorem \ref{IntReg} and Theorem \ref{LimMiMib}, there is a constant $r_x^*>0$ such that
\begin{equation}\aligned\label{MinfBsy1ep2}
\mathcal{H}^n\left(M_\infty\cap B_{s}(y)\right)\ge(1+\ep/2)\omega_n s^n\qquad \mathrm{for\ any}\ s\in(0,r_x^*], \ y\in B_{r_x^*}(x)\cap\mathcal{S}_{M_\infty,\ep}.
\endaligned
\end{equation}
We call $\mathcal{S}_{M_\infty}$ \emph{the singular set} of $M_\infty$ in $\mathcal{R}$.
Denote $\mathcal{R}_{M_\infty,\ep,r}=M_\infty\cap\mathcal{R}\setminus\mathcal{S}_{M_\infty,\ep,r}$. If $y\in\mathcal{R}_{M_\infty,\ep,r}$, then there is a number $s\in(0,r]$ so that
$$\mathcal{H}^n\left(M_\infty\cap B_{s}(y)\right)<(1+\ep)\omega_n s^n.$$
Set
$$\mathcal{R}_{M_\infty}=M_\infty\cap\mathcal{R}\setminus\mathcal{S}_{M_\infty}=\bigcap_{\ep>0}\bigcup_{0<r\le1}\mathcal{R}_{M_\infty,\ep,r}.$$
From Lemma \ref{4EQUIV}, for each $x\in \mathcal{R}_{M_\infty}$, any tangent cone of $M_\infty$ at $x$ is a hyperplane in $\R^{n+1}$.

Now let us prove Theorem \ref{dimestn-7n-2}, which is divided into the following two lemmas.
\begin{lemma}\label{codim7}
$\mathcal{S}_{M_\infty}$ has Hausdorff dimension $\le n-7$ for $n\ge7$, and it is empty for $n<7$.
\end{lemma}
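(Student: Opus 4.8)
The plan is to run a Federer-style dimension reduction, exploiting the fact that a single blow-up at a point of $\mathcal{R}$ already lands us in Euclidean space, where the classical structure theory of area-minimizing hypersurfaces (De Giorgi--Almgren--Simons) applies. Consider first the case $n<7$. If $x\in\mathcal{S}_{M_\infty}$ then $x\in\mathcal{R}\subset B_1(p_\infty)\setminus\mathcal{S}^{n-2}$, so by Theorem \ref{Covcone} some blow-up of $(M_\infty,x)$ is a metric cone $(C',o)$ sitting inside $(\R^{n+1},o)$; by Theorem \ref{Conv0} this $C'$ is area-minimizing, and by \eqref{MinfBsy1ep2} together with Lemma \ref{4EQUIV} its density at the vertex is strictly larger than $\omega_n$, hence $C'$ is not a hyperplane --- impossible when $n<7$. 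Thus $\mathcal{S}_{M_\infty}=\emptyset$. For $n\ge7$, since $\mathcal{S}_{M_\infty}$ is an increasing union of the sets $\mathcal{S}_{M_\infty,\ep}$ as $\ep\downarrow0$ and Hausdorff dimension is countably stable, it suffices to show $\dim_{\mathcal{H}}\mathcal{S}_{M_\infty,\ep}\le n-7$ for each fixed $\ep\in(0,\th_n)$.

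Next I would set up the density machinery on $M_\infty\cap\mathcal{R}$. For $x\in M_\infty\cap\mathcal{R}$, Theorem \ref{Covcone} shows every blow-up limit is a cone in $\R^{n+1}$, which is area-minimizing by Theorem \ref{Conv0}; combining Lemma \ref{pM+Minfty} with the asymptotic constancy of $\Th_{M_\infty,x}$ obtained inside the proof of Theorem \ref{Covcone} (the estimate $\phi_j(2^{-k_0}r_j,2)\le2\de_j^*$), the density $\Th_x:=\lim_{r\to0}\omega_n^{-1}r^{-n}\mathcal{H}^n(M_\infty\cap B_r(x))$ exists and equals the common vertex-density of all tangent cones. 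By Lemma \ref{4EQUIV} and \eqref{DEFthn} there is a gap: either $\Th_x=1$ (all tangent cones are hyperplanes) or $\Th_x\ge1+\th_n$, the latter precisely on $\mathcal{S}_{M_\infty}$. Moreover \eqref{MinfBsy1ep2}, read with Theorem \ref{LimMiMib} and Corollary \ref{M**Btau}, shows that $\mathcal{S}_{M_\infty,\ep}$ is relatively closed in $\mathcal{R}$, that near each of its points the uniform lower bound $\mathcal{H}^n(M_\infty\cap B_s(y))\ge(1+\ep/2)\omega_n s^n$ holds, and that the density excess is upper semicontinuous along any sequence converging in the induced Hausdorff sense.

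For the reduction itself, suppose towards a contradiction that $\mathcal{H}^s(\mathcal{S}_{M_\infty,\ep})>0$ for some $s>n-7$. Pick $x$ at which the upper $s$-density of $\mathcal{S}_{M_\infty,\ep}$ is positive and blow up: along a subsequence $\la_i\to0$ the rescaled data $\la_i^{-1}(B_1(p_\infty),M_\infty,\mathcal{S}_{M_\infty,\ep}\cap B_{\la_i}(x))$ converges to $(\R^{n+1},M',T)$ with $M'$ an area-minimizing hypersurface (indeed cone) through $o$ by Theorems \ref{Conv0} and \ref{Covcone}, and with $T$ a closed subset of $M'$ on which the density is $\ge(1+\ep/2)\omega_n$; the inclusion $T\subset\mathrm{sing}\,M'$ follows since density one is a regular point in $\R^{n+1}$, and $\mathcal{H}^s(T\cap B_1(o))>0$ follows from the uniform density bound plus relative closedness, which prevent any loss of $\mathcal{H}^s$-measure under the blow-up. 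But then $\dim_{\mathcal{H}}\mathrm{sing}\,M'\ge s>n-7$, contradicting the classical Federer bound for area-minimizing hypersurfaces in $\R^{n+1}$. Hence $\dim_{\mathcal{H}}\mathcal{S}_{M_\infty,\ep}\le n-7$, which finishes the proof.

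The main obstacle is the bookkeeping in passing from $M_\infty$ to its blow-up: one must show that the abstract singular set $\mathcal{S}_{M_\infty,\ep}$ --- defined only through a density-excess condition, and not a priori the support of an integral current (cf. the Remark after Theorem \ref{dimestn-7n-2}) --- converges under rescaling to a subset of the Euclidean singular set without dropping $\mathcal{H}^s$-measure. This is exactly where \eqref{MinfBsy1ep2} (uniform density lower bound on a whole neighborhood, yielding relative closedness and ruling out mass concentration), Theorem \ref{LimMiMib} and Corollary \ref{M**Btau} (to carry Hausdorff-measure, not merely Minkowski-content, bounds through the Gromov--Hausdorff limit), and the monotonicity of Lemma \ref{pM+Minfty} (to be sure the blow-up limit is a genuine cone) are all needed simultaneously. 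Once the blow-up is safely in $\R^{n+1}$, everything reduces to classical facts.
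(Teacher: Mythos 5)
Your proposal is essentially the paper's argument: a Federer-style dimension reduction that blows up (via Theorem \ref{Covcone}) at a density point of $\mathcal{S}_{M_\infty,\ep}$ into a Euclidean area-minimizing cone, uses the uniform lower density bound \eqref{MinfBsy1ep2} to carry the rescaled singular sets into the singular set of the limit cone, and then contradicts the classical bound; the paper packages these steps through Lemma 11.2, Proposition 11.3, Lemma 11.5 and Theorem 11.8 of Giusti. One small precision worth flagging: the quantity that survives the blow-up is the Hausdorff \emph{content} $\mathcal{H}^s_\infty$ (which is upper semicontinuous under Hausdorff convergence of closed sets), not the Hausdorff measure $\mathcal{H}^s$ itself, which is why the paper first passes from $\mathcal{H}^{\beta}>0$ to $\mathcal{H}^{\beta}_\infty>0$ via Giusti's Lemma 11.2 before starting the reduction.
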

\begin{proof}
For the case $n\ge 7$, we only need to show $\mathrm{dim}\mathcal{S}_{M_\infty,\ep}\le n-7$ for each $\ep>0$.
Suppose there exists a constant $\be>n-7$ (maybe not integer) such that $\mathcal{H}^\be\left(\mathcal{S}_{M_\infty,\ep}\right)>0$.
Then $\mathcal{H}^\be_\infty\left(\mathcal{S}_{M_\infty,\ep}\right)=\lim_{t\rightarrow\infty}\mathcal{H}^\be_t\left(\mathcal{S}_{M_\infty,\ep}\right)>0$ from Lemma 11.2 in \cite{Gi}.
By the argument of Proposition 11.3 in \cite{Gi}, there is a point $x_0\in \mathcal{S}_{M_\infty,\ep}$ and a sequence $r_j\rightarrow0$ such that
\begin{equation}\aligned
\mathcal{H}^\be_\infty\left(\mathcal{S}_{M_\infty,\ep}\cap B_{r_j}(x_0)\right)>2^{-\be-1}\omega_\be r_j^\be.
\endaligned
\end{equation}
Denote $(N_j^*,x_j)=\f1{r_j}(B_1(p_\infty),x_0)$, and $M^j_\infty=\f1{r_j}M_\infty$. Then for the ball $B_{1}(x_j)\subset N_j^*$ we have
\begin{equation}\aligned
\mathcal{H}^\be_\infty\left(\mathcal{S}_{M^j_\infty,\ep}\cap B_{1}(x_j)\right)>2^{-\be-1}\omega_\be.
\endaligned
\end{equation}
Without loss of generality, by Theorem \ref{Conv0} we assume that $(M^j_\infty,x_j)$ converges as $j\rightarrow\infty$ in the induced Hausdorff sense to  $(M^*_\infty,0)$ in $\R^{n+1}$, where $M^*_\infty$ is an area-minimizing hypersurface through the origin in $\R^{n+1}$.
If $y_j\in \mathcal{S}_{M^j_\infty,\ep}\cap B_{1}(x_j)$ and $y_j\rightarrow y_*\in M^*_\infty$, then from \eqref{MinfBsy1ep2} and the definition of $M^j_\infty$ one has
$$\mathcal{H}^n\left(M^j_\infty\cap B_{s}(y_j)\right)\ge(1+\ep/2)\omega_n s^n$$
for any $0<s\le1$ and the sufficiently large $j$. Combining Theorem \ref{Conv0} and \eqref{EQUIVMEA}, taking the limit in the above inequality implies
$$\mathcal{H}^n\left(M^*_\infty\cap B_{s}(y_*)\right)\ge(1+\ep/2)\omega_n s^n.$$
Hence, we conclude that $y_*$ is a singular point of $M^*_\infty$. From the proof of Lemma 11.5 in \cite{Gi}, we have
\begin{equation}\aligned
\mathcal{H}^\be_\infty\left(\mathcal{S}_{M^*_\infty,\ep}\cap B_{1}(0)\right)>2^{-\be-1}\omega_\be.
\endaligned
\end{equation}
Now we can use Theorem 11.8 in \cite{Gi} to get a contradiction. Hence, $\mathcal{S}_{M_\infty}$ has Hausdorff dimension $\le n-7$ for $n\ge7$. For $n<7$, we can use the above argument to show $\mathcal{S}_{M_\infty}$ empty. This completes the proof.
\end{proof}

\begin{lemma}\label{codim2}
$\mathcal{S}\cap M_\infty$ has Hausdorff dimension $\le n-2$.
\end{lemma}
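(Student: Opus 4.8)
The plan is to run a Federer-type dimension reduction paralleling the scheme of Lemma~\ref{codim7}, but tracking \emph{simultaneously} the singular stratification of $B_1(p_\infty)$ and the cone structure of the blow-ups of $M_\infty$ furnished by Theorem~\ref{Covcone}. Since $B_1(p_\infty)$ is a non-collapsed Ricci limit we have $\mathcal{S}=\bigcup_k\mathcal{S}^k=\mathcal{S}^{n-1}$, and by \eqref{Sk} the lower stratum satisfies $\dim\mathcal{S}^{n-2}\le n-2$; hence it suffices to prove $\mathcal{H}^{\be}\big((\mathcal{S}^{n-1}\setminus\mathcal{S}^{n-2})\cap M_\infty\big)=0$ for every $\be>n-2$. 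Suppose this fails, and set $A=(\mathcal{S}^{n-1}\setminus\mathcal{S}^{n-2})\cap M_\infty$. Exactly as in Proposition~11.3 of \cite{Gi} (used already for Lemma~\ref{codim7}) there are $x_0\in A$ and $r_j\to0$ with $\mathcal{H}^{\be}_\infty\big(A\cap B_{r_j}(x_0)\big)\ge 2^{-\be-1}\omega_\be r_j^{\be}$. As $x_0\notin\mathcal{S}^{n-2}$, Theorem~\ref{Covcone} applies: along a subsequence, $r_j^{-1}(B_1(p_\infty),x_0)\to(C,o)$ with $C=C\mathbb{S}^1_\rho\times\R^{n-1}\in\mathcal{C}_{n+1}$, and $r_j^{-1}(M_\infty,x_0)$ converges in the induced Hausdorff sense to a cone $(C',o)$, $C'\subset C$, which is $n$-dimensional by Lemma~\ref{UPPLOWMinfty} and which, off the singular locus $\mathcal{S}_C=\{o_*\}\times\R^{n-1}$ of $C$, is (by the construction in the proof of Theorem~\ref{Covcone}) a multiplicity-one stationary cone again arising as a limit of area-minimizing hypersurfaces in smooth manifolds converging to $C$. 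Using the upper semicontinuity of the stratification under Gromov--Hausdorff convergence together with $\mathcal{S}^{n-1}(C)=\mathcal{S}_C$ and $\mathcal{S}^{n-2}(C)=\emptyset$ (valid precisely when $\rho<1$), the rescalings $r_j^{-1}(A,x_0)$ subconverge to a closed cone $A'\subset\mathcal{S}_C\cap C'$ with $\mathcal{H}^{\be}_\infty\big(A'\cap B_1(o)\big)\ge 2^{-\be-1}\omega_\be$; in particular $\rho<1$ (if $C=\R^{n+1}$ then $\mathcal{S}^{n-1}\setminus\mathcal{S}^{n-2}=\emptyset$ and nothing can concentrate), and $\dim A'>n-2$.

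Next I iterate the reduction inside the \emph{fixed} product $C\mathbb{S}^1_\rho\times\R^{k}$. Since $A'$ is a closed cone in the $(n-1)$-plane $\mathcal{S}_C$ with $\dim A'>n-2$, choose $q\in A'\setminus\{o\}$ of positive upper $\mathcal{H}^{\be}$-density and blow up $(C',q)$; because the ambient is the same product near $q$, the new tangent cone $D$ is again an $n$-dimensional minimal cone of the same kind, it contains the line through its vertex spanned by $q-o$, and this line lies in $\mathcal{S}_C$. A stationary-cone splitting argument---run through the rigidity in the equality case \eqref{MontSi} of the monotonicity formula of Lemma~\ref{pM+Minfty}, using that $D$ is a cone over $o$ and hence, by upper semicontinuity of density together with monotonicity, also a cone over every point of that line---gives $D=\R(q-o)\times\widetilde D$ with $\widetilde D$ a minimal cone of the same kind in the orthogonal factor $C\mathbb{S}^1_\rho\times\R^{n-2}$, and with $\widetilde D\cap(\{o_*\}\times\R^{n-2})$ still of positive $\mathcal{H}^{\be-1}_\infty$-density. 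Repeating this $n-2$ times (vacuous if $n=2$) leaves a $2$-dimensional minimal cone $\widetilde D^{(n-2)}$ in $C\mathbb{S}^1_\rho\times\R$ that contains a point of the singular line $\{o_*\}\times\R$ distinct from its vertex; one final blow-up along that line, followed by splitting it off, produces a $1$-dimensional minimal cone $C_0\subset C\mathbb{S}^1_\rho$ which is a limit of area-minimizing hypersurfaces---hence, by Theorem~\ref{LimMiMib} and the argument of Theorem~\ref{Conv0}, locally the boundary $\partial F$ of a minimal set $F\subset C\mathbb{S}^1_\rho$---and which contains the cone point $o_*$.

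The heart of the matter is to show that no such $C_0$ exists; this is exactly where $\rho<1$ (equivalently, the positive curvature $2\pi(1-\rho)\,\de_{o_*}$ concentrated at the vertex) is decisive. By the monotonicity of Lemma~\ref{pM+Minfty} applied to $\widetilde D^{(n-2)}=\R\times C_0$ at its vertex, $\Th_{C_0,o_*}$ is at least the density at a regular point, namely $1$; since a cone of $m$ rays from $o_*$ has density $m/2$, we get $m\ge2$, and $m\ge3$ is impossible for $\partial F$ with $F$ minimal (it would force a triple junction at $o_*$, and would have density $\ge 3/2>1$ while a flat cut through $o_*$ has density $1$). Thus $C_0$ consists of exactly two rays, splitting $C\mathbb{S}^1_\rho$ into sectors of angles $\alpha$ and $2\pi\rho-\alpha$ with $\alpha\le\pi\rho<\pi$; replacing $C_0\cap B_\ep(o_*)$ (two radial segments of total length $2\ep$) by the geodesic chord joining their endpoints on the $\alpha$-side, of length $2\ep\sin(\alpha/2)<2\ep$, yields a competitor with the same boundary and strictly smaller mass, contradicting the minimizing property of $C_0$ in the limit. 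Hence $\mathcal{H}^{\be}(A)=0$ for every $\be>n-2$, and together with $\dim\mathcal{S}^{n-2}\le n-2$ this gives $\dim(\mathcal{S}\cap M_\infty)\le n-2$.

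I expect the two delicate points to be the following. First, that each blow-up of $C'$ (and of its descendants) is again a limit of area-minimizing hypersurfaces in smooth manifolds converging to the relevant cone; this is a diagonal argument combining Theorem~\ref{Covcone}, the uniform mass bounds of Lemma~\ref{UPPLOWMinfty} and Theorem~\ref{LimMiMib} to keep both the mass ratios and the minimizing competitors under control. Second, the splitting step in the singular space $C\mathbb{S}^1_\rho\times\R^{k}$, where the line to be split off sits precisely in the singular locus $\{o_*\}\times\R^{k}$; this is handled by working on the flat regular part $C\mathbb{S}^1_\rho\times\R^{k}\setminus(\{o_*\}\times\R^{k})$, invoking the rigidity of \eqref{MontSi} there, and using that the singular line carries no $\mathcal{H}^n$-mass so that it does not obstruct the density comparisons or the competitor constructions.
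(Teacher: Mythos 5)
Your proposal follows essentially the same dimension-reduction strategy as the paper: restrict to $\mathcal{S}^{n-1}\setminus\mathcal{S}^{n-2}$ (since $\dim\mathcal{S}^{n-2}\le n-2$), blow up at a point of high $\mathcal{H}^\be$-density using Theorem~\ref{Covcone} to land in $C\mathbb{S}^1_\rho\times\R^{n-1}$ with $\rho<1$, iterate blow-ups and split off $\R$-factors until the minimizing cone contains the whole singular plane, reduce to a $1$-dimensional minimizing cone $C_y'\subset C\mathbb{S}^1_\rho$ through the vertex $o_*$, and rule it out by exhibiting a shorter competitor exploiting $\rho<1$. The paper performs only two explicit blow-ups and invokes ``dimension reduction'' as shorthand for the iteration you carry out explicitly; that part is a matter of exposition, not substance.

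There are, however, two problems in your final step. First, the exclusion of $m\ge3$ rays by comparing to a ``flat cut through $o_*$'' is not a valid argument: a flat cut through the vertex of $C\mathbb{S}^1_\rho$ is itself two rays, and a boundary-of-a-set structure merely forces $m$ to be even, so $m=4$ is not \emph{a priori} excluded by your density comparison. (This error is harmless, because the chord replacement works for any $m\ge2$: at least one of the $m$ sectors, whose angles sum to $2\pi\rho<2\pi$, has angle $<\pi$.) Second, and this is the genuine gap: your competitor replaces $C_0\cap B_\ep(o_*)$ by a chord, so the comparison requires $C_0$ to be area-minimizing in a ball \emph{containing} the singular vertex $o_*$. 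You justify this by citing Theorem~\ref{LimMiMib} and the argument of Theorem~\ref{Conv0}, but the minimizing conclusion of Theorem~\ref{Conv0} (and of Proposition~\ref{MinMSinfty}) is established for smooth ambient balls, and $C\mathbb{S}^1_\rho$ is not smooth at $o_*$. The paper deliberately avoids this issue: the competitor $S=S_*\cup S^*$ of Lemma~\ref{codim2} lives in the annular slab $\overline{\Om_{t,\ep}}\subset\R^{n-1}\times\left(C\mathbb{S}^1_r\setminus B_\ep(o_r)\right)$, which is bounded away from the singular axis, so that the minimizing property of $C_y$ there \emph{can} be justified by \eqref{MSHS} and the (smooth-ambient) proof of Proposition~\ref{MinMSinfty}, and the contradiction is then obtained by letting $t\to\infty$ and $\ep\to0$ so the side term $\pi(n-1)\omega_{n-1}t^{n-2}$ is dominated. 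To repair your argument you would either need to prove directly that the limit cone is a minimizing boundary in balls meeting $o_*$ (a version of Proposition~\ref{MinMSinfty} in the singular cone), or, more in line with the paper, run the comparison in the slab of the product $\R^{n-1}\times C\mathbb{S}^1_\rho$ away from the singular line rather than near the vertex of the $2$-dimensional factor.
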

\begin{proof}
Let us prove it by contradiction.
Assume there is a constant $\de\in(0,1]$ such that
$\mathcal{H}^{n-2+\de}(\mathcal{S}\cap M_\infty)\ge\de$.
From the definition of $\mathcal{S}^k$ in \eqref{Sk} and dim$(\mathcal{S}^k)\le k$ for each nonnegative integer $k\le n-1$,
the set $\Lambda_{M_\infty}\triangleq M_\infty\cap\mathcal{S}^{n-1}$ satisfies $\mathcal{H}^{n-2+\de}(\La_{M_\infty})\ge\de$.
From Proposition 11.3 in \cite{Gi},
for $\mathcal{H}^{n-2+\de}$-almost every point $y\in\La_{M_\infty}$, there is a sequence $r_i\rightarrow0$ such that
\begin{equation}\aligned\label{ri2-n-deLaMinfty}
\lim_{i\rightarrow\infty}r_i^{2-n-\de}\mathcal{H}^{n-2+\de}_\infty(\La_{M_\infty}\cap B_{r_i}(y))>0.
\endaligned
\end{equation}
By Gromov's compactness theorem, we can assume that $\f1{r_i}(B_1(p_\infty),y)$ converges to a metric cone $(Y_y,o_y)$ in the pointed Gromov-Hausdorff sense,
$\f1{r_i}(M_\infty,y)$ converges to $(M_{y,\infty},o_y)$ with closed $M_{y,\infty}$ in $Y_y$, and $\f1{r_i}\left(\La_{M_\infty},y\right)$ converges to $(\La_{y,M_\infty},o_y)$ with closed $\La_{y,M_\infty}\subset M_{y,\infty}$ in the induced Hausdorff sense. We write $Y_y=\R^{n-1}\times CS^1_r$ for some metric cone $CS^1_r$ with the cross section $S^1_r$, where $S^1_r$ is the round circle with the radius $r\in(0,1)$.
By the definition of $\Lambda_{M_\infty}$, $\La_{y,M_\infty}$ is contained in the singular set $\R^{n-1}\times\{o_r\}$ of $Y_y$, where $o_r$ is the vertex of $CS^1_r$.
From \eqref{ri2-n-deLaMinfty} and the proof of Lemma 11.5 in \cite{Gi}, we get
\begin{equation}\aligned\label{Hn-2deyMinfrr}
\mathcal{H}^{n-2+\de}_\infty(\La_{y,M_\infty}\cap B_{1}(0^{n-1},o_r))>0.
\endaligned
\end{equation}

From Proposition 11.3 in \cite{Gi} again,
there are a point $z\in\La_{y,M_\infty}\setminus\{o_y\}\subset\R^{n-1}\times\{o_r\}$ and a sequence $\tau_i\rightarrow0$ such that
\begin{equation}\aligned\label{abssss}
\lim_{i\rightarrow\infty}\tau_i^{2-n-\de}\mathcal{H}^{n-2+\de}_\infty(\La_{y,M_\infty}\cap B_{\tau_i}(z))>0.
\endaligned
\end{equation}
Up to choose the subsequence, we assume that $\f1{\tau_i}(M_{y,\infty},z)$ converges to a metric cone $(M_{z,y,\infty},z)$ in $\R^{n-1}\times CS^1_r$ from Theorem \ref{Covcone}, and
$\f1{\tau_i}(\La_{y,M_\infty},z)$ converges to a closed set $(\La_{z,y,\infty},z)$ in $\R^{n-1}\times\{o_r\}\subset\R^{n-1}\times CS^1_r$.
Moreover, $M_{z,y,\infty}$ and $\La_{z,y,\infty}$ both split off the same line (through $z$) isometrically.
From \eqref{Hn-2deyMinfrr} and the proof of Lemma 11.5 in \cite{Gi}, we get
\begin{equation}\aligned
\mathcal{H}^{n-2+\de}_\infty(\La_{z,y,M_\infty}\cap B_{1}(z))>0.
\endaligned
\end{equation}
By dimension reduction argument, there is a metric cone $C_y\subset\R^{n-1}\times CS^1_r$ with $\R^{n-1}\times \{o_r\}\subset C_y$,
and $C_y$ splits off a factor $\R^{n-1}$ isometrically.
Moreover, there are a sequence of $(n+1)$-dimensional complete Riemannian manifolds $Q_i$ with Ricci curvature $\ge-(n-1)R_i^{-2}$ on $B_{R_i}(y_i)\subset Q_i$ for some sequence $R_i\rightarrow\infty$ such that $(Q_i,y_i)$ converges to the cone $(Y_y,o_y)$ in the pointed Gromov-Hausdorff sense,
and a sequence of area-minimizing hypersurfaces $\Si_i$ in $B_{R_i}(y_i)$ with $\p \Si_i\subset\p B_{R_i}(y_i)$ such that $(\Si_i,y_i)$ converges in the induced Hausdorff sense to $(C_y,o_y)$. In particular, $\p C_y=\emptyset$.

There is a 1-dimensional cone $C_y'\subset CS^1_r$ with $\p C_y'=\emptyset$ such that $C_{y}=\R^{n-1}\times C_y'$.
Let $\Om$ be a domain (connected open set) in $CS^1_r$ with boundary in $C_y'$. Then $\p\Om\cap\p B_1(o_r)$ are two points, denoted by $\a^+,\a^-$.
Then there is a minimizing geodesic $\g$ in $\overline{\Om}$ connecting $\a^+,\a^-$, and there is a constant $\th_r>0$ so that the length of $\g$ satisfies $\mathcal{H}^1(\g)\le2-\th_r$. Let $U$ denote the bounded domain in $\Om\subset CS^1_r$ enclosed by $C_y'$ and $\g$.
For any $\ep\in(0,1)$, let $\ep\g$ denote a minimizing geodesic connecting $\ep\a^+,\ep\a^-$, and $\ep U$ denote the bounded domain in $CS^1_r$ enclosed by $C_y'$ and $\ep\g$.
For any $t>0$, we define a domain $\Om_{t,\ep}$ in $Y_y\setminus(\R^{n-1}\times \{o_r\})$ by
$$\Om_{t,\ep}=\{(\xi,x)\in \R^{n-1}\times \Om|\, \ep<d(x,o_r)<1,|\xi|<t\}.$$
From \eqref{MSHS} and the proof of Proposition \ref{MinMSinfty},
$C_y$ is an area-minimizing hypersurface in $\overline{\Om_{t,\ep}}$ for any $t>0,\ep\in(0,1)$.
Let $S_*=\{(\xi,x)\in \R^{n-1}\times CS^1_r|\, x\in\g\cup\ep\g,|\xi|<t\}$, and $S^*=\{(\xi,x)\in \R^{n-1}\times CS^1_r|\, x\in \overline{U}\setminus\ep U,|\xi|=t\}$.
Let $S=S_*\cup S^*$ be a set in $\overline{\Om_{t,\ep}}$, then
\begin{equation}\aligned
\p S=&\left\{(\xi,x)\in \R^{n-1}\times \overline{\Om}|\, \ep\le d(x,o_r)\le1,|\xi|=t\right\}\\
&\cup\left\{(\xi,x)\in \R^{n-1}\times \overline{\Om}|\, d(x,o_r)=\ep\ or\ 1,|\xi|\le t\right\}=C_y\cap\p\Om_{t,\ep}.
\endaligned
\end{equation}
Minimizing $C_y$ in $\overline{\Om_{t,\ep}}$ implies
\begin{equation}\aligned
2(1-\ep)\omega_{n-1}t^{n-1}=&\mathcal{H}^{n}\left(C_y\cap \overline{\Om_{t,\ep}}\right)\le\mathcal{H}^{n}(S)=\mathcal{H}^{n}(S_*)+\mathcal{H}^{n}(S^*)\\
\le&(1+\ep)(2-\th_r)\omega_{n-1}t^{n-1}+\pi(n-1)\omega_{n-1}t^{n-2}.
\endaligned
\end{equation}
The above inequality is impossible for the suitable large $t>0$ and the suitable small $\ep>0$.
This completes the proof.
\end{proof}

\section{Appendix I}

Compared with Schoen-Yau's argument on Laplacian of distance functions from fixed points (see Proposition 1.1 in \cite{SY2}),
the inequality \eqref{Hxtge} holds in the distribution sense as follows.
\begin{lemma}\label{DerMge*}
Let $N$ be an $(n+1)$-dimensional complete Riemannnian manifolds with Ricci curvature $\ge-n\de^2_N$ on $B_R(p)$. If $V$ is an $n$-rectifiable stationary varifold in $B_{R}(p)$ with $M=\mathrm{spt} V\cap B_R(p)$, then
\begin{equation}\aligned\label{DerMge}
\De_N\r_M\le n\de_N\tanh\left(\de_N\r_M\right)\qquad on\ B_{R-t}(p)\cap B_t(M)\setminus M
\endaligned
\end{equation}
for all $0<t<R$ in the distribution sense.
\end{lemma}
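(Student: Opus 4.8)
**Proof proposal for Lemma \ref{DerMge*}.**

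The plan is to establish the distributional inequality by reducing it, via a standard approximation scheme, to the classical pointwise comparison \eqref{Hxtge} on the smooth locus, and to control the contribution of the cut locus and the singular set $\mathcal{S}_M$ of $M$. First I would recall the structure of the distance function $\r_M$: away from the cut locus $\mathcal{C}_M$ of $\r_M$ and away from $M$ itself, $\r_M$ is smooth, and for $x$ at which $\r_M$ is differentiable there is a unique foot point $x_M \in \mathcal{R}_M$ (the regular part of $M$) and a minimizing geodesic $\g_x$ from $x_M$ to $x$ meeting $\mathcal{R}_M$ orthogonally; the stationarity of $V$ forces $H_x(0)=0$ at the foot point (the first variation of area vanishes, so the mean curvature of the level set at $t=0$ is the mean curvature of $M$, which is zero in the varifold sense), and then \eqref{HtRic}--\eqref{Hxtge} give $\De_N \r_M = H_x(\r_M) \ge -n\de_N\tanh(\de_N\r_M)$ — wait, the direction is opposite: pointing the normal appropriately, $-\De_N\r_M = H_x(\r_M)\ge H_x(0)-n\de_N\tanh(\de_N\r_M) = -n\de_N\tanh(\de_N\r_M)$, hence $\De_N\r_M \le n\de_N\tanh(\de_N\r_M)$ pointwise on the smooth locus. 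This is the pointwise backbone.

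Next I would handle the distributional statement. The standard argument (à la Calabi, or Schoen–Yau \cite{SY2}) is: for any $x_0 \in B_{R-t}(p)\cap B_t(M)\setminus M$, either $x_0\notin \mathcal{C}_M$, in which case $\r_M$ is smooth near $x_0$ and the inequality holds classically, or $x_0\in\mathcal{C}_M$, in which case one uses a \emph{support function} from below: pick a foot point $x_M$ and a nearby point $y$ slightly inside along $\g_{x_0}$ which is \emph{not} on the cut locus, and compare $\r_M$ with $\r_M(y) + d(\cdot,y)$; this upper barrier has the right Laplacian bound because $d(\cdot,y)$ satisfies the Laplacian comparison with base point $y$, and the curvature term only improves. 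The only subtlety not present in the fixed–base–point case is that the foot point may lie in the singular set $\mathcal{S}_M$; but $\mathcal{S}_M$ has Hausdorff dimension $\le n-7 < n$ (in fact the tube over it has zero $\mathcal{H}^{n+1}$-measure by the stationarity and the volume bound, cf. Lemma \ref{LOWERM000}), so the set of $x$ whose foot point lands in $\mathcal{S}_M$ is negligible and does not affect the distributional inequality, which only needs to hold against nonnegative test functions. Concretely I would test against $0\le\varphi\in C_c^\infty(B_{R-t}(p)\cap B_t(M)\setminus M)$, approximate $\r_M$ by the smooth functions obtained from the hypersurfaces $M_i$ exhausting $\mathcal{R}_M$ (as already set up in the proof of Lemma \ref{LOWERM000} via \eqref{equivMiM}), apply the classical inequality to each $\r_{M_i}$ outside its cut locus $\mathcal{C}_{M_i}$, and pass to the limit using that $\overline{\mathcal{C}_{M_i}}$ has dimension $\le n$ (Corollary 4.12 in \cite{MM}) so it carries no Laplacian mass.

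I expect the main obstacle to be the rigorous justification that the cut locus $\mathcal{C}_M$ (and the excluded set where $\r_M$ fails to be semiconcave because the foot point is singular) contributes nothing to the distributional Laplacian. In the smooth submanifold case this is Calabi's trick with upper support functions; here one must either invoke that $\r_M$ is locally semiconcave on $B_t(M)\setminus M$ away from a closed set of codimension $\ge 7$ — which follows because near any regular foot point $\r_M$ is smooth for small enough $t$, and the distance to a smooth hypersurface is semiconcave up to the focal/cut distance — or, more robustly, run the whole argument through the approximating smooth hypersurfaces $M_i$ from appendix-I's construction, where everything is classical, and then take $i\to\infty$ using the locally uniform convergence $\r_{M_i}\to\r_M$ and the fact, recorded in the proof of Lemma \ref{LOWERM000}, that $\{\r_{M_i}=t\}=\{\r_M=t\}$ in a fixed ball for $i$ large. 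I would adopt the latter route, since it sidesteps any delicate semiconcavity analysis at singular foot points and only uses the ingredients already assembled in the paper.
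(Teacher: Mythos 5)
Your overall plan — approximate $M$ by smooth hypersurfaces $M_i$, exploit that $\r_{M_i}=\r_M$ on compacta away from $M$ for $i$ large, and integrate by parts — matches the paper's route, but two of your key reductions do not actually hold in the form you state, and the missing pieces are exactly where the paper's proof does its real work.

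First, the claim that $\overline{\mathcal{C}_{M_i}}$ "carries no Laplacian mass" because it has Hausdorff dimension $\le n$ is false. A set of dimension $n$ in an $(n+1)$-manifold can absolutely support singular Laplacian mass; the paper itself points this out at the end of Appendix I with the example $\r_M=|x_{n+1}|$ in $\R^{n+1}$, whose distributional Laplacian is twice the surface measure on the hyperplane. The cut locus of $\r_M$ does in general carry singular second-order mass; what the paper establishes is that this mass has the \emph{favorable sign}. Concretely, the paper excises an $s$-neighborhood $\mathcal{E}_s$ of $\mathcal{C}_M$, integrates by parts on $U\setminus\mathcal{E}_s$, and shows (inequality \eqref{nuijge0}, via the Schoen--Yau observation that the inward normal to $\partial\mathcal{E}_s$ makes a nonnegative angle with $\na\r_M$) that the boundary term $\int_{U\cap\partial\mathcal{E}_s}\phi\lan\nu_s,\na\r_M\ran$ is nonnegative, then lets $s\to0$ using $\mathcal{H}^{n+1}(\mathcal{E}_s)\to0$. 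You gesture at Calabi's support-function trick in your first branch but then declare the $M_i$-approximation route "more robust" precisely to avoid it — yet the $M_i$ route still needs the sign argument, since the $M_i$ have their own cut loci and those coincide with $\mathcal{C}_M$ on compacta away from $M$. You cannot discard the cut locus contribution; you must show it has the right sign.

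Second, your treatment of singular foot points is weaker than what the paper actually needs and proves. You argue the set of $x$ with foot point in $\mathcal{S}_M$ is negligible because $\dim\mathcal{S}_M\le n-7$; but a "measure-zero exceptional set" is not automatically negligible for a distributional second-order inequality, since distributional Laplacians can concentrate on null sets. The paper instead proves a sharp structural fact: for \emph{every} $x\in B_t(M)\setminus M$, the foot point $y_x$ is necessarily a regular point of $M$, because any tangent cone of $M$ at $y_x$ lies on one side of a hyperplane and the strong maximum principle for minimal cones then forces flatness. This is what makes the equality $\r_{M_i}=\r_M$ on $K$ (for $i$ large) exact — not merely approximate — and it is an ingredient you would still need to supply even in your chosen route.
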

\begin{proof}
For $\mathcal{H}^n$-a.e. $x\in \mathrm{spt}V\cap B_R(p)$, the tangent cone of $V$ at $x$ is a hyperplane in $\R^{n+1}$ with multiplicity $\th_x>0$.
Let $\th$ denote the multiplicity function of $V$, then $\th(x)=\th_x$ for $\mathcal{H}^n$-a.e. $x$.
Let $\mu_V$ denote the Radon measure associated with $V$ defined by $\mu_V=\mathcal{H}^n\llcorner\th$, then $\lim_{r\rightarrow0}r^{-n}\mu_V(B_r(x))=\th_x\omega_n$.
Note that the density of stationary varifolds in Euclidean space is upper semicontinuous (see \cite{S}). Similarly, we can let $\th$ be the density of $V$, 
and $\th$ is upper semicontinuous on $\mathrm{spt}V\cap B_R(p)$.
By Allard's regularity theorem, $M$ is smooth in a neighborhood of $x$. From constancy theorem (see 
\cite{S}), $M$ is a minimal hypersurface in a neighborhood of $x$.
Hence, the singular set $\mathcal{S}_M$ of $M$ is closed in $B_R(p)$.

For any $i\in \mathbb{N}^+$, there is a covering $\{B_{r_{i,j}}(x_{i,j})\}_{j=1}^{l_i}$ of $\mathcal{S}_M$ with $\lim_{i\rightarrow\infty}l_i=\infty$ such that
$\omega_n\sum_{j=1}^{l_i} r_{i,j}^n<2^{-i}$.
Hence there is a sequence of smooth embedded hypersurfaces $M_i$ converging to $M$ in the Hausdorff sense with $M_i=M$ outside $\bigcup_{j=1}^{l_i} B_{r_{i,j}}(x_{i,j})$. Let $\r_{M_i}$ be the distance function from $M_i$.
Let $K$ be a closed set in $B_{R-t}(p)\cap B_t(M)\setminus M$ for some fixed $t\in(0,R)$.
We claim
\begin{equation}\label{equivMiM}
\r_{M_i}=\r_{M}\qquad \mathrm{on} \ \ K
\end{equation}
for the sufficiently large $i$ depending on $n,t,R,K$.
Let us prove \eqref{equivMiM}. For any $x\in K$, there is a point $y_x\in M$ such that $\r_M(x)=d(x,y_x)$. Then any tangent cone of $M$ at $y_x$ is on one side of $\R^n$ in $\R^{n+1}$. By maximum principle of (singular) minimal hypersurfaces, the tangent cone of $M$ at $y_x$ is $\R^n$, which implies $y_x\in \mathcal{R}_{M}= M\setminus \mathcal{S}_M$. Then $y_x\varsubsetneq \bigcup_{j=1}^{l_i} B_{r_{i,j}}(x_{i,j})$ for the sufficiently large $i$ depending on $n,t,R,K$, which implies
$y_x\in M_i$ and
$d(x,y)>\r_M(x)$ for any $y\in\bigcup_{j=1}^{l_i} B_{r_{i,j}}(x_{i,j})$. Hence it follows that $\r_{M_i}(x)=d(x,y_x)=\r_M(x)$, and this confirms the claim \eqref{equivMiM}.

Let $\mathcal{C}_{M}$ denote the cut locus of $\r_{M}$, the set
including all the points which are joined to $M$ by two minimizing geodesics at least.
From the above argument and \eqref{Hxtge}, we immediately have
\begin{equation}\label{DeMideN}
-\De\r_{M}\ge-n\de_N\tanh\left(\de_N\r_{M}\right)\qquad \mathrm{on}\ B_{R-t}(p)\cap B_t(M)\setminus(M\cup\mathcal{C}_{M})
\end{equation}
for all $0<t<R$.
Let $\mathcal{C}_{M_i}$ denote the cut locus of $\r_{M_i}$ for each $i$.
For any closed set $K\subset B_{R-t}(p)\cap B_t(M)\setminus M$, $\mathcal{C}_{M_i}\cap K$ is a closed set of Hausdorff dimension $\le n$ and $\mathcal{C}_{M_i}\cap K$ is smooth up to a zero set of Hausdorff dimension $\le n$ (see \cite{MM} for instance).
From \eqref{equivMiM}, $\mathcal{C}_{M}\cap K=\mathcal{C}_{M_i}\cap K$ for the sufficiently large $i$. Hence, $\mathcal{C}_{M}$ is closed in $B_{R-t}(p)\cap B_t(M)\setminus M$ of Hausdorff dimension $\le n$ and $\mathcal{C}_{M}$ is smooth up to a zero set of Hausdorff dimension $\le n$.
Let $\mathcal{E}_s\triangleq B_s(\mathcal{C}_{M})$ denote the $s$-neighborhood of $\mathcal{C}_{M}$ in $N$, then (as the Hausdorff measure is Borel-regular)
\begin{equation}\aligned\label{VOlKt0}
\lim_{s\rightarrow0}\mathcal{H}^{n+1}(\mathcal{E}_s)=0.
\endaligned
\end{equation}
From co-area formula and Theorem 4.4 in \cite{Gi}, $\p\mathcal{E}_s$ is countably $n$-rectifiable for almost all $s>0$.
For any point $y\in\p\mathcal{E}_s$, there is a unique normalized geodesic $\g_y\subset N$ connecting $\g(0)\in M$ and $\g_y(\r_M(y)+s)\in \mathcal{C}_{M}$ with $y=\g_y(\r_M(y))$.
In particular, (compared with the argument of the proof of Proposition 1.1 in \cite{SY2})
\begin{equation}\aligned\label{nuijge0}
\lan \nu_{s},\na\r_{M}\ran\ge0 \qquad \mathcal{H}^n-a.e.\ \ \mathrm{on}\ \p \mathcal{E}_s,
\endaligned
\end{equation}
where $\nu_s$ is the inner unit normal vector $\mathcal{H}^n$-a.e. to $\p \mathcal{E}_s$.

Let $U$ be an open set in $B_{R-t}(p)\cap B_t(M)\setminus M$.
Let $\phi$ be a nonnegative Lipschitz function on $U$ with $\phi=0$ on $\p U$, then with \eqref{VOlKt0}
\begin{equation}\aligned
\int_U\left\lan\na\r_{M},\na\phi\right\ran=\lim_{s\rightarrow0}\int_{U\setminus \mathcal{E}_s}\left\lan\na\r_{M},\na\phi\right\ran.
\endaligned
\end{equation}
With \eqref{DeMideN} and \eqref{nuijge0}, integrating by parts implies
\begin{equation}\aligned
\int_{U\setminus \mathcal{E}_s}\left\lan\na\r_{M},\na\phi\right\ran=&\int_{U\cap\p \mathcal{E}_s}\phi\lan \nu_s,\na\r_{M}\ran-\int_{U\setminus \mathcal{E}_s}\phi\De\r_{M}\\
\ge&-n\de_N\int_{U\setminus \mathcal{E}_s}\phi\tanh\left(\de_N\r_M\right).
\endaligned
\end{equation}
Letting $s\rightarrow0$ in the above inequality infers
\begin{equation}\aligned
\int_U\left\lan\na\r_{M},\na\phi\right\ran\ge-n\de_N\int_U\phi\tanh\left(\de_N\r_{M}\right).
\endaligned
\end{equation}
This completes the proof.
\end{proof}
In general, the inequality \eqref{DerMge} cannot hold true in $B_{R-t}(p)\cap B_t(M)$. For example, let $N$ be the standard Euclidean space, $M$ be the $\R^n\times\{0\}\subset\R^{n+1}=N$, then $\r_M(x_1,\cdots,x_{n+1})=|x_{n+1}|$. Clearly, $|x_{n+1}|$ is not a superharmonic function on $\R^{n+1}$ in the distribution sense.

\section{Appendix II}

Let $N_i$ be a sequence of $(n+1)$-dimensional smooth Riemannian manifolds with $\mathrm{Ric}\ge-n\k^2$ on the metric ball $B_{1+\k'}(p_i)\subset N_i$ for constants $\k\ge0$, $\k'>0$.
Up to choose the subsequence, we assume that $\overline{B_1(p_i)}$ converges to a metric ball $\overline{B_1(p_\infty)}$ in the Gromov-Hausdorff sense.
Namely, there is a sequence of $\ep_i$-Hausdorff approximations $\Phi_i:\, B_1(p_i)\rightarrow B_1(p_\infty)$ for some sequence $\ep_i\rightarrow0$.
Let $\nu_\infty$ denote the renormalized limit measure on $B_1(p_\infty)$ obtained from the renormalized measures as \eqref{nuinfty}.
For any set $K$ in $\overline{B_1(p_\infty)}$, let $B_\de(K)$ be the $\de$-neighborhood of $K$ in $\overline{B_1(p_\infty)}$ defined by $\{y\in \overline{B_1(p_\infty)}|\ d(y,K)<\de\}$.
Here, $d$ denotes the distance function on $\overline{B_1(p_\infty)}$.

Let us introduce several useful results as follows. For the completeness and self-sufficiency, we give the proofs here.
\begin{lemma}\label{nuinftyKthj}
Let $K$ be a closed subset of $B_1(p_\infty)$ with $\inf_{x\in K}d(x,\p B_1(p_\infty))=\ep_0>0$.
For each $\ep\in(0,\ep_0/3]$, there is a sequence of mutually disjoint balls $\{B_{\th_j}(x_j)\}_{j=1}^\infty$ with $x_j\in K$ and $\th_j<\ep$
such that $K\subset \bigcup_{1\le j\le k}B_{\th_j+2\th_k}(x_j)$ for the sufficiently large $k$, and
\begin{equation}\aligned\label{nuinfKUP}
\nu_\infty(K)\le\sum_{j=1}^{\infty}\nu_\infty\left(B_{\th_j}(x_j)\right).
\endaligned
\end{equation}
Moreover, if $\mathcal{H}^m(K)<\infty$ and $\inf\{r^{-m}\mathcal{H}^m(K\cap B_r(x))|\, B_r(x)\subset B_1(p_\infty)\}>0$ for some integer $0<m\le n+1$, then
\begin{equation}\aligned\label{Hkepkjinfthjk}
\mathcal{H}^m_\ep(K)\le\omega_m\sum_{j=1}^{\infty}\th_j^m.
\endaligned
\end{equation}
\end{lemma}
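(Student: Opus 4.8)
The plan is to produce the balls by a greedy maximal–packing construction inside $K$ and then read off the three conclusions by combining the resulting covering property with the Bishop--Gromov volume comparison for $\nu_\infty$. Note first that $K$ is compact: it is closed in $B_1(p_\infty)$, bounded away from $\partial B_1(p_\infty)$, hence closed inside the compact set $\overline{B_1(p_\infty)}$. Fix the working bound $\eta:=\ep/6$ on radii. Having chosen disjoint balls $B_{\th_1}(x_1),\dots,B_{\th_{j-1}}(x_{j-1})$ with centres in $K$, set
\[ s_j=\sup\Big\{s\in(0,\eta]\ :\ \exists\, x\in K\text{ with }B_s(x)\cap B_{\th_i}(x_i)=\emptyset\ \text{for all}\ i<j\Big\}. \]
If $s_j=0$ the process stops, and then $K$ is covered by finitely many closed balls of radius $<\eta$, a case in which all three assertions are immediate; so assume $s_j>0$ for every $j$. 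Using the definition of $s_j$ as a supremum, pick $x_j\in K$ and a radius $\th_j\in(s_j/2,s_j)$ with $B_{\th_j}(x_j)$ disjoint from the previous balls and, in addition, $\nu_\infty\big(\overline{B_{\th_j}(x_j)}\big)=\nu_\infty\big(B_{\th_j}(x_j)\big)$; the latter rules out only countably many radii, so such $\th_j$ exists. Then $\{B_{\th_j}(x_j)\}_{j\ge1}$ are mutually disjoint with $x_j\in K$ and $\th_j<\eta<\ep$, and $\{s_j\}$ is non-increasing because the family of balls to be avoided grows with $j$. Finally $\th_j\to0$: the disjoint balls lie in $B_1(p_\infty)$, so $\sum_j\nu_\infty(B_{\th_j}(x_j))\le\nu_\infty(B_1(p_\infty))=1$, while the Bishop--Gromov bounds for $\nu_\infty$ give a uniform $\nu_\infty(B_\de(x))\ge c(\de)>0$ for $x$ in the compact set $\{d(\cdot,\partial B_1(p_\infty))\ge\ep_0\}\supset K$ (the map $x\mapsto\nu_\infty(B_{\ep_0/2}(x))$ is lower semicontinuous and positive, hence has a positive minimum on $K$, and Bishop--Gromov transfers this to radius $\de$); so no $\th_j$ can stay bounded below, whence $\th_j\to0$ and also $s_j\to0$.

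For the covering property, fix $x\in K$ and $k$ so large that $s_{k+1}<\eta$. If $x\in B_{\th_i}(x_i)$ for some $i\le k$, we are done. Otherwise, for every $s\in(s_{k+1},\eta]$ the ball $B_s(x)$ must meet $\bigcup_{i\le k}B_{\th_i}(x_i)$, by maximality of $s_{k+1}$, so $d(x,x_i)<s+\th_i$ for some $i\le k$ (depending on $s$); letting $s\downarrow s_{k+1}$ and using that there are only finitely many indices $i\le k$, we obtain $d(x,x_i)\le s_{k+1}+\th_i$ for some fixed $i\le k$. Since $s_{k+1}\le s_k<2\th_k$, this gives $x\in B_{\th_i+2\th_k}(x_i)$, and therefore $K\subset\bigcup_{i\le k}B_{\th_i+2\th_k}(x_i)$ for all sufficiently large $k$.

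Now \eqref{nuinfKUP} follows by letting $k\to\infty$ in $\nu_\infty(K)\le\sum_{j\le k}\nu_\infty(B_{\th_j+2\th_k}(x_j))$, which is just subadditivity. For fixed $j$, as $k\to\infty$ the balls $B_{\th_j+2\th_k}(x_j)$ decrease to $\overline{B_{\th_j}(x_j)}$, so $\nu_\infty(B_{\th_j+2\th_k}(x_j))\to\nu_\infty(\overline{B_{\th_j}(x_j)})=\nu_\infty(B_{\th_j}(x_j))$ by the choice of $\th_j$; and for $k\ge j$ one has $\th_j+2\th_k\le 3s_j\le 6\th_j<\ep<\ep_0$, so all these balls lie in $B_1(p_\infty)$ and Bishop--Gromov for $\nu_\infty$ bounds each term by $C_{n,\k}\,\nu_\infty(B_{\th_j}(x_j))$, a $k$-independent summable majorant since $\sum_j\nu_\infty(B_{\th_j}(x_j))\le1$. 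Dominated convergence over the index $j$ then yields $\nu_\infty(K)\le\sum_{j}\nu_\infty(B_{\th_j}(x_j))$.

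For \eqref{Hkepkjinfthjk}, the extra hypotheses enter only through the bound $\sum_j\th_j^m<\infty$: since $B_{\th_j}(x_j)\subset B_1(p_\infty)$ and $x_j\in K$, the density lower bound gives $\gamma\th_j^m\le\mathcal{H}^m(K\cap B_{\th_j}(x_j))$, and summing over the disjoint balls yields $\sum_j\th_j^m\le\gamma^{-1}\mathcal{H}^m(K)<\infty$. Because $\th_j,\th_k<\eta=\ep/6$, each covering ball $B_{\th_j+2\th_k}(x_j)$ has diameter $\le 2(\th_j+2\th_k)<\ep$, hence is admissible in the definition of $\mathcal{H}^m_\ep$, so $\mathcal{H}^m_\ep(K)\le\frac{\omega_m}{2^m}\sum_{j\le k}\big(2(\th_j+2\th_k)\big)^m=\omega_m\sum_{j\le k}(\th_j+2\th_k)^m$ for all large $k$. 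Here $(\th_j+2\th_k)^m\to\th_j^m$ for each $j$, and for $k\ge j$ it is bounded by $(3s_j)^m$ with $\sum_j(3s_j)^m\le 6^m\sum_j\th_j^m<\infty$ (using $s_j<2\th_j$); dominated convergence again gives $\mathcal{H}^m_\ep(K)\le\omega_m\sum_j\th_j^m$. I expect the genuinely delicate step to be exactly this passage $k\to\infty$ in the two inequalities: it is what dictates keeping the slack $2\th_k$ controlled, which is why the construction must force $\th_j\to0$ and why one needs, for the two series, the summable majorant supplied by Bishop--Gromov for $\nu_\infty$ and by the $m$-density lower bound respectively; the combinatorial covering step itself is routine.
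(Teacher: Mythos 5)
Your proof is correct, and at the structural level it takes the same route as the paper: a greedy Vitali-type packing by balls centred in $K$ with maximal (or near-maximal) admissible radii, followed by the observation that maximality forces $K\subset\bigcup_{j\le k}B_{\th_j+2\th_k}(x_j)$ for large $k$, and finally a Bishop--Gromov control of the enlarged balls as $k\to\infty$. The places where you deviate are technical refinements rather than a different proof. First, you cap the radii at $\eta=\ep/6$ rather than at $\ep$; this is actually necessary for \eqref{Hkepkjinfthjk} as stated, since the covering balls have radius up to $\th_j+2\th_k$ and hence diameter up to roughly $6\eta$, and the definition of $\mathcal{H}^m_\ep$ requires diameters $<\ep$. (The paper's bound $\th_j<\ep$ would literally only give $\mathcal{H}^m_{6\ep}$; your tighter cap removes that slack.) Second, instead of taking the exact maximiser $\th_j=s_j$, you take $\th_j\in(s_j/2,s_j)$ chosen so that the sphere $\p B_{\th_j}(x_j)$ has $\nu_\infty$-measure zero, and then pass to the limit $k\to\infty$ by dominated convergence with the $k$-independent Bishop--Gromov majorant $C_{n,\k}\,\nu_\infty(B_{\th_j}(x_j))$; the paper instead keeps $\th_j=s_j$ and shows explicitly that the Bishop--Gromov correction factor $1+\beta_{n,\k}\th_k/\th_j$ tends to $1$ in the sum. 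Both devices serve the same purpose (controlling the slack $2\th_k$ in the enlarged balls), and both are valid; yours is arguably slightly cleaner since it avoids splitting the series. One small wording note: the balls $B_{\th_j+2\th_k}(x_j)$ are not literally decreasing in $k$ since your $\th_k$ need not be monotone, but the continuity-from-above argument you invoke still works because $\th_k\to0$ and $\nu_\infty$ is finite, so the conclusion $\nu_\infty(B_{\th_j+2\th_k}(x_j))\to\nu_\infty(\overline{B_{\th_j}(x_j)})=\nu_\infty(B_{\th_j}(x_j))$ stands.
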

\begin{proof}
For each $\ep>0$, let $\mathcal{U}_\ep$ be a collection of balls defined by
$$\mathcal{U}_\ep=\left\{B_r(x)\subset B_1(p_\infty)\big|\ x\in K,\ 0<r\le\ep\right\}.$$
Now we adopt the idea in the proof of Vitali's covering lemma.
First, we take $B_{\th_1}(x_1)\subset\mathcal{U}_\ep$ such that $\th_1$ is the largest radius of balls belonging to $\mathcal{U}_\ep$.
Suppose that $B_{\th_1}(x_1),B_{\th_2}(x_2),\cdots$, $B_{\th_{k-1}}(x_{k-1})$ have already been chosen.
Then we select $B_{\th_k}(x_k)\subset\mathcal{U}_\ep$ such that $\th_k$ is the largest radius of balls belonging to $\mathcal{U}_\ep$
with $B_{\th_k}(x_k)\cap B_{\th_j}(x_j)=\emptyset$ for each $1\le j\le k-1$.
Hence we can obtain an infinite sequence of mutually disjoint balls $\{B_{\th_j}(x_j)\}_{j\ge1}$.
From the choice of $\th_j$, $\lim_{j\rightarrow\infty}\th_j=0$ and $\th_j\ge \th_{j+1}$ for each $j\ge1$.
Then there is an integer $N_\th>0$ such that $\th_{k}<\ep$ for all $k\ge N_\th$.
For a point $x\in K\setminus\bigcup_{j=1}^{k}B_{\th_j}(x_j)$ with $k\ge N_\th$, $\bigcup_{j=1}^{k}\overline{B_{\th_j}(x_j)}\cap \overline{B_{\th_k}(x)}\neq\emptyset$
according to the choice of $\{B_{\th_j}(x_j)\}_{j=1}^k$, which implies $x\in\bigcup_{j=1}^{k}B_{\th_j+2\th_k}(x_j)$. Hence,
\begin{equation}\aligned\label{detauFinfty}
K\subset\bigcup_{j=1}^{k}B_{\th_j+2\th_k}(x_j).
\endaligned
\end{equation}
Combining \eqref{detauFinfty} and Bishop-Gromov comparison, there is a constant $\be_{n,\k}>0$ depending only on $n,\k$ such that
\begin{equation}\aligned\label{nuinf*}
\nu_\infty(K)\le&\sum_{j=1}^{k}\nu_\infty\left(B_{\th_{j}+2\th_k}(x_j)\right)\le\sum_{j=1}^{k}\f{V^{n+1}_\k(\th_j+2\th_k)}{V^{n+1}_\k(\th_j)}\nu_\infty\left(B_{\th_j}(x_j)\right)\\
<&\sum_{j=1}^{k}\left(1+\be_{n,\k}\f{\th_k}{\th_j}\right)\nu_\infty\left(B_{\th_j}(x_j)\right),
\endaligned
\end{equation}
where $V^{n+1}_\k(r)$ denotes the volume of a geodesic ball with radius $r$ in an $(n+1)$-dimensional space form with constant sectional curvature $-\k^2$.
For any $\de>0$, from $\nu_\infty(B_1(p_\infty))=1$ there is a constant $N_\th'\ge N_\th$ so that
$$\sum_{j=N_\th'}^\infty \nu_\infty\left(B_{\th_j}(x_j)\right)<\de.$$
Combining $\th_j\ge \th_{j+1}$, we have
\begin{equation}\aligned\label{knuinf000*}
&\sum_{j=1}^{k}\f{\th_k}{\th_j}\nu_\infty\left(B_{\th_j}(x_j)\right)
\le\sum_{j=1}^{N_\th'-1}\f{\th_k}{\th_j}\nu_\infty\left(B_{\th_j}(x_j)\right)+\sum_{j=N_\th'}^k\f{\th_k}{\th_j}\nu_\infty\left(B_{\th_j}(x_j)\right)\\
\le&\f{\th_k}{\th_{N_\th'}}\sum_{j=1}^{N_\th'-1}\nu_\infty\left(B_{\th_j}(x_j)\right)+\sum_{j=N_\th'}^k\nu_\infty\left(B_{\th_j}(x_j)\right)
\le\f{\th_k}{\th_{N_\th'}}+\de
\endaligned
\end{equation}
for each $k\ge N_\th'$. With $\lim_{j\rightarrow\infty}\th_j=0$, it is easy to see
\begin{equation}\aligned\label{knuinf000}
\lim_{k\rightarrow\infty}\sum_{j=1}^{k}\f{\th_k}{\th_j}\nu_\infty\left(B_{\th_j}(x_j)\right)=0.
\endaligned
\end{equation}
Combining \eqref{nuinf*}\eqref{knuinf000} we have
\begin{equation}\aligned
\nu_\infty(K)\le\lim_{k\rightarrow\infty}\sum_{j=1}^{k}\left(1+\be_{n,\k}\f{\th_k}{\th_j}\right)\nu_\infty\left(B_{\th_j}(x_j)\right)
=\sum_{j=1}^{\infty}\nu_\infty\left(B_{\th_j}(x_j)\right).
\endaligned
\end{equation}

Now we assume $\mathcal{H}^m(K)<\infty$ for some integer $0<m\le n+1$, and $\de_K\triangleq\inf\{r^{-m}\mathcal{H}^m(K\cap B_r(x))|\, B_r(x)\subset B_1(p_\infty)\}>0$.
Then from \eqref{knuinf000*}\eqref{knuinf000}, we have
\begin{equation}\aligned\label{ktoinfthjk=0}
\lim_{k\rightarrow\infty}\sum_{j=1}^{k}\th_k\th_j^{m-1}\le\de_K^{-1}\lim_{k\rightarrow\infty}\sum_{j=1}^{k}\f{\th_k}{\th_j}\mathcal{H}^m\left(B_{\th_j}(x_j)\right)=0.
\endaligned
\end{equation}
There is a constant $\be_m$ depending on $m$ such that
\begin{equation}\aligned\label{ktoinfthjk=0*}
\mathcal{H}^m_\ep(K)\le&\omega_m\sum_{j=1}^{k}(\th_{j}+2\th_k)^m\le\omega_m\sum_{j=1}^{k}\left(1+\be_m\f{\th_k}{\th_j}\right)\th_j^m.
\endaligned
\end{equation}
With \eqref{ktoinfthjk=0}\eqref{ktoinfthjk=0*}, we get \eqref{Hkepkjinfthjk}.
\end{proof}

\begin{lemma}\label{Cont*}
Let $F_i$ be a subset in $B_1(p_i)\subset N_i$ such that $\Phi_i(F_i)$ converges in the Hausdorff sense to a closed set $F_\infty$ in $B_1(p_\infty)$, then $$\nu_\infty(F_\infty)\ge\limsup_{i\rightarrow\infty}\mathcal{H}^{n+1}(F_i)/\mathcal{H}^{n+1}(B_1(p_i)).$$
\end{lemma}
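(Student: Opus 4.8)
The plan is to exploit that $\Phi_i(F_i)$ eventually lies in an arbitrarily thin neighbourhood of $F_\infty$, to cover that neighbourhood efficiently by a disjoint family of tiny balls in the spirit of Lemma \ref{nuinftyKthj}, to pull this family back through the $\ep_i$-Hausdorff approximations $\Phi_i$ to a covering of $F_i$, and then to pass to the limit ball by ball via the volume comparison \eqref{nuinfty}, closing with continuity of the finite measure $\nu_\infty$ from above.

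Fix $\de>0$, small compared with the distance from $F_\infty$ to $\partial B_1(p_\infty)$ — the general situation being reduced to this one by replacing $F_i$ with $F_i\cap B_t(p_i)$, absorbing the error with $\mathcal H^{n+1}\big(B_1(p_i)\setminus B_t(p_i)\big)/\mathcal H^{n+1}(B_1(p_i))\to 1-\nu_\infty(B_t(p_\infty))$, and letting $t\uparrow1$ at the end. Since $\Phi_i(F_i)\to F_\infty$ in the Hausdorff sense, there is $i_0$ with $\Phi_i(F_i)\subset B_{\de}(F_\infty)$ for $i\ge i_0$. Running the covering argument in the proof of Lemma \ref{nuinftyKthj} on the set $\overline{B_{\de}(F_\infty)}$ produces mutually disjoint balls $\{B_{\th_j}(x_j)\}_{j\ge1}$ with $x_j\in\overline{B_{\de}(F_\infty)}$, $\th_j\downarrow0$ and $\th_j<\de$ (so $B_{\th_j}(x_j)\subset B_{2\de}(F_\infty)$), such that $\overline{B_{\de}(F_\infty)}\subset\bigcup_{j\le k}B_{\th_j+2\th_k}(x_j)$ for every large $k$. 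Choose $x_{i,j}\in B_1(p_i)$ with $d(\Phi_i(x_{i,j}),x_j)<\ep_i$; the defining inequality of a Hausdorff approximation yields $\Phi_i^{-1}\big(B_r(x_j)\big)\subset B_{r+2\ep_i}(x_{i,j})$, so, for $i\ge i_0$ large enough that $2\ep_i\le\th_k$,
\[ F_i\subset\bigcup_{j=1}^{k}B_{\th_j+3\th_k}(x_{i,j}),\qquad\text{hence}\qquad \mathcal H^{n+1}(F_i)\le\sum_{j=1}^{k}\mathcal H^{n+1}\big(B_{\th_j+3\th_k}(x_{i,j})\big). \]
Dividing by $\mathcal H^{n+1}(B_1(p_i))$ and letting $i\to\infty$ for the fixed finite family (using \eqref{nuinfty}, after enlarging each radius by an arbitrarily small amount to avoid the at most countably many radii on whose spheres $\nu_\infty$ concentrates mass), then letting $k\to\infty$ and absorbing the enlargement factors $V^{n+1}_\k(\th_j+3\th_k)/V^{n+1}_\k(\th_j)=1+O(\th_k/\th_j)$ exactly as in the proof of Lemma \ref{nuinftyKthj}, one gets
\[ \limsup_{i\to\infty}\frac{\mathcal H^{n+1}(F_i)}{\mathcal H^{n+1}(B_1(p_i))}\le\sum_{j=1}^{\infty}\nu_\infty\big(B_{\th_j}(x_j)\big)\le\nu_\infty\big(\overline{B_{2\de}(F_\infty)}\big), \]
the last inequality since the $B_{\th_j}(x_j)$ are mutually disjoint and lie in $B_{2\de}(F_\infty)$. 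As $\nu_\infty$ is finite and $\bigcap_{\de>0}\overline{B_{2\de}(F_\infty)}=F_\infty$ (here $F_\infty$ is closed), continuity from above gives $\nu_\infty\big(\overline{B_{2\de}(F_\infty)}\big)\to\nu_\infty(F_\infty)$ as $\de\to0$, which finishes the proof.

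The single genuinely delicate step is the $k\to\infty$ passage: one must control simultaneously the perturbed radii $\th_j+3\th_k$ and the Bishop--Gromov enlargement factors, and show that the error accumulated over $j=1,\dots,k$ really vanishes — but this is precisely the bookkeeping already carried out in the proof of Lemma \ref{nuinftyKthj}, so the cleanest presentation is to invoke that lemma (and its internal estimates) rather than repeat it. The exhaustion step near $\partial B_1(p_\infty)$ and the infinitesimal radius perturbations needed to sidestep $\nu_\infty$-charged spheres are routine and do not affect the final inequality.
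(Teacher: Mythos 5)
Your proof is correct and follows the same skeleton as the paper: cover a neighbourhood of $F_\infty$ by small balls, transfer the cover to $B_1(p_i)$ through the Hausdorff approximations so that it covers $F_i$ for large $i$, and pass to the limit ball-by-ball using \eqref{nuinfty}. Where you diverge is in how the total $\nu_\infty$-mass of the cover is controlled. The paper's display \eqref{10.1} asserts directly that one can choose a finite ball cover of $F_\infty$ whose total $\nu_\infty$-mass is within $\ep$ of $\nu_\infty(F_\infty)$, invoking the outer-measure construction of $\nu_\infty$; you instead run the Vitali-type disjoint family of Lemma \ref{nuinftyKthj} on $\overline{B_\de(F_\infty)}$, exploit disjointness to dominate $\sum_j\nu_\infty(B_{\th_j}(x_j))$ by $\nu_\infty\bigl(\overline{B_{2\de}(F_\infty)}\bigr)$, and then send $\de\to0$ via continuity from above. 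This is a more explicit and arguably more elementary justification of the same bound (it sidesteps the overlap issue that makes the paper's \eqref{10.1} non-obvious at first glance), at the cost of an extra limiting parameter. The reduction to $F_\infty$ compactly contained in $B_1(p_\infty)$ via $\nu_\infty(B_t(p_\infty))\to1$, and the infinitesimal perturbation of radii to dodge $\nu_\infty$-charged spheres, are correct and more careful than what the paper writes out; all the bookkeeping as $k\to\infty$ is exactly the estimate \eqref{knuinf000} in Lemma \ref{nuinftyKthj}, as you note.
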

\begin{proof}
Note that $\nu_\infty$ is an outer measure defined as (1.9) in \cite{CCo1}.
For any $\ep>0$, by finite covering lemma there exist a constant $\de>0$,
and a covering
$\{B_{r_j}(x_{j})\}_{j=1}^{k_\de}$ of $F_\infty$ with finite $k_\de\in\N$ and $r_j\in(0,\de)$ such that
\begin{equation}\aligned\label{10.1}
\nu_\infty(F_\infty)\ge\sum_{j=1}^{k_\de}\nu_\infty(B_{r_j}(x_j))-\ep.
\endaligned
\end{equation}
Without loss of generality, we assume that $\bigcup_{j=1}^{k_\de}B_{r_{j}}(x_{j})$ contains $B_\ep(F_\infty)$ up to choose the sufficiently small $\ep>0$,
where $B_\ep(F_\infty)$ is the $\ep$-neighborhood of $F_\infty$.
Since $B_1(p_i)$ converges to $B_1(p_\infty)$ in the Gromov-Hausdorff sense,
then there is a sequence $x_{i,j}\in B_1(p_i)$ with $x_{i,j}\rightarrow x_j$.
Combining $\Phi_i(F_i)\rightarrow F_\infty$ in the Hausdorff sense, we get $F_i\subset \bigcup_{j=1}^{k_\de}B_{r_j}(x_{i,j})$.
With
\begin{equation}\aligned
\lim_{i\rightarrow\infty}\mathcal{H}^{n+1}(B_{r_j}(x_{i,j}))/\mathcal{H}^{n+1}(B_1(p_i))=\nu_\infty(B_{r_j}(x_j))
\endaligned
\end{equation}
for each $j$, we get
\begin{equation}\aligned\label{10.2}
\mathcal{H}^{n+1}(F_i)/\mathcal{H}^{n+1}(B_1(p_i))\le\sum_{j=1}^{k_\de}\mathcal{H}^{n+1}(B_{r_j}(x_{i,j}))/\mathcal{H}^{n+1}(B_1(p_i))\le\ep+\sum_{j=1}^{k_\de}\nu_\infty(B_{r_j}(x_j))
\endaligned
\end{equation}for the sufficiently large $i$.
Combining \eqref{10.1}\eqref{10.2} we have
$$\nu_\infty(F_\infty)+2\ep\ge\limsup_{i\rightarrow\infty}\mathcal{H}^{n+1}(F_i)/\mathcal{H}^{n+1}(B_1(p_i)).$$
Letting $\ep\rightarrow0$ completes the proof.
\end{proof}

\begin{lemma}\label{subset}
Let $F_i$ be a subset in $B_1(p_i)\subset N_i$ such that $\Phi_i(F_i)$ and $\Phi_i(\p F_i)$ converge in the Hausdorff sense to closed sets $F_\infty$ and $T_\infty$ in $B_1(p_\infty)$,
respectively. Then $\p F_\infty\subset T_\infty$.
\end{lemma}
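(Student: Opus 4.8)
The claim is that $\p F_\infty \subset T_\infty$ where $T_\infty = \lim \Phi_i(\p F_i)$ and $F_\infty = \lim \Phi_i(F_i)$, with all limits in the Hausdorff sense. The plan is to argue by contradiction: suppose $y \in \p F_\infty \setminus T_\infty$. Since $T_\infty$ is closed, there is a radius $\sigma > 0$ with $B_{2\sigma}(y) \cap T_\infty = \emptyset$. First I would transport this separation back to the sequence: because $\Phi_i(\p F_i) \to T_\infty$ in the Hausdorff sense, for all sufficiently large $i$ we have $B_{3\sigma/2}(y_i) \cap \Phi_i(\p F_i) = \emptyset$ for any sequence $y_i \in B_1(p_i)$ with $\Phi_i(y_i) \to y$ (equivalently, working with $\Phi_i$-preimages, $\p F_i$ does not meet a ball of comparable radius around the preimage point). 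Hence the open ball $\Phi_i^{-1}(B_{3\sigma/2}(y))$ — more precisely a metric ball of radius $\approx 3\sigma/2$ in $B_1(p_i)$ centered near the preimage of $y$ — is disjoint from $\p F_i$, so within that ball $F_i$ is relatively open and relatively closed, i.e. it is a union of connected components of the ball.

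Next I would use this local triviality to show that $F_\infty$ is itself either all of $B_\sigma(y)$ or disjoint from a slightly smaller ball, contradicting $y \in \p F_\infty$. Concretely: since $y \in \p F_\infty = \overline{F_\infty} \cap \overline{B_1(p_\infty) \setminus F_\infty}$, there are points of $F_\infty$ and points of its complement arbitrarily close to $y$. Pick $a \in F_\infty \cap B_{\sigma/4}(y)$ and $b \in B_{\sigma/4}(y) \setminus F_\infty$. Choosing $a_i, b_i \in B_1(p_i)$ with $\Phi_i(a_i) \to a$, $\Phi_i(b_i) \to b$, and using that $\Phi_i(F_i) \to F_\infty$ in the Hausdorff sense, for large $i$ we get $a_i$ lying within $o(1)$ of $F_i$ while $b_i$ stays a definite distance from $F_i$. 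A short connectedness/path argument inside the ball $B_{\sigma}(y_i) \subset B_1(p_i)$ — which is a geodesic ball in a smooth manifold, hence path-connected — then forces a point of $\p F_i$ into $B_{\sigma}(y_i)$: any path from (a point near) $F_i$ to a point away from $F_i$ inside that ball must cross the topological boundary $\p F_i$. Here one should be slightly careful that $F_i$ is the support of a set of finite perimeter rather than a smooth region, so ``crossing $\p F_i$'' should be phrased via: if the ball met both $F_i$ and its complement in sets of positive measure, then since $\p F_i$ is the topological boundary of $F_i$ as used elsewhere in the paper (e.g. in Theorem \ref{Conv0}, where $\p E_i \cap U_i = M_i \cap U_i$), the ball cannot be split into open-and-closed pieces without $\p F_i$ intruding. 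This contradicts the disjointness established in the first step.

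The technical friction — and the step I expect to require the most care — is the bookkeeping between the Gromov--Hausdorff approximations $\Phi_i$ and honest metric balls: $\Phi_i$ is only an $\ep_i$-Hausdorff approximation, not an isometry, so ``$B_r(y)$ in $B_1(p_\infty)$'' pulls back only to an ``$(r \pm \ep_i)$-almost ball'' in $B_1(p_i)$, and one must absorb the $\ep_i \to 0$ errors uniformly over the finitely many radii ($\sigma/4$, $\sigma$, $3\sigma/2$, $2\sigma$) appearing in the argument. This is entirely routine given the discussion of $\ep$-Hausdorff approximations in the Preliminary section, but it is where a careless write-up would slip. A cleaner alternative, which I would probably adopt, avoids paths entirely: observe that $B_\sigma(y) \setminus T_\infty$ is disjoint from the Hausdorff limit of $\Phi_i(\p F_i)$, so on $B_\sigma(y)$ the indicator functions $\mathbf{1}_{\Phi_i(F_i)}$ converge (after passing to a subsequence) to a function that is locally constant on the connected open set $B_\sigma(y) \setminus T_\infty = B_\sigma(y)$, forcing $F_\infty \cap B_\sigma(y)$ to be relatively clopen in $B_\sigma(y)$, hence empty or everything — either way $y \notin \p F_\infty$, the desired contradiction. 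This completes the proof.
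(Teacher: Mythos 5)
Your argument is correct and rests on essentially the same mechanism as the paper's proof: if $\p F_i$ stays a definite distance from the (preimages of) $y$, then $F_i$ is relatively clopen in a small ball there, which is incompatible with $y\in\p F_\infty$. The paper's version is slightly leaner — it takes $y_i\in F_i$ with $\Phi_i(y_i)\to y$, notes that $d(y_i,\p F_i)\ge\de$ forces $B_\de(y_i)\subset F_i$ and hence a full ball around $y$ inside $F_\infty$, contradicting $y\in\p F_\infty$ — so it never needs the complement point $b$, the path-connectedness of geodesic balls, or your aside about finite perimeter (the statement is purely topological), but these extra ingredients in your write-up are harmless.
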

\begin{proof}
For any $\ep>0$, there is an $i_0=i_0(\ep)>0$ such that $\Phi_i(\overline{F_i})\subset B_\ep(F_\infty)$ for all $i\ge i_0$.
For any $y\in \p F_\infty$, if there is no sequence in $\p F_i$ converging to $y$,
then there are a constant $\de>0$ and a sequence $y_i\in F_i$ converging to $y$ such that the distance to $\p F_i$ at $y_i$ satisfies $d(y_i,\p F_i)\ge\de$, which
implies $B_\de(y_i)\subset F_i$. Hence, we get $B_\de(y)\subset F_\infty$,
which is a contradiction to $y\in \p F_\infty$. So there is a sequence $z_i\in\p F_i$ so that $z_i\rightarrow y$, which implies $y\in\lim_{i\rightarrow\infty}\p F_i=T_\infty$.
We complete the proof.
\end{proof}

\bibliographystyle{amsplain}

\begin{thebibliography}{10}

\bibitem{Al} F. J. Almgren, Jr., Some interior regularity theorems for minimal surfaces and an extension of Bernstein's theorem, Ann. of Math. {\bf 85} (1966), 277-292.

\bibitem{AK} L. Ambrosio, B. Kirchheim, Currents in metric spaces, Acta Math. {\bf 185(1)} (2000), 1-80.

\bibitem{AFP} L. Ambrosio, N. Fusco, D. Pallara, Functions of Bounded Variation and Free Discontinuity Problems, Clarendon Press, Oxford (2000).

\bibitem{An} M. Anderson, On area-minimizing hypersurfaces in manifolds of nonnegative curvature, Indiana Univ. Math. J. {\bf 32} (1983), 745-760.

\bibitem{An1} M. T. Anderson, The $L^2$ structure of moduli spaces of Einstein metrics on 4-manifolds, GAFA., {\bf 2} (1992), 29-89.

\bibitem{AR} M. Anderson and L. Rodriguez, Minimal surfaces and 3-manifolds with nonnegative Ricci curvature, Math. Ann. {\bf 284}(1989), 461-475.

\bibitem{B} S. Brendle, Sobolev inequalities in manifolds with nonnegative curvature, arXiv:2009.13717, 2020.

\bibitem{BBI} D. Burago, Y. Burago, S. Ivanov, A Course in Metric Geometry, Graduate Studies in Mathematics, 33. American Mathematical Society, Providence, RI, 2001. xiv+415 pp. ISBN: 0-8218-2129-6.

\bibitem{CC} Jeff Cheeger, Tobias H. Colding, Lower Bounds on Ricci Curvature and the Almost Rigidity of Warped Products, Ann. Math. {\bf 144} (1996), 189-237.

\bibitem{CCo1} Jeff Cheeger, Tobias H. Colding, On the structure of spaces with Ricci curvature bounded below. I, J. Differential Geom. {\bf 46} (1997),  no. 3, 406-480.

\bibitem{CCo2} Jeff Cheeger, Tobias H. Colding, On the structure of spaces with Ricci curvature bounded below. II, J. Differential Geom. {\bf 54} (2000),  no. 1, 15-35.

\bibitem{CCo3} Jeff Cheeger, Tobias H. Colding, On the structure of spaces with Ricci curvature bounded below. III, J. Differential Geom. {\bf 54} (2000),  no. 1, 37-74.

\bibitem{CN13} Jeff Cheeger, A. Naber, Lower bounds on Ricci curvature and quantitative behavior of singular sets, Invent. Math. {\bf191} (2013), 321-339.

\bibitem{CN} Jeff Cheeger, A. Naber, Quantitative stratification and the regularity of harmonic maps and minimal currents, Comm. Pure Appl. Math. {\bf 66} (2013), 965-990.

\bibitem{C} Tobias H. Colding, Ricci curvature and volume convergence, Ann. Math. {\bf 145} (1997), 477-501.

\bibitem{CM} T. H. Colding, and W. P. Minicozzi II, A course in minimal surfaces, Graduate Studies in Mathematics, 121. American Mathematical Society, Providence, RI, 2011.

\bibitem{Cr} C. Croke, Some isoperimetric inequalities and eigenvalue estimates, Ann. Scient. $\mathrm{\acute{E}}$c. Norm. Sup. 4, T {\bf 13} (1980), 419-435.

\bibitem{DG0} E. De Giorgi, Frontiere orientate di misura minima, Sem. Mat. Scuola Norm. Sup. Pisa-A.A. 1960/1961.

\bibitem{DG} E. De Giorgi, Una estensione del teorema di Bernstein, Ann. Sc. Norm. Sup. Pisa {\bf 19} (1965), 79-85.

\bibitem{D2} Qi Ding, Poincar\'e inequality on minimal graphs over manifolds and applications, preprint.

\bibitem{FH1} H. Federer, Geometric Measure Theory, Springer-Verlag, Berlin-Heidelberg-New York (1969).

\bibitem{Fe} H. Federer, The singular sets of area minimizing rectifiable currents with codimension one and of area minimizing flat chains modulo two with arbitrary codimension, Bull. A.M.S. {\bf 76} (1970), 767-771.

\bibitem{FS} D. Fischer-Colbrie, R. Schoen, The structure of complete stable minimal surfaces in 3-manifolds of nonnegative scalar curvature, Commun. Pure Appl. Math. {\bf 33} (1980), 199-211.

\bibitem{F} W. Fleming, On the oriented Plateau problem, Rend Circ. Mat. Palermo {\bf 11} (1962), 1-22.

\bibitem{Fu} K. Fukaya, Theory of convergence of Riemannian orbifolds, Japan. J. Math. {\bf 12} (1984), 121-160.

\bibitem{Gi} E. Giusti, Minimal surfaces and functions of bounded variation, Birkh$\mathrm{\ddot{a}}$user Boston, Inc., 1984.

\bibitem{Gri} Alexander Grigor'yan, Estimates of heat kernels on Riemannian manifolds, Spectral theory and geometry (Edinburgh, 1998), 140-225, London Math. Soc. Lecture Note Ser., 273, Cambridge Univ. Press, Cambridge, 1999.

\bibitem{GLP} M. Gromov, J. Lafontaine and P. Pansu, Structures metriques pour les variete riemannienes, Cedic-Fernand Nathan, Paris (1981).

\bibitem{HR} R. Hardt, An introduction to Geometric Measure Theory, Lecture notes, Melbourne University (1979).

\bibitem{HK} Ernst Heintze, Hermann Karcher, A general comparison theorem with applications to volume estimates for submanifolds, Ann. Sci. $\mathrm{\acute{E}}$cole Norm. Sup. {\bf 11} (1978), no. 4, 451-470.

\bibitem{IK} Yoe Itokawa and Ryoichi Kobayashi, Minimizing currents in open manifolds and the $n-1$ homology of nonnegatively Ricci curved manifolds, Amer. J. Math. {\bf 121} (1999), no. 6, 1253-1278.

\bibitem{LW} Peter Li, Jiaping Wang, Mean value inequalities, Indiana Univ. Math. J. {\bf 48} (1999), no. 4, 1257-1283.

\bibitem{LYa} F.H. Lin, X.P. Yang, Geometric measure theory: an introduction, Science Press, Beijing/ New York; International Press, Boston, 2002.

\bibitem{Liu} Gang Liu, 3-manifolds with nonnegative Ricci curvature, Invent. Math. {\bf 193} (2013),  no. 2, 367-375.

\bibitem{Mm} Masao Maeda, Volume estimate of submanifolds in compact Riemannian manifolds, J. Math. Soc. Japan, {\bf 30(3)}, 1978, 533-551.

\bibitem{MM} C. Mantegazza and A.C. Mennucci, Hamilton-Jacobi equations and distance functions on Riemannian manifolds, Appl. Math. Optim., {\bf 47(1)} (2003), 1-25.

\bibitem{MS} A. Mondino and D. Semola, Weak Laplacian bounds and minimal boundaries in non-smooth spaces with Ricci curvature lower bounds, arXiv:2107.12344, 2021.

\bibitem{NV} A. Naber and D. Valtorta, The singular structure and regularity of stationary varifolds, J. Eur. Math. Soc. {\bf22} (2020), no. 10, 3305-3382.

\bibitem{Re} E.R. Reifenberg, Solution of the Plateau problem for $m$-dimensional surfaces of varying topological type, Acta Math. {\bf 104} (1960), 1-92.

\bibitem{SY} R. Schoen and S. T. Yau, On the proof of the positive mass conjecture in general relativity, Commun. math. Phys. {\bf 65} (1979), 45-76.

\bibitem{SY0} R. Schoen and S. T. Yau, Proof of the positive mass theorem. II, Comm. Math. Phys. {\bf 79} (1981), no. 2, 231-260.

\bibitem{SY2} R. Schoen, S. T. Yau, Lectures on differential geometry, International Press, Cambridge, MA, 1994.

\bibitem{S} Leon Simon, Lectures on Geometric Measure Theory, Proceedings of the center for mathematical analysis Australian national university, Vol. 3, 1983.

\bibitem{Si} J. Simons, Minimal varieties in Riemannian manifolds, Ann. Math. {\bf 88} (1968), 62-105.

\end{thebibliography}

\end{document}